\def\F{\mathbb{F}}
\def\A{{\mathcal A}}
\def\text#1{\hbox{#1}}
\def\Z{{\mathbb Z}}
\def\R{{\mathbb R}}
\def\N{{\mathbb N}}
\def\C{{\mathbb C}}
\def\Q{{\mathbb Q}}
\def\A{{\mathbb A}}
\def\O{{\mathcal O}}
\def\p{\partial}
\def\Re{{\mathrm {Re}}}
\def\Im{{\mathrm {Im}}}
\def\supp{{\rm supp}\,}
\def\Vol{{\rm E}}
\def\N{{\rm N}}
\def\logp{{\rm log}^\dagger}
    \DeclareFontFamily{U}{wncy}{}
    \DeclareFontShape{U}{wncy}{m}{n}{<->wncyr10}{}
    \DeclareSymbolFont{mcy}{U}{wncy}{m}{n}
    \DeclareMathSymbol{\Sh}{\mathord}{mcy}{"58}
\newtheorem{theorem}{Theorem}[section]
\newtheorem{definition}[theorem]{Definition}
\newtheorem{lemma}[theorem]{Lemma}
\newtheorem{corollary}[theorem]{Corollary}
\newtheorem{proposition}[theorem]{Proposition}
\newtheorem{remark}[theorem]{Remark}
\newtheorem{thmx}{Theorem}
\date{\today}
\theoremstyle{definition}
\numberwithin{equation}{section}
\DeclareMathOperator*{\esssup}{ess\,sup}
\newcommand\blfootnote[1]{%
  \begingroup
  \renewcommand\thefootnote{}\footnote{#1}%
  \addtocounter{footnote}{-1}%
  \endgroup
}
\def \Leb {{\rm Leb}}
\def\N{{\mathbb N}}
\title{On the optimal rate of equidistribution in number fields} 
\date{}
\author{Miko\l aj Fraczyk}
\address{Alfr\'ed R\'enyi Institute of Mathematics,  Re\'altanoda utca 13-15, H-1053, Budapest, Hungary}
\address{Department of Mathematics, University of Chicago\\
5734 S University Ave, Chicago, IL 60637}
\email{mfraczyk@math.uchicago.edu}
 \author{Anna Szumowicz}
  \address{Institut de Mathématiques de Jussieu –- Paris Rive Gauche, 4, place Jussieu, 75252 Paris, France}
  \address{Caltech, The Division of Physics, Mathematics and Astronomy
1200 E California Blvd, Pasadena CA 91125}
\email{anna.szumowicz@caltech.edu}
\subjclass[2010]{11N25,11K38,13F20,11D57} 
\begin{document}
\maketitle
\blfootnote{\today}
\begin{abstract}
Let $k$ be a number field and let $\O_k$ be its ring of integers. We study how well can finite subsets of $\O_k$ equidistribute modulo powers of prime ideals, for all prime ideals at the same time. Our main result states that the optimal rate of equidistribution in $\O_k$ predicted by the local constraints cannot be achieved unless $k=\Q$. We deduce that $\Q$ is the only number field where the ring of integers $\mathcal O_k$ admits a simultaneous $\frak p$-ordering, answering a question of Bhargava. Along the way we establish a non-trivial upper bound on the number of solutions $x\in \O_k$ of the inequality $|N_{k/\Q}(x(a-x))|\leq X^2$ where $X$ is a positive real parameter and $a\in\mathcal O_k$ is of norm at least $e^{-B}X$ for a fixed real number $B$. The latter can be translated as an upper bound on the average number of solutions of certain unit equations in $\mathcal O_k$.
\end{abstract} 

\section{Introduction}
\subsection{Optimal rate of equidistribution in number fields.} Let $k$ be a number field with the ring of integers $\O_k$. We say that a sequence of finite subsets $E_n\subset \O_k$ \textbf{equidistributes locally} if for every prime ideal $\frak p$  
the sequence of probability measures $$\mu_n:=\frac{1}{|E_n|}\sum_{x\in E_n}\delta_x$$ seen as measures on the $\frak p$-adic completion $k_\frak p$ converge weakly-* to the normalized Haar measure on the ring of integers $\O_{k_\frak p}$.
One can measure the rate of equidistribution in $\O_{k_\frak p}$ by looking at the additive $\frak p$-adic valuation of the product of differences $\prod_{s\neq s'\in E_n}(s-s')$. 
Using the pigeonhole principle, one can show that $$v_\mathfrak{p} \left(\prod_{s\neq s' \in E_n}(s-s') \right) \geq \sum_{m=1}^{|E_n|-1}\sum_{i=1}^\infty \left\lfloor \frac{m}{q^{i}}\right\rfloor,$$ where $q$ is the size of the residue field of $k_\frak p$. When the equality is achieved for each $n$, we say that $(E_n)_{n \in \mathbb{N}}$ \textbf{equidistributes optimally}
 in $\O_{k_\frak p}$.
 It happens, for example, when $E_n$ are sets of the first $n$ elements of a sequence $(a_i)_{i\in\N}$ which is a $\frak p$-ordering (Definition \ref{d-pOrdering}). The $\frak p$-orderings were introduced by Manjul Bhargava in \cite{Bhargava1} in order to generalize the notion of the factorial to any Dedekind domain (or even subsets of Dedekind domains) and to extend the  classical results of P\'olya on integer valued polynomials in $\Q[t]$ to arbitrary Dedekind domains \cite[Theorem 14]{Bhargava1}. While it is easy to see that for a fixed finite set $P$ of primes $\frak p$ one can find a sequence $E_n$ that equidistributes optimally in $\O_{k_\frak p}$ for all $\frak p\in P$, it is not clear if there exists a sequence of sets  $E_n$ that equidistributes optimally for all primes $\frak p$ at the same time. It is certainly possible in $\Z$ because we can take $E_n=\left\{1,2,\ldots, n\right\}$. As the main result of this paper we prove that $k=\Q$ is the \textbf{only} number field for which $\O_k$ enjoys this property. As a corollary we answer the question of Bhargava \cite[Question 3]{Bhargava1} for rings of integers in number fields. Bhargava asked which Dedekind domains admit simultaneous $\frak p$-orderings. Our main result implies that $\Z$ is the only ring of integers in a number field where this is possible.
\subsection{$\mathfrak{p}$-orderings and equidistribution} 
\label{sectionporderings}

Let $A$ be a ring and let $I$ be an ideal of $A$.  We say that a finite subset $S\subseteq A$ is \textbf{almost uniformly distributed modulo $I$} if for any $a,b\in A$ we have 
\begin{equation}\label{e-ConditionAUE}\mid \left\lbrace s\in S\mid\: s-a\in I\right\rbrace\mid -\mid\left\lbrace s\in S\mid \: s-b \in I\right\rbrace\mid\in\left\lbrace -1, 0,1\right\rbrace .
\end{equation}
If $A/I$ is finite the condition (\ref{e-ConditionAUE}) is equivalent to the following  
\begin{equation}\label{e-ConditionAUE2}
\left||\left\lbrace s\in S\mid\: s-a\in I\right\rbrace| -\frac{|S|}{| A/I|}\right| <1.
\end{equation}
\begin{definition}\label{d-noptimal}
We call a finite subset $S\subseteq \O_k$ \textbf{$n$-optimal} if $|S|=n+1$ and $S$ is almost uniformly equidistributed modulo every power $\frak p^l, l\geq 1$ for every prime ideal $\frak p$ of $\O_k$.  
\end{definition}  The sequences of $n$-optimal sets are precisely the ones that equidistribute optimally in $\O_{k_\frak p}$ for all primes $\frak p$ at the same time.  
The main result of this paper determines the numbers fields $k$ where the rings of integers $\O_k$ admits arbitrarily large $n$-optimal sets.
\begin{thmx}\label{mt-noptimal}
Let $k$ be a number field different than $\Q$. Then there is a natural number $n_0$ such that there are no $n$-optimal sets in $\O_k$ for $n\geq n_0$.
\end{thmx} In particular, unless $k=\Q$ there are no infinite sequences of finite subsets that equidistribute optimally modulo all prime powers. Motivation for considering the $n$-optimal subsets comes from the theory of integer valued polynomials and from the study of $\mathfrak{p}$-orderings. We recall the definition of a $\frak p$-ordering in a subset of $\O_k$, following \cite{Bhargava1}. 
\begin{definition}\label{d-pOrdering}
Let $W \subseteq \O_k$ and let $\frak p$ be a non-zero proper prime ideal. A sequence $(a_i)_{i\in\N}\subseteq W$ is a \textbf{$\frak p$-ordering in $W$} if for every $n\in\N$ we have 
$$v_W(\frak p,n):=v_\frak p\left(\prod_{i=0}^{n-1}(a_i-a_n)\right)=\min_{s\in W} v_\frak p\left(\prod_{i=0}^{n-1}(a_i-s)\right),$$ where $v_\frak p$ stands for the additive $\frak p$-adic valuation on $k$. The value $v_W(\frak p,n)$ does not depend on the choice of a $\frak p$-ordering (\cite{Bhargava1}).
\end{definition} 

Bhargava defines the \textbf{generalized factorial} as the ideal $n!_W=\prod_{\frak p}\frak p^{v_W(\frak p,n)}$ where $\frak p$ runs over primes in $\O_k$. A sequence $(a_i)_{i\in\N}\subseteq W$ is called a \textbf{simultaneous $\frak p$-ordering} in $W$ if it is a $\frak p$-ordering in $W$ for every prime ideal $\frak p$. Simultaneous $\frak p$-orderings are also called Newton sequences \cite{CahenChabert1,CahenChabert2}.  A sequence $(a_i)_{i\in \N}\subseteq\O_k$ is a simultaneous $\frak p$-ordering in $\O_k$ if and only if the set $\left\{a_0,a_1,\ldots,a_n\right\}$ is $n$-optimal for every $n\in\N$ (see \cite[Proposition 2.6]{BFS2017}) . In \cite{Bhargava1,BhargavaAMM} Bhargava asks what are the subsets $W\subseteq\O_k$ (or more general Dedekind domains) admitting simultaneous $\frak p$-orderings and in particular for which $k$ the ring $\O_k$ admits a simultaneous $\frak p$-ordering. The last question was addressed by Melanie Wood in \cite{MWood} where she proved that there are no simultaneous $\frak p$-orderings in $\O_k$ if $k$ is imaginary quadratic. This result was extended in \cite[Theorem 16]{AdamCahen} to real quadratic number fields $\Q(\sqrt{d})$ except for possibly finitely many exceptions.
Existence of a simultaneous $\frak p$-ordering implies that there are $n$-optimal sets in $\O_k$ for all $n$. As a corollary of  Theorem \ref{mt-noptimal} we get:
\begin{corollary}\label{mc-pordering}
$\Q$ is the unique number field whose ring of integers admits a simultaneous $\frak p$-ordering.
\end{corollary}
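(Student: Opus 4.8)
The plan is to derive the corollary directly from Theorem \ref{mt-noptimal}, together with the dictionary between simultaneous $\frak p$-orderings and sequences of $n$-optimal sets recorded just above its statement.

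First I would treat the case $k=\Q$ separately, by exhibiting an explicit simultaneous $\frak p$-ordering in $\Z$. The sequence $a_i=i$ ($i\in\N$) works: among any $n+1$ consecutive integers each residue class modulo a given $m$ is represented either $\lfloor (n+1)/m\rfloor$ or $\lceil (n+1)/m\rceil$ times, so $\{0,1,\dots,n\}$ is almost uniformly distributed modulo every ideal $(m)$ and hence is $n$-optimal for every $n$; equivalently, for every prime $p$ one checks that $v_p(\prod_{i=0}^{n-1}(i-n))=v_p(n!)$ is the minimum of $v_p(\prod_{i=0}^{n-1}(i-s))$ over $s\in\Z$ by an elementary counting argument (partition $\{0,1,\dots,n-1\}$ according to the residue class of $i$ modulo $p^j$). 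Thus $\Z$ does admit a simultaneous $\frak p$-ordering, as already noted in the introduction.

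Now suppose $k\neq\Q$ and, for contradiction, that there is a simultaneous $\frak p$-ordering $(a_i)_{i\in\N}\subset\O_k$. By the equivalence recalled above, the finite set $\{a_0,a_1,\dots,a_n\}$ is $n$-optimal for every $n\in\N$, so $\O_k$ contains $n$-optimal sets for arbitrarily large $n$. This contradicts Theorem \ref{mt-noptimal}, which asserts the existence of an $n_0$ such that no $n$-optimal set exists for $n\ge n_0$. Hence no simultaneous $\frak p$-ordering exists when $k\neq\Q$, which together with the previous paragraph proves that $\Q$ is the unique number field whose ring of integers admits a simultaneous $\frak p$-ordering.

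I do not expect any genuine obstacle in this argument: the entire content lies in Theorem \ref{mt-noptimal}, and the remaining ingredients — that $\{0,1,\dots,n\}$ is $n$-optimal in $\Z$, and that a simultaneous $\frak p$-ordering is the same thing as a nested sequence of $n$-optimal sets — are immediate from the definitions.
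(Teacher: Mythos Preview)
Your argument is correct and matches the paper's approach: the corollary is derived directly from Theorem \ref{mt-noptimal} via the equivalence between simultaneous $\frak p$-orderings and nested sequences of $n$-optimal sets, together with the observation that $a_i=i$ gives a simultaneous $\frak p$-ordering in $\Z$. The paper does not spell out a separate proof, treating the corollary as immediate from these two remarks in the introduction.
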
 This answers \cite[Question 3]{Bhargava1} for rings of integers in number fields. We remark that our method is completely different from the approach of Wood or Adam and Cahen.
Having an upper bound on $n$ such that there exists an $n$-optimal set is \textit{a priori} stronger than non-existence of simultaneous $\frak p$-orderings because not every $n$-optimal set can be ordered into an initial fragment of a simultaneous $\frak p$-ordering. We do not know any example of a Dedekind domain that has $n$-optimal sets for every $n \in \mathbb{N}$ but has no simultaneous $\frak p$-orderings. We remark that the ring $\F_q[t]$ admits a simultaneous $\frak p$-ordering \cite[p. 125]{Bhargava1}. It would be interesting to know which finite extensions $F$ of $\F_q(t)$  have the property that $\O_F$ admits a simultaneous $\frak p$-ordering. 
\subsection{Test sets for integer valued polynomials.}
The notions of $\frak p$-ordering and $n$-optimal set are connected to the theory of integer valued polynomials. Let $P\in k[X]$ be a polynomial. We say that $P$ is \textbf{integer valued} if $P(\O_k)\subseteq \O_k$. 

A subset $E\subset \O_k$ is an \textbf{$n$-universal} set if the following  holds. A polynomial $P\in k[X]$ of degree at most $n$ is integer valued (on $\O_k$) if and only if $P(E)\subseteq \O_k$. It is easy to prove, using Lagrange interpolation, that $|S|\geq n+1$ for any $n$-universal set $S$. It was shown in \cite{PV2013, BFS2017} that if $|S|=n+1$, then $S$ is $n$-universal if and only if it is almost uniformly distributed modulo all powers of all prime ideals. In our terminology the latter is equivalent to $S$ being $n$-optimal.  It is proved in \cite{BFS2017} that in a Dedekind domain for every $n\in\N$ there exists an $n$-universal set of size $n+2$. It is therefore natural to ask whether there are $n$-universal sets of cardinality $n+1$ (i.e. $n$-optimal sets). For $k$ quadratic imaginary it was proven in \cite{BFS2017} that there is an $n_0=n_0(k)$ such that there are no $n$-optimal sets in $\O_k$ with $n>n_0$. This generalizes the analogous result for $k=\Q(\sqrt{-1})$ from \cite{PV2013}. For general quadratic number fields Cahen and Chabert  \cite[Theorem 3.12]{CahenChabert2} proved that there are no $2$-optimal sets, except possibly in $\Q(\sqrt{d}), d=-3,-1,2,3,5$ and $d\equiv 1\mod 8$.  
From our main result and \cite[Theorem 4.1.]{BFS2017} we deduce the following.
\begin{corollary}\label{mc-nuniversal}
Let $k\neq \Q$ be a number field. Then for $n\in\N$ sufficiently large the minimal cardinality of an $n$-universal set in $\O_k$ is $n+2$.
\end{corollary}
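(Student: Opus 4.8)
The plan is to derive Corollary \ref{mc-nuniversal} directly from Theorem \ref{mt-noptimal} together with the two structural facts about test sets recalled above. Let $n_0$ be the threshold produced by Theorem \ref{mt-noptimal}, so that for every $n\geq n_0$ the ring $\O_k$ contains no $n$-optimal set, and fix such an $n$ throughout.

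First I would record the elementary lower bound that any $n$-universal set $S$ satisfies $|S|\geq n+1$. Indeed, suppose $|S|=m\leq n$. Since $\O_k$ is infinite we may fix $a\in\O_k\setminus S$ and any prime ideal $\frak p$, and then choose $c\in\O_k$ with $v_\frak p(c)>v_\frak p\bigl(\prod_{s\in S}(a-s)\bigr)$. The polynomial $P(X)=c^{-1}\prod_{s\in S}(X-s)\in k[X]$ then has degree $m\leq n$, satisfies $P(S)\subseteq\O_k$ (each value is $0$), but $P(a)\notin\O_k$, so $P$ is not integer valued on $\O_k$; this contradicts $n$-universality of $S$. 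Next I would invoke the equivalence established in \cite{PV2013,BFS2017}: a subset $S\subseteq\O_k$ with $|S|=n+1$ is $n$-universal if and only if it is almost uniformly distributed modulo all powers of all prime ideals, i.e.\ if and only if it is $n$-optimal in the sense of Definition \ref{d-noptimal}. This is the only step where the polynomial condition is translated into the local equidistribution condition, via the standard description of $I_n(\O_k,\O_k)$ in terms of $\frak p$-adic valuations of products of differences, and we simply quote it. Since for $n\geq n_0$ there are no $n$-optimal sets, there are no $n$-universal sets of cardinality $n+1$, and combined with the previous paragraph this forces every $n$-universal set to have cardinality at least $n+2$.

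Finally, to see that $n+2$ is actually attained, I would quote the construction of \cite{BFS2017}, which produces an $n$-universal set of size $n+2$ in $\O_k$ for every $n\in\N$. Together with the lower bound this shows that, for all $n\geq n_0$, the minimal cardinality of an $n$-universal set in $\O_k$ is exactly $n+2$, which is the assertion of Corollary \ref{mc-nuniversal}. There is no genuine obstacle in this argument beyond Theorem \ref{mt-noptimal} itself: the remaining ingredients are the trivial Lagrange-interpolation lower bound and two results quoted verbatim from \cite{PV2013} and \cite{BFS2017}. The only point deserving mild care is matching hypotheses — namely observing that the regime "$|S|=n+1$" covered by the cited equivalence is precisely the one left open by the lower bound, so that ruling out $n$-optimal sets is exactly what is needed to exclude $n$-universal sets of the minimal conceivable size $n+1$.
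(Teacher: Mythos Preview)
Your proof is correct and follows exactly the approach the paper sketches in the introduction: the paper does not give a separate formal proof of Corollary \ref{mc-nuniversal}, but simply records (i) the Lagrange-interpolation lower bound $|S|\ge n+1$, (ii) the equivalence from \cite{PV2013,BFS2017} between $n$-universal sets of size $n+1$ and $n$-optimal sets, and (iii) the existence of $n$-universal sets of size $n+2$ from \cite{BFS2017}, and then states that the corollary follows from Theorem \ref{mt-noptimal}. Your write-up just makes these steps explicit, and your construction of the witness polynomial for the lower bound is a clean way to flesh out what the paper calls ``easy to prove, using Lagrange interpolation.''
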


\subsection{ Average number of solutions of a unit equation}
One of our key technical ingredients in the proof of Theorem \ref{mt-noptimal} is the following bound, which can be interpreted as a bound on the average number of solutions of the unit equation (see \cite{Zannier2014}). To shorten notation we will write $\|x\|=|N_{k/\Q}(x)|$ for $x\in k$, where $N_{k/\Q}$ is the field extension norm.
\begin{thmx}\label{mt-Count1} Let $k$ be a number field of degree $N$ with $d$ Archimedean places and let $B\in \R$. Put $\kappa=\frac{1}{3}$ if $N=1$ or $\kappa=\min\left\{\frac{1}{2N(N-1)},\frac{1}{4N-1}\right\}$ otherwise. There are constants $\Xi_1,\Xi_2,\Xi_3,\Xi_4$ dependent only on $k$ and $B$ such that for every $a\in \O_k$ and $0< X\leq\|a\|e^{B}$ we have 
$$|\left\{x\in \O_k|\: \|x(a-x))\|\leq X^2\right\}|\leq \Xi_1 X^{1+\kappa}\|a\|^{-\kappa} +\Xi_2(\log X)^{2d-2}+ \Xi_3\logp\logp\logp\log \|a\| +\Xi_4,$$
where $\logp t:=\log t$ if $t>1$ and $\logp t=0$ otherwise.
\end{thmx}
 The traditional form of the unit equation is 
$$\alpha_1\lambda_1+\alpha_2\lambda_2=1 \textrm{ where } \alpha_1,\alpha_2\in k^\times$$
and the indeterminates $\lambda_1,\lambda_2$ are units of $\O_k$. We may consider an equivalent form of the unit equation
\begin{equation}\label{e-UnitEq}
\alpha_1\lambda_1+\alpha_2\lambda_2=\alpha_3 \textrm{ where }\alpha_1,\alpha_2,\alpha_3\in \O_k \setminus \{ 0 \}.
\end{equation} It is clear that the number of solutions depends only on the class of $(\alpha_1,\alpha_2,\alpha_3)$ in the quotient of the projective space $\mathbb P^2(k)/{(\O_k^\times)^3}.$ Let $\nu(\alpha_1,\alpha_2,\alpha_3)$ be the number of solutions of (\ref{e-UnitEq}). It was known since Siegel \cite{Siegel21} that $\nu(\alpha_1,\alpha_2,\alpha_3)$  is finite and Evertse \cite{Evertse} found an upper bound independent of $\alpha_1,\alpha_2,\alpha_3$
$$\nu(\alpha_1,\alpha_2,\alpha_3)\leq 3\times 7^{N}.$$
In fact, Evertse, Gy\"ory, Stewart and Tijdeman \cite{EGST88} showed that except for finitely many points $[\alpha_1,\alpha_2,\alpha_3]\in \mathbb P^2(k)/(\O_k^\times)^{3}$ the equation (\ref{e-UnitEq}) has at most two solutions.  Theorem \ref{mt-Count1} gives a quantitative control on the "average" number of solutions of (\ref{e-UnitEq}) as $\alpha_1,\alpha_2\in \O_k/\O_k^\times, \|\alpha_1\alpha_2\|\leq X^2$ and $\|\alpha_3\|$ is fixed with $\|\alpha_3\|$ not much smaller than $X$.
\begin{thmx}\label{t-AvUnit}
Let $k$ be a number field of degree $N$ with $d$ Archimedean places, let $B\in \R$ and put $\kappa=\frac{1}{3}$ if $N=1$ or $\kappa=\min\left\{\frac{1}{2N(N-1)},\frac{1}{4N-1}\right\}$ otherwise. There exist constants $\Xi_1,\Xi_2,\Xi_3,\Xi_4$ dependent only on $k$ and $B$ such that for every $\alpha_3\in \O_k, 0< X\leq  \|\alpha_3\|e^{B}$ we have 
$$\sum_{\substack{\alpha_1,\alpha_2\in \O_k/\O_k^{\times}\\ \|\alpha_1\alpha_2\|\leq X^2 }}\nu(\alpha_1,\alpha_2,\alpha_3)\leq \Xi_1 X^{1+\kappa}\|\alpha_3\|^{-\kappa}+ \Xi_2 (\log X)^{2d-2} +\Xi_3\logp\logp\logp\log \|\alpha _{3}\|+\Xi_4.$$
\end{thmx}

The number of terms in the sum is of order $X^2\log X$ so Theorem \ref{t-AvUnit} shows that the average value of $\nu(\alpha_1,\alpha_2,\alpha_3)$ is 
\begin{equation*} 
O(X^{\kappa-1}\|\alpha_3\|^{-\kappa}(\log X)^{-1}+(\log X)^{2d-3}X^{-2}+(\logp\logp\logp\log\|\alpha_3\|)X^{-2}(\log X)^{-1}). 
\end{equation*} 
 Unless $\|\alpha_3\|\gg e^{e^{e^{X^2\log X}}}$, this improves on average on the pointwise bound of Evertse, Gy\"ory, Stewart and Tijdeman \cite{EGST88}.
\subsection{Energy minimizing shapes}
Let $V=\R^{r_1}\times \C^{r_2}$ and write $\|v\|=\prod_{i=1}^{r_1}|v_i|\prod_{i=r_1+1}^{r_2}|v_i|^2.$ We introduce a notion of energy for compactly supported measures on $V$ of bounded density (see Definition \ref{d-Energy}). For any compactly supported finite measure $\nu$ on $V$, absolutely continuous with respect to the Lebesgue measure and of bounded density, the energy is given by
$$I(\nu):=\int_V\int_V\log\|x-y\|d\nu(x)d\nu(y).$$ 
This is not too far from the energy considered in the potential theory, but the singularities of the kernel make it a bit more difficult to handle. The probability measures of density at most $1$ which minimize the energy among all such measures play a special role in the proof of Theorem \ref{mt-noptimal}. We prove that they always are of the following form:
\begin{thmx}\label{thm-Shape}
Let $\nu$ be a compactly supported probability measure with density at most $1$ which minimizes the energy among all such measures. Then, up to translation, there exists an open bounded set $U\subset V$  such that $\nu=\Leb|_U$ and $\partial U \cap \{v\in V| \|v\|>0\}$ is a $C^1$ submanifold of $\{v\in V| \|v\|>0\}$.
\end{thmx}
Theorem \ref{thm-Shape} follows from Proposition \ref{p-EnergyMinimizers}.
\subsection{Outline of the proof}
To prove Theorem  \ref{mt-noptimal} we argue by contradiction. We assume that there exists a sequence $\mathcal S_{n_{i}}$ of $n_{i}$-optimal subsets where $n_{i}$ tends to infinity. Let $V:= k\otimes _{\Q}\R\simeq \R^{r_{1}}\times \C^{r_{2}}$. First we show (Theorem \ref{t-OptimalFrame}) that for each $n_{i}$ there exists a cylinder (see Definition \ref{d-Cylinder}) $\mathcal{C}_{n_{i}}\subseteq V$ of volume $O(n_{i})$ containing $S_{n_{i}}$. This fact was implicit in the proofs of Theorem \ref{mt-noptimal} for $k=\Q (\sqrt{-1})$ in \cite{PV2013} and for $k$ quadratic imaginary in \cite{BFS2017}. The argument in \cite{PV2013, BFS2017} relied on a technique called "discrete collapsing" \footnote{In  \cite{BFS2017} it was called simply "collapsing". We add the adjective discrete to distinguish it from the collapsing for measures used in the present work.} which crucially uses the fact that the norm $N_{k/\Q}$ is convex for any quadratic imaginary number field $k$. Finding a way to prove Theorem \ref{t-OptimalFrame} for a general number field $k$ is one of the main contributions of the present work. A key number-theoretical input is provided by Proposition \ref{p-CountingMain} which counts the number of $x\in\O_k$ such that $|N_{k/\Q}(x(a-x))|\leq X^2$ for some $X>0$ and $a\in \O_k$ subject to the condition $|N_{k/\Q}(a)|\geq Xe^{-B}$ where $B$ is a fixed real number. The proof of Proposition \ref{p-CountingMain} combines a variant of Ikehara's Tauberian theorem, counting points of $\O_k$ in thin cylinders and the Baker--W\"ustholz's theorem on linear forms in logarithms.

Let $\Delta _{k}$ be the field discriminant of $k$. From Theorem \ref{t-OptimalFrame} we deduce (Corollary \ref{c-CompactFrame}) that there exists a compact set $\Omega$ and sequences $(s_{n_i})_{i\in\N},(t_{n_i})_{i\in\N}\subset V$ with $\|s_{n_i}\|=n_i|\Delta_k|^{1/2}$ such that the sets $s_{n_i}^{-1}(\mathcal S_{n_i}-t_{n_i})$ are all contained in $\Omega$. Thus, it makes sense to look at subsequential weak-* limits of measures 
$$\mu_{n_i}:=\frac{1}{n_i}\sum_{x\in \mathcal S_{n_i}} \delta_{s_{n_i}^{-1}(x-t_{n_i})}.$$ Any such limit will be called a \textbf{limit measure}. It is always a probability measure supported on $\Omega$, absolutely continuous with respect to the Lebesgue measure and of density\footnote{By density we mean the Radon--Nikodym derivative with respect to the Lebesgue measure.} at most one\footnote{The reason why we introduced the factor $|\Delta_k|^{1/2}$ in the formula $\|s_{n_i}\|=n_i|\Delta_k|^{1/2}$ is to ensure that the limits have density at most $1$. } (see Lemma \ref{l-DensityLimitMeasure}). By passing to a subsequence if necessary we can assume that $\mu_{n_i}$ converges to a limit measure $\mu$. The measure $\mu$ contains the information about the asymptotic geometry of the sets $\mathcal{S} _{n_i}$. Our strategy is to exploit the properties of $n$-optimal sets to show that no such limit measure can exist. 
The energy formula \footnote{In \cite{BFS2017} the energy ideal was called volume.} for $n$-optimal sets (see \cite[Corollary 5.2]{BFS2017}) allows us to prove (Proposition \ref{p-LimitEnergy}) that for any limit measure $\mu$ we have $$I(\mu)=-\frac{1}{2}\log|\Delta_k|-\frac{3}{2}-\gamma_k+\gamma_\Q,$$ where $\gamma_k,\gamma_\Q$ are the Euler--Kronecker constants of $k$ and $\Q$ respectively (c.f. \cite{ihara2006}). We know that the norm of the product of differences in an $n$-optimal set must be minimal among the norms of products of differences in all subsets of $\O_k$ of cardinality $n+1$  (\cite[Corollary 5.2]{BFS2017}). In other words the energy of an $n$-optimal set is minimal among energies (\cite[Definition 2.3]{BFS2017}) of subsets of $\O _k$ of cardinality $n+1$. This is used to show that $\mu$ minimizes the energy $I(\mu)$ among all compactly supported probability measures of density bounded by one (Lemma \ref{l-LowerEnergyBound}). The last property forces strong geometric constraints on $\mu$. In Proposition \ref{p-EnergyMinimizers} we show that any such energy-minimizing measure must be of the form $\mu(A)=\Leb(A\cap U)$ where $\Leb$ is the Lebesgue measure on $V$ and $U$ is an open set of measure $1$ whose boundary satisfies certain regularity conditions. This part of the argument uses the collapsing procedure for measures (Definition \ref{d-Collapsing}) which is analogous to the discrete collapsing from \cite{BFS2017} and similar to the Steiner symmetrization. We remark that if the field $k$ is not imaginary quadratic or $\Q$, then there is no reasonable discrete collapsing procedure for subsets of $\O_k$. The passage from subsets of $\O_k$ to measures on $V$ seems crucial for this part of the argument.

At this point we have established that $\mu_{n_i}$ converges weakly-* to $\mu=\Leb|_U$ for some open subset $U$ of $V$ with sufficiently regular boundary. This is equivalent to saying that  $\mathcal S_{n_i}=(\O_k\cap (s_{n_i}U+t_{n_i})) \sqcup R_{n_i}$ where the remainder satisfies $|R_{n_i}|=o(n_i)$.  The idea for the last part of the proof is to show that for $n_i$ sufficiently large, there is a prime ideal $\frak p_{n_i}$ such that $\mathcal S_{n_i}$ fails to be almost uniformly equidistributed modulo $\frak p_{n_i}^{l}$ for a certain $l \in \N$. This part is analogous to the proofs in \cite{PV2013, BFS2017} but substantially harder since we do not know the shape of $U$ explicitly. This problem is solved by relating the almost uniform equidistribution of $\mathcal{S} _{n_i}$ with the lattice point discrepancy of $U$ (see \ref{d-Discrepancy}). If $\mathcal{S} _{n_i}$ were almost uniformly equidistributed modulo powers of all prime ideals, then the maximal discrepancy of $U$ would be strictly less than $1$ (Lemma \ref{l-LimitDiscrepancy}). On the other hand, we show (Lemma \ref{l-NoAEq}) that once $\dim_\R V\geq 2$ and $\p U$ is smooth enough, the maximal discrepancy of $U$ must be strictly greater than $1$. This is the only place in the proof where we use the assumption that $k\neq \Q.$ We deduce that there must be a prime $\frak p_{n_i}$ such that $\mathcal S_{n_i}$ is not uniformly equidistributed modulo $\frak p_{n_i}^{l}$ for a certain $l \in \N$. This contradicts the fact that $\mathcal S_{n_i}$ is $n_i$-optimal and concludes the proof.
\subsection{Notation} 
Let $k$ be a number field of degree $N$ and let $\O_k$ be the ring of integers of $k$. Numbers $r_1,r_2$ are respectively the number of real and complex places of $k$. Put $d=r_1+r_2$. The field $k$ is fixed throughout the article and so are the numbers $N,r_1,r_2,d$. We identify  $V=k\otimes_\Q \R$ with $\R^{r_1}\times \C^{r_2}.$ For $v=(v_1,\ldots,v_d)\in V$ define $\|v\|=\prod_{i=1}^{r_1}|v_i|\prod_{i=r_1+1}^{r_1+r_2} |v_i|^2.$ Write $|v|_i$ for the absolute value of $i$-th coordinate of $v\in V$ and let $k_{\nu_i}$ be the completion of $k$ with respect to the valuation $|\cdot|_i$. We write $V^\times=\left\{v\in V|\: \|v\|\neq 0\right\}$ and $\O_k^\times$ for the unit group of $\O_k$. Let $N_{k/\Q}\colon k\to \Q$ be the norm of the extension $k/\Q$. The field $k$ embeds in $V$ and $\|x\|=|N_{k/\Q}(x)|$ for every $x\in k$.  We write $\Delta_k$ for the field discriminant of $k$. The $\mathfrak{p}$-adic valuations are normalized so that $|p|_{\mathfrak{p}}=p^{-[k_{\mathfrak{p}}:\mathbb{Q}_{\mathfrak{p}}]}$ for every $\mathfrak{p}|p$. We use standard big-O and little-o notation. Write $\log$ for the principal value of the natural logarithm and $\logp t=\log t$ if $t>1$ and $\logp t=0$ otherwise. 
We will write $\Leb$ for the Lebesgue measure on $V$, which is the product of Lebesgue measures on the real and complex factors. For any measure $\mu$ and measurable sets $E,F$ we will write $\mu|_E(F)=\mu(E\cap F).$ We write $B_{\R}(x,R)$ ($B_{\C}(x,R)$) for the ball of radius $R$ around $x\in \R$ ($x\in\C$). We will write $\mathcal M^1(V)$  (resp. $\mathcal P^1(V)$) for the set of finite (resp. probability) measures $\nu$ on $V$ which are absolutely continuous with respect to the Lebesgue measure and such that the Radon--Nikodym derivative satisfies $d\nu(v)/d\Leb(v)\leq 1$ for almost every $v\in V$. For any real number $t$ we will write $\lfloor t\rfloor=\max\left\{z\in\Z|\, z\leq t\right\}.$ If $G$ is a group we will write $\widehat G$ for the group of unitary characters of $G$. If $E$ is a set, then we will write $|E|$ for its cardinality. 
\subsection{Structure of the paper}
In Section \ref{s-CountingProblem} we develop estimates on the number of lattice points in $\O_k$ satisfying certain norm inequalities. The goal is to prove Proposition \ref{p-CountingMain} and deduce Theorem \ref{t-MainPropCount}. These inequalities control the number of points $x\in \O_k$ for which the norm $N_{k/\Q}((x-y)(x-z))$ is bounded where $y,z$ are two far-away points in $\O_k$. In \ref{s-Aramaki} we recall the Aramaki--Ikehara Tauberian theorem. In \ref{s-ProofPropC} we prove Proposition \ref{p-CountingMain} modulo some lemmas relying on diophantine approximation techniques. The subsection \ref{s-Baker} completes the missing part of the proof using Baker--W\"ustholz inequalities on linear forms in logarithms. In \ref{s-AverageNumber} we explain briefly why Theorem \ref{t-AvUnit} follows from Theorem \ref{t-MainPropCount}.

Section \ref{s-Geometry} is devoted to the proof of Theorem \ref{t-OptimalFrame}. This is where we prove that $n$-optimal sets can be suitably renormalized. In \ref{s-Gennoptimal} we gather some basic observations on norms of differences in an $n$-optimal set and in \ref{s-ProofOptimalFrame} couple them with the results of Section \ref{s-CountingProblem} to prove Theorem \ref{t-OptimalFrame}.

The Section \ref{s-Collapsing} is devoted to the collapsing procedure for measures on $V$. We mainly study its effect on the energy.

In Section \ref{s-LimitMeasures} we define and study the properties of limit measures. In \ref{s-Density} we prove that they have density bounded by $1$. In \ref{s-Energy} we prove that the limit measures must minimize the energy in the class of compactly supported probability measures of density at most $1$. Next in \ref{s-EnergyMinimizers} we study the geometric properties of such energy minimizing measures and describe them in Proposition \ref{p-EnergyMinimizers}. The proof of Proposition \ref{p-EnergyMinimizers} crucially uses the collapsing procedure.

In Section \ref{s-NonExistence} we show that limit measures cannot exist. In the first part \ref{s-Discrepancy} we recall the notion of lattice point discrepancy and relate it to limit measures. In \ref{s-Proof} we prove Theorem \ref{mt-noptimal}.

Finally in the Appendix we provide a proof of a folklore result on density of measures and likely well known variant of the prime number theorem for number fields.
\subsection*{Acknowledgment}
We would like to thank Jakub Byszewski for helpful discussions and previous collaboration that lead us to the subject of this paper. His comments were especially helpful for the proof of Lemma \ref{l-CollapsingPairsR}.  The second author is grateful to her supervisor Anne-Marie Aubert for comments on the earlier version of this paper. M.F. acknowledges support from ERC Consolidator Grant 648017. A.S acknowledges the support of UK Engineering and Physical Sciences Research Council.

\section{Counting problem}\label{s-CountingProblem}

The main result of this section is Proposition \ref{p-CountingMain}. It is a key ingredient in the proof of Theorem \ref{t-OptimalFrame} on the shape of $n$-optimal sets in $\O_k$.  As a corollary of Proposition \ref{p-CountingMain} we get the following counting result, introduced as Theorem \ref{mt-Count1} in the introduction, that may be of independent interest.

\begin{theorem}\label{t-MainPropCount}
Let $k$ be a number field of degree $N$ with $d$ Archimedean places, let $B\in\R$ and put $ \kappa =\frac{1}{3}$ if $N=1$ and $\kappa=\min\left\{\frac{1}{2N(N-1)},\frac{1}{4N-1}\right\}$ otherwise. There exist constants $\Xi_1,\Xi_2,\Xi_3,\Xi_4$ dependent only on $k$ and $B$ such that for every $X>0$ and $a\in \O_k$ such that $ \|a\|\geq Xe^{-B}$ we have 
$$|\left\{x\in \O_k|\: \|x(a-x)\|\leq X^2\right\}|\leq \Xi_1 X^{1+\kappa}\|a\|^{-\kappa} +\Xi_2(\log X)^{2d-2}+\Xi_3\logp\logp\logp\log\|a\|+\Xi_4.$$ 
\end{theorem}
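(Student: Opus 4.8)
The plan is to obtain Theorem~\ref{t-MainPropCount} from a sharper ``local'' count (this is the role of Proposition~\ref{p-CountingMain}); since the two have the same shape I shall describe directly how I would bound $D(X,a):=|\{x\in\O_k\mid \|x(a-x)\|\le X^2\}|$. The crucial first observation is that $D(X,a)$ would be \emph{infinite} without the hypothesis $\|a\|\ge Xe^{-B}$: once the unit rank $d-1$ is positive, $\O_k$ has infinitely many elements of any prescribed norm, so no bound can come from $\|x\|$ alone — it is the identity $x+(a-x)=a$ together with the lower bound on $\|a\|$ that couples $x$ and $a-x$ and forces finiteness, and the whole proof is an effective quantification of this coupling. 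Since the involution $x\mapsto a-x$ preserves $\|x(a-x)\|$ I may restrict, up to a factor $2$, to $\|x\|\le\|a-x\|$, so $\|x\|\le X$ and $\|a-x\|\le X^2/\|x\|$; I then split the ranges of $\|x\|$ and $\|a-x\|$ into dyadic blocks $\|x\|\asymp U$, $\|a-x\|\asymp W$ with $UW\le X^2$, and in each block I also fix the partition $S=\{\sigma:|x_\sigma|\ge|(a-x)_\sigma|\}$ of the archimedean places (finitely many choices).

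For the \textbf{bulk} blocks I would argue by volume. On $S$ one has $|x_\sigma|\ge|a_\sigma|/2$ and on $S^c$ one has $|(a-x)_\sigma|\ge|a_\sigma|/2$, and multiplying these inequalities over all $\sigma$ shows that $\prod_{\sigma\in S^c}|x_\sigma|^{e_\sigma}\prod_{\sigma\in S}|(a-x)_\sigma|^{e_\sigma}\ll X^2/\|a\|$ (here $e_\sigma\in\{1,2\}$ is the local degree); together with $\|x\|\asymp U$ this confines $x$ to a thin cylinder in $V$, whose volume I compute by passing to logarithmic (Dirichlet) coordinates. The number of $x\in\O_k$ in such a cylinder is then its volume divided by $\mathrm{covol}(\O_k)$ plus lower-dimensional boundary terms (Lipschitz's lattice-point principle), and to make the sum over blocks and over ideal classes converge I would feed in a variant of Ikehara's Tauberian theorem — applied to $\zeta_k$ and to the Hecke $L$-functions recording the cylinder constraints — which counts the ideals of norm $\le Y$ with an error term good enough for the purpose. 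Summing the volumes and choosing the common scale of the cylinders optimally yields the main term $\Theta_1 X^{1+\kappa}\|a\|^{-\kappa}$ (the bookkeeping that produces exactly this exponent — balancing the saving coming from the constraint against the loss incurred once a cylinder is too thin to be governed by its volume — is one of the more technical points), while $\Theta_2(\log X)^{2d-2}$ absorbs the boundary and block contributions, which are of size $\bigl((\log X)^{d-1}\bigr)^{2}$, one factor $(\log X)^{d-1}$ for $x$ and one for $a-x$.

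For the \textbf{extreme} blocks — those where $U=\|x\|$ is so small that the corresponding cylinder has volume $\ll1$, so the volume bound says nothing — I would argue arithmetically. Such an $x$ factors as $x=u\beta$ with $\beta$ ranging over a fixed finite set of ideal representatives of bounded norm and $u$ over $\O_k^\times$, and the surviving constraint $\|a-u\beta\|\le X^2/\|x\|$ forces the logarithmic vector of $u$ to lie very close to a prescribed affine subspace determined by the numbers $\log|a_\sigma|$. Only $O(1)$ units have logarithmic vector in a bounded region, but one must control how many \emph{scales} of closeness can occur, and this is exactly where Baker--W\"ustholz's lower bound for linear forms in logarithms of the fundamental units (against the $\log|a_\sigma|$) enters: it shows that $\bigl|\log|u_\sigma\beta_\sigma/a_\sigma|\bigr|$ cannot be smaller than $(\log\|a\|)^{-C}$ unless it vanishes, and iterating this across the admissible scales produces the term $\Theta_3\log\log\log\log\|a\|$ (iterated logarithms being the typical residue of Baker-type estimates) plus the absolute constant $\Theta_4$.

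Collecting the three contributions and optimizing the single parameter that separates the ``bulk'' from the ``extreme'' regime gives $\kappa=\min\{\tfrac{1}{2N(N-1)},\tfrac{1}{4N-1}\}$; the two candidate exponents come from the two ways the thin-cylinder estimate can degrade — thin in one coordinate direction, versus thin in all directions at once — and one must take the worse. \textbf{The hardest part} is the extreme case: what is needed there is an \emph{effective}, and essentially optimal, replacement for the a~priori finiteness of the unit equation $x+(a-x)=a$, and it is precisely here that both the hypothesis $\|a\|\ge Xe^{-B}$ and the Baker--W\"ustholz input are indispensable — and it is this step, rather than the volume estimates, that dictates the final shape of the error term.
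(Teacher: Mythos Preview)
Your sketch has the right ingredients --- the involution $x\mapsto a-x$, a Tauberian input, thin-cylinder lattice counting, and Baker--W\"ustholz --- but your organizing decomposition differs from the paper's, and two of your attributions of the error terms are off. The paper does not split dyadically in $\|x\|,\|a-x\|$ and by sign patterns of the places. It fixes a good fundamental domain $\mathcal F$ for $\O_k^\times$ on $V^\times$, writes each nonzero solution as $x\lambda^{-1}$ with $x\in\mathcal F\cap\O_k$ and $\lambda\in\O_k^\times$, and splits according to the size of $\|\lambda\|_\infty$. The piece $S_1$ where $\|\lambda\|_\infty$ is below a threshold $\asymp\log\|a\|-\log\|x\|$ is handled by the Tauberian Lemma~\ref{l-IdealsEst} and produces the exponent $\tfrac{1}{4N-1}$; the complement $S_2$ is split again (Lemma~\ref{l-S2Decomp}) according to which coordinate $i$ witnesses $\bigl|\tfrac{x}{a}-\lambda\bigr|_i$ small, and the thin-cylinder bound on each fibre $S_2^i(\lambda)$, summed over $\lambda$, gives the exponent $\tfrac{1}{2N(N-1)}$. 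So your reading of $\kappa$ as ``thin in one coordinate versus thin in all at once'' is not the actual dichotomy: one exponent is Tauberian over the fundamental domain, the other is the single-coordinate cylinder.

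The two genuine gaps are in the error terms. First, $\Theta_2(\log X)^{2d-2}$ is \emph{not} a boundary or block-count contribution of the volume estimate. It arises inside the Baker--W\"ustholz step, as the crude count of units $\lambda\in T^i$ with $\|\lambda\|_\infty\le C(\log X)^2$: there are $O\bigl((\log X)^2\bigr)^{d-1}$ of these, whence the exponent $2(d-1)$. Second, your account of the quadruple logarithm is too vague to be a proof. A single application of Baker--W\"ustholz to $\bigl|\log|\lambda a/x|_i\bigr|$ only gives $\|\lambda\|_\infty\ll\log\|a\|\log\log\|a\|$ (inequality~(\ref{e-UpperLogBound})), hence $|T^i|=O\bigl((\log\|a\|\log\log\|a\|)^{d-1}\bigr)$, which is far too weak. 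The paper instead compares \emph{pairs} $\lambda,\lambda'$ in the wild part of $T^i$ (Lemma~\ref{l-PairsTw}), applying Corollary~\ref{c-BW} to $\bigl|\log|\tfrac{x'}{x}\lambda\lambda'^{-1}|_i\bigr|$; this forces $\log 2\|\lambda_{l+1}\|_\infty\ge(\log 2\|\lambda_l\|_\infty)^2$ along the ordered list, and it is this squaring --- not ``iteration across scales'' of a single estimate --- that converts the polylog bound into $\log\log\log\log\|a\|$. Without that pairing idea your extreme-block estimate does not close.
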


We give a brief argument on how to deduce Theorem \ref{t-MainPropCount} after the statement of Proposition \ref{p-CountingMain}. To state Proposition \ref{p-CountingMain} we need to introduce some notations and auxiliary objects.
\begin{definition}\label{d-GoodFD} A \textbf{ good fundamental domain } of $\O_k^\times$ in $V^{\times}$ is a set $\mathcal F$ which is a finite union of convex closed cones in $V^{\times}$ such that $\mathcal F/\R^\times$ is compact in the projective space $\mathbb P(V)$,  $\rm{ int }\mathcal F\cap \lambda (\rm{ int } \mathcal F)=\emptyset$ for every $\lambda\in \O_k^\times, \lambda\neq 1$ and $V^{\times}=\bigcup_{\lambda\in \O_k^\times}\lambda\mathcal F.$ For technical reasons we will also require that the boundary $\p \mathcal F$ does not contain any points of $\O_k$. \end{definition} 
These conditions imply that $\mathcal F\cap \O_k$ is a set of all non-zero representatives of $\O_k/\O_k^\times.$ We have the following elementary observation.
\begin{lemma}\label{l-cone}Let $\mathcal F$ be a good fundamental domain of $\O_k^\times$ in $V^\times$. Then there exists a constant $C_0>0$ such that every $v\in\mathcal F$ satisfies $C_0^{-1} \|v\|^{1/N} \leq |v|_i \leq C_0\|v\|^{1/N}$ for $i=1,\ldots,d$.
\end{lemma}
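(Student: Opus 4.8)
The statement is a quantitative comparability bound: on a good fundamental domain $\mathcal F$, the individual coordinate absolute values $|v|_i$ are all controlled, up and down, by $\|v\|^{1/N}$. The key point is that on $\mathcal F/\R^\times$, which is compact in $\mathbb P(V)$, the function $v\mapsto \|v\|$ is bounded away from $0$ and $\infty$ once we normalize, say, by $\max_i |v|_i$.

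First I would exploit homogeneity. Both sides of the desired inequality $C_0^{-1}\|v\|^{1/N}\le |v|_i\le C_0\|v\|^{1/N}$ are homogeneous of degree $1$ in $v$ (recall $\|\lambda v\| = |\lambda|^N \|v\|$ for $\lambda\in\R^\times$, since $\sum_{i=1}^{r_1} 1 + \sum 2 = N$), so it suffices to prove it for $v$ ranging over a compact transversal. Concretely, set $\Sigma = \{v\in\mathcal F : \max_{1\le i\le d}|v|_i = 1\}$; since $\mathcal F$ is a finite union of closed cones and $\mathcal F/\R^\times$ is compact in $\mathbb P(V)$, the set $\Sigma$ is compact in $V$. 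On $\Sigma$ the continuous function $v\mapsto \|v\| = \prod_{i=1}^{r_1}|v_i|\prod_{i=r_1+1}^{r_1+r_2}|v_i|^2$ attains a maximum $M$ and a minimum $m$. Crucially $m>0$: if $\|v\|=0$ for some $v\in\Sigma$ then $v\in\mathcal F\setminus V^\times$, but $\mathcal F\subseteq V^\times$ by the definition of a good fundamental domain (it is a finite sum of convex closed cones \emph{in $V^\times$}), so this cannot happen. Hence $0 < m \le \|v\| \le M < \infty$ for all $v\in\Sigma$, and also trivially $0\le |v|_i\le 1$ for $v\in\Sigma$, with $|v|_i \ge \|v\|/\big(\prod_{j\ne i}|v|_j^{e_j}\big) \ge m$ (here $e_j\in\{1,2\}$ and $|v|_j\le 1$), so in fact $m\le |v|_i\le 1$ on $\Sigma$ as well.

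Now take an arbitrary $v\in\mathcal F$, $v\ne 0$; write $v = \lambda w$ with $\lambda = \max_i |v|_i > 0$ and $w\in\Sigma$. Then $|v|_i = \lambda |w|_i$ and $\|v\| = \lambda^N \|w\|$, so $|v|_i/\|v\|^{1/N} = |w|_i/\|w\|^{1/N}$, and the right-hand side lies in the interval $[m/M^{1/N},\, 1/m^{1/N}]$ by the bounds of the previous paragraph. Choosing $C_0 = \max\{M^{1/N}/m,\, m^{-1/N}\}$ gives $C_0^{-1}\|v\|^{1/N} \le |v|_i \le C_0\|v\|^{1/N}$ for all $i$, as required. (For $v\in\mathcal F$ with $\|v\|=0$ there is nothing to prove, but as noted this case is vacuous.)

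**Main obstacle.** There is no deep obstacle here — the content is entirely the compactness of $\mathcal F/\R^\times$ together with the fact that $\mathcal F$ avoids the norm-zero locus; the only point requiring a little care is to package the scaling argument cleanly so that the exponent $1/N$ comes out right, i.e. to remember that the degree of $\|\cdot\|$ as a polynomial in the real coordinates is $N = r_1 + 2r_2$, not $d$. Once the transversal $\Sigma$ is set up as a compact subset of $V^\times$, everything else is continuity plus homogeneity.
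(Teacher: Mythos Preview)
The paper does not give a proof of this lemma at all; it says ``The proof is left to the reader.'' Your argument is exactly the natural compactness-plus-homogeneity proof one would expect, and it is correct.

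One minor imprecision: when you write $|v|_i \ge \|v\|/\prod_{j\ne i}|v|_j^{e_j}$, the left-hand side should be $|v|_i^{e_i}$, not $|v|_i$. This does not affect the conclusion, since on $\Sigma$ one has $\|v\|\le 1$ and hence $m\le 1$, so $|v|_i \ge m^{1/e_i}\ge m$ in either case ($e_i\in\{1,2\}$). Also, to conclude $w=v/\lambda\in\mathcal F$ you are implicitly using that $\mathcal F$ is a union of cones, hence closed under multiplication by positive reals; this is part of the definition, so it is fine.
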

We will often use this lemma in the latter part of the proof and sometimes we shall do so without additional comment. Let $W_k$ be the torsion subgroup of $\O_k^\times$. The quotient $\O_k^\times/W_k$ is a free abelian group of rank $d-1$, by Dirichlet's unit theorem. Let $\xi_1,\ldots,\xi_{d-1}$ be a basis of a maximal torsion free subgroup of $\O_k ^{\times }$. Every element $\lambda\in \O_k^\times$ is uniquely expressed as a product $\lambda=w \xi_1^{b_1}\ldots \xi_{d-1}^{b_{d-1}}$ with $w\in W_k$ and $b_i\in \Z$ for $i=1,\ldots,d-1.$ We define an $l^\infty$ norm on $\O_k^\times$ by $\|\lambda\|_\infty:=\max_{i=1,\ldots, d-1}|b_i|.$ From now on we fix the basis $\xi_1,\ldots, \xi_{d-1}$ as well as the associated norm $\|\cdot\|_\infty$.  
\begin{lemma}\label{l-NormIneq}
There exists a constant $\alpha>0$ such that $\max_{i=1,\ldots ,d}\log |\lambda|_i\geq \alpha \|\lambda\|_\infty$ for every $\lambda\in\O_k^\times$.
\end{lemma}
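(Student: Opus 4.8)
The plan is to compare two norms on the finitely generated abelian group $\O_k^\times$: the combinatorial norm $\|\lambda\|_\infty = \max_i |b_i|$ coming from the chosen basis $\xi_1,\dots,\xi_{d-1}$, and the geometric quantity $M(\lambda) := \max_{i=1,\dots,d}\log|\lambda|_i$. Both descend to the free quotient $\O_k^\times/W_k \cong \Z^{d-1}$, since torsion elements $w\in W_k$ satisfy $|w|_i = 1$ for all $i$ (they are roots of unity, hence have all Archimedean absolute values equal to $1$), so $M(w\mu) = M(\mu)$; thus it suffices to prove the inequality on the lattice $L := \O_k^\times/W_k$. On $L$, the map $\lambda \mapsto \|\lambda\|_\infty$ is genuinely a norm (it is the $\ell^\infty$ norm in the basis coordinates), and I want to show $M(\lambda) \geq \alpha\|\lambda\|_\infty$ for a fixed $\alpha>0$.

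The key step is to recognize that $M$ is, up to a harmless adjustment, comparable to a norm on $L\otimes\R$. Consider the logarithmic embedding $\mathrm{Log}\colon \O_k^\times \to \R^d$, $\lambda \mapsto (\log|\lambda|_1,\dots,\log|\lambda|_d)$, weighted so that by the product formula $\sum_i \log|\lambda|_i = \log\|\lambda\| = 0$ (note $\|\lambda\|=1$ for a unit). By Dirichlet's unit theorem, $\mathrm{Log}$ maps $L$ isomorphically onto a full-rank lattice $\Lambda$ in the trace-zero hyperplane $H = \{y\in\R^d : \sum y_i = 0\} \cong \R^{d-1}$. The quantity $M(\lambda) = \max_i (\mathrm{Log}\,\lambda)_i$ is not a norm, but on the hyperplane $H$ it satisfies $\max_i y_i \geq \frac{1}{d}\sum_i |y_i| \cdot \frac{1}{?}$ — more precisely, since $\sum y_i = 0$ we have $\sum_{i: y_i>0} y_i = \sum_{i: y_i<0}|y_i| = \tfrac12\sum_i|y_i|$, and hence $\max_i y_i \geq \frac{1}{d}\sum_{i: y_i>0} y_i = \frac{1}{2d}\|y\|_1 \geq \frac{1}{2d}\|y\|_\infty^{(\R^d)}$, where $\|y\|_\infty^{(\R^d)}$ is the sup-norm on $\R^d$. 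So on $H$, $M(\lambda) \geq c_1 \|\mathrm{Log}\,\lambda\|$ for the Euclidean (or any) norm $\|\cdot\|$ on $H$, with $c_1>0$ depending only on $d$ and the normalization.

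Finally I would invoke the standard fact that on a finite-dimensional real vector space all norms are equivalent: the norm $v\mapsto \|v\|_\infty$ transported from the basis coordinates on $L\otimes\R \cong H$ and the Euclidean norm on $H$ differ by bounded multiplicative constants, say $\|\mathrm{Log}\,\lambda\| \geq c_2 \|\lambda\|_\infty$ for all $\lambda\in\O_k^\times$ (with $c_2>0$ depending on $k$ and the chosen basis). Combining, $M(\lambda) \geq c_1 c_2\|\lambda\|_\infty$, so $\alpha = c_1 c_2$ works. I expect no serious obstacle here — the only mild subtlety is bookkeeping the torsion quotient and being careful that the relevant functional $\max_i y_i$ really is bounded below by a norm once restricted to the trace-zero hyperplane, which is exactly where the product formula $\sum_i \log|\lambda|_i = 0$ gets used. (If one prefers to avoid the hyperplane trick, one can note directly that $\max_i \log|\lambda|_i \geq 0$ always, and $= 0$ only for $\lambda$ a root of unity, then use compactness of the unit sphere in $L\otimes\R$ together with the fact that $\max_i \log|\lambda|_i$ scales linearly along rays emanating from $0$ under the logarithmic embedding.)
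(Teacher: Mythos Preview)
Your proof is correct and follows essentially the same approach as the paper: both reduce the claim to equivalence of norms on $\O_k^\times\otimes_\Z\R\cong\R^{d-1}$ via the logarithmic embedding. The paper is terser, simply asserting that $\|\lambda\|_0:=\max_i\log|\lambda|_i$ extends to a norm on $\R^{d-1}$ and then invoking equivalence of norms; you fill in precisely the detail the paper glosses over, namely that on the trace-zero hyperplane the functional $\max_i y_i$ (which is not absolutely homogeneous and hence not literally a norm) is nonetheless bounded below by $\tfrac{1}{2d}\|y\|_1$ thanks to $\sum_i y_i=0$.
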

\begin{proof}
Put $\|\lambda\|_0:=\max_{i=1,\ldots ,d}\log |\lambda|_i$. Both $\|\cdot\|_0,\|\cdot\|_\infty$ extend uniquely to norms on $\O_k^\times \otimes_\Z \R\simeq \R^{d-1}$. Since any two norms on $\R^{d-1}$ are comparable, there exists a constant $\alpha>0$ such that $\alpha\|\lambda\|_\infty\leq \|\lambda\|_0\leq \alpha^{-1}\|\lambda\|_\infty$ for every $\lambda\in\O_k^\times$.
\end{proof}
By definition if we are given a good fundamental domain $\mathcal F$, then every element $y\in \O_k$ except $0$ decomposes uniquely as $y=x\lambda$ for $x\in\mathcal F\cap\O_k,\lambda\in \O_k^\times$. Let us fix a good fundamental domain $\mathcal F$. For $a\in \O_k, a\neq 0$ and $X>0$ we define the set 
$$S(a,X)=\left\{(x,\lambda)\in (\mathcal F\cap \O_k)\times \O_k^\times |\: \|x(a-x\lambda^{-1})\|\leq X^2, \|x\|\leq X\right\}.$$ 
\begin{proposition}\label{p-CountingMain}
Let $k$ be a number field of degree $N$ with $d$ Archimedean places, let $B\in\R$ and put $\kappa=\frac{1}{3}$ if $N=1$ and $\kappa=\min\left\{\frac{1}{2N(N-1)},\frac{1}{4N-1}\right\}$ otherwise.  Choose a good fundamental domain $\mathcal F$. There exist constants $\Theta_1,\Theta_2,\Theta_3,\Theta_4$ dependent only on $k,\mathcal F$ and $B$ such that for every $X>0$ and $a\in \O_k$ such that $ \|a\|\geq Xe^{-B}$ we have
\begin{enumerate}
\item  
$|S(a,X)|\leq \Theta_1 X^{1+\kappa}\|a\|^{-\kappa} +\Theta_2(\log X)^{2d-2}+\Theta_3\logp\logp\logp\log\|a\|+\Theta_4.$ 
\item Suppose that $a\in \mathcal F$. For every $\varepsilon>0$ there exists $M$ such that 
\begin{align*}|\{(x,\lambda)\in S(a,X)|\: \| &\lambda\|_\infty\geq M \}|\leq \\ & \varepsilon X^{1+\kappa}\|a\|^{-\kappa} +\Theta_2(\log X)^{2d-2}+\Theta_3\logp\logp\logp\log\|a\|+\Theta_4. 
\end{align*} 
\end{enumerate}
\end{proposition}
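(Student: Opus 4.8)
The plan is to reduce the count of $|S(a,X)|$ to counting lattice points $x \in \O_k \cap \mathcal F$ inside a region cut out by the two constraints $\|x\| \le X$ and $\|x(a - x\lambda^{-1})\| \le X^2$, organized according to the size of $\|\lambda\|_\infty$. First I would observe that since $x \in \mathcal F$ and $\|x\| \le X$, Lemma \ref{l-cone} forces $|x|_i \ll X^{1/N}$ for every Archimedean place $i$. Fixing such an $x$, the constraint $\|x(a - x\lambda^{-1})\| \le X^2$ becomes $\|a - x\lambda^{-1}\| \le X^2/\|x\|$, i.e. $\|\lambda^{-1}\| \cdot \|x\lambda^{-1} - a\| \cdot \|\lambda\| \le \cdots$; more usefully, writing $y = x\lambda^{-1} \in \O_k$ one has $\|y\| = \|x\|$ and the pair $(x,\lambda)$ is determined by the element $y \in \O_k$ together with the decomposition $y = (\text{its }\mathcal F\text{-representative})\cdot(\text{unit})$. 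So $|S(a,X)|$ is essentially $|\{y \in \O_k : \|y\| \le X,\ \|y(a-y)\| \le X^2\}|$ counted with the extra bookkeeping of which translate of $\mathcal F$ contains $y$ — but in fact I would keep the variable $x \in \mathcal F$ fixed and count the admissible $\lambda$, because that is where part (2) lives.

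The main term: for a fixed $x$, the units $\lambda$ with $(x,\lambda)\in S(a,X)$ are those for which the point $x\lambda^{-1}$ lies in the shell $\{v \in V : \|a - v\| \le X^2/\|x\|,\ \|v\| = \|x\|\}$. Since $\|a\| \ge Xe^{-B} \ge e^{-B}\|x\|$, the center $a$ is "far out" relative to $x$, and this shell is a thin piece of the level set $\{\|v\| = \|x\|\}$ concentrated near the finitely many directions where $v$ can be close to $a$. Using Lemma \ref{l-NormIneq}, $\|\lambda\|_\infty$ is comparable to $\max_i \log|\lambda|_i$, and the condition $|x\lambda^{-1}|_i \asymp (\text{admissible size})$ at each place bounds $\|\lambda\|_\infty$ from above by $O(\log X)$; summing the number of lattice points of $\O_k$ in such thin cylinders over the dyadic scales of $\|x\|$ produces the $X^{1+\kappa}\|a\|^{-\kappa}$ term (this is exactly where the counting-points-in-thin-cylinders input and the exponent $\kappa$ enter), while the remaining "small $\lambda$" contributions over all scales of $\|x\| \le X$ give $O((\log X)^{2d-2})$. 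The triple-and-quadruple-logarithm term $\Theta_3 \log\log\log\log\|a\|$ is the footprint of the Baker–W\"ustholz lower bound on linear forms in logarithms: it controls how close the unit multiple $x\lambda^{-1}$ can actually get to $a$ in each Archimedean coordinate without forcing $x\lambda^{-1} = a$ (which would violate $\|y(a-y)\|\ne 0$), hence bounds the number of "degenerate" near-solutions. Part (2) then follows by the same argument with the extra restriction $\|\lambda\|_\infty \ge M$: the contribution of large $\lambda$ corresponds to points $x\lambda^{-1}$ with one coordinate exponentially small in $M$, and the Tauberian/Ikehara estimate for the counting function of $\O_k$ shows this piece of the main term is $\le \varepsilon X^{1+\kappa}\|a\|^{-\kappa}$ once $M$ is large, the lower-order terms being unchanged.

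The hard part will be making the thin-cylinder count uniform in both $X$ and $\|a\|$ with the correct exponent $\kappa = \min\{\tfrac{1}{2N(N-1)}, \tfrac{1}{4N-1}\}$: one must control the number of $\O_k$-points in a neighborhood of the norm-$\|x\|$ hypersurface that is genuinely thin in $d-1$ directions, and the two competing expressions for $\kappa$ presumably reflect two regimes (one governed by the geometry of the lattice $\O_k$ in $V$, the other by the arithmetic of how many units can land a fixed $x$ near $a$), so the estimate has to be split and optimized. Interlaced with this is the delicate point that $a$ and $X$ are only loosely coupled ($\|a\| \ge Xe^{-B}$, no upper bound on $\|a\|$), so the dependence on $\|a\|$ must be tracked carefully — it is precisely the slack in "how close can $x\lambda^{-1}$ be to $a$" that forces the Baker–W\"ustholz term and prevents a clean power-saving in $\|a\|$ alone.
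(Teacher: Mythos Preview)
Your proposal has the right ingredients — Tauberian counting, thin cylinders, Baker--W\"ustholz — but the decomposition is not quite right and one key mechanism is missing.

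First, the claim that the constraint $\|x(a-x\lambda^{-1})\|\le X^2$ forces $\|\lambda\|_\infty=O(\log X)$ is false: the norm condition only controls the \emph{product} of $|a - x\lambda^{-1}|_i$ over all places, so some coordinates of $x\lambda^{-1}$ can be huge and others tiny. This is exactly why a uniform cylinder count does not directly apply. The paper instead rewrites the constraint as $\log\|\lambda - x/a\|\le 2\log X - \log\|a\| - \log\|x\|$ and splits $S=S_1\cup S_2$ according to whether $\|\lambda\|_\infty$ lies below or above an explicit threshold depending on $\log\|x\|$. For $S_1$ (moderate $\lambda$) one counts units in a box and sums over $x$ using Lemma~\ref{l-IdealsEst}; this gives the exponent $\tfrac{1}{4N-1}$. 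For $S_2$ (large $\lambda$) one shows that some coordinate $|\lambda - x/a|_i$ must be exponentially small in $\|\lambda\|_\infty$, which traps $x$ in a thin cylinder $\mathcal C^i(\lambda)$ of volume $\ll X^{1+\kappa'}\|a\|^{-\kappa'}e^{-\beta\|\lambda\|_\infty}$ with $\kappa'=\tfrac{1}{2N(N-1)}$; summing the cylinder counts over $\lambda$ gives the other half of the main term. So the two candidates for $\kappa$ come from $S_1$ and $S_2$ respectively, not from two regimes within one cylinder count.

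Second, and more seriously: your description of the $\log\log\log\log\|a\|$ term does not contain the idea that actually produces it. A single application of Baker--W\"ustholz to $\log|\tfrac{a}{x}\lambda|_i$ only gives $\|\lambda\|_\infty\ll \log\|a\|\log\log\|a\|$, hence $|T^i|\ll(\log\|a\|\log\log\|a\|)^{d-1}$ for the set $T^i$ of contributing units — far too weak. The paper's trick is to apply Baker--W\"ustholz a \emph{second time} to the ratio $\tfrac{x'}{x}\lambda\lambda'^{-1}$ for \emph{pairs} $\lambda,\lambda'\in T^i$ (Lemma~\ref{l-PairsTw}): since both $\tfrac{a}{x}\lambda$ and $\tfrac{a}{x'}\lambda'$ are exponentially close to $1$ in coordinate $i$, so is their quotient, and now the height of $\tfrac{x'}{x}$ is only $O(\log X)$ rather than $O(\log\|a\|)$. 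This yields an inequality of the shape $\|\lambda\|_\infty\ll (\log X)\log\|\lambda'\|_\infty$, which after iterating along the ordered list of wild $\lambda$'s forces a tower-of-exponentials growth and hence only $O(\log\log\log\log\|a\|)$ wild units. The $(\log X)^{2d-2}$ term is the count of the ``tame'' units with $\|\lambda\|_\infty\ll(\log X)^2$, not of ``small $\lambda$'' as you wrote. Without this pairs argument your outline cannot reach the stated bound.
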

The proof consists of dividing the set $S(a,X)$ in two parts $S_1,S_2$ where $S_1$ consists of pairs $(x,\lambda)$ where $\|\lambda\|_\infty$ is "not too big" compared to $\log \|a\|-\log\|x\|$ and $S_2$ is the complement of $S_1$.
 To estimate the size of $S_1$ we will use the Aramaki--Ikehara Tauberian theorem (Section \ref{s-Aramaki}) and to control $S_2$ we rely on Baker--W\"ustholz's theorem on linear forms in logarithms and counting integer points in cylinders (Section \ref{s-Baker}). Theorem \ref{t-MainPropCount} is an easy consequence of Proposition \ref{p-CountingMain}.
 \begin{proof}[Proof of Theorem \ref{t-MainPropCount}]
 It is enough to show that $|\left\{x\in \O_k|\: \|x(a-x)\|\leq X^2\right\}|\leq 2|S(a,X)|+2$. Note that the set $\left\{x\in \O_k|\: \|x(a-x)\|\leq X^2\right\}$ is invariant under the map $x\mapsto a-x$. The inequality $\|x(a-x)\|\leq X^2$ implies that either $\|x\|\leq X$ or $\|a-x\|\leq X$. For any such $x$ different than $0$ and $a$ there exists a pair $(y,\lambda)\in S(a,X)$ such that $\lambda^{-1} y=x$ or $\lambda^{-1} y=a-x$. This proves that $|\left\{x\in \O_k|\: \|x(a-x)\|\leq X^2\right\}|\leq 2|S(a,X)|+2$. Theorem \ref{t-MainPropCount} now follows from Proposition \ref{p-CountingMain} with $\Xi_i=2\Theta_i, i=1,2,3$ and $\Xi_4=2\Theta_4+2$.
 \end{proof}
 
\subsection{Aramaki--Ikehara theorem}\label{s-Aramaki}
We will need an extension of the classical Tauberian theorem of Wiener and Ikehara due to Aramaki \cite{Aramaki1988}. Our goal is Lemma \ref{l-IdealsEst} and it is the only result from this section that we will be using later. The result stated below differs from the original formulation in \cite{Aramaki1988}, we comment on that after the statement.
\begin{theorem}\label{t-Ararmaki}(\cite{Aramaki1988})
Let $Z(s)=\sum_{n\in \N}\frac{a_n}{n^s}$ be a Dirichlet series with non-negative real coefficients, convergent for $\Re(s)$ sufficiently large. Assume that $Z(s)$ satisfies the following conditions:
\begin{enumerate}
\item $Z(s)$ has a meromorphic extension to $\C$ with poles on the real line.
\item $Z(s)$ has the first singularity at $s=a>0$ and there exist constants $A_{j}\in \C$ for $j=1,\ldots ,p$ such that   
$$Z(s)-\sum_{j=1}^p\frac{A_j}{(j-1)!}\left(-\frac{d}{ds}\right)^{j-1}\frac{1}{s-a}$$ is holomorphic in $\left\{s\in \C\mid \Re(s)>a-\delta\right\}$ for some $\delta>0$. 
\item $Z(s)$ is of polynomial order of growth with respect to $\Im(s)$ in all vertical strips, excluding neighborhoods of the poles. 
\end{enumerate}
Then, there exists $\delta_0>0$ such that for all $X\geq 1$
$$\sum_{n\leq X}a_n=\sum_{j=1}^p\frac{A_j}{(j-1)!}\left.\left(\frac{d}{ds}\right)^{j-1}\left(\frac{X^s}{s}\right)\right|_{s=a}+ O(X^{a-\delta_0}).$$
\end{theorem}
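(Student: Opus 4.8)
The final statement to prove is Theorem~\ref{t-Ararmaki}, the Aramaki--Ikehara Tauberian theorem. Since the excerpt attributes it to Aramaki~\cite{Aramaki1988}, the honest plan is to derive it from the classical Wiener--Ikehara theorem by a standard perturbation-of-contour argument, rather than to reprove Wiener--Ikehara from scratch. So the plan is as follows.

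\textbf{Reduction to a single simple pole.} First I would observe that the map $(-d/ds)^{j-1}\frac{1}{s-a}$ produces exactly $(j-1)!/(s-a)^j$, so the hypothesis says $Z(s)-\sum_{j=1}^p A_j (s-a)^{-j}$ is holomorphic on $\Re(s)>a-\delta$. The target main term, after differentiating $X^s/s$, is the residue at $s=a$ of $(\sum_j A_j(s-a)^{-j})\cdot X^s/s$, i.e. the natural ``expected'' partial sum. By linearity it suffices to treat each term $A_j/(s-a)^j$ separately; equivalently, one handles the model Dirichlet series whose coefficients have partial sums $\sim c\, X^a (\log X)^{j-1}$ and subtracts, reducing $Z(s)$ to something with only a simple pole or no pole at all on the line $\Re(s)=a$.

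\textbf{Contour shift with polynomial growth control.} The core analytic step is a Perron-type formula: write $\sum_{n\le X} a_n = \frac{1}{2\pi i}\int_{(c)} Z(s)\frac{X^s}{s}\,ds$ (with the usual smoothing or truncation to make this rigorous), then push the contour from $\Re(s)=c>a$ leftward to $\Re(s)=a-\delta_0$ for a small $\delta_0<\delta$. Crossing the poles at $s=a$ picks up precisely $\sum_j \frac{A_j}{(j-1)!}(d/ds)^{j-1}(X^s/s)\big|_{s=a}$ as the sum of residues. The remaining integral over the shifted vertical line, together with the two horizontal segments, is bounded using hypothesis (3): polynomial growth of $Z(s)$ in $|\Im s|$ on vertical strips lets one estimate $\int |Z(a-\delta_0+it)|\,|X^{a-\delta_0+it}|/|a-\delta_0+it|\,dt$. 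To make this integrable one must insert a smoothing kernel (e.g. replace $X^s/s$ by $X^s/(s(s+1)\cdots(s+m))$ for $m$ large enough to beat the polynomial order, then undo the smoothing by finite differencing, or use a Cesàro average and then a Tauberian removal of the average), which costs a power $X^{a-\delta_0}$ but no more. This produces the claimed error $O(X^{a-\delta_0})$.

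\textbf{Main obstacle and finishing.} The genuine difficulty is the standard Tauberian subtlety: the Perron integral does not converge absolutely on a vertical line where $Z(s)$ has only polynomial (not rapid) decay, so one cannot shift contours naively. The fix — either Ikehara's original smoothing via a Fejér-type kernel followed by a Tauberian de-smoothing, or repeated integration (the $1/(s(s+1)\cdots(s+m))$ trick) followed by finite differences — is exactly where Aramaki's refinement over classical Ikehara lies, since classical Ikehara gives only $o(X^a)$ while here the polynomial-growth hypothesis (3) upgrades this to a power-saving $O(X^{a-\delta_0})$. I would carry this out by: (i) fixing $m$ with $m > $ the polynomial order of $Z$ on the strip; (ii) establishing the smoothed identity $\sum_{n}a_n K_m((X-n)/ \cdot) = \frac{1}{2\pi i}\int_{(c)} Z(s) \widetilde K_m(s) X^s\,ds$; (iii) shifting to $\Re(s)=a-\delta_0$, collecting residues, bounding the tail by absolute convergence; (iv) recovering $\sum_{n\le X}a_n$ from the smoothed sum by taking $m$-th order finite differences in $X$ and using the trivial one-sided monotonicity/positivity-type Tauberian bound on $a_n$ implied by the known main-term asymptotics, absorbing all discrepancies into $O(X^{a-\delta_0})$ after possibly shrinking $\delta_0$. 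The bookkeeping of which poles lie in the strip $a-\delta_0<\Re(s)<c$ is controlled by hypothesis (2), which guarantees no poles other than at $s=a$ in a neighborhood of the line, so $\delta_0$ can be chosen small enough to avoid them all.
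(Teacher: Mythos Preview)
The paper does not prove this theorem; it is quoted from Aramaki \cite{Aramaki1988} and used as a black box (the section explicitly says its only goal is Lemma~\ref{l-IdealsEst}). So there is no proof in the paper to compare your proposal against.

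Your outline is the standard route to such power-saving Tauberian theorems and is broadly correct: Perron formula, contour shift to $\Re(s)=a-\delta_0$, smoothing by $1/s(s+1)\cdots(s+m)$ (Riesz means) to beat the polynomial growth, residues at $s=a$ give the main term, then de-smooth by finite differences. One genuine gap: your step (iv) invokes a ``positivity-type Tauberian bound on $a_n$'', but the theorem as stated carries no sign hypothesis on the coefficients. Passing from Riesz means back to the sharp partial sum with a power-saving error really needs either $a_n\ge 0$ or an a priori bound of the shape $|a_n|=O(n^{a-1+\varepsilon})$; without some such input the finite-difference comparison between $\sum_{n\le X}$ and $\sum_{n\le X(1\pm h)}$ cannot be controlled. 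In the paper's only application (Corollary~\ref{l-AramakiCor}) positivity is explicitly assumed, so this is harmless for the paper's purposes, but if you want the theorem in the generality stated you must either add that hypothesis or supply the coefficient bound separately.
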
 
Originally Aramaki states the theorem only for the series of the form $Z(s):=\sum_{i=}^\infty \lambda_i^{-s}$ where $\lambda_i\in\mathbb R_{>0}$ are the eigenvalues (counted with multiplicity) of certain compact positive self-adjoint operator $P$ acting on a Hilbert space. We can recover Theorem \ref{t-Ararmaki} for Dirichlet series $\sum_{n\in \N}\frac{a_n}{n^s}$ with coefficients in $\mathbb N$ by considering an operator with eigenvalue $n$ appearing with multiplicity $a_n$, for each $n\geq 1$. 
The version with non-negative real coefficients does not follow directly from \cite{Aramaki1988}. However, the proof provided by Aramaki depends only on the series $Z(s)$ and never uses the spectral interpretation. In particular, the integrality of the coefficients $a_n$ is never used, so by repeating exactly the same argument as in \cite{Aramaki1988} one can prove Theorem \ref{t-Ararmaki}. 
\begin{corollary}\label{l-AramakiCor}
Let $(a_n)_{n\in \N}$ be a sequence of positive real numbers such that the Dirichlet series $Z(s)=\sum_{n=1}^\infty \frac{a_n}{n^s}$ satisfies the hypotheses of Theorem \ref{t-Ararmaki}. Then for every integer $m\geq 0$ and $X\geq 1$ we have  
\begin{enumerate} 
\item there exists $ \delta _{m}>0$ such that $$\sum_{n\leq X}a_n(\log n)^m=\sum_{j=1}^p \frac{A_j}{(j-1)!}\left.\left(\frac{d}{ds}\right)^{m+j-1}\left(\frac{X^s}{s}\right)\right|_{s=a}+O(X^{a-\delta _{m}}).$$
\item If $Z(s)$ has a simple pole at $1$ with residue $\rho$, then there exists $\delta >0 $ such that
$$\sum_{n\leq X}a_n(\log X-\log n)^{m}= m!\rho X+O(X^{1-\delta}).$$
\end{enumerate}
\end{corollary}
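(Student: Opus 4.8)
The plan is to derive both statements from Theorem~\ref{t-Ararmaki} by applying it to auxiliary Dirichlet series built out of $Z$.

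For part (1), the key observation is that $\frac{d^k}{ds^k}n^{-s}=(-\log n)^k n^{-s}$, so the coefficients $a_n(\log n)^k$ are attached to the Dirichlet series $(-1)^kZ^{(k)}(s)$, which converges in the same half-plane as $Z$. I would then check that $(-1)^kZ^{(k)}$ satisfies the three hypotheses of Theorem~\ref{t-Ararmaki}. Meromorphy with poles on the real line is immediate, since differentiating only raises the orders of the poles of $Z$ without moving them; in particular $s=a$ stays the first singularity. Polynomial growth in vertical strips away from the poles follows from that of $Z$ by a Cauchy estimate: bounding $|Z^{(k)}(s)|$ by a constant times the maximum of $|Z|$ on a circle of fixed small radius around $s$, chosen small enough to stay clear of the (real) poles when $s$ is outside a slightly larger neighbourhood of them. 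Finally, since $\left(-\frac{d}{ds}\right)^{j-1}\frac{1}{s-a}=\frac{(j-1)!}{(s-a)^j}$, the principal-part hypothesis for $Z$ reads: $Z(s)-\sum_{j=1}^p A_j(s-a)^{-j}$ is holomorphic for $\Re(s)>a-\delta$; differentiating $k$ times and using $\frac{d^k}{ds^k}(s-a)^{-j}=(-1)^k\frac{(j+k-1)!}{(j-1)!}(s-a)^{-(j+k)}$ shows that $(-1)^kZ^{(k)}(s)-\sum_{j=1}^pA_j\frac{(j+k-1)!}{(j-1)!}(s-a)^{-(j+k)}$ is holomorphic there, and the subtracted singular part equals $\sum_{m=1}^{p+k}\frac{B_m}{(m-1)!}\left(-\frac{d}{ds}\right)^{m-1}\frac{1}{s-a}$ with $B_m=0$ for $m\le k$ and $B_{j+k}=A_j\frac{(j+k-1)!}{(j-1)!}$. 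Feeding this into the conclusion of Theorem~\ref{t-Ararmaki}, dropping the vanishing terms, reindexing $m=j+k$, and noting $\frac{B_{j+k}}{(j+k-1)!}=\frac{A_j}{(j-1)!}$ gives precisely the asserted formula (with $\delta=\delta_0$, the exponent from Theorem~\ref{t-Ararmaki}).

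For part (2) I would use partial summation rather than part (1). Here $Z$ has a simple pole at $s=1$ with residue $\rho$, so applying Theorem~\ref{t-Ararmaki} directly (the case $a=1$, $p=1$, $A_1=\rho$) yields $F(t):=\sum_{n\le t}a_n=\rho t+O(t^{1-\delta_0})$, where we may assume $\delta_0<1$. For $k\ge1$, starting from $(\log X-\log n)^k=k\int_n^X(\log X-\log t)^{k-1}\frac{dt}{t}$ and interchanging the (finite) sum with the integral,
$$\sum_{n\le X}a_n(\log X-\log n)^k=k\int_1^X(\log X-\log t)^{k-1}\frac{F(t)}{t}\,dt .$$
Substituting $F(t)=\rho t+O(t^{1-\delta_0})$, the main term $k\rho\int_1^X(\log X-\log t)^{k-1}\,dt$ becomes, after $t=Xe^{-v}$, $k\rho X\int_0^{\log X}v^{k-1}e^{-v}\,dv=k\rho X\bigl(\Gamma(k)+O((\log X)^{k-1}X^{-1})\bigr)=k!\rho X+O((\log X)^{k-1})$, while the error contributes $O\bigl((\log X)^{k-1}\int_1^Xt^{-\delta_0}\,dt\bigr)=O((\log X)^{k-1}X^{1-\delta_0})=O(X^{1-\delta_0/2})$. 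The case $k=0$ is Theorem~\ref{t-Ararmaki} itself. Taking $\delta=\delta_0$ completes the proof.

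The steps that require genuine (if modest) care are the verification of the polynomial-growth hypothesis for $(-1)^kZ^{(k)}$, handled by the Cauchy estimate above, and the bookkeeping of factorials in the principal part; everything else is routine, including the tail estimate $\int_{\log X}^\infty v^{k-1}e^{-v}\,dv\ll(\log X)^{k-1}X^{-1}$. As an alternative to partial summation, part (2) can also be obtained from part (1) by expanding $(\log X-\log n)^k$ binomially: using part (1) with exponents $0,1,\dots,k$ together with the Leibniz expansion of $\left(\frac{d}{ds}\right)^i\frac{X^s}{s}$ at $s=1$, all contributions of size $X(\log X)^m$ with $m\ge1$ cancel by the identity $\sum_i(-1)^i\binom{m}{i}=0$, leaving $k!\rho X$; I would nonetheless prefer the partial-summation route as cleaner.
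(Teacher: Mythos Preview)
Your argument is correct. For part~(1) you follow exactly the paper's route---differentiate $Z$ and reapply Theorem~\ref{t-Ararmaki}---only fleshing out the principal-part bookkeeping and the Cauchy-estimate justification for polynomial growth that the paper leaves implicit.

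For part~(2) you take a genuinely different path. The paper expands $(\log X-\log n)^k$ binomially, invokes part~(1) for each exponent $0,\dots,k$, and then recombines the resulting derivative terms by recognising a Leibniz identity: $\sum_{l}\binom{k}{l}\bigl(-\tfrac{d}{ds}\bigr)^l X^{1-s}\cdot\bigl(-\tfrac{d}{ds}\bigr)^{k-l}(X^s/s)=\bigl(-\tfrac{d}{ds}\bigr)^k(X/s)$, evaluated at $s=1$ to give $k!X$. Your partial-summation route needs only the $k=0$ case of Theorem~\ref{t-Ararmaki} (i.e.\ $F(t)=\rho t+O(t^{1-\delta_0})$) and a single Gamma-integral substitution, avoiding the cascade of applications of part~(1) and the Leibniz recombination. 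The paper's approach is slicker algebraically once one spots the identity, but yours is more self-contained and makes the provenance of the error term more transparent. You also correctly sketch the paper's binomial route as an alternative in your final paragraph.
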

\begin{proof}
\begin{enumerate}
\item Note that $\sum_{n=1}^\infty \frac{a_n(\log n)^m}{n^s}=\left(-\frac{d}{ds}\right)^m Z(s). $ The derivative $\left(-\frac{d}{ds}\right)^m Z(s)$ is meromorphic on $\C$ with poles on the real line. Cauchy's integral formula implies that $\left(-\frac{d}{ds}\right)^m Z(s)$ is of polynomial order of growth with respect to $\Im(s)$ on vertical strips away from the poles. The desired formula follows from Aramaki theorem applied to $\left(-\frac{d}{ds}\right)^m Z(s).$
\item By the previous point we have $\sum_{n\leq X}a_n(\log n)^m=\rho \left.\left(\frac{d}{ds}\right)^m\left(\frac{X^s}{s}\right)\right|_{s=1}+O(X^{1-\delta _{m} }).$ We use this identity in the following computation:
\begin{align*}
\sum_{n\leq X} a_n(\log & X -\log n)^m= \\=&\sum_{l=0}^m{m\choose l}(-1)^{m-l}(\log X)^l\sum_{n\leq X}a_n(\log n)^{m-l}\\
=&\sum_{l=0}^m {m\choose l}(-1)^{m-l}\left.\left(\frac{d}{ds}\right)^l X^{s-1}\right|_{s=1} \rho\left.\left(\frac{d}{ds}\right)^{m-l}\left(\frac{X^s}{s}\right)\right|_{s=1} +O(X^{1-\delta } )\\
=&\rho \sum_{l=0}^m {m \choose l}\left.\left(-\frac{d}{ds}\right)^l X^{1-s}\right|_{s=1} \left.\left(-\frac{d}{ds}\right)^{m-l}\left(\frac{X^s}{s}\right)\right|_{s=1} +O(X^{1-\delta })\\
=&\rho \left.\left(-\frac{d}{ds}\right)^m\left( X^{1-s}\frac{X^s}{s}\right)\right|_{s=1}+ O(X^{1-\delta })\\
=&m!\rho X+O(X^{1-\delta })
\end{align*}
\end{enumerate} 
where $\delta = {\rm min} \left\lbrace \delta _{0}, \ldots , \delta _{m} \right\rbrace/2 $. 

\end{proof}
The following lemma is a key ingredient in the proof of Proposition \ref{p-CountingMain}.
\begin{lemma}\label{l-IdealsEst}
Let $\rho_k$ be the residue of the Dedekind zeta function $\zeta_k(s)$ at $s=1$, let $h_k$ be the class number of $k$ and let $w_k$ be the size of the torsion subgroup of $\O_k^\times$. For every $m \geq 0$ there exists $\delta_0>0$ such that for every $X\geq 1$ we have
\begin{enumerate}
\item $$\sum_{\substack{a\in \O_k/\O_k^\times\\ 0<N(a)\leq X}}\log N(a)^m=\frac{\rho_k}{h_k}X(\log X)^m+O(X^{1-\delta_0})$$
and
\item $$\sum_{\substack{a\in \O_k/\O_k^\times\\ 0<N(a)\leq X}}(\log X-\log N(a))^m=m!\frac{\rho_k}{h_k}X+O(X^{1-\delta_0}).$$
\end{enumerate}
\end{lemma}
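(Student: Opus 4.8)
The plan is to apply the Aramaki--Ikehara machinery of Corollary \ref{l-AramakiCor} to a suitable Dirichlet series that repackages the sum over ideal classes. The natural candidate is the Dedekind zeta function $\zeta_k(s) = \sum_{n\ge 1} \frac{c_n}{n^s}$, where $c_n = |\{\mathfrak a \subseteq \O_k \text{ ideal} \mid N(\mathfrak a) = n\}|$. The point is that the sum over $a \in \O_k/\O_k^\times$ with $0 < N(a) \le X$ counts \emph{principal} ideals of norm at most $X$; since each nonzero ideal class contributes its own copy of the ``principal-like'' counting function and the class group has order $h_k$, a standard heuristic (made precise below) says that the number of principal ideals of norm $\le X$ is $\frac{1}{h_k}$ times the number of all ideals of norm $\le X$, up to a power-saving error. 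More precisely, I would record the classical fact (originally due to Weber; see e.g.\ the partial-zeta-function decomposition $\zeta_k(s) = \sum_{\mathfrak c \in \mathrm{Cl}(k)} \zeta_{k,\mathfrak c}(s)$) that the number of integral ideals in a fixed ideal class $\mathfrak c$ of norm $\le X$ equals $\frac{\rho_k}{h_k} X + O(X^{1-1/N})$, and in particular this holds for $\mathfrak c$ the trivial class. Dividing by $w_k$ is not needed here because we are summing over $\O_k/\O_k^\times$, i.e.\ over principal ideals, not over elements; the torsion $w_k$ is mentioned in the statement only because the reader may wish to pass between the two, but the count of $a \in \O_k/\O_k^\times$ with $0<N(a)\le X$ is exactly the count of nonzero principal ideals of norm $\le X$, so $w_k$ does not actually appear in the final asymptotic. (If a version summing over elements were wanted one would multiply by $w_k$; I will state the lemma as it stands.)

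Concretely, I would proceed as follows. Let $b_n$ denote the number of nonzero principal ideals of $\O_k$ of norm exactly $n$, and let $Z(s) = \sum_{n\ge 1} \frac{b_n}{n^s} = \zeta_{k,\text{triv}}(s)$ be the partial Dedekind zeta function attached to the trivial class. Then $Z(s)$ has a meromorphic continuation to $\C$ with a unique simple pole at $s=1$ of residue $\rho_k/h_k$ (the full residue $\rho_k$ of $\zeta_k$ splits evenly among the $h_k$ classes), it is holomorphic for $\Re(s) > 1-\delta$ away from $s=1$ after subtracting the principal part, and it is of polynomial growth in vertical strips by the standard functional-equation/convexity bounds for Hecke $L$-functions (or directly for partial zeta functions). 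Thus $Z(s)$ satisfies the hypotheses of Theorem \ref{t-Ararmaki} with $a = 1$, $p = 1$, $A_1 = \rho_k/h_k$. Part (1) of the lemma is then exactly Corollary \ref{l-AramakiCor}(1) applied to $Z(s)$, noting that $\sum_{n\le X} b_n (\log n)^k = \sum_{a\in\O_k/\O_k^\times,\, 0<N(a)\le X} (\log N(a))^k$ and that $\left(\frac{d}{ds}\right)^k\!\left(\frac{X^s}{s}\right)\big|_{s=1} = X(\log X)^k + (\text{lower-order in }\log X)\cdot X$; the lower-order-in-$\log$ terms are absorbed... actually they are \emph{not} $O(X^{1-\delta_0})$, so I must be slightly careful: Corollary \ref{l-AramakiCor}(1) gives the full polynomial-in-$\log X$ main term, and the statement of part (1) as written, with main term exactly $\frac{\rho_k}{h_k} X (\log X)^k$, is only correct up to these $X(\log X)^{k-1}$-type terms — so either the lemma intends $O$ to absorb $X(\log X)^{k-1}$ (it cannot, since that is larger than $X^{1-\delta_0}$), or, more likely, part (1) is used only through part (2) and the precise shape of part (1) is as I would restate it from Corollary \ref{l-AramakiCor}(1). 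I would therefore present part (1) by citing Corollary \ref{l-AramakiCor}(1) verbatim and remark that the displayed ``$\frac{\rho_k}{h_k}X(\log X)^k$'' is its leading term. Part (2) is cleaner: it is Corollary \ref{l-AramakiCor}(2) applied to $Z(s)$ with $\rho = \rho_k/h_k$, giving $\sum_{n\le X} b_n(\log X - \log n)^k = k!\,\frac{\rho_k}{h_k} X + O(X^{1-\delta_0})$, which is precisely the claim.

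The only genuine input beyond Corollary \ref{l-AramakiCor} is the analytic behaviour of the partial zeta function $Z(s)$: its meromorphic continuation, the fact that its residue at $s=1$ is $\rho_k/h_k$ (equivalently, equidistribution of ideals among classes), and polynomial growth in vertical strips. I expect this to be the main ``obstacle,'' though it is entirely classical: it follows from writing $Z(s) = \frac{1}{h_k}\sum_{\chi \in \widehat{\mathrm{Cl}(k)}} L(s,\chi)$ as a linear combination of Hecke $L$-functions of finite-order class group characters, each of which has the desired continuation (entire for $\chi$ nontrivial, simple pole at $s=1$ for $\chi$ trivial), functional equation, and polynomial vertical growth. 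I would simply cite a standard reference (e.g.\ Lang's \emph{Algebraic Number Theory} or Narkiewicz) for these facts rather than reprove them. Everything else is bookkeeping: matching $\sum_{a\in\O_k/\O_k^\times}$ to $\sum_n b_n$, and invoking Corollary \ref{l-AramakiCor}.
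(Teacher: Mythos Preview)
Your proposal is correct and follows essentially the same route as the paper: define the Dirichlet series over principal ideals, express it as $\frac{1}{h_k}\sum_{\chi}L(s,\chi)$ over class-group characters to obtain meromorphic continuation with a simple pole of residue $\rho_k/h_k$ at $s=1$ and polynomial vertical growth, then invoke Corollary~\ref{l-AramakiCor}. Your observation that the displayed main term in part~(1) omits the lower-order $X(\log X)^{j}$ contributions for $j<k$ is a valid criticism of the statement as written; the paper's proof simply says ``Equalities (1),(2) follow from Corollary~\ref{l-AramakiCor} applied to $G(s)$'' and does not address this either, and in practice only part~(2) (and the case $k=0$ of part~(1)) are used downstream.
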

\begin{proof}
Let $\chi_1,\ldots,\chi_{h_k}$ be the characters of the class group of $k$, with $\chi_1=1$. The L-functions $L(s,k,\chi _i)=\sum_{\frak a}\frac{\chi_i(\frak a)}{(N\frak a)^s}$ are entire for $i\geq 2$ and $L(s,k,1)$ is the Dedekind zeta function of $k$ with unique simple pole at $s=1$ with residue $\rho_k$. All of them are of polynomial growth on vertical strips. Consider the Dirichlet series 
$$G(s)=\sum_{\substack{a\in \O_k/\O_k^\times\\ 0<N(a)}}\frac{1}{N(a)^s}=\sum_{\substack{\frak a\\ {\rm principal}}}\frac{1}{(N\frak a)^s}=\frac{1}{h_k}\sum_{i=1}^{h_k}L(s,k,\chi_i).$$
It has non-negative coefficients and extends to a meromorphic function on $\C$ with a simple pole at $s=1$ with residue $\frac{\rho_k}{h_k}.$ Equalities (1),(2) follow from Corollary \ref{l-AramakiCor} applied to $G(s)$.
\end{proof}
\subsection{Proof of Proposition \ref{p-CountingMain}}\label{s-ProofPropC}
If $X<1$, then $S(a,X)$ is empty so in the sequel we assume $X\geq 1$. We adopt the following convention. The constants $C_i,B_i$ appearing in the inequalities successively throughout the proof are dependent on $k$ and $B$ alone. 
We structured the proof so that it should be clear that $C_i,B_i$ depend only on $k,B$ and the constants $C_j,B_j$ for $j<i$. We omit the computations of exactly how big $C_i,B_i$ should be in terms of $k$ and $B$. 
\begin{proof}
First consider the case $N=1$. Remark that we do not need this case of the proposition in order to prove Theorem \ref{mt-noptimal}. However for completeness and in order to prove Theorem \ref{mt-Count1} and \ref{t-AvUnit} we include the proof.

We have 
\begin{equation}\label{Nrowne1}\left| S(a,X)\right| =\left|\{ x\in \mathbb{Z}\setminus \{0 \} | \ \ \|x(x-a)\| \leq X^2,\ \ \|x\| \leq X\}\right| 
 \leq \left| \{ x\in \mathbb{Z}|\ \ \|x(a-x)\| \leq X^2\}\right|. 
\end{equation}
The inequality $\|x(a-x)\| \leq X^2$ holds if and only if $\frac{a^2}{4}-X^{2}\leq \left(x-\frac{a}{2}\right)^2 \leq X^{2}+\frac{a^2}{4}$.  
\\ First assume $\frac{a^2}{4}-X^2<0$. Since $|a|\geq Xe^{-B}$, 
\begin{equation*}
\left| S(a,X)\right| \leq 2 \sqrt{X^2+\frac{a^2}{4}}+2 \leq 2\sqrt{2}X+2 \leq 4\sqrt{2}X^{2}|a|^{-1}+2 \leq 4\sqrt{2} e^{-\frac{2}{3}B} X^{1+\frac{1}{3}} |a|^{-\frac{1}{3}} +2.   
\end{equation*}
Assume now $\frac{a^2}{4}-X^{2}\geq 0$. By (\ref{Nrowne1}), 
\begin{align*} 
\left|S(a,X)\right| \leq & 2\left(\sqrt{X^{2}+\frac{a^{2}}{4}}-\sqrt{\frac{a^2}{4}-X^2} \right)+2=2\left( \int_{-X^2}^{X^2} \left(\sqrt{\frac{a^2}{4}+t}\right)'dt \right) +2 \\
\leq & \frac{2X^2}{\sqrt{\frac{a^2}{4}-X^2}}+2. 
\end{align*}
If $X<\frac{|a|}{4}$, then $|S(a,X)|\leq 2X^{1+\frac{1}{3}}|a|^{-\frac{1}{3}}+2$. Finally, if $\frac{|a|}{4}\leq X \leq \frac{|a|}{2}$ by a trivial bound $|S(a,X)|\leq 2X+2 \leq 4X^{1+\frac{1}{3}}|a|^{-\frac{1}{3}}+2$. 
Point (2) of the proposition is trivial. 
From now on assume $N>1$. 
\\(1)  The problem is invariant under multiplying $a$ by $\O_k^\times$ so we may assume, without loss on generality, that $a\in \mathcal F$. Recall that  $\|a\|\geq Xe^{-B}$ and
\begin{align*}S:=S(a,X)=&\left\{(x,\lambda)\in (\mathcal F\cap \O_k)\times \O_k^\times | \|x(a-x\lambda^{-1})\|\leq X^2, \|x\|\leq X\right\}\\
=&\left\{(x,\lambda)\in (\mathcal F\cap \O_k)\times \O_k^\times | \log\left\|\lambda-\frac{x}{a}\right\|\leq 2\log X-\log\|a\|-\log\|x\|, \|x\|\leq X\right\}.\\
\end{align*}
Let $\alpha$ be the constant from Lemma \ref{l-NormIneq}. We define 
\begin{align*}S_1:=&\left\{(x,\lambda)\in  S| \|\lambda\|_\infty \leq \frac{2}{\alpha}\left(2\log X- \left(2-\frac{1}{2N}\right)\log \|x\|-\frac{1}{2N}\log \|a\|\right)\right\}\\
\end{align*}
and $S_2:=S\setminus S_1$. We start be estimating the size of $S_1$. We will use the fact that for non-negative $R$ the number of $\lambda\in \O_k^\times$ with $\|\lambda\|_\infty\leq R$ is at most $O(R^{d-1})+ |W_k|$. 
\begin{align*}
|S_1|\leq& \sum_{\substack{x\in \mathcal{F}\cap \O_k\\ \|x\|\leq X}}\left|\left\{ \lambda\in\O_k^\times |\|\lambda\|_\infty\leq \frac{2}{\alpha}\left(2\log X-\left(2-\frac{1}{2N}\right)\log\|x\|-\frac{1}{2N} \log\|a\|\right)\right\}\right|\\
=& \sum_{\substack{x\in \mathcal{F}\cap \O_k\\ \|x\|\leq X}}\left|\left\{ \lambda\in\O_k^\times |\|\lambda\|_\infty\leq \frac{(4N-1)}{N\alpha}\left(\frac{4N}{4N-1}\log X- \frac{1}{4N-1} \log\|a\|-\log\|x\|\right)\right\}\right|\\
\end{align*}
Put $\log Y=\frac{4N}{4N-1}\log X- \frac{1}{4N-1} \log\|a\|$. The summands in the last formula vanish unless $\|x\|\leq Y$ so we get
\begin{align*}
|S_1|\leq& \sum_{\substack{x\in \mathcal{F}\cap \O_k\\ \|x\|\leq Y}}\left|\left\{ \lambda\in\O_k^\times |\|\lambda\|_\infty\leq \frac{(4N-1)}{N\alpha}\left(\log Y-\log \|x\|\right)\right\}\right|\\
\leq& \sum_{\substack{x\in \mathcal{F}\cap \O_k\\ \|x\|\leq Y}}\left(C_1\left(\log Y-\log\|x\|\right)^{d-1}+C_2\right)\\
\leq& C_3Y=C_3 X^{1+\frac{1}{4N-1}}\|a\|^{-\frac{1}{4N-1}}.
\end{align*}
The last inequality uses Lemma \ref{l-IdealsEst}.  It remains to bound the size of $|S_2|$. 
\begin{lemma} \label{l-S2Decomp} Put $B_1:=\alpha^{-1}((\log X-\log\|a\|)N^{-1}+ 2\log C_0+\log 2)$ where $C_0$ is as in Lemma \ref{l-cone}. Let $(x,\lambda)\in S_2$. Then either $\|\lambda\|_\infty< B_1$ or  there exists $i\in\left\{1,\ldots,d\right\}$ such that 
\begin{equation}\label{e-ConditionS2i}
\log\left|\frac{x}{a}-\lambda\right|_i\leq -\frac{\alpha\|\lambda\|_\infty}{2N-2}-\left(\frac{1}{N}+\frac{1}{2N(N-1)}\right)(\log\|a\|-\log\|x\|)+\log 2. 
\end{equation}
\end{lemma}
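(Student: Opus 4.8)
The goal is to show that if $(x,\lambda)\in S_2$ and $\|\lambda\|_\infty$ is not too small, then $x/a$ is very close to $\lambda$ in one of the archimedean absolute values. The starting point is the defining inequality of $S$, namely
$$\log\left\|\lambda-\frac{x}{a}\right\|\leq 2\log X-\log\|a\|-\log\|x\|,$$
together with the fact that $(x,\lambda)\notin S_1$, which gives the lower bound $\|\lambda\|_\infty > \frac{2}{\alpha}(2\log X-(2-\frac1{2N})\log\|x\|-\frac1{2N}\log\|a\|)$. Eliminating $\log X$ between these two facts should convert the upper bound on $\log\|\lambda-\frac xa\|$ into an upper bound of the form
$$\log\left\|\lambda-\frac{x}{a}\right\|\leq -\frac{\alpha}{2}\|\lambda\|_\infty - c(\log\|a\|-\log\|x\|)$$
for an appropriate constant $c$; I expect $c=\frac1N+\frac{1}{2N(N-1)}$ times $N$, i.e.\ the right-hand side of the product over the $d$ coordinates will distribute so that each factor can be blamed for an $N$-th of the exponent. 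This arithmetic is the first step and is routine once one is careful with the $\frac1{2N}$ bookkeeping.

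The second step is to pass from the product $\|\cdot\|$ to a single coordinate. Write $\|\lambda-\frac xa\| = \prod_{i}|\lambda-\frac xa|_i^{e_i}$ where $e_i\in\{1,2\}$. If \emph{every} coordinate satisfied $\log|\lambda-\frac xa|_i > -\frac{\alpha\|\lambda\|_\infty}{2N-2}-(\frac1N+\frac1{2N(N-1)})(\log\|a\|-\log\|x\|)+\log 2$, then summing these (weighted by $e_i$, with $\sum e_i=N$) would contradict the bound from the first step, \emph{provided} that the bound from the first step is strictly stronger than $N$ times the per-coordinate bound. Comparing $-\frac{\alpha}{2}\|\lambda\|_\infty$ with $N\cdot(-\frac{\alpha}{2N-2}\|\lambda\|_\infty)=-\frac{\alpha N}{2N-2}\|\lambda\|_\infty$, one sees $\frac{N}{2N-2}\geq\frac12$, so the per-coordinate bound is the \emph{weaker} one and this comparison works in our favour; similarly for the $(\log\|a\|-\log\|x\|)$ term and the additive $\log 2$ (here the hypothesis $\|\lambda\|_\infty\geq B_1$ is exactly what is needed to absorb the residual slack, since $B_1$ is defined so that $\frac{\alpha\|\lambda\|_\infty}{2N-2}$ dominates the leftover $\frac1N(\log\|a\|-\log\|x\|)+2\log C_0+\log 2$). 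Hence some coordinate $i$ must violate the per-coordinate lower bound, which is precisely \eqref{e-ConditionS2i}.

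The main subtlety — and the step I would be most careful about — is the appearance of Lemma~\ref{l-cone}: the inequality $\log\|\lambda-\frac xa\|\le\cdots$ controls the \emph{product} of coordinates, but to deduce something about an individual coordinate of $\frac xa-\lambda$ one needs that no \emph{other} coordinate is pathologically large. Since $x\in\mathcal F$ and $a\in\mathcal F$, Lemma~\ref{l-cone} bounds $|x|_i$ and $|a|_i$ by $C_0\|x\|^{1/N}$ and $C_0\|a\|^{1/N}$, hence $|x/a|_i\le C_0^2(\|x\|/\|a\|)^{1/N}$; and $|\lambda|_i\le e^{\max_j\log|\lambda|_j}$, which by Lemma~\ref{l-NormIneq} applied in reverse (the two-sided comparison $\|\lambda\|_0\le\alpha^{-1}\|\lambda\|_\infty$) is at most $e^{\alpha^{-1}\|\lambda\|_\infty}$. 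So $|\frac xa-\lambda|_i\le C_0^2(\|x\|/\|a\|)^{1/N}+e^{\alpha^{-1}\|\lambda\|_\infty}$, and the geometric-mean-type argument above goes through: if $\|\lambda\|_\infty\ge B_1$ then the case $\|\lambda\|_\infty<B_1$ is vacuous, and otherwise the product bound forces one coordinate to be small since the others cannot be too large. The factor $\log 2$ and the constant $2\log C_0$ in the definition of $B_1$ are there to swallow exactly these auxiliary estimates. I would carry out the computation by taking logarithms throughout, substituting $\log Y=\frac{4N}{4N-1}\log X-\frac1{4N-1}\log\|a\|$ as in the treatment of $S_1$ to keep the exponents transparent, and checking the three comparisons (coefficient of $\|\lambda\|_\infty$, coefficient of $\log\|a\|-\log\|x\|$, constant term) one at a time.
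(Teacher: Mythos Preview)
Your plan has a sign error at the crucial first step, and that error is not a slip but reflects a genuinely missing idea. When you eliminate $\log X$ between the membership condition for $S$ and the \emph{negation} of the $S_1$ condition, you get
\[
\log\left\|\lambda-\frac{x}{a}\right\|\;\leq\; 2\log X-\log\|a\|-\log\|x\|\;<\;\frac{\alpha}{2}\|\lambda\|_\infty-\frac{2N-1}{2N}\bigl(\log\|a\|-\log\|x\|\bigr),
\]
with a \emph{positive} coefficient on $\|\lambda\|_\infty$, not the $-\frac{\alpha}{2}\|\lambda\|_\infty$ you claim. This is not an accident: when $\|\lambda\|_\infty$ is large, the coordinate $j$ at which $|\lambda|_j$ is maximal satisfies $\log|\lambda|_j\ge \alpha\|\lambda\|_\infty$, so $|\lambda-\tfrac{x}{a}|_j$ is genuinely large and the product $\|\lambda-\tfrac{x}{a}\|$ really can be of order $e^{\alpha\|\lambda\|_\infty/2}$. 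Consequently, averaging the product bound over all $N$ coordinates cannot give a per-coordinate upper bound of order $-\frac{\alpha}{2(N-1)}\|\lambda\|_\infty$; your pigeonhole/contradiction in Step~2 does not close (and indeed the inequality $\frac{N}{2(N-1)}\ge\frac12$ points in the wrong direction for the contradiction you describe).

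The paper's proof supplies exactly the missing manoeuvre: it first isolates the maximal coordinate $j$, uses the hypothesis $\|\lambda\|_\infty\ge B_1$ together with Lemma~\ref{l-cone} to ensure $|\lambda|_j\ge 2|x/a|_j$ and hence $\log|\tfrac{x}{a}-\lambda|_j\ge \log|\lambda|_j-\log 2\ge \alpha\|\lambda\|_\infty-\log 2$, then combines this with the $S_2$ lower bound on $\|\lambda\|_\infty$ to obtain the sharper lower bound
\[
\log\left|\frac{x}{a}-\lambda\right|_j\;\ge\;\frac{\alpha}{2}\|\lambda\|_\infty+\Bigl(2\log X-\frac{4N-1}{2N}\log\|x\|-\frac{1}{2N}\log\|a\|\Bigr)-\log 2.
\]
Only after \emph{subtracting} this large $j$-th contribution from the product bound does the sum over the remaining (at most $N-1$ weighted) coordinates become $\le -\frac{\alpha}{2}\|\lambda\|_\infty-\frac{2N-1}{2N}(\log\|a\|-\log\|x\|)+\log 2$, and now averaging over those remaining coordinates yields \eqref{e-ConditionS2i}. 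So the role of $B_1$ is not to ``absorb residual slack'' in a final comparison of constants, but to guarantee the dominance $|\lambda|_j\ge 2|x/a|_j$ that makes the peeling-off step possible.
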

\begin{proof}
Assume that $\|\lambda\|_\infty\geq B_1$ and that $(x,\lambda)\in S_2$. Let $j\in \left\{1,\ldots,d\right\}$ be such that $|\lambda|_j$ is maximal. By Lemma \ref{l-NormIneq}, we have  $\log |\lambda|_j \geq \alpha\|\lambda\|_\infty.$  Since $\|\lambda\|_\infty\ge B_1$ we have   $\log |\lambda|_j\geq (\log X-\log\|a\|)N^{-1}+2\log C_0+\log 2$. By Lemma \ref{l-cone}, $\log \left|\frac{x}{a}\right|_j\leq (\log\|x\|-\log\|a\|)N^{-1} + 2\log C_0\leq (\log X-\log\|a\|)N^{-1}+2\log C_0$. It follows that $|\lambda|_j\geq 2|\frac{x}{a}|_j$ so we have $\log |\frac{x}{a}-\lambda|_j\geq \log|\lambda|_j-\log 2.$ From this and the fact that $(x,\lambda)\in S_2$ we deduce that 

\begin{align*}\log|\lambda|_j \geq & \alpha\|\lambda\|_\infty\geq  \frac{\alpha}{2}\|\lambda\|_\infty+ \left(2\log X-\frac{4N-1}{2N}\log\|x\|-\frac{1}{2N}\log\|a\|\right)
\end{align*}
and
\begin{align}\label{e-MaxCoord}
\log\left|\frac{x}{a}-\lambda\right|_j\geq& \frac{\alpha}{2}\|\lambda\|_\infty+ \left(2\log X-\frac{4N-1}{2N}\log\|x\|-\frac{1}{2N}\log\|a\|\right)-\log 2.
\end{align}
At the same time $|\lambda|_j \geq 1$ because $\|\lambda\|=1$ so we also have $\log\left|\frac{x}{a}-\lambda\right|_j\geq -\log 2$. This observation is valid even if $B_1<0$.  By definition of $S$ we have 
\begin{equation*}
\log\left\|\frac{x}{a}-\lambda\right\|\leq 2\log X-\log\|a\|-\log\|x\|.
\end{equation*}
Let $f=1$ if $j> r_1$ and $f=0$ otherwise. Subtracting (\ref{e-MaxCoord}) we get
\begin{align*}
\sum_{i=1,i\neq j}^{r_1}\log\left|\frac{x}{a}-\lambda\right|_i+2\sum_{i=r_1+1,i\neq j}^d \log\left|\frac{x}{a}-\lambda\right|_i + f\log\left|\frac{x}{a}-\lambda\right|_j \leq \\ -\frac{\alpha}{2}\|\lambda\|_\infty -\frac{2N-1}{2N}\left(\log\|a\|-\log\|x\|\right)+\log 2.
\end{align*}
At least one term in the sum must be smaller or equal to the average. Therefore, for some $i$ we have 
\begin{equation}
\log\left|\frac{x}{a}-\lambda\right|_i\leq -\frac{\alpha\|\lambda\|_\infty}{2N-2} -\left(\frac{1}{N}+\frac{1}{2N(N-1)}\right)\left(\log\|a\|-\log\|x\|\right)+\frac{\log 2}{N-1}.
\end{equation}
This is slightly better than what we needed to prove.
\end{proof}
Put $S^0_2:=\left\{(x,\lambda)\in S_2| \|\lambda\|_\infty\leq B_1\right\}$ and for $i=1,\ldots,d$ let 
\begin{equation}\label{e-DefS2i}S^i_2:=\left\{(x,\lambda)\in S_2| \textrm{ inequality (\ref{e-ConditionS2i}) holds }\right\}.
\end{equation}
\begin{lemma}\label{l-S2-0Bound}
There is a constant $C_5$ dependent only on $k,B$ such that 
$$|S^0_2|\leq C_5 X^{1+\kappa} \|a\|^{-\kappa}.$$
\end{lemma}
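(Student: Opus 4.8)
\textbf{Proof proposal for Lemma \ref{l-S2-0Bound}.}
The plan is to bound $|S_2^0|$ by counting, for each admissible value of $\lambda$ with $\|\lambda\|_\infty\le B_1$, the number of $x\in\mathcal F\cap\O_k$ that can be paired with it, and then sum over $\lambda$. Fix such a $\lambda$. Since $(x,\lambda)\in S\subset S(a,X)$ we have $\|x\|\le X$ and $\log\|\lambda-x/a\|\le 2\log X-\log\|a\|-\log\|x\|$; in particular, for a fixed $\lambda$ the point $x/a$ is constrained to lie in a region where $\|\lambda-x/a\|$ is small, i.e. $x$ lies near the ``line'' $\{y : y/a=\lambda\}=\{a\lambda\}$ in a suitable multiplicative sense. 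The key point is that $B_1=O(\log X-\log\|a\|)+O(1)$, and because $\|a\|\ge Xe^{-B}$ the quantity $\log X-\log\|a\|$ is bounded above by $B$, so $B_1$ is bounded by a constant depending only on $k$ and $B$. Hence there are only $O(1)$ many values of $\lambda$ to consider (the number of $\lambda\in\O_k^\times$ with $\|\lambda\|_\infty\le B_1$ is $O(B_1^{d-1})+|W_k|=O(1)$), and it suffices to show that for each individual $\lambda$ the number of valid $x$ is $O(X^{1+\kappa}\|a\|^{-\kappa})$.

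For a fixed $\lambda$, substitute $z=a\lambda^{-1}-x$ (equivalently work with the original shape $\|x(a-x\lambda^{-1})\|\le X^2$). Writing $b=a\lambda^{-1}$, which again lies in $\|b\|=\|a\|\ge Xe^{-B}$ up to the unit, the pairs $(x,\lambda)\in S_2^0$ with this $\lambda$ inject into $\{x\in\O_k : \|x(b-x)\|\le X^2\}$ after translating back by the unit; more directly, $x\mapsto\lambda^{-1}x$ sends $x\in\mathcal F\cap\O_k$ with $\|x(a-x\lambda^{-1})\|\le X^2$ to $y=\lambda^{-1}x\in\O_k$ with $\|(\lambda y)(a-y)\|=\|y(a-y)\|\le X^2$ (using $\|\lambda\|=1$), so these $y$ lie in $\{y\in\O_k : \|y(a-y)\|\le X^2\}$, and the map is injective for fixed $\lambda$. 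Thus $|S_2^0|$ is at most $(\#\{\lambda:\|\lambda\|_\infty\le B_1\})$ times $|\{y\in\O_k : \|y(a-y)\|\le X^2\}|$. This, however, is circular since that latter count is exactly what the whole Proposition is meant to bound. So instead I would keep the constraint $\log\|\lambda-x/a\|\le 2\log X-\log\|a\|-\log\|x\|$ together with $\|x\|\le X$ and count $x$ directly: splitting dyadically in $\|x\|$, on the range $\|x\|\asymp T$ (for $1\le T\le X$) the inequality forces $x/a$ into a neighbourhood of $\lambda$ of $\|\cdot\|$-size at most $X^2\|a\|^{-1}T^{-1}$, i.e. $x$ into a neighbourhood of $a\lambda$ of that multiplicative size while also having norm $\asymp T$; since $a\lambda$ has norm $\asymp\|a\|$, this is a ``thin cylinder'' condition and the number of lattice points of $\O_k$ in it is $O\!\big(X^2\|a\|^{-1}T^{-1}\cdot\text{(geometric factor)}\big)+O(1)$. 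Summing the main term over dyadic $T\le X$ gives $O(X^2\|a\|^{-1}\log X)+O(\log X)$, and using $\|a\|\ge Xe^{-B}$ once more to convert $X^2\|a\|^{-1}$ into $X^{1+\kappa}\|a\|^{-\kappa}$ times a bounded factor (for the specific $\kappa\le 1$ in the statement), we obtain $|S_2^0|\le C_5 X^{1+\kappa}\|a\|^{-\kappa}$ after absorbing lower-order terms.

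More carefully, the lattice-point count in the thin region should be handled exactly as the rest of Section \ref{s-Baker} handles ``counting integer points in cylinders'': one uses Lemma \ref{l-cone} to see that on $\mathcal F$ all coordinates $|x|_i$ are comparable to $\|x\|^{1/N}\asymp T^{1/N}$, so the region is contained in a box of bounded eccentricity whose volume is controlled by the product of the coordinatewise widths, and the number of $\O_k$-points in such a box is its volume plus a boundary term of one lower dimension; here the product of widths is precisely the $\|\cdot\|$-radius of the neighbourhood, up to constants depending on $k$ and $\mathcal F$, giving the stated $O(X^2\|a\|^{-1}T^{-1})$ plus $O(1)$. I would state this as an application of the cylinder-counting lemma to be proved later in Section \ref{s-Baker} (or prove the elementary special case inline), being careful that the implied constants depend only on $k,\mathcal F,B$ as promised by the convention fixed at the start of the proof of Proposition \ref{p-CountingMain}.

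The main obstacle is ensuring the argument is genuinely non-circular: we must not invoke Theorem \ref{t-MainPropCount} or Proposition \ref{p-CountingMain} itself, and the bound on $|S_2^0|$ has to come purely from elementary lattice-point counting in cylinders plus the single crucial input that on $S_2^0$ the unit $\lambda$ ranges over a \emph{bounded} set (which is where $\|a\|\ge Xe^{-B}$, forcing $B_1=O(1)$, is essential). The secondary technical point is matching exponents: one has the clean bound $O(X^2\|a\|^{-1}\log X)$, and turning this into the required $\Theta$-shaped bound $C_5X^{1+\kappa}\|a\|^{-\kappa}$ just uses $X\le\|a\|e^{B}$ to write $X^2\|a\|^{-1}=X^{1+\kappa}\|a\|^{-\kappa}\cdot(X\|a\|^{-1})^{1-\kappa}\le X^{1+\kappa}\|a\|^{-\kappa}e^{B(1-\kappa)}$, and then $\log X$ is absorbed by slightly enlarging the constant or, if one wants to be clean, by noting $X^{1-\kappa}\log X = O_{\kappa}(X^{1-\kappa/2}\cdot X^{-\kappa/2}\log X)$—but in fact the simplest route is to observe $X^2\|a\|^{-1}\log X\ll_{B}X^{1+\kappa}\|a\|^{-\kappa}$ directly whenever $\log X-\log\|a\|\le B$, since then $\log X\ll_B\log\|a\|$ and one can trade a tiny power. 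I would write this last step out in one line.
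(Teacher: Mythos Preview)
You are working much harder than necessary, and the complicated route you take has genuine gaps. The paper's proof is two lines, and it rests on one observation you never make: if $S_2^0$ is nonempty then some $\lambda$ satisfies $\|\lambda\|_\infty\le B_1$, hence $B_1\ge 0$; from the formula $B_1=\alpha^{-1}\big((\log X-\log\|a\|)/N+2\log C_0+\log 2\big)$ this forces $\log X-\log\|a\|$ to be bounded \emph{below} by a constant depending only on $k$. Combined with the hypothesis $\log X-\log\|a\|\le B$, this pins $X/\|a\|$ between two positive constants, so $X^\kappa\|a\|^{-\kappa}\asymp 1$. Now the number of admissible $\lambda$ is $O(1)$, which is therefore also $O(X^\kappa\|a\|^{-\kappa})$; and for each such $\lambda$ one simply drops the constraint $\|x(a-x\lambda^{-1})\|\le X^2$ entirely and bounds the number of valid $x$ by $|\{x\in\mathcal F\cap\O_k:\|x\|\le X\}|=O(X)$ via Lemma~\ref{l-IdealsEst}. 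Multiply: $|S_2^0|\le C_4 X^\kappa\|a\|^{-\kappa}\cdot O(X)=C_5 X^{1+\kappa}\|a\|^{-\kappa}$. No dyadic decomposition, no cylinder counting, no exponent manipulation.

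Your dyadic argument, besides being unnecessary, has problems on its own terms. First, the set $\{x:\|x-a\lambda\|\le R\}$ is not a cylinder --- it is an unbounded neighbourhood of the translated coordinate hyperplanes --- so Lemma~\ref{l-Cylinder} does not yield the per-range bound $O(X^2\|a\|^{-1}T^{-1})$ you assert. Second, even granting that bound, the dyadic sum $\sum_{T\le X}X^2\|a\|^{-1}T^{-1}$ over $T=2^j$ is a geometric series equal to $O(X^2\|a\|^{-1})$, not $O(X^2\|a\|^{-1}\log X)$. Third, your final ``trade a tiny power'' step cannot absorb a genuine $\log X$: when $\|a\|\asymp Xe^{-B}$ the target $X^{1+\kappa}\|a\|^{-\kappa}$ is $\asymp X$, while $X^2\|a\|^{-1}\log X\asymp X\log X$, so the inequality fails. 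None of this matters once you use the two-line argument above, but you should recognise that the attempt as written does not close.
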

\begin{proof}
If $B_1 \geq 0$, then the number of $\lambda$ satisfying $\|\lambda\|_\infty \leq B_1$, where $B_1$ is as in Lemma \ref{l-S2Decomp}, is at most $O(1+B_1)^{d-1}\leq O(X^{\kappa}\|a\|^{-\kappa})$ so there is a constant $C_4$ such that 
$$|S^0_2|\leq C_4 X^{\kappa}\|a\|^{-\kappa}\sum_{\substack{x\in\mathcal{F}\cap \O_k\\ \|x\|\leq X}}1\leq C_5 X^{1+\kappa}\|a\|^{-\kappa}.$$ The last inequality uses Lemma \ref{l-IdealsEst}. If $B_1<0$, then $S^0_2$ is empty. 
\end{proof}
We have the following estimate on $|S^i_2|$ for $i=1,\ldots,d$.
\begin{lemma}\label{l-S2iBound}
Let $\kappa'=\frac{1}{2N(N-1)}.$ There are constants $C_6,C_7,C_8,C_9$ dependent on $k,B$ alone such that for $i=1,\ldots, d$ we have
$$|S^i_2|\leq C_6 X^{1+\kappa'}\|a\|^{-\kappa'}+ C_7(\log X)^{2(d-1)} + C_8\logp\logp\logp\log\|a\|+C_9.$$
\end{lemma}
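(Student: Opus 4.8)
The plan is to fix $i\in\{1,\dots,d\}$ and estimate $|S^i_2|$ by stratifying the pairs $(x,\lambda)$ according to the size of $\|\lambda\|_\infty$, and for each value of $\|\lambda\|_\infty$ to bound the number of admissible $x\in\mathcal F\cap\O_k$. The defining inequality \eqref{e-ConditionS2i} says that in the $i$-th coordinate the quantity $x/a-\lambda$ is extremely small: its $i$-th absolute value is at most $e^{-c\|\lambda\|_\infty}$ times a power of $\|a\|/\|x\|$. Equivalently, viewing this multiplicatively, $\lambda$ (hence an element of $\O_k^\times$, which is a specific algebraic number with all other coordinates of controlled size by Lemma \ref{l-NormIneq}) is forced to be a very good approximation to $x/a$ in that one place. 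This is exactly the shape of statement to which the Baker--Wüstholz theorem on linear forms in logarithms applies: after taking logarithms, the left side of \eqref{e-ConditionS2i} becomes a linear form in logarithms of algebraic numbers (the coordinates of $\lambda$, equivalently of the fixed basis units $\xi_1,\dots,\xi_{d-1}$, together with $x$ and $a$), and Baker--Wüstholz gives a lower bound of the form $-C\log(\text{height})\cdot\prod(\dots)$ on its absolute value unless the linear form vanishes. Comparing this lower bound with the upper bound $-\frac{\alpha\|\lambda\|_\infty}{2N-2}-(\dots)(\log\|a\|-\log\|x\|)+\log 2$ from \eqref{e-ConditionS2i} forces $\|\lambda\|_\infty$, and hence the whole pair, to lie in a very restricted range once $\log\log\|x\|,\log\log\|a\|$ are accounted for — this is where the $\log\log\log\log\|a\|$ term will enter, as an iterated-logarithm bound on the admissible $\|\lambda\|_\infty$.

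More concretely, I would argue as follows. First, for a given $\lambda$ with $\|\lambda\|_\infty=R$, inequality \eqref{e-ConditionS2i} confines $x$ to a thin region: it says $x/a$ lies within distance $\approx e^{-\alpha R/(2N-2)}(\|x\|/\|a\|)^{1/N+\kappa'}$ of the fixed point $\lambda$ in the $i$-th coordinate, while by Lemma \ref{l-cone} the other coordinates of $x$ are pinned to size $\approx\|x\|^{1/N}$. Thus $x$ ranges over $\O_k\cap(\text{thin cylinder})$, and counting lattice points in such a cylinder (the "counting integer points in cylinders" input advertised in Section \ref{s-Baker}) gives a bound roughly $O\bigl(e^{-\alpha R/(2N-2)}X^{1+\kappa'}\|a\|^{-\kappa'}\bigr)+O(\log X)^{?}+O(1)$ per value of $\lambda$; summing the geometric series over $R=\|\lambda\|_\infty\ge 0$ (there are $O((1+R)^{d-2})$ units of each norm on the sphere, which is absorbed by the exponential decay $e^{-\alpha R/(2N-2)}$) yields the main term $C_6X^{1+\kappa'}\|a\|^{-\kappa'}$. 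The error terms, however, must be handled carefully: the cylinder-counting error is of order $(\log X)^{2(d-1)}$ summed over a range of $R$, and the range of relevant $R$ cannot be taken unbounded for free. Here Baker--Wüstholz intervenes: it shows that \eqref{e-ConditionS2i} is impossible once $\|\lambda\|_\infty$ exceeds something like $C\log\log(\|a\|\|x\|)\le C'\log\log\|a\|$ (the linear form in logs, if nonzero, cannot be that small), and the finitely many $x$ for which the linear form actually vanishes contribute at most the $C_8\log\log\log\log\|a\|+C_9$ term — this last, most delicate bookkeeping is where the quadruple logarithm is genuinely used, presumably because the number of such "degenerate" $x$ is controlled by the number of divisors or by a unit-equation count at a yet smaller scale.

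The main obstacle, I expect, is precisely the interface between the three tools: one must choose the threshold separating "$\|\lambda\|_\infty$ small" (handle by cylinder-counting, get the power-saving main term) from "$\|\lambda\|_\infty$ large" (handle by Baker--Wüstholz, get contradiction unless the log-form degenerates) in a way that keeps all error terms of the claimed shape, and then separately control the degenerate locus where the linear form in logarithms vanishes. Getting the exponents in the cylinder count exactly right — so that the geometric sum over $R$ produces $\kappa'=\frac{1}{2N(N-1)}$ rather than something worse — requires carefully tracking the coefficient $\frac{1}{N}+\frac{1}{2N(N-1)}$ of $\log\|a\|-\log\|x\|$ in \eqref{e-ConditionS2i} against the volume of the thin cylinder, whose thickness in one direction is $e^{-\alpha R/(2N-2)}$ times that power of $\|x\|/\|a\|$. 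Everything else is a routine, if lengthy, summation; the conceptual content is entirely in the Baker--Wüstholz lower bound cutting off the tail of $\|\lambda\|_\infty$ at an iterated-logarithmic height.
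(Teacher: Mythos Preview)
Your outline captures the cylinder-counting half of the argument correctly: for each fixed $\lambda$ the constraint \eqref{e-ConditionS2i} confines $x$ to a cylinder of volume $O(X^{1+\kappa'}\|a\|^{-\kappa'}e^{-\beta\|\lambda\|_\infty})$ with $\beta=\alpha/(2N-2)$, and Lemma~\ref{l-Cylinder} then gives $|S_2^i(\lambda)|\le 1+C_{13}X^{1+\kappa'}\|a\|^{-\kappa'}e^{-\beta\|\lambda\|_\infty}$. Summing the geometric series over $\lambda$ produces the main term $C_6X^{1+\kappa'}\|a\|^{-\kappa'}$ plus a contribution of $+1$ for every $\lambda$ in $T^i:=\{\lambda:S_2^i(\lambda)\neq\emptyset\}$. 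So everything comes down to bounding $|T^i|$, and this is where your argument has a genuine gap.

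A single application of Baker--W\"ustholz does \emph{not} force $\|\lambda\|_\infty\le C\log\log\|a\|$ as you claim. The Weil heights of $x$ and $a$ are of order $\log\|x\|$ and $\log\|a\|$, not their iterated logarithms, so Corollary~\ref{c-BW} only gives $\beta\|\lambda\|_\infty\le C_{10}(1+2\log\|a\|)\log\|\lambda\|_\infty+O(1)$, which solves to $\|\lambda\|_\infty\le C_{15}\log\|a\|\log\log\|a\|+C_{16}$. That yields merely $|T^i|=O\bigl((\log\|a\|\log\log\|a\|)^{d-1}\bigr)$, which is far too large when $\|a\|$ is big. Your attribution of the quadruple logarithm to ``the finitely many $x$ for which the linear form vanishes'' is also incorrect; no degenerate locus of that kind enters.

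The missing idea is a \emph{second, pairwise} application of Baker--W\"ustholz that eliminates the large-height number $a$. If $\lambda,\lambda'\in T^i$ have witnesses $x,x'$ and $\|\lambda\|_\infty,\|\lambda'\|_\infty$ are both moderately large, then subtracting the two near-equalities $\log|\tfrac{a}{x}\lambda|_i\approx 0$ and $\log|\tfrac{a}{x'}\lambda'|_i\approx 0$ gives $\bigl|\log|\tfrac{x'}{x}\lambda\lambda'^{-1}|_i\bigr|\le 4e^{-\beta\|\lambda\|_\infty+B_2}$. The algebraic numbers in this new linear form have heights $O(\log X)$ rather than $O(\log\|a\|)$, so Corollary~\ref{c-BW} now yields $\beta\|\lambda\|_\infty-O(1)\le C_{10}(1+2\log X)\log 2\|\lambda'\|_\infty$ (this is Lemma~\ref{l-PairsTw}). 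One then splits $T^i$: the \emph{tame} part $\|\lambda\|_\infty\le B_4(\log X)^2+B_4$ has size $O((\log X)^{2(d-1)})$, which is where that term actually comes from; on the \emph{wild} part the pairwise inequality, after a short manipulation, forces $(\log 2\|\lambda_l\|_\infty)^2\le \log 2\|\lambda_{l+1}\|_\infty$ when the wild elements are listed in increasing order. Iterating gives a tower $\log 2\|\lambda_L\|_\infty>e^{2^L}$, and comparing with the single-step ceiling $\|\lambda_L\|_\infty\le C_{15}\log\|a\|\log\log\|a\|+C_{16}$ gives $|T^i_w|\le C_8\log\log\log\log\|a\|+C_9$. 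That is the true origin of the quadruple logarithm.
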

The proof of the Lemma \ref{l-S2iBound} relies on the Baker--W\"ustholz's bounds on linear forms in logarithms and it's quite independent from the rest. We postpone it to the next section and move on to prove Proposition \ref{p-CountingMain}. By Lemma \ref{l-S2Decomp}, we have $S_2=\bigcup_{i=0}^d S^i_2$ so 
\begin{align}
|S|\leq & |S_1|+\sum_{i=0}^d|S^i_2|\\
\leq&  C_3 X^{1+\frac{1}{4N-1}}\|a\|^{-\frac{1}{4N-1}} + C_5X^{1+\kappa}\|a\|^{-\kappa} + dC_6 X^{1+\kappa'}\|a\|^{-\kappa'}\\ &+dC_7(\log X)^{2(d-1)}+ dC_8\logp\logp\logp\log\|a\|+ dC_9.
\end{align}
As $\kappa= \min\left\{\frac{1}{4N-1}, \kappa'\right\}$ and $X\|a\|^{-1}\leq e^B$  we can deduce that
\begin{align}
|S|\leq & \Theta_1 X^{1+\kappa}\|a\|^{-\kappa} +\Theta_2(\log X)^{2(d-1)} + \Theta_3\logp\logp\logp\log\|a\|+\Theta_4, 
\end{align}
where $\Theta_1,\Theta_2,\Theta_3,\Theta_4$ depend only on $k,B$.  This proves the first part of Proposition \ref{p-CountingMain}.

We proceed to the proof of part (2). Let $M>0$. Put $S[M]:=\left\{(x,\lambda)\in S|\,\|\lambda\|_\infty\geq M\right\}$ and $S_1[M]:=S_1\cap S[M],S_2[M]:=S_2\cap S[M],S_2^i[M]:=S_2^i\cap S[M]$ for $i=0,\ldots ,d$. The proof of this part is reduced to the following lemmas.
\begin{lemma}\label{l-BoundS1M} For every $\delta>0$ there exists $M_1$ such that  for every $M\geq M_1$
$$|S_1[M]|\leq \delta X^{1+\kappa}\|a\|^{-\kappa}.$$
\end{lemma}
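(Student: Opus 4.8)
The plan is to rerun the bound on $|S_1|$ from the proof of Proposition~\ref{p-CountingMain}(1), tracking how the extra constraint $\|\lambda\|_\infty\ge M$ shrinks the admissible range of $x$. Recall from that argument the inclusion
$$S_1\subseteq\Big\{(x,\lambda)\in(\mathcal F\cap\O_k)\times\O_k^\times\ \Big|\ \|x\|\le Y,\ \|\lambda\|_\infty\le\tfrac{4N-1}{N\alpha}\big(\log Y-\log\|x\|\big)\Big\},$$
where $\log Y=\tfrac{4N}{4N-1}\log X-\tfrac{1}{4N-1}\log\|a\|$ and $\alpha$ is the constant of Lemma~\ref{l-NormIneq}. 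The first step is the elementary observation that if in addition $\|\lambda\|_\infty\ge M$, then $M\le\tfrac{4N-1}{N\alpha}(\log Y-\log\|x\|)$, equivalently $\log\|x\|\le\log Y-\tfrac{N\alpha M}{4N-1}$, so $x$ now lies in the much smaller region $\|x\|\le Y_M:=Y\,e^{-N\alpha M/(4N-1)}$; in particular $S_1[M]=\emptyset$ as soon as $Y_M<1$, since every nonzero element of $\O_k$ has norm $\ge1$.

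Next I would, exactly as in part (1), bound the number of admissible $\lambda$ for a fixed $x$ by $C_1(\log Y-\log\|x\|)^{d-1}+C_2$ and sum over $x\in\mathcal F\cap\O_k$ with $\|x\|\le Y_M$. Writing $\log Y-\log\|x\|=(\log Y-\log Y_M)+(\log Y_M-\log\|x\|)=\tfrac{N\alpha M}{4N-1}+(\log Y_M-\log\|x\|)$ and expanding by the binomial theorem, the sum becomes a finite linear combination of $\sum_{\|x\|\le Y_M}(\log Y_M-\log\|x\|)^{j}$ for $0\le j\le d-1$, each of which is $O(Y_M)$ by Lemma~\ref{l-IdealsEst} (the case $j=0$ also absorbing the contribution of $C_2$). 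This produces a constant $C$ depending only on $k,B$ such that
$$|S_1[M]|\le C\,(1+M)^{d-1}\,Y_M=C\,(1+M)^{d-1}\,e^{-N\alpha M/(4N-1)}\,Y,$$
the polynomial factor $(1+M)^{d-1}$ coming from the powers of $\tfrac{N\alpha M}{4N-1}$ in the binomial expansion.

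Finally I would convert $Y$ into the quantity appearing in the statement. Since $\kappa\le\tfrac1{4N-1}$ and $0<X\le\|a\|e^{B}$,
$$Y=X^{1+\frac1{4N-1}}\|a\|^{-\frac1{4N-1}}=X^{1+\kappa}\|a\|^{-\kappa}\Big(\tfrac{X}{\|a\|}\Big)^{\frac1{4N-1}-\kappa}\le e^{B(\frac1{4N-1}-\kappa)}\,X^{1+\kappa}\|a\|^{-\kappa},$$
hence $|S_1[M]|\le C'\,(1+M)^{d-1}e^{-N\alpha M/(4N-1)}\,X^{1+\kappa}\|a\|^{-\kappa}$ for a constant $C'$ depending only on $k,B$. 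As $(1+M)^{d-1}e^{-cM}\to0$ when $M\to\infty$ for the fixed positive constant $c=N\alpha/(4N-1)$, one fixes $M_1$ so large that $C'(1+M)^{d-1}e^{-N\alpha M/(4N-1)}\le\delta$ for all $M\ge M_1$; this is the asserted estimate.

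There is no serious obstacle here — the argument is essentially bookkeeping — but two points require care: isolating the genuine exponential gain $e^{-N\alpha M/(4N-1)}$ so that it dominates the polynomial-in-$M$ prefactor inherited from the $(\log Y-\log\|x\|)^{d-1}$ term, and, in the case $\kappa=\tfrac1{2N(N-1)}<\tfrac1{4N-1}$, invoking the hypothesis $X\le\|a\|e^{B}$ to replace the exponent $\tfrac1{4N-1}$ by $\kappa$ at the cost of only a multiplicative constant.
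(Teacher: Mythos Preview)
Your argument is correct and follows essentially the same route as the paper: restrict the $x$-sum to $\|x\|\le Y_M:=Y\,e^{-N\alpha M/(4N-1)}$, bound the number of admissible $\lambda$, apply Lemma~\ref{l-IdealsEst}, and then use $X\le\|a\|e^{B}$ to trade the exponent $\tfrac{1}{4N-1}$ for $\kappa$. The only difference is cosmetic: the paper replaces the $\lambda$-count bound by $C_1(\log Y_M-\log\|x\|)^{d-1}+C_2$ directly, obtaining a clean $C_3Y_M$, whereas you keep $(\log Y-\log\|x\|)^{d-1}$ and expand via the binomial theorem, picking up the harmless polynomial prefactor $(1+M)^{d-1}$; either way the exponential $e^{-N\alpha M/(4N-1)}$ dominates and the conclusion is the same.
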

\begin{proof}
\begin{align*}
&|S_1[M]|\leq\\ &\sum_{\substack{x\in \mathcal{F}\cap \O_k\\ \|x\|\leq X}}\left|\left\{ \lambda\in\O_k^\times |M\leq \|\lambda\|_\infty\leq \frac{2}{\alpha}\left(2\log X-\left(2-\frac{1}{2N}\right)\log\|x\|-\frac{1}{2N} \log\|a\|\right)\right\}\right|\\
=& \sum_{\substack{x\in \mathcal{F}\cap \O_k\\ \|x\|\leq X}}\left| \left\{ \lambda\in\O_k^\times |M\leq \|\lambda\|_\infty\leq \frac{4N-1}{N\alpha}\left(\frac{4N}{4N-1}\log X- \frac{1}{4N-1} \log\|a\|-\log\|x\|\right)\right\}\right|\\
\end{align*}
The summands in the last formula vanish unless $M\leq \frac{(4N-1)}{N\alpha}(\frac{4N}{4N-1}\log X- \frac{1}{4N-1} \log\|a\|-\log\|x\|)$ i.e. $\log \|x\|\leq \frac{4N}{4N-1}\log X- \frac{1}{4N-1} \log\|a\|-\frac{N\alpha}{(4N-1)}M$. Put  $\log Y_M:=\frac{4N}{4N-1}\log X- \frac{1}{4N-1} \log\|a\|-\frac{MN\alpha}{(4N-1)}$. We get
\begin{align*}
|S_1[M]|\leq& \sum_{\substack{x\in \mathcal{F}\cap \O_k\\ \|x\|\leq Y_M}}\left|\left\{ \lambda\in\O_k^\times |\|\lambda\|_\infty\leq \frac{4N-1}{N\alpha}\left(\log Y_M-\log \|x\|\right)\right\}\right|\\
\leq& \sum_{\substack{x\in \mathcal{F}\cap \O_k\\ \|x\|\leq Y_M}}\left(C_1\left(\log Y_M-\log\|x\|\right)^{d-1}+C_2\right)\\
\leq& C_3Y_M=C_3 X^{1+\frac{1}{4N-1}}\|a\|^{-\frac{1}{4N-1}}e^{-\frac{MN\alpha}{4N-1}}.
\end{align*}
For the last inequality we have used Lemma \ref{l-IdealsEst}. As $\|a\|\geq Xe^{-B}$ we have 
\begin{equation*}|S_1[M]|\leq e^{-B\left(\frac{1}{4N-1}-\kappa\right)}X^{1+\kappa}\|a\|^{-\kappa}e^{-\frac{MN\alpha}{4N-1}}.\end{equation*} Clearly for $M\geq M_1$ sufficiently large we have $e^{-\frac{MN\alpha}{4N-1}-B\left(\frac{1}{4N-1}-\kappa \right)}\leq \delta$. The lemma is proven.

\end{proof}
We have the following analogue of Lemma \ref{l-S2iBound}.
\begin{lemma}\label{l-S2iMBound}
Let $\kappa'=\frac{1}{2N(N-1)}.$ For every $\delta>0$ and $i=1,\ldots,d$ there exists $M_2$ such that for every $M\geq M_2$  
$$|S^i_2[M]|\leq \delta X^{1+\kappa'}\|a\|^{-\kappa'}+C_7(\log X)^{2(d-1)}+C_8\logp\logp\logp\log\|a\|+C_9.$$
\end{lemma}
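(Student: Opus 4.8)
The intention is to deduce Lemma~\ref{l-S2iMBound} from the very same computation carried out for Lemma~\ref{l-S2iBound} in Section~\ref{s-Baker}, the only change being that the constraint $\|\lambda\|_\infty\ge M$ is retained throughout. First I would recall the structure of the bound for $|S_2^i|$: one fixes a unit $\lambda\in\O_k^\times$, writes $R=\|\lambda\|_\infty$, and counts the $x\in\mathcal F\cap\O_k$ with $\|x\|\le X$ obeying (\ref{e-ConditionS2i}). Since $\|x\|\le X\le e^B\|a\|$ one has $\log\|a\|-\log\|x\|\ge-B$, so (\ref{e-ConditionS2i}) forces $|x/a-\lambda|_i\ll_{k,B}e^{-\alpha R/(2N-2)}(\|x\|/\|a\|)^{1/N+\kappa'}$; together with Lemma~\ref{l-cone} applied to $a$ this pins $x$ to a thin cylinder about the fixed point $a\lambda$, of width $\ll_{k,B}e^{-\alpha R/(2N-2)}X^{1/N+\kappa'}\|a\|^{-\kappa'}$ in the $i$-th coordinate and of extent $O(X^{1/N})$ in the others. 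The decisive feature is that this width decays like $e^{-\alpha R/(2N-2)}$ as $R$ grows.

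Next I would recall the dichotomy behind Lemma~\ref{l-S2iBound}. In the generic regime --- where the relevant linear form in logarithms of the absolute values of the conjugates of $x$, $a$ and of the fundamental units is not abnormally small --- the count of $\O_k$-points in the thin cylinder is at most its volume over the covolume of $\O_k$, plus lower-dimensional boundary terms; the volume is $\ll X^{1+\kappa'}\|a\|^{-\kappa'}e^{-\alpha R/(2N-2)}$ (up to the harmless factor $(X/\|a\|)^{\kappa'}\le e^{B\kappa'}$), while the boundary terms feed the $(\log X)^{2(d-1)}$ contribution. In the degenerate regime one invokes Baker--W\"ustholz, whose lower bound on $|x/a-\lambda|_i$ is incompatible with (\ref{e-ConditionS2i}) unless $R$ or $\log\log\|a\|$ is under control; bounding the surviving pairs yields precisely the terms $C_8\log\log\log\log\|a\|+C_9$ (and the remaining part of $C_7(\log X)^{2(d-1)}$). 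Summing the generic contribution over $\lambda$ and using $\#\{\lambda\in\O_k^\times:\|\lambda\|_\infty\le R\}=O(R^{d-1})+|W_k|$, the geometric decay makes the series converge to $C_6X^{1+\kappa'}\|a\|^{-\kappa'}$, which is how Lemma~\ref{l-S2iBound} is proved; crucially, the degenerate-regime terms do not involve the sum over $\lambda$.

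Finally, to obtain Lemma~\ref{l-S2iMBound} I would rerun this for $S_2^i[M]$, where now $\lambda$ ranges only over units with $\|\lambda\|_\infty\ge M$. The generic contribution is then at most $\big(\sum_{R\ge M}(C\,R^{d-1}+|W_k|)\,e^{-\alpha R/(2N-2)}\big)X^{1+\kappa'}\|a\|^{-\kappa'}$; the bracketed sum is the tail of a convergent series, hence $<\delta$ once $M\ge M_2$ for a suitable $M_2=M_2(\delta,k,B,i)$, and the degenerate-regime terms reproduce $C_7(\log X)^{2(d-1)}+C_8\log\log\log\log\|a\|+C_9$ verbatim (imposing $\|\lambda\|_\infty\ge M$ only discards pairs, so those bounds are unchanged). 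I expect the only real obstacle to be organizational rather than conceptual: one must check that in the proof of Lemma~\ref{l-S2iBound} the main term genuinely appears as a sum over $\lambda$ of the generic cylinder counts, each carrying the factor $e^{-\alpha\|\lambda\|_\infty/(2N-2)}$, so that discarding the range $\|\lambda\|_\infty<M$ really does leave a convergent tail, and that the Baker--W\"ustholz part is insensitive to the cutoff. For this reason it is cleanest to establish Lemmas~\ref{l-S2iBound} and~\ref{l-S2iMBound} together in Section~\ref{s-Baker}.
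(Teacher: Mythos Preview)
Your plan is correct and matches the paper's own proof almost exactly: in the proof of Lemma~\ref{l-S2iBound} one has the per-unit cylinder bound $|S_2^i(\lambda)|\le 1+C_{13}X^{1+\kappa'}\|a\|^{-\kappa'}e^{-\beta\|\lambda\|_\infty}$, so summing over $\lambda\in T^i$ with $\|\lambda\|_\infty\ge M$ gives $|S_2^i[M]|\le |T^i|+C_{13}X^{1+\kappa'}\|a\|^{-\kappa'}\sum_{\|\lambda\|_\infty\ge M}e^{-\beta\|\lambda\|_\infty}$; the tail is $\le\delta/C_{13}$ for $M$ large, and the bound (\ref{e-TiFinal}) on $|T^i|$ supplies the remaining three terms unchanged.

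One small correction to your narrative: the $(\log X)^{2(d-1)}$ term does \emph{not} arise from ``lower-dimensional boundary terms'' of the lattice-point count in the thin cylinder. The cylinder bound of Lemma~\ref{l-Cylinder} contributes only a single ``$+1$'' per $\lambda$, and these $1$'s sum to at most $|T^i|$; it is the subsequent estimate $|T^i|=|T^i_t|+|T^i_w|$ that produces both $(\log X)^{2(d-1)}$ (from the tame count $|T^i_t|\le C_{18}(\log X)^{2(d-1)}+C_{18,5}$, a pure unit-counting bound) and $\log\log\log\log\|a\|$ (from the Baker--W\"ustholz argument bounding $|T^i_w|$). So there is no split of $C_7(\log X)^{2(d-1)}$ between two ``regimes''---the entire non-main contribution is just $|T^i|$, and that bound is indeed insensitive to the cutoff $\|\lambda\|_\infty\ge M$, exactly as you say.
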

The proof is postponed to the next section. We will also need the following easy observation.
\begin{lemma}\label{l-S20M} There exists $M_3$ such that for every $M\geq M_3$ the set $S^0_2[M]$ is empty. \end{lemma}
\begin{proof}
It is enough to choose $M_3>B_1$. Recall that $B_1:=\alpha^{-1}((\log X-\log\|a\|)N^{-1}+ 2\log C_0+\log 2)$. By assumptions of Proposition \ref{p-CountingMain}, $\log X-\log \|a\|\leq B$, so $B_1$ is bounded independently of $X$ and $\|a\|$.
\end{proof}
We are ready to prove Proposition \ref{p-CountingMain} (2).  Choose $M$ such that $S_2^0[M]$ is empty, Lemma \ref{l-BoundS1M} and Lemma \ref{l-S2iMBound} hold with $\delta=\frac{\varepsilon}{e^{B}(d+1)}$. By Lemma \ref{l-S2Decomp}, we have:
$$S[M]=S_1[M]\cup S_2[M]=S_1[M]\cup S_2^0[M]\cup \bigcup_{i=1}^d S_2^i[M]$$
$$|S[M]|\leq \varepsilon X^{1+\kappa}\|a\|^{-\kappa}+ d C_7(\log X)^{2(d-1)}+dC_8\logp\logp\logp\log\|a\|+dC_9.$$ This concludes the proof of Proposition \ref{p-CountingMain}.
\end{proof}
\subsection{Linear forms in logarithms and the bound on $|S^i_2|$}\label{s-Baker}
The aim of this section is to show Lemma \ref{l-S2iBound} i.e. an upper bound on $|S^i_2|$ where $S^i_2$ is the set defined by (\ref{e-DefS2i}). Next we apply more or less the same argument to prove Lemma \ref{l-S2iMBound}. Our main tool is the Baker--W\"ustholz inequality on linear forms in logarithms \cite[Theorem 7.1]{BakerWustholz07}. 

We recall the definition of the logarithmic Weil height of an algebraic number. Let $K$ be a finite extension of $\Q$ and let $\omega\in K$. 
\begin{definition} The logarithmic Weil height of $\omega$ is defined as
$$h(\omega)=\frac{1}{[K:\Q]}\sum_{\nu\in\Sigma}a_\nu\max\left\{0,\log|\omega|_\nu\right\},$$ where $a_\nu=2$ if $\nu$ is a complex Archimedean place and $a_\nu=1$ otherwise and $\Sigma $ denotes the set of normalized valuations of $K$. The value of $h(\omega)$ does not depend on the choice of $K$.
\end{definition} 
The height enjoys the following sub-additivity properties: $h(xy)\leq h(x)+h(y), h(x/y)\leq h(x)+h(y), h(x^m)=mh(x).$ We also have $h(|x|)\leq h(x)$ for any algebraic complex number. For later use we define $h'(\omega):=\max\left\{h(\omega),1\right\}$. This definition agrees with the one from \cite[7.2]{BakerWustholz07} up to a constant depending only on $[\Q(\omega):\Q]$. 
\begin{theorem}[{\cite[Theorem 7.1]{BakerWustholz07}}] \label{t-BW} Let $\alpha_1,\ldots,\alpha_n\in \overline{\Q}\setminus \{ 0,1 \}$ and let $\log\alpha_i$ be the value of the main branch of logarithm for $i=1,\ldots,n$. Let $D=[\Q(\alpha_1,\ldots,\alpha_n):\Q]$. For every $b_1,\ldots,b_n\in \Z$ such that 
$$\Lambda:=b_1\log\alpha_1+\ldots b_n\log\alpha_n\neq 0$$ we have 
$$\log|\Lambda|\geq -C_{n,D}h'(\alpha_1)\ldots h'(\alpha_n)\max\left\{1,\log\max_{i=1,\ldots,n}\frac{|b_i|}{b}\right\},$$ where $C_{n,D}$ is a positive constant depending only on $n$ and $D$ and $b$ is the highest common divisor of $b_1, \ldots ,b_n$.
\end{theorem}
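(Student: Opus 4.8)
The plan is to prove this by the Gelfond--Baker method in its group-variety form (following Baker--W\"ustholz), arguing by contradiction. Set $K=\Q(\alpha_1,\ldots,\alpha_n)$, $B=\max_i|b_i|$ and $h=h'(\alpha_1)\cdots h'(\alpha_n)$, and after relabelling assume $b_n\neq 0$. Suppose, contrary to the assertion, that $\Lambda\neq 0$ but $\log|\Lambda|<-C_{n,D}\,h\,\max\{1,\log B\}$, where $C_{n,D}$ is a constant depending only on $n$ and $D$ which we choose large enough at the end; the goal is to reach a contradiction, which then yields the stated lower bound. All the auxiliary parameters below (polynomial degrees, orders of vanishing, numbers of interpolation points) will be chosen as explicit functions of $n$, $D$, $h$ and $\log B$, so that bookkeeping at the end gives the dependence of $C_{n,D}$ on $n$ and $D$ alone.

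First I would construct an auxiliary entire function attached to the commutative algebraic group $G=\mathbb{G}_a\times\mathbb{G}_m^n$ over $K$. Using the relation $\log\alpha_n=b_n^{-1}\bigl(\Lambda-\sum_{j<n}b_j\log\alpha_j\bigr)$, set
$$\Phi(z)=\sum_{\lambda_0=0}^{L_0}\;\sum_{\lambda_1,\ldots,\lambda_n=0}^{L}p(\lambda_0,\ldots,\lambda_n)\,z^{\lambda_0}\,\prod_{j=1}^{n-1}\alpha_j^{(\lambda_j+\lambda_n b_j/b_n)z}\cdot e^{\lambda_n\Lambda z/b_n},$$
with degrees $L_0,L$ of the right size relative to $h$ and $\log B$. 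By Siegel's lemma one can pick coefficients $p(\lambda_0,\ldots,\lambda_n)\in\O_K$, not all zero and of controlled logarithmic height, so that $\Phi$ and its derivatives up to a prescribed order $T$ vanish at $z=1,2,\ldots,m$, provided the number of linear conditions is a fixed fraction of the number $(L_0+1)(L+1)^n$ of unknowns. The decisive point is that on the discs used below the factor $e^{\lambda_n\Lambda z/b_n}$ is extremely close to $1$ because $|\Lambda|$ is assumed tiny, so each derivative value $\Phi^{(t)}(\ell)$ differs by a negligible amount (far below any Liouville bound) from the value at an algebraic point of $G$ of an explicit polynomial of controlled degree and height.

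The core of the argument is the extrapolation step. Starting from the vanishing of $\Phi$ to order $T$ at $\ell=1,\ldots,m$, one applies the maximum modulus principle (Schwarz's lemma with the known zeros) on an enlarged disc to bound $|\Phi^{(t)}(\ell)|$ for $\ell$ in a larger range and $t$ somewhat smaller than $T$. Combining this analytic upper bound with the Liouville (fundamental) inequality --- $\Phi^{(t)}(\ell)$, up to the negligible $\Lambda$-error, is a nonzero algebraic integer of bounded height divided by a controlled denominator, hence either $0$ or not too small --- forces $\Phi^{(t)}(\ell)=0$ exactly, once $C_{n,D}$ is large enough. Iterating this cycle, trading a modest loss of vanishing order for a large gain in the number of points, produces a nonzero polynomial $P\in\overline{\Q}[X_0,\ldots,X_n]$ of degree $\leq L_0$ in $X_0$ and $\leq L$ in each $X_j$ that vanishes to high order at all the points $(\ell,\alpha_1^\ell,\ldots,\alpha_n^\ell)$ with $\ell$ in a very long interval of integers.

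Finally I would invoke the Masser--W\"ustholz multiplicity estimate on the group variety $G=\mathbb{G}_a\times\mathbb{G}_m^n$: such widespread high-order vanishing is impossible unless the relevant points lie on a proper connected algebraic subgroup $H\subsetneq G$. That subgroup then gives a nontrivial multiplicative relation $\alpha_1^{c_1}\cdots\alpha_n^{c_n}=1$ with integers $c_i$ bounded in terms of the parameters; one either observes it is proportional to $(b_1,\ldots,b_n)$, contradicting $\Lambda\neq 0$, or uses it to eliminate one logarithm and closes the loop by induction on $n$ (the case $n=1$ being a classical lower bound for $|b\log\alpha|$). The contradiction shows the assumed upper bound on $|\Lambda|$ cannot hold, which is exactly Theorem~\ref{t-BW}, and tracking the admissible sizes of $L_0,L,m,T$ through Siegel's lemma, the extrapolation, the Liouville inequality and the zero estimate confirms that $C_{n,D}$ depends only on $n$ and $D$. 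The main obstacle is the extrapolation/zero-estimate machinery, and in particular the Masser--W\"ustholz zero estimate on commutative group varieties, which is itself a substantial theorem; a secondary difficulty is ensuring that no step smuggles a dependence on the $\alpha_i$ or the $b_i$ into the constant $C_{n,D}$.
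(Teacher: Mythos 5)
This statement is not proved in the paper at all: it is quoted verbatim as \cite[Theorem 7.1]{BakerWustholz07} and used as a black box (via Corollary \ref{c-BW}) in the counting argument of Section \ref{s-Baker}. So there is no ``paper proof'' to compare yours against; the only meaningful question is whether your sketch would actually establish the Baker--W\"ustholz theorem.

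Your outline correctly identifies the architecture of the real proof --- the auxiliary function on $\mathbb{G}_a\times\mathbb{G}_m^n$ built with Siegel's lemma, the extrapolation via Schwarz's lemma against the Liouville inequality, and the reduction to a proper algebraic subgroup via a multiplicity estimate, closed by induction on $n$. But as a proof it has a genuine gap, and you name it yourself: every quantitatively decisive step is deferred. The choice of $L_0$, $L$, $T$, $m$ and the verification that the extrapolation cycle actually gains (that the number of new zeros forced at each stage outpaces the loss in vanishing order, uniformly in $h$ and $\log B$) is exactly where the theorem lives, and the Masser--W\"ustholz zero estimate you invoke at the end is itself a major theorem whose proof occupies a substantial part of \cite{BakerWustholz07}. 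Without those, the argument is a roadmap rather than a proof. For the purposes of this paper that is no worse than what the authors do --- they simply cite the result --- and your Corollary-level application would be unaffected; but the proposal should not be mistaken for a self-contained derivation of the stated inequality, nor does it verify the specific normalization of $h'$ used here (the paper notes its $h'$ agrees with that of \cite[7.2]{BakerWustholz07} only up to a constant depending on the degree, which is harmless but must be absorbed into $C_{n,D}$).
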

Recall that $\xi_1,\ldots,\xi_{d-1}$ form a basis of a maximal torsion free subgroup of $\O_k^{ \times }$, so every element $\lambda\in \O_k^\times$ can be written as $\lambda=w\xi_1^{b_1}\ldots \xi_{d-1}^{b_{d-1}}$ with $w$ torsion and $\|\lambda\|_\infty:=\max_{i=1,\ldots,d-1}|b_i|.$ 
\begin{corollary}\label{c-BW}
Let $x\neq y\in \mathcal F\cap \O_k$, let  $i\in\left\{1,\ldots,d\right\}$  and let $\lambda\in \O_k^\times$. Then 
$$\log\left|\log \left( \frac{x_i}{y_i}\lambda_i \right)\right|\geq -C_{10}(1+\log\|x\|+\log\|y\|)\max\left\{1,\log\|\lambda\|_\infty\right\},$$
where $C_{10}>0$ depends only on $k$ and the choice of $\xi_1,\ldots,\xi_{d-1}$.
\end{corollary}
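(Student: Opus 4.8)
\textbf{Proof proposal for Corollary \ref{c-BW}.} The plan is to express $\log|\frac{x}{y}\lambda|_i$ as a $\Z$-linear combination of logarithms of absolute values of finitely many fixed algebraic numbers, together with one ``variable'' coefficient coming from the coordinates of $x$ and $y$, and then apply the Baker--W\"ustholz inequality (Theorem \ref{t-BW}) to that linear form. First I would write $\lambda = w\,\xi_1^{b_1}\cdots\xi_{d-1}^{b_{d-1}}$ with $w\in W_k$ torsion, so that $|\lambda|_i = |\xi_1|_i^{b_1}\cdots|\xi_{d-1}|_i^{b_{d-1}}$ (the torsion factor has all absolute values equal to $1$). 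Hence
$$\log\left|\frac{x}{y}\lambda\right|_i = \log|x|_i - \log|y|_i + \sum_{j=1}^{d-1} b_j\log|\xi_j|_i.$$
This is almost a linear form in logarithms of algebraic numbers, but the coefficients of $\log|x|_i$ and $\log|y|_i$ are both $1$ while we want the coefficient of $\lambda$-part to carry the size. Since $x,y\in\O_k$ are not fixed, the clean way is to regard $|x|_i$ and $|y|_i$ (or rather the algebraic numbers $\sigma_i(x),\sigma_i(y)$ whose absolute values these are) as two further ``$\alpha$'s'' in the linear form, with coefficients $1$ and $-1$; then $n = d+1$, the coefficients are $b_1,\dots,b_{d-1},1,-1$, and $D = [\Q(\xi_1,\dots,\xi_{d-1},x,y):\Q]\le[k:\Q]=N$ is bounded in terms of $k$ alone.

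Next I would feed this into Theorem \ref{t-BW}. The quantity $\Lambda := \log|\frac{x}{y}\lambda|_i$ is the logarithm of an absolute value, which up to sign is $\log$ of a product of Galois conjugates raised to integer powers, i.e.\ itself of the form $\sum b_\ell\log\alpha_\ell$ with the $\alpha_\ell$ being the complex embeddings $\sigma_i(\xi_j),\sigma_i(x),\sigma_i(y)$ and a suitable choice of branches (one must be slightly careful: $\log|z| = \Re\log z$, so strictly one works with $\log|\frac{x}{y}\lambda|_i^2 = \log\big(\frac{x}{y}\lambda\big)_i + \log\overline{\big(\frac{x}{y}\lambda\big)_i}$ to stay within a $\Z$-linear combination of ordinary logarithms of algebraic numbers; this only changes constants). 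Assuming $\Lambda\ne 0$, Theorem \ref{t-BW} gives
$$\log|\Lambda| \ge -C\, h'(\sigma_i(x))\,h'(\sigma_i(y))\prod_{j=1}^{d-1} h'(\xi_j)\cdot \max\{1,\log\max(|b_1|,\dots,|b_{d-1}|,1)\},$$
where $C$ depends only on $n=d+1$ and on $D\le N$, hence only on $k$. The heights $h'(\xi_j)$ are fixed constants absorbed into $C$, and $\max_j|b_j| = \|\lambda\|_\infty$. It remains to bound $h'(\sigma_i(x))$ and $h'(\sigma_i(y))$: for an algebraic integer $x\in\O_k$ all non-Archimedean contributions to the Weil height vanish, so $h(x) = \frac{1}{N}\sum_{v\mid\infty} a_v\max\{0,\log|x|_v\} \le \frac{1}{N}\sum_{v\mid\infty} a_v\max\{0,\log|x|_v\}$; since each $|x|_v \le C_0\|x\|^{1/N}$ for $x\in\mathcal F$ by Lemma \ref{l-cone} and $\sum_v a_v = N$, this yields $h(x) \le \frac{1}{N}\log\|x\| + O(1)$, hence $h'(\sigma_i(x)) = h'(x) \le C'(1+\log\|x\|)$ and likewise for $y$. (Note $h$ is invariant under $x\mapsto\sigma_i(x)$.) Plugging these in gives exactly
$$\log|\Lambda| \ge -C_{10}(1+\log\|x\|)(1+\log\|y\|)\max\{1,\log\|\lambda\|_\infty\} \ge -C_{10}(1+\log\|x\|+\log\|y\|)\max\{1,\log\|\lambda\|_\infty\}$$
after adjusting $C_{10}$ (using $(1+u)(1+v)\le C(1+u+v)$ on the relevant range, or simply replacing the product bound by the cruder additive one the statement asks for — one may also keep the product form, it is stronger).

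The one genuine issue to address is the non-vanishing hypothesis $\Lambda\ne 0$ of Theorem \ref{t-BW}: we need $\log|\frac{x}{y}\lambda|_i \ne 0$, i.e.\ $|\frac{x}{y}\lambda|_i \ne 1$. This could fail in principle, and the corollary as stated takes the logarithm of $|\Lambda|$, which is $-\infty$ when $\Lambda = 0$. I expect this to be handled either by a convention (the inequality is vacuous / interpreted appropriately when $\Lambda=0$), or — more likely, given how the corollary is used — by noting that in the application to $S^i_2$ one has $x = $ the numerator and $y = a$ with $\frac{x}{a}-\lambda$ extremely close to $0$ in the $i$-th coordinate, so that $\lambda$ is close to but not equal to $\frac{x}{a}$, and in fact the relevant linear form there is $\log|\frac{x}{a}-\lambda|_i$ rather than $\log|\frac{x}{a}\lambda|_i$; the present corollary is the auxiliary estimate that lets one conclude that once $|\frac{x}{a}-\lambda|_i$ is tiny, $\|\lambda\|_\infty$ must be large. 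So in the intended usage $\Lambda\ne 0$ will be automatic (if $\frac xy\lambda$ had $i$-th absolute value exactly $1$ one argues separately, or such pairs are negligible). I would simply state the corollary under the standing assumption $\log|\frac{x}{y}\lambda|_i\ne 0$ and remark that this is the only case needed. The rest is the routine bookkeeping above: rewriting $\log|\cdot|_i$ as a $\Z$-combination of logarithms of conjugates, bounding the number of terms and the degree by quantities depending only on $k$, and bounding the Weil heights of algebraic integers in $\mathcal F$ via Lemma \ref{l-cone}.
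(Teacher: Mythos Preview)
Your overall strategy matches the paper's: write $\lambda = w\xi_1^{b_1}\cdots\xi_{d-1}^{b_{d-1}}$, apply Baker--W\"ustholz, and bound the heights of $x,y$ via Lemma \ref{l-cone}. Your careful remarks about rewriting $\log|z|$ as a $\Z$-linear form in genuine logarithms, and about the non-vanishing hypothesis $\Lambda\neq 0$, are fine and in fact more scrupulous than the paper's own write-up.

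There is, however, one real discrepancy that affects the statement you actually prove. You treat $x$ and $y$ as two separate $\alpha$'s, so Baker--W\"ustholz produces the factor $h'(x)\,h'(y)$, i.e.\ a bound of the shape
\[
\log|\Lambda|\ \ge\ -C\,(1+\log\|x\|)(1+\log\|y\|)\,\max\{1,\log\|\lambda\|_\infty\}.
\]
Your parenthetical claim that this can be converted to the additive form $(1+\log\|x\|+\log\|y\|)$ is backwards: since $(1+u)(1+v)\ge 1+u+v$ for $u,v\ge 0$, the product bound is the \emph{weaker} (more negative) lower bound, and it does \emph{not} imply the additive one asserted in the corollary. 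The inequality $(1+u)(1+v)\le C(1+u+v)$ you invoke is simply false for large $u,v$.

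The paper avoids this by a small but decisive trick: it takes $\alpha_1=\tfrac{x}{y}$ as a \emph{single} algebraic number (together with $\xi_1,\dots,\xi_{d-1}$, so $n=d$ rather than $d+1$), and then uses sub-additivity of the Weil height,
\[
h'\!\left(\tfrac{x}{y}\right)\ \le\ 1+h(x)+h(y)\ =\ O\!\bigl(1+\log\|x\|+\log\|y\|\bigr),
\]
which delivers the additive form directly. With that one change your argument goes through verbatim. (Your product bound would in fact suffice for the later applications in the paper, at the cost of an extra logarithmic factor, but it does not prove the corollary as stated.)
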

\begin{proof}
As $x,y\in \mathcal{F}\cap \O_k$ the definition of Weil height with $K=k$ and Lemma \ref{l-cone} imply that $h(x)\leq \frac{1}{N}\log\|x\|+ \log C_0$ and similarly for $y$. We have $h'(\frac{x}{y})\leq 1+h(\frac{x}{y})\leq 1+h(x)+h(y)= O(1+\log\|x\|+\log\|y\|)$. Write $\lambda=w \xi_1^{b_1}\xi_2^{b_2}\ldots\xi_{d-1}^{b_{d-1}}$ with $w$ being a torsion element. Observe that by the definition of $\mathcal{F}$, $\frac{x_i}{y_i}\lambda _i \neq 1$. Theorem \ref{t-BW} yields 
\begin{equation*}\log\left|\log \left(\frac{x_i}{y_i}\lambda_i\right)\right|
 \geq -C_{d,N}h'\left(\frac{x}{y}\right)h'(\xi_1)\ldots h'(\xi_{d-1})\max\left\{1,\log \max_{j=1,\ldots, d-1}|b_j|\right\}.\end{equation*} 
Since $\max_{j=1,\ldots, d-1}|b_j|=\|\lambda\|_\infty$, the corollary follows.
\end{proof}

\begin{definition}\label{d-Cylinder}A cylinder in $V\simeq \R^{r_1}\times \C^{r_2}$ is a set $\mathcal C$ which is a coordinate-wise product of closed balls  $$\mathcal C=\prod_{i=1}^{r_1} B_{\R}(t_i, R_i)\times \prod_{i=r_1+1}^{d} B_{\C}(t_i, R_i),$$ with $t_i\in \R$ for $i=1,\ldots, r_1$, $t_i\in \C$ for $i=r_1+1,\ldots, d$ and $R_i\in \R_{\geq 0}$ for $i=1,\ldots, d$. 
\end{definition}
\begin{lemma}\label{l-Cylinder}
Let $\mathcal C$ be a cylinder. Then $|\mathcal C\cap \O_k|\leq 1+C_{11}\Leb(\mathcal C)$ where $C_{11}$ is a constant depending only on $k$. 
\end{lemma}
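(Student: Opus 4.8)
The plan is to bound $|\mathcal C\cap\O_k|$ by comparing it with the Lebesgue volume of a slightly enlarged cylinder, using the fact that $\O_k$ is a lattice of covolume $|\Delta_k|^{1/2}$ in $V$. First I would fix a fundamental parallelepiped $\Pi$ for the lattice $\O_k\subset V$; let $\delta$ be its diameter (in some fixed norm on $V$), so $\delta$ depends only on $k$. For each $x\in\mathcal C\cap\O_k$ the translate $x+\Pi$ is contained in the enlarged set $\mathcal C^{(\delta)}:=\{v\in V:\ \mathrm{dist}(v,\mathcal C)\le\delta\}$, and these translates are disjoint up to measure zero. Since each has volume $|\Delta_k|^{1/2}$, we get $|\mathcal C\cap\O_k|\cdot|\Delta_k|^{1/2}\le\Leb(\mathcal C^{(\delta)})$.

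Next I would estimate $\Leb(\mathcal C^{(\delta)})$ in terms of $\Leb(\mathcal C)$. Writing $\mathcal C=\prod_{i=1}^{r_1}B_\R(t_i,R_i)\times\prod_{i=r_1+1}^d B_\C(t_i,R_i)$, the set $\mathcal C^{(\delta)}$ is contained in $\prod_{i=1}^{r_1}B_\R(t_i,R_i+\delta)\times\prod_{i=r_1+1}^d B_\C(t_i,R_i+\delta)$, whose volume is
$$\prod_{i=1}^{r_1}2(R_i+\delta)\ \prod_{i=r_1+1}^d\pi(R_i+\delta)^2.$$
Expanding each factor $(R_i+\delta)^{m}$ ($m=1$ or $2$) and using the trivial bound $R_i+\delta\le(1+\delta)\max\{R_i,1\}$ gives
$$\Leb(\mathcal C^{(\delta)})\ \le\ c_1\prod_{i=1}^d\max\{R_i,1\}^{m_i}$$
for a constant $c_1$ depending only on $k$, where $m_i\in\{1,2\}$ is the local degree. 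If all $R_i\ge1$ this already yields $\Leb(\mathcal C^{(\delta)})\le c_1\cdot 2^{-r_1}\pi^{-r_2}\Leb(\mathcal C)\le c_2\Leb(\mathcal C)$, and dividing by $|\Delta_k|^{1/2}$ gives the claim with $C_{11}=c_2|\Delta_k|^{-1/2}$.

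The one genuinely delicate point is that some radii $R_i$ may be small (even $0$), so $\Leb(\mathcal C)$ can be tiny or zero while $\mathcal C\cap\O_k$ is still nonempty; this is exactly why the statement has the additive $1$. I would handle it as follows: if $\mathcal C\cap\O_k=\emptyset$ there is nothing to prove, so assume it contains some point $x_0$. If every $R_i\ge 1$ we are in the case above. Otherwise let $J=\{i:R_i<1\}\ne\emptyset$; replace $\mathcal C$ by the cylinder $\mathcal C'$ with the same centres but radii $R_i'=\max\{R_i,1\}$. Then $\Leb(\mathcal C')\le c_3\prod_i\max\{R_i,1\}^{m_i}$ and, crucially, in the directions $i\in J$ the cylinder $\mathcal C'$ has radius exactly $1$, a constant; a standard packing argument (the same translate-a-fundamental-domain trick, now with the enlarged cylinder $\mathcal C'^{(\delta)}$) bounds $|\mathcal C\cap\O_k|\le|\mathcal C'\cap\O_k|\le 1+C_{11}\Leb(\mathcal C)$ once we observe that $\prod_{i\notin J}\max\{R_i,1\}^{m_i}=\prod_{i\notin J}R_i^{m_i}\le 2^{r_1}\pi^{r_2}\Leb(\mathcal C)$ whenever $J\ne\emptyset$ forces no shrinkage in the remaining coordinates — and if $\Leb(\mathcal C)=0$ entirely (some $R_i=0$), the intersection $\mathcal C\cap\O_k$ lies in a proper affine subspace meeting the lattice in at most one point (after perturbing, or directly: a $0$-radius coordinate pins an algebraic coordinate to a fixed value, and $\O_k$ has no repeated conjugates), giving $|\mathcal C\cap\O_k|\le 1$. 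Collecting the cases produces $|\mathcal C\cap\O_k|\le 1+C_{11}\Leb(\mathcal C)$ with $C_{11}$ depending only on $k$. The main obstacle is purely this bookkeeping with degenerate radii; the geometric heart — lattice packing plus crude volume expansion — is routine.
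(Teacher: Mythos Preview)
Your packing argument is sound when all radii satisfy $R_i\ge 1$, but the handling of small radii has a genuine gap, and it is not just bookkeeping. The displayed inequality
\[
\prod_{i\notin J}R_i^{m_i}\ \le\ 2^{r_1}\pi^{r_2}\Leb(\mathcal C)
\]
is false: since $\Leb(\mathcal C)=2^{r_1}\pi^{r_2}\prod_i R_i^{m_i}$ and $R_i<1$ for $i\in J$, the partial product over $i\notin J$ is \emph{larger} than $\prod_i R_i^{m_i}$, not smaller. Replacing $\mathcal C$ by $\mathcal C'$ (radii rounded up to $1$) and applying the packing bound only yields $|\mathcal C\cap\O_k|\le C\,\Leb(\mathcal C')$, and $\Leb(\mathcal C')$ can exceed $\Leb(\mathcal C)$ by an arbitrarily large factor. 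More fundamentally, the translate-a-fundamental-domain argument uses only the lattice structure of $\O_k$, and for a general lattice the conclusion is simply false: in $\Z^2$ the box $[0,M]\times\{0\}$ has volume $0$ but contains $M+1$ lattice points. So some arithmetic input beyond covolume is required, and your argument invokes it only in the extreme case $R_i=0$ (via injectivity of the embeddings), not for $R_i$ small but positive.

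The paper's proof supplies exactly this missing arithmetic input, and does so very cleanly: if $x\neq y$ lie in a cylinder with radii $R_i$, then $|x-y|_i\le 2R_i$, hence $\|x-y\|\le 2^{r_1}4^{r_2}\prod_i R_i^{m_i}$, which is $<1$ once $\Leb(\mathcal C)$ is below the explicit threshold $\pi^{r_2}4^{-r_2}$. Since distinct algebraic integers satisfy $\|x-y\|=|N_{k/\Q}(x-y)|\ge 1$, any cylinder of volume below this threshold contains at most one point of $\O_k$. One then covers an arbitrary cylinder by $1+C_{11}\Leb(\mathcal C)$ sub-cylinders of that fixed small volume. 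The norm lower bound is the key idea your approach lacks; once you have it, the degenerate-radius case requires no separate treatment.
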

\begin{proof}
First we prove that any cylinder $\mathcal{C}'$ of volume strictly below $\pi^{r_2}4^{-r_2}$ cannot contain more than one point of $\O_k$. Write 
$$\mathcal C'=\prod_{i=1}^{r_1} B_{\R}(t_i', R_i')\times \prod_{i=r_1+1}^{d} B_{\C}(t_i', R_i').$$ If $x,y\in \mathcal C'$, then $|x-y|_i\leq 2R_i'$ for every $i=1,\ldots,d$. We deduce that $$\|x-y\|\leq \prod_{i=1}^{r_1}2R_i'\prod_{i=r_1+1}^{d}4R_i'^2=4^{r_2}\pi^{-r_2}\Leb\mathcal C'<1.$$ On the other hand if $x,y\in \O_k$ are distinct, then $\|x-y\|=|N_{k/\Q}(x-y)|\geq 1$. Hence $\mathcal C'$ can contain at most one point from $\O_k$. The lemma follows since we can cover $\mathcal C$ with at most $1+C_{11}\Leb(\mathcal C)$ cylinders of volume  $\frac{\pi^{r_2}4^{-r_2}}{2}.$
\end{proof}

\begin{lemma}\label{l-log}
For every $z\in\C$ with $|1-z|\leq \frac{1}{2}$ we have $\log|\log z|\leq \log|1-z|+\log 2$. 
\end{lemma}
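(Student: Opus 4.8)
The plan is to prove Lemma \ref{l-log} by a direct elementary estimate, reducing the claim about $\log|\log|z||$ to a one-variable estimate on $\log|1-z|$. First I would set $z$ with $|1-z|\le \tfrac12$ and write $w=|z|$, so that $|1-w|\le |1-z|\le \tfrac12$, which in particular forces $\tfrac12\le w\le \tfrac32$ and hence $\log w$ is well-defined and small. The goal is to show $|\log w|\le 2|1-z|$, because then taking logarithms of both sides yields $\log|\log w|\le \log 2+\log|1-z|$, which is exactly the assertion (note $\log w=\log|z|$).

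The key step is the inequality $|\log w|\le 2|1-w|$ for $w\in[\tfrac12,\tfrac32]$. This follows from the mean value theorem: $\log w=\log w-\log 1=(w-1)/\xi$ for some $\xi$ between $1$ and $w$, and since $\xi\ge \tfrac12$ on this range we get $|\log w|\le 2|w-1|$. Combining with $|1-w|\le |1-z|$ gives $|\log w|\le 2|1-z|$. Since $|1-z|\le\tfrac12<1$ we have $|\log w|<1$, so in particular $\log w\ne 0$ unless $z$ lies on the unit circle; in the degenerate case $|z|=1$ the quantity $\log|\log|z||$ is $-\infty$ and the inequality holds trivially (or one simply notes the statement is vacuous there). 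Taking $\log$ of $|\log w|\le 2|1-z|$ yields $\log|\log|z||\le\log 2+\log|1-z|$, as desired.

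I do not expect any real obstacle here; the only mild subtlety is making sure the reduction from $|1-z|\le\tfrac12$ to control of $|1-|z||$ is clean (it follows from the reverse triangle inequality $\big||z|-1\big|\le|z-1|$) and keeping track of the degenerate case $|z|=1$ where the right-hand quantity is genuinely $-\infty$. Everything else is a single application of the mean value theorem and monotonicity of $\log$.
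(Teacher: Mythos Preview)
Your proof is correct and follows essentially the same strategy as the paper: both reduce the claim to the inequality $|\log|z||\le 2|1-z|$ and then take logarithms. The only minor difference is that the paper bounds $|\log|z||$ via the Taylor expansion $\log z=-\sum_{n\ge 1}t^n/n$ with $t=1-z$, whereas you first pass to $w=|z|$ via the reverse triangle inequality and then apply the real mean value theorem; both arguments are equally elementary.
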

\begin{proof}
Let $z=1-t$. Then $|t|\leq \frac{1}{2}$ and $\log z=-\sum_{n=1}^\infty \frac{t^n}{n}$. Hence $|\log z|\leq 2|t|$ and consequently $\log|\log z|\leq \log|1-z|+\log 2$.
\end{proof}
We can are ready to prove Lemma \ref{l-S2iBound}.
\begin{proof}[Proof of Lemma \ref{l-S2iBound}]
Recall that $\kappa':=\frac{1}{2N(N-1)}.$ For $\lambda\in \O_k^\times$ define 
\begin{align*}S_2^i(\lambda):=&\left\{x\in \mathcal F\cap \O_k| (x,\lambda)\in S^i_2\right\}\\
T^i:=&\left\{\lambda\in \O_k^\times |S_2^i(\lambda)\neq \emptyset\right\}.
\end{align*}
Put $\beta:=\frac{\alpha}{2N-2}$. By definition and by Lemma \ref{l-cone}, for every $x\in S_2^i(\lambda)$ we have 
\begin{align*}\log\left|\frac{x}{a}-\lambda\right|_i\leq& -\beta\|\lambda\|_\infty -\left(\frac{1}{N}+\kappa'\right)(\log\|a\|-\log\|x\|)+\log 2.\\
\log\left|x-a\lambda\right|_i\leq& -\beta\|\lambda\|_\infty +\log|a|_i-\left(\frac{1}{N}+\kappa'\right)(\log\|a\|-\log\|x\|)+\log 2\\
\leq& -\beta\|\lambda\|_\infty -\kappa'(\log\|a\|-\log X)+\frac{1}{N}\log X+\log 2+\log C_0.
\end{align*}
By Lemma \ref{l-cone}, the set $\left\{x\in \mathcal F\cap \O_k|\|x\|\leq X\right\}$ is contained in the cylinder 
\begin{equation*}\prod_{j=1}^{r_1}B_\R(0,C_0X^{1/N})\times \prod_{j=r_1+1}^d B_\C(0, C_0X^{1/N}). 
\end{equation*} Hence, $S_2^i(\lambda)$ is contained in the cylinder 
\begin{align*}\mathcal{C}^i(\lambda)= \prod_{j=1,j\neq i}^{r_1}B_\R(0,C_0X^{1/N})\times & \prod_{j=r_1+1,j\neq i}^d B_\C(0, C_0X^{1/N})\times \\ &B_{\mathbb K}(a_i\lambda_i, 2C_0X^{1/N +\kappa'}\|a\|^{-\kappa'}e^{-\beta\|\lambda\|_\infty}), 
\end{align*} where $\mathbb K=\R$ if $i=1,\ldots,r_1$ and $\mathbb K=\C$ otherwise.
We have 
\begin{equation*}\Leb(\mathcal{C}^i(\lambda))\leq C_{12}X^{1+\kappa'}\|a\|^{-\kappa'}e^{-\beta\|\lambda\|_\infty} \quad \textrm{if} \quad i=1,\ldots,r_1 
\end{equation*} and  
\begin{equation*}\Leb(\mathcal{C}^i(\lambda))\leq C_{12}X^{1+2\kappa'}\|a\|^{-2\kappa'}e^{-2\beta\|\lambda\|_\infty} \quad \textrm{if} \quad i=r_1+1,\ldots,r_2. 
\end{equation*} We work under assumption that $X\leq \|a\|e^B$ so in the second case we have 
\begin{equation*}C_{12}X^{1+2\kappa'}\|a\|^{-2\kappa'}e^{-2\beta\|\lambda\|_\infty}\leq e^{B\kappa'}C_{12}X^{1+\kappa'}\|a\|^{-\kappa'}e^{-\beta\|\lambda\|_\infty}. 
\end{equation*} 
By Lemma \ref{l-Cylinder}, we get 
$$|S_2^i(\lambda)|\leq 1+C_{11} \Leb(\mathcal{C}^i(\lambda))\leq 1+C_{13}X^{1+\kappa'}\|a\|^{-\kappa'}e^{-\beta\|\lambda\|_\infty}.$$
Hence
\begin{align}\label{e-S2iFirstBound} |S_2^i|\leq& \sum_{\lambda\in T^i}|S_2^i(\lambda)|\leq |T^i|+C_{13}X^{1+\kappa'}\|a\|^{-\kappa'}\sum_{\lambda\in \O_k^\times}e^{-\beta\|\lambda\|_\infty}\\
\leq& |T^i|+C_{14} X^{1+\kappa'}\|a\|^{-\kappa'}.
\end{align}
It remains to bound $|T^i|$. First we show that for every $\lambda\in T^i$ we have 
\begin{equation*} \|\lambda\|_\infty \leq C_{15}\log\|a\|\logp\log\|a\|+C_{16}
\end{equation*}  (equation (\ref{e-UpperLogBound})). Let $x \in S_2 ^i (\lambda )$. We have 
\begin{align*}\log\left|\frac{x}{a}-\lambda\right|_i\leq& -\beta\|\lambda\|_\infty -\left(\frac{1}{N}+\kappa'\right)(\log\|a\|-\log\|x\|)+\log 2.\\
\log\left|1-\frac{a}{x}\lambda\right|_i\leq& -\beta\|\lambda\|_\infty +\log|a|_i-\log|x|_i-\left(\frac{1}{N}+\kappa'\right)(\log\|a\|-\log\|x\|)+\log 2\\
\leq& -\beta\|\lambda\|_\infty -\kappa'(\log\|a\|-\log X)+\log 2+2\log C_0\\
\leq& -\beta\|\lambda\|_\infty +\kappa' B+\log 2 +2\log C_0=:-\beta\|\lambda\|_\infty + B_2.
\end{align*}
Here we define the constant $B_2=\kappa' B+\log 2 +2\log C_0$ to lighten the notation. It follows that for $\|\lambda\|_\infty\geq \frac{B_2+\log 2}{\beta}$ we will have $\left|1-\frac{a}{x}\lambda\right|_i\leq \frac{1}{2}$ and by Lemma \ref{l-log}
\begin{equation}\label{e-LogIneq}\log\left|\log\left(\frac{a_i}{x_i}\lambda _i\right)\right|\leq -\beta\|\lambda\|_\infty+B_2+\log 2.\end{equation}
Put $B_3=\max\left\{\frac{B_2+\log 2}{\beta}, 3\right\}$. For  $\|\lambda\|_\infty\geq B_3$ Corollary \ref{c-BW} yields 
\begin{equation}
-C_{10}(1+\log\|x\|+\log\|a\|)\log\|\lambda\|_\infty\leq -\beta\|\lambda\|_\infty+ B_2+\log 2.
\end{equation} The only thing we used is that $\|\lambda\|_\infty\geq 3$ so $\max\left\{1,\log\|\lambda\|_\infty\right\}=\log\|\lambda\|_\infty.$
Using inequalities $\log\|x\|\leq \log X\leq \log\|a\|+B$ we get 
\begin{equation*}
-C_{10}(1+B+2\log\|a\|)\log\|\lambda\|_\infty\leq -\beta\|\lambda\|_\infty+ B_2+\log 2.
\end{equation*}
For $\|\lambda \|_{\infty}\geq B_3\geq \frac{B_2+\log2}{\beta}$ we deduce that 
\begin{align}\label{e-UpperLogBound}
\|\lambda\|_\infty\leq C_{15}\log\|a\|\logp\log\|a\|+C_{16}.
\end{align} We proved this inequality under the assumption that $\|\lambda\|_\infty\geq B_3$ but by making $C_{16}$ bigger if necessary this inequality is also valid if $\|\lambda\|_\infty\leq B_3$. Inequality (\ref{e-UpperLogBound}) already implies a non-trivial upper bound of the form $|T^i|=O((\log\|a\|\logp\log\|a\|)^{d-1})+O(1)$. This is too weak for our purposes when $\|a\|$ is large. To get the desired bound we need to consider the relations between pairs $\lambda,\lambda'\in T^i$ with $B_3\leq \|\lambda\|_\infty\leq \|\lambda'\|_\infty$.
\begin{lemma}\label{l-PairsTw}Let $\lambda\neq\lambda'\in T^i$ with $B_3\leq \|\lambda\|_\infty\leq \|\lambda'\|_\infty$. Then 
$$\beta\|\lambda\|_\infty-B_2- 2\log 2\leq C_{10}(1+2\log X)\log 2\|\lambda'\|_\infty.$$
\end{lemma}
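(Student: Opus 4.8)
The plan is to exploit the fact that both $\lambda$ and $\lambda'$ force the quantity $\left|1-\tfrac{a}{x}\mu\right|_i$ (for the respective witness $x$ and $\mu\in\{\lambda,\lambda'\}$) to be small, and then combine the two resulting bounds from Corollary \ref{c-BW}. Concretely, fix witnesses: pick $x\in S_2^i(\lambda)$ and $x'\in S_2^i(\lambda')$, so that both inequalities of the form (\ref{e-LogIneq}) hold, namely $\log|\log|\tfrac{a}{x}\lambda|_i|\leq -\beta\|\lambda\|_\infty+B_2+\log 2$ and $\log|\log|\tfrac{a}{x'}\lambda'|_i|\leq -\beta\|\lambda'\|_\infty+B_2+\log 2$. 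The idea is to form the ratio $\tfrac{x'}{x}\cdot\tfrac{\lambda}{\lambda'}\in k^\times$ (an element of $k$, not a unit, since $x,x'\in\mathcal F\cap\O_k$), whose $i$-th log-absolute-value is controlled because it is (up to sign) the difference of $\log|\tfrac{a}{x}\lambda|_i$ and $\log|\tfrac{a}{x'}\lambda'|_i$, both of which are exponentially small in $\|\lambda\|_\infty$ respectively $\|\lambda'\|_\infty$; in particular it is bounded by roughly $2e^{-\beta\|\lambda\|_\infty}\cdot e^{B_2}$, using $\|\lambda\|_\infty\le\|\lambda'\|_\infty$.

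First I would record that $\log\bigl|\log\bigl|\tfrac{x'\lambda}{x\lambda'}\bigr|_i\bigr|\le -\beta\|\lambda\|_\infty + B_2 + 2\log 2$: indeed $\log|\tfrac{x'\lambda}{x\lambda'}|_i = \log|\tfrac{a}{x}\lambda|_i - \log|\tfrac{a}{x'}\lambda'|_i$, each summand has absolute value $\le e^{-\beta\|\lambda\|_\infty+B_2+\log 2}$ (by exponentiating (\ref{e-LogIneq}) and monotonicity in $\|\cdot\|_\infty$), so the difference is at most $2e^{-\beta\|\lambda\|_\infty+B_2+\log 2}$ in absolute value, and taking $\log$ gives the claim. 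The key point is that this log-absolute-value is nonzero: it cannot vanish because $\tfrac{x'\lambda}{x\lambda'}$ is not a root of unity (its archimedean absolute values are not all $1$ unless $x'\lambda/(x\lambda')$ is torsion, which one rules out using that $x,x'$ are genuine integers whose norms are bounded by $X$ while the product of differences is nonzero — alternatively one arranges the witnesses so that this element is non-torsion, or absorbs the torsion case into the constant). Then apply Corollary \ref{c-BW} with $x\rightsquigarrow x'$, $y\rightsquigarrow x$, the unit being $\lambda/\lambda'$ (whose $\|\cdot\|_\infty$ is at most $2\|\lambda'\|_\infty$ by subadditivity of the coordinate exponents): this yields
$$\log\Bigl|\log\Bigl|\tfrac{x'\lambda}{x\lambda'}\Bigr|_i\Bigr|\ \ge\ -C_{10}\bigl(1+\log\|x\|+\log\|x'\|\bigr)\max\{1,\log(2\|\lambda'\|_\infty)\}\ \ge\ -C_{10}(1+2\log X)\log 2\|\lambda'\|_\infty,$$
where at the last step I bound $\log\|x\|,\log\|x'\|\le\log X$ and $\max\{1,\log(2\|\lambda'\|_\infty)\}\le \log(2\|\lambda'\|_\infty)$ (valid since $\|\lambda'\|_\infty\ge\|\lambda\|_\infty\ge B_3\ge 3$ so $2\|\lambda'\|_\infty\ge e$), and crudely $\log(2\|\lambda'\|_\infty)\le \log 2\cdot\|\lambda'\|_\infty$ for $\|\lambda'\|_\infty\ge 1$ — or whatever precise combinatorial rearrangement makes the stated right-hand side $C_{10}(1+2\log X)\log 2\|\lambda'\|_\infty$ come out (the displayed inequality in the statement should be read as $C_{10}(1+2\log X)\cdot(\log 2)\cdot\|\lambda'\|_\infty$ or similar). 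Chaining the two displays gives exactly $\beta\|\lambda\|_\infty - B_2 - 2\log 2 \le C_{10}(1+2\log X)\log 2\|\lambda'\|_\infty$, which is the assertion.

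The main obstacle is the nonvanishing of the linear form in logarithms: one must guarantee $\log|\tfrac{x'\lambda}{x\lambda'}|_i\neq 0$ before invoking Baker--Wüstholz. If $x/x'$ times a unit had $i$-th absolute value exactly $1$, the argument stalls; I expect this is handled either by choosing, for each $\lambda\in T^i$, a witness $x=x(\lambda)$ once and for all and noting that for $\lambda\ne\lambda'$ the element $x'\lambda/(x\lambda')$ being a root of unity would force a rigid relation that contradicts $\|\lambda\|_\infty\le\|\lambda'\|_\infty$ together with (\ref{e-UpperLogBound}) and the bound on $|S_2^i(\lambda)|$, or else the finitely many "bad" pairs are absorbed into the additive constants. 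A secondary, purely bookkeeping obstacle is tracking that $\|\lambda/\lambda'\|_\infty\le 2\|\lambda'\|_\infty$ (from $\|\lambda\|_\infty\le\|\lambda'\|_\infty$ and the triangle inequality on the exponent vectors $b_i$) and that all constants still depend only on $k$ and $B$; this is routine given the conventions already fixed in the proof.
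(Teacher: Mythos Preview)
Your approach is essentially identical to the paper's: pick witnesses $x\in S_2^i(\lambda)$, $x'\in S_2^i(\lambda')$, use (\ref{e-LogIneq}) to bound $\bigl|\log|\tfrac{a}{x}\lambda|_i\bigr|$ and $\bigl|\log|\tfrac{a}{x'}\lambda'|_i\bigr|$, subtract to control $\bigl|\log|\tfrac{x'}{x}\lambda\lambda'^{-1}|_i\bigr|$, and feed this into Corollary~\ref{c-BW}.

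Two clarifications. First, the expression $\log 2\|\lambda'\|_\infty$ in the statement is to be read as $\log(2\|\lambda'\|_\infty)$, not $(\log 2)\cdot\|\lambda'\|_\infty$. It arises exactly as you guessed: $\|\lambda\lambda'^{-1}\|_\infty\le\|\lambda\|_\infty+\|\lambda'\|_\infty\le 2\|\lambda'\|_\infty$, and since $\|\lambda'\|_\infty\ge B_3\ge 3$ one has $2\|\lambda'\|_\infty\ge e$, so $\max\{1,\log\|\lambda\lambda'^{-1}\|_\infty\}\le \log(2\|\lambda'\|_\infty)$. This removes the ``combinatorial rearrangement'' uncertainty in your write-up.

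Second, your concern about the nonvanishing hypothesis in Baker--W\"ustholz is legitimate; the paper's proof does not address it explicitly either and simply invokes Corollary~\ref{c-BW} directly. So you are not missing an argument that the paper supplies.
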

\begin{proof}
We stress that the estimate we are going to prove becomes interesting only when $\|a\|$ is very large compared to $X$. Let $x,x'\in \mathcal F\cap \O_k$ be such that $(x,\lambda),(x',\lambda')\in S_2^i$. Inequality (\ref{e-LogIneq}) yields 
$\left|\log\left(\frac{a_i}{x_i}\lambda _i\right)\right|\leq 2e^{-\beta\|\lambda\|_\infty+B_2}$ and the same for $(x',\lambda')$. Taking the difference we get 
$$\left|\log\left(\frac{x_i'}{x_i}\lambda _i\lambda_i'^{-1}\right)\right|= \left|\log\left(\frac{a_i}{x_i}\lambda_i\right)-\log\left(\frac{a_i}{x_i'}\lambda_i'\right)\right|\leq 2e^{B_2}(e^{-\beta\|\lambda\|_\infty}+e^{-\beta\|\lambda'\|_\infty})\leq 4e^{-\beta\|\lambda\|_\infty + B_2}.$$
In this way, we got rid of $a$. Using Corollary \ref{c-BW} we get 
\begin{align*}\label{e-LogDoubleIneq}
-\beta\|\lambda\|_\infty+B_2+ 2\log 2 \geq& \log \left|\log\left(\frac{x_i'}{x_i}\lambda _i\lambda_i'^{-1}\right)\right|\\ \geq& -C_{10}(1+\log\|x\|+\log\|x'\|)\max\left\{1,\log \|\lambda\lambda'^{-1}\|_\infty\right\}\\ \geq&-C_{10}(1+2\log X)\log 2\|\lambda'\|_\infty.
\end{align*}
Therefore 
$ \beta\|\lambda\|_\infty-B_2- 2\log 2\leq C_{10}(1+2\log X)\log 2\|\lambda'\|_\infty.$
\end{proof}
Let $B_4\geq \max\left\{9370,B_3\right\}$ be a constant dependent only on $C_{10}, B_3$ and $B_2$ such that whenever $\|\lambda\|_\infty\geq B_4(\log X)^2+ B_4$ we have $\beta\|\lambda\|_\infty-B_2-2\log 2\geq C_{10}(1+2\log X)\|\lambda\|_\infty^{1/2}.$
We divide the set $T^i$ into two parts: a "tame" part $T^i_t:=\left\{\lambda\in T^i| \|\lambda\|_\infty\leq B_4(\log X)^2+B_4\right\}$ and a "wild" part $T^i_w:=T^i\setminus T^i_t$. We have a simple estimate for $|T^i_t|$ 
\begin{equation}\label{e-TtBound}
|T^i_t|\leq C_{18}(\log X)^{2(d-1)} +C_{18,5}.
\end{equation}
The set $T_w^i$ is finite. Denote $L:=|T_w^i|$. Let us list the elements of $T^i_w$ as $\lambda_1,\ldots,\lambda_L$ in such a way that $\|\lambda_l\|_\infty\leq \|\lambda_{l+1}\|_\infty$ for $l=1,\dots,L-1.$ By Lemma \ref{l-PairsTw} and choice of $B_4$, we have 
$$ C_{10}(1+2\log X)\|\lambda_l\|_\infty^{1/2}\leq \beta\|\lambda_l\|_\infty-B_2- 2\log 2\leq C_{10}(1+2\log X)\log 2\|\lambda_{l+1}\|_\infty.$$
Therefore $\|\lambda_l\|_\infty^{1/2}\leq \log 2\|\lambda_{l+1}\|_\infty$ for $l=1,\ldots,L-1$. Since $\|\lambda_l\|_\infty\geq 9370 $ we have $(\log 2\|\lambda_l\|_\infty)^2\leq \|\lambda_l\|_\infty^{1/2}$ so 
$$(\log2\|\lambda_l\|_\infty)^2\leq \log 2\|\lambda_{l+1}\|_\infty.$$
Now an elementary induction shows that $\log2\|\lambda_L\|_\infty\geq (\log (2\times 9370))^{2^{L-1}}> e^{2^L}.$ Together with (\ref{e-UpperLogBound}) this yields 
$$e^{e^{2^L}}=e^{e^{2^{|T^i_w|}}}\leq 2\|\lambda_L\|_\infty\leq 2C_{15}\log\|a\|\logp\log\|a\|+2C_{16},$$
\begin{equation}\label{e-TwBound}
|T^i_w|\leq C_{19}\logp\logp\logp\log\|a\|+ C_{19,5}.
\end{equation}
By (\ref{e-TtBound}) and (\ref{e-TwBound}) we get 
\begin{equation}\label{e-TiFinal}|T^i|\leq C_{18}(\log X)^{2(d-1)}+C_{19}\logp\logp\logp\log\|a\|+C_{20}\end{equation} where 
$C_{20}=C_{18,5}+C_{19,5}.$ Together with (\ref{e-S2iFirstBound}) this gives Lemma \ref{l-S2iBound}.

\end{proof}
The proof of Lemma \ref{l-S2iMBound} is very similar.
\begin{proof}[Proof of Lemma \ref{l-S2iMBound}.]
We adopt notation from the proof of Lemma \ref{l-S2iBound}. By the same reasoning as in the proof of Lemma \ref{l-S2iBound} we get 
\begin{align}\label{e-S2iMFirst}
|S^i_2[M]|\leq& \sum_{\substack{\lambda\in T^i\\ \|\lambda\|_\infty\geq M}}|S^i_2(\lambda)|\leq |T^i|+C_{13}X^{1+\kappa'}\|a\|^{-\kappa'}\sum_{\substack{\lambda\in \O_k^{\times}  \\ \|\lambda\|_\infty\geq M}}e^{-\beta\|\lambda\|_\infty}.
\end{align}
For $M_2$ sufficiently large we have $$\sum_{\substack{\lambda\in \O_k \\ \|\lambda\|_\infty\geq M_2}}e^{-\beta\|\lambda\|_\infty}\leq \delta C_{13}^{-1}$$ so (\ref{e-S2iMFirst}) yields 
$|S^i_2[M_2]|\leq \delta X^{1+\kappa'}\|a\|^{-\kappa'}+|T^i|.$ By inequality (\ref{e-TiFinal}) we get 
$$|S^i_2[M_2]|\leq \delta X^{1+\kappa'}\|a\|^{-\kappa'}+C_{18}(\log X)^{2(d-1)}+C_{19}\logp\logp\logp\log\|a\|+C_{20}.$$ The lemma is proven.
\end{proof}
\subsection{Average number of solutions of unit equations}\label{s-AverageNumber}
For completeness we explain how Theorem \ref{t-AvUnit} follows from Theorem \ref{t-MainPropCount}.
\begin{proof}[Proof of Theorem \ref{t-AvUnit}]
Let $a=\alpha_3$. Assume that $\alpha_1\lambda_1+\alpha_2\lambda_2=\alpha_3$ for some $\lambda_1,\lambda_2\in\O_k^\times$ and $\alpha_1,\alpha_2, \alpha _3\in\O_k \setminus \{ 0\}$. If we put $x=\alpha_1\lambda_1$, then $\alpha_2\lambda_2=a-x$ and $\|x(a-x)\|=\|\alpha_1\alpha_2\|$. Hence, the sum $$\sum_{\substack{\alpha_1,\alpha_2\in \O_k/\O_k^{\times}\\ 0<\|\alpha_1\alpha_2\|\leq X^2 }}\nu(\alpha_1,\alpha_2,\alpha_3)$$ counts the number of $x\in\O_k\setminus\{0\}$ such  that $\|x(a-x)\|\leq X^2$. Theorem now follows from Theorem \ref{t-MainPropCount}.
\end{proof}
\section{Geometry of $n$-optimal sets. }\label{s-Geometry}
As before let $k$ be a number field of degree $N$ and let $d=r_1+r_2$ where $r_1, r_2$ are numbers of the real and complex places of $k$. Recall that $V\simeq \R^{r_1}\times \C^{r_2}$. The aim of this section is to show: 
\begin{theorem}\label{t-OptimalFrame}
There exists a positive constant $\Theta_5$ dependent only on $k$, with the following property. For every $n$-optimal set $\mathcal S\subset \O_k$ there exists a cylinder (see Definition \ref{d-Cylinder}) $\mathcal C$ of volume $\Theta_5 n$ such that $\mathcal S\subset \mathcal C$. 
\end{theorem}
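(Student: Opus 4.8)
The plan is to extract the cylinder from Proposition~\ref{p-CountingMain} together with the divisibility-minimality of the product of differences of an $n$-optimal set (\cite[Corollary 5.2]{BFS2017}), after a normalisation and a crude a priori bound.

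\emph{Normalisations and reduction.} Translating $\mathcal S$ by an element of $\O_k$ and multiplying it by a unit preserve $n$-optimality, so we may assume $0\in\mathcal S$ and, after fixing a good fundamental domain $\mathcal F$, that the pair $a,b\in\mathcal S$ realising the diameter $D:=\max_{s\ne s'\in\mathcal S}\|s-s'\|$ has $b=0$ and $a\in\mathcal F$; then $\|s\|\le D$ and $\|a-s\|\le D$ for every $s\in\mathcal S$. By Lemma~\ref{l-Cylinder} any cylinder containing $\mathcal S$ has volume $\gtrsim n$, and since the $\|\cdot\|$-diameter of a cylinder is comparable to its volume one gets $D\gtrsim n$; conversely, producing the desired cylinder amounts to showing that $\mathcal S$ lies in a bounded union of cylinders of total volume $O(n)$, the final single cylinder then being furnished by enlarging it to absorb the bounded number of exceptional points below.

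\emph{A priori bound.} By \cite[Corollary 5.2]{BFS2017}, $\prod_{s\ne s'\in\mathcal S}(s-s')\O_k$ divides $\prod_{t\ne t'\in T}(t-t')\O_k$ for every $(n+1)$-element $T\subseteq\O_k$. Taking $T=\O_k\cap(\text{Euclidean ball of volume}\asymp n)$ and comparing norms yields $\prod_{s\ne s'\in\mathcal S}\|s-s'\|\le e^{O(n^2\log n)}$, hence $\|s\|\le e^{O(n^2\log n)}$ for all $s$; taking instead $T=(\mathcal S\setminus\{s_0\})\cup\{t_0\}$ and cancelling the common factor of the two norms gives the per-element inequality $\prod_{s\ne s_0}\|s-s_0\|\le\prod_{s\ne s_0}\|s-t_0\|$ for every $s_0\in\mathcal S$ and $t_0\in\O_k\setminus(\mathcal S\setminus\{s_0\})$. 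The first bound makes the error terms $(\log X)^{2d-2}$ and $\log\log\log\log D$ of Proposition~\ref{p-CountingMain} negligible and seeds a bootstrap; the per-element inequality controls points of $\mathcal S$ lying far from the bulk.

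\emph{Main step.} Choose a scale $X\asymp n$ with $X\le D$ (possible by the previous step). Every $s\in\mathcal S$ with $\|s\|\le X$ and $\|a-s\|\le X$ satisfies $\|s(a-s)\|\le X^2$, so Proposition~\ref{p-CountingMain}, applied with this $a$ and this $X$ (its hypothesis $\|a\|=D\ge Xe^{-B}$ holds because $X\le D$), applies: part~(1) bounds the number of such $s$ by $\Theta_1 X^{1+\kappa}D^{-\kappa}+o(n)\le\Theta_1 X+o(n)=O(n)$, and part~(2) shows that, for any fixed $\varepsilon>0$, all but $\varepsilon\cdot O(n)$ of the $z\in\O_k$ with $\|z(a-z)\|\le X^2$ and $\|z\|\le X$ are of the form $z=x\lambda^{-1}$ with $x\in\mathcal F\cap\O_k$ and $\|\lambda\|_\infty\le M(\varepsilon)$. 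For such $z$, Lemma~\ref{l-cone} together with $|\lambda|_i\le C_M$ (valid when $\|\lambda\|_\infty\le M$) gives $|z|_i\le C_MC_0X^{1/N}$ for all $i$, so all these points lie in a single cylinder of volume $O_M(X)=O(n)$. Hence, up to an $\varepsilon\cdot O(n)$ remainder, the part of $\mathcal S$ contained in $\{\,\|s\|\le X,\ \|a-s\|\le X\,\}$ lies in a cylinder of volume $O(n)$. The remaining analysis — bounding the number of $s\in\mathcal S$ with $\|s\|>X$ or $\|a-s\|>X$, and absorbing the $\varepsilon\cdot O(n)$ stragglers — is carried out by combining the per-element divisibility (too many escaping $s$ would make $\prod_{s\ne0}\|s\|$ or $\prod_{s\ne a}\|a-s\|$ too large relative to the right-hand side of the per-element inequality) with a second application of Proposition~\ref{p-CountingMain} at the same scale but relative to a different distinguished point, and by running the bootstrap so that the a priori volume bound strictly decreases at each step. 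Once it stabilises, $\mathcal S$ lies in a cylinder of volume $\Theta_5 n$ with $\Theta_5$ depending only on $k$.

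\emph{Main obstacle.} The essential difficulty — and the reason the discrete collapsing argument of \cite{PV2013,BFS2017}, which relies on convexity of $N_{k/\Q}$ (available only for imaginary quadratic $k$), does not generalise — is that $n$-optimality is a purely multiplicative/congruence condition while the conclusion is Archimedean. Proposition~\ref{p-CountingMain}, and in particular its second part (concentration of solutions near a fundamental domain, obtained via Baker--W\"ustholz and the Aramaki--Ikehara theorem), is precisely the device that bridges the two. The delicate points are calibrating $X\asymp n$ against the error terms, making the per-element divisibility interact quantitatively with the counting bound in order to control the escaping points, and ensuring that the bootstrap descends all the way to $O(n)$ rather than stalling at an intermediate power of $n$.
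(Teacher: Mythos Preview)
Your outline has the right ingredients (Proposition~\ref{p-CountingMain}, the per-element minimality coming from \cite[Corollary 5.2]{BFS2017}, the crude a~priori bound on the diameter, and the use of part~(2) to trap most points in a cylinder), but the logical order is wrong and the central step is missing. The decisive fact you need \emph{before} any of your ``main step'' works is the diameter bound $\log D\le\log n+O(1)$ (Lemma~\ref{l-BallsnOptimal} in the paper). Your ``main step'' applies Proposition~\ref{p-CountingMain} at scale $X\asymp n$, but this only captures those $s\in\mathcal S$ with $\|s\|\le X$ and $\|a-s\|\le X$; if $D$ is, say, $n^2$ or $e^{n}$, there is no reason for more than $o(n)$ of the points of $\mathcal S$ to lie in that region, and the argument stalls. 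The per-element inequality $\prod_{s\ne 0}\|s\|\le N_{k/\Q}(n!_k)$ only gives $\sum_{s\ne0}\log\|s\|\le n\log n+O(n)$, and since each summand is nonnegative this does not force any individual $\|s\|$ (not even $D$ itself) to be $O(n)$; a vague ``bootstrap'' does not rescue this, because Proposition~\ref{p-CountingMain} is an \emph{upper} bound on a point count and never by itself produces an upper bound on $D$.

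What the paper does instead is to turn the counting bound into a \emph{lower} bound on $\sum_{z\in\mathcal S\setminus\{x,y\}}\log\|(x-z)(y-z)\|$ via the layer-cake identity
\[
\sum_z\min\{\log\|z(a-z)\|,2\log X\}=2|\mathcal S|\log X-2\int_1^X|E(a,t)|\,\frac{dt}{t},
\]
and then integrate Theorem~\ref{t-MainPropCount} (this is Lemma~\ref{l-LogSumLB}). Combining this lower bound with the upper bound $\sum_{z\ne x}\log\|x-z\|+\sum_{z\ne y}\log\|y-z\|\le 2n\log n+2A_1n$ from Lemma~\ref{l-PointBound} yields, in one stroke and with no bootstrap, that $\|x-y\|^{-\kappa}n^{1+\kappa}\gtrsim n$, i.e.\ $\|x-y\|=O(n)$ for every pair. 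Only \emph{after} this does the argument you sketched kick in: with $D\asymp n$ one can take $X=D$, every $s\in\mathcal S$ is now counted by $S(a,X)$, Proposition~\ref{p-CountingMain}(2) puts all but $\varepsilon n$ of them in a single cylinder of volume $O(n)$ (Lemma~\ref{l-99prcfram}), and a final geometric argument using the diameter bound again absorbs the $\varepsilon n$ stragglers into a slightly larger cylinder. So your outline is essentially the \emph{second half} of the proof; the missing half is the integral trick that converts counting into a lower bound and pins down the diameter.
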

We prove it in Section \ref{s-ProofOptimalFrame}. As an easy  consequence we get:
\begin{corollary}\label{c-CompactFrame} There exists a positive constant $A_3$ depending only on $k$, such that the cylinder $\Omega=B_\R(0,A_3)^{r_1}\times B_\C(0,A_3)^{r_2}$ has the following property. Let $\mathcal S$ be an $n$-optimal set in $\O_k$. Then there exist $t,s\in V$ with $\|s\|=n|\Delta_k|^{1/2}$ such that $s^{-1}(\mathcal{S}-t)\subset \Omega.$
\end{corollary}
\begin{proof}Let $\mathcal C$ the cylinder from Theorem \ref{t-OptimalFrame}. Let $t$ be the center of $\mathcal C$. We have 
$$\mathcal S- t\subset \mathcal C-t=\prod_{i=1}^{r_1}B_\R(0,v_i)\times \prod_{i=r_1+1}^d B_\C(0,v_i),$$ with $\prod_{i=1}^{r_1}(2v_i)\prod_{i=r_1+1}^d(\pi v_i^2)=\Leb(\mathcal C)\leq \Theta_5 n$. Let $A_3=(\Theta_5 |\Delta_k|^{-1/2}2^{-r_1}\pi^{-r_2})^{1/N}.$ Put  $s=(s_1,\ldots,s_d)$ where $s_i=v_i (n|\Delta_k|^{1/2}2^{r_1}\pi^{r_2}\Leb(\mathcal C)^{-1})^{1/N}$. Then $\|s\|=n|\Delta_k|^{1/2}$ and $s^{-1}(\mathcal{C}-t)\subset \Omega$ because $|s_i^{-1}v_i|=(\Leb(\mathcal C)n^{-1}|\Delta_k|^{-1/2}2^{-r_1}\pi^{-r_2})^{1/N}\leq A_3$.
\end{proof}

\subsection{Generalities on $n$-optimal sets} \label{s-Gennoptimal}
For a finite subset $F\subset \O_k$ we define the energy ideal \footnote{In \cite{BFS2017} it was called the volume. We decided to change the name to avoid confusion with the Lebesgue measure.} of $F$ as the principal ideal $\Vol(F)=\prod_{x\neq y\in F}(x-y)$. For $m \in \mathbb{N}$, $m \geqslant 0$ let $m!_k:=m!_{\mathcal{O} _{k}}$ be the generalized factorial in $\O_k$ in the sense of Bhargava \cite{Bhargava1} (see subsection \ref{sectionporderings}). We remark that $m! _{k}$ is an ideal of $\O_k$, not a number. By \cite[Proposition 2.6]{BFS2017}, a set $\mathcal S\subset \O_k$ of size $n+1$ is $n$-optimal if and only if 
\begin{equation}\label{e-volume}\Vol(\mathcal S)=\prod_{m=0}^{n}m!_k^2.
\end{equation}
Also by \cite[Proposition 2.6]{BFS2017} for every subset $F\subset \O_k$ of size $n+1$ we have \begin{equation}\label{e-volumeB2}N_{k/\Q}(\Vol(F))\geq\prod_{m=0}^{n}N_{k/\Q}(m!_k^2).\end{equation}
\begin{lemma}\label{l-PointBound}Let $\mathcal S\subset \O_k$ be an $n$-optimal set. Then for every $x\in \mathcal{S} $ we have 
$$\sum_{y\in \mathcal S\setminus\left\{x\right\}}\log \|x-y\|\leq \log N_{k/\Q}(n!_k)\leq n\log n + A_1 n,$$
\end{lemma}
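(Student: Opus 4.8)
I would prove the two displayed inequalities separately. The first is purely algebraic and rests on the volume formulas (\ref{e-volume}) and (\ref{e-volumeB2}). Fix $x\in\mathcal S$; we may assume $n\geq 1$, the case $n=0$ being trivial. Splitting in $\Vol(\mathcal S)=\prod_{a\neq b\in\mathcal S}(a-b)$ the ordered pairs that involve $x$ from those that do not, and using that for each $y\neq x$ the two factors $(x-y)$ and $(y-x)$ multiply to $-(x-y)^2$, one gets
$$\Vol(\mathcal S)=(-1)^{n}\,\Vol(\mathcal S\setminus\{x\})\prod_{y\in\mathcal S\setminus\{x\}}(x-y)^2,$$
and hence, $\|\cdot\|$ being multiplicative,
$$\|\Vol(\mathcal S)\|=\|\Vol(\mathcal S\setminus\{x\})\|\prod_{y\in\mathcal S\setminus\{x\}}\|x-y\|^2 .$$
Since $\mathcal S$ is $n$-optimal, (\ref{e-volume}) gives $\|\Vol(\mathcal S)\|=N_{k/\Q}(\O_k\Vol(\mathcal S))=\prod_{m=0}^{n}N_{k/\Q}(m!_k)^2$, while applying (\ref{e-volumeB2}) to $\mathcal S\setminus\{x\}$ (a subset of $\O_k$ of cardinality $n$) gives $\|\Vol(\mathcal S\setminus\{x\})\|\geq\prod_{m=0}^{n-1}N_{k/\Q}(m!_k)^2$. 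Substituting into the second display and cancelling the common positive factor $\prod_{m=0}^{n-1}N_{k/\Q}(m!_k)^2$ yields $\prod_{y\in\mathcal S\setminus\{x\}}\|x-y\|^2\leq N_{k/\Q}(n!_k)^2$; taking square roots and then logarithms gives $\sum_{y\in\mathcal S\setminus\{x\}}\log\|x-y\|\leq\log N_{k/\Q}(n!_k)$. (Every quantity here is a positive integer, as $\|x-y\|=|N_{k/\Q}(x-y)|\geq 1$ for distinct $x,y\in\O_k$, so no positivity issue arises.)

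For the second inequality it remains to bound $\log N_{k/\Q}(n!_k)$. By Bhargava's description of the generalized factorial of $\O_k$ \cite{Bhargava1}, $n!_k=\prod_{\frak p}\frak p^{e_{\frak p}(n)}$ with $e_{\frak p}(n)=\sum_{j\geq 1}\lfloor n/N\frak p^{j}\rfloor$, so
$$\log N_{k/\Q}(n!_k)=\sum_{\frak p}e_{\frak p}(n)\log N\frak p\;\leq\;n\sum_{N\frak p\leq n}\frac{\log N\frak p}{N\frak p}\;+\;n\sum_{\frak p}\frac{\log N\frak p}{N\frak p\,(N\frak p-1)},$$
where we isolated the term $j=1$ (bounding $\lfloor n/N\frak p\rfloor\leq n/N\frak p$, and using that it vanishes once $N\frak p>n$) and estimated all terms with $j\geq 2$ by a geometric series. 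The series $\sum_{\frak p}\frac{\log N\frak p}{N\frak p(N\frak p-1)}$ converges to a constant depending only on $k$ (compare with $\sum_p\frac{\log p}{p^2}$, using that at most $N$ primes of $\O_k$ lie over each rational prime), so it contributes a term $A'n$. For the first sum I would invoke the Mertens-type estimate $\sum_{N\frak p\leq x}\frac{\log N\frak p}{N\frak p}=\log x+O_k(1)$ — which follows by partial summation from the quantitative prime ideal theorem for $k$, equivalently from $\zeta_k(s)$ having a simple pole at $s=1$ — to conclude $n\sum_{N\frak p\leq n}\frac{\log N\frak p}{N\frak p}\leq n\log n+A''n$. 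Taking $A_1=A'+A''$ then completes the proof.

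The only genuinely non-elementary ingredient is this Mertens estimate with leading constant exactly $1$: a crude Chebyshev-type bound such as $\sum_{N\frak p\leq x}\log N\frak p=O_k(x)$ would only give $\log N_{k/\Q}(n!_k)=O_k(n\log n)$ with an inadmissible implied constant, so one really needs the prime ideal theorem for $k$ — which is used anyway elsewhere in the paper — in order to pin down the coefficient $1$ of $n\log n$. Alternatively, the bound $\log N_{k/\Q}(n!_k)\leq n\log n+A_1n$ may be quoted directly from the literature on generalized factorials. Everything else — the factorization of $\Vol(\mathcal S)$, the two applications of the volume formulas, and the geometric-series bound on the higher prime-power contributions — is routine.
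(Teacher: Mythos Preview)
Your proof is correct and, for the first inequality, essentially identical to the paper's: both split $\Vol(\mathcal S)$ as $\Vol(\mathcal S\setminus\{x\})$ times the factors involving $x$, apply (\ref{e-volume}) to $\mathcal S$ and (\ref{e-volumeB2}) to $\mathcal S\setminus\{x\}$, and cancel. For the second inequality the paper simply cites \cite[Theorem 1.2.4]{Lam}, whereas you supply a direct argument via the explicit formula $e_{\frak p}(n)=\sum_{j\geq 1}\lfloor n/N\frak p^{j}\rfloor$ together with the Mertens-type estimate $\sum_{N\frak p\leq x}\frac{\log N\frak p}{N\frak p}=\log x+O_k(1)$; this is exactly the content of the cited result, so your alternative ``quote from the literature'' remark lands on precisely what the paper does.
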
 where $A_1\geq 1$ is a constant 
depending only on $k$.
\begin{proof}
By (\ref{e-volumeB2}) we have 
$$\log N_{k/\Q}(\Vol(\mathcal S))-2\sum_{y\in \mathcal S\setminus\left\{x\right\}}\log \|x-y\|=\log N_{k/\Q}(\Vol(\mathcal S\setminus \left\{x\right\}))\geq  \sum_{m=0}^{n-1}\log N_{k/\Q}(m!_k)^2.$$ Using formula (\ref{e-volume}) we get $\sum_{y\in \mathcal S\setminus\left\{x\right\}}\log \|x-y\|\leq \log N_{k/\Q}(n!_k).$ The second inequality in the lemma follows from \cite[Theorem 1.2.4]{Lam}.
\end{proof} We immediately get:
\begin{corollary}\label{c-DifferenceBound}
Let $\mathcal S$ be an $n$-optimal set. Then for every $x\neq y\in \mathcal S$ we have 
$\log \|x-y\|\leq n\log n + A_1 n.$
\end{corollary}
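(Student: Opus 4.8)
The final statement is Corollary \ref{c-DifferenceBound}, which is an immediate consequence of Lemma \ref{l-PointBound}.

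\medskip

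The plan is to deduce the pointwise bound on a single difference directly from the sum bound just established in Lemma \ref{l-PointBound}. First I would fix an $n$-optimal set $\mathcal S$ and two distinct elements $x\neq y\in\mathcal S$. The key observation is that every term appearing in the sum $\sum_{z\in\mathcal S\setminus\{x\}}\log\|x-z\|$ is non-negative: indeed, for distinct elements $x,z\in\O_k$ the difference $x-z$ is a non-zero algebraic integer, so $\|x-z\|=|N_{k/\Q}(x-z)|$ is a non-zero rational integer, hence $\|x-z\|\geq 1$ and $\log\|x-z\|\geq 0$. Therefore the single term $\log\|x-y\|$ is bounded above by the whole sum:
$$\log\|x-y\|\;\leq\;\sum_{z\in\mathcal S\setminus\{x\}}\log\|x-z\|.$$

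\medskip

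Then I would simply invoke Lemma \ref{l-PointBound}, which gives $\sum_{z\in\mathcal S\setminus\{x\}}\log\|x-z\|\leq n\log n+A_1 n$ with $A_1$ depending only on $k$. Combining the two displays yields $\log\|x-y\|\leq n\log n+A_1 n$, which is exactly the claimed inequality. Since $x,y$ were arbitrary distinct elements of $\mathcal S$, this completes the argument.

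\medskip

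There is essentially no obstacle here; the only point worth double-checking is the non-negativity of the summands, which hinges on the elementary fact that a non-zero element of $\O_k$ has norm of absolute value at least $1$ (already used, for instance, in the proof of Lemma \ref{l-Cylinder}). Everything else is a one-line estimate: dropping all but one non-negative term from a sum that has already been bounded above.
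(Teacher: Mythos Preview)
Your proof is correct and matches the paper's intended argument: the paper states the corollary with only the phrase ``We immediately get'' and gives no further proof, and your deduction---dropping all but one non-negative term from the sum bounded in Lemma~\ref{l-PointBound}---is exactly the immediate step being alluded to.
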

\begin{remark}A posteriori we know that the bound in the above corollary is very far off but it will be used to ensure that the quadruple-logarithmic error term from Proposition \ref{p-CountingMain} is negligible.\end{remark}
\subsection{Proof of Theorem \ref{t-OptimalFrame}}\label{s-ProofOptimalFrame}
 Our first goal is to give an upper bound on the norms of differences of pairs of elements in hypothetical $n$-optimal sets. 

\begin{lemma}\label{l-LogSumLB}
Let $B\in\R$, let  $F$ be a finite subset of $\O_k$ and let $x,y\in F$ be such that $\log|F| \leq \log \|x-y\|+B$. Then for every $0<\log X\leq \log\|x-y\|+B$ we have 
\begin{align*}\sum_{z\in F\setminus\left\{x,y\right\}}&(\log\|(x-z)(y-z)\|)\geq 2|F|\log X- \frac{2\Theta_1\|x-y\|^{-\kappa}}{1+\kappa}(X^{1+\kappa}-1) \\
& -\frac{2\Theta_2}{2d-1}(\log X)^{2d-1}-2\Theta_3\logp\logp\logp\log\|x-y\|\log X -2\Theta_4 \log X,
\end{align*} where $\kappa$ is as in Proposition \ref{p-CountingMain} and the constants $\Theta_i$ depend only on $k$ and $B$ .
\end{lemma}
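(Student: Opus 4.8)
The plan is to count, for each relevant size parameter, how many $z\in F$ can make the product $\|(x-z)(y-z)\|$ small, and then integrate these counts against $\log$. First I would set $a = x-y$ and change variables $w = x-z$, so that $y-z = w - a$ and $(x-z)(y-z) = w(a-w)$ with $w$ ranging over the translate $F-x \subseteq \O_k$ (in particular $w\in\O_k$). For a real parameter $1\le t \le X$, the set of $w$ with $\|w(a-w)\|\le t^2$ is precisely the set counted by Theorem \ref{t-MainPropCount}, applied with $a = x-y$ and the same $B$: the hypothesis $\log|F|\le \log\|x-y\|+B$ together with $\log t\le\log X\le \log\|x-y\|+B$ gives $\|a\|\ge t e^{-B}$, so the theorem applies and yields
$$\#\{w\in\O_k : \|w(a-w)\|\le t^2\}\le \Theta_1 t^{1+\kappa}\|a\|^{-\kappa}+\Theta_2(\log t)^{2d-2}+\Theta_3\log\log\log\log\|a\|+\Theta_4.$$
I would keep $t^2$ as the natural scale since $\|(x-z)(y-z)\|$ is a product of two factors.

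Next I would convert the counting bound into the desired sum via a layer-cake / Abel summation argument. Writing $g(w) := \log\|w(a-w)\|$ for $w\in (F\setminus\{x,y\})-x$ (note $w\neq 0$ and $w\neq a$, so $g(w)\ge 0$ since $\|w(a-w)\|\ge 1$ for nonzero elements of $\O_k$ whose product is a nonzero integer — actually each factor has norm $\ge 1$), one has the identity
$$\sum_{w}\big(2\log X - g(w)\big)^{+} = \int_{0}^{2\log X}\#\{w : g(w)\le u\}\,du = \int_{1}^{X}\#\{w : \|w(a-w)\|\le t^2\}\,\frac{2\,dt}{t},$$
after substituting $u = 2\log t$. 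Since $g(w)\le 2\log\|x-y\|+B\le$ (a bound of the shape allowed, using Corollary \ref{c-DifferenceBound} if one wants, but in fact $2\log X\ge g(w)$ need not hold for all $w$, so I keep the positive part), I would instead bound $\sum_w g(w)$ below by $\sum_w\big(2\log X - (2\log X - g(w))^{+}\big) = 2(|F|-2)\log X - \sum_w (2\log X - g(w))^{+}$, and then estimate the last sum from above by plugging the Theorem \ref{t-MainPropCount} bound into the integral $\int_1^X (\cdots)\,\tfrac{2\,dt}{t}$. The three error terms integrate to the three error terms in the statement: $\int_1^X \Theta_1 t^{1+\kappa}\|a\|^{-\kappa}\tfrac{2dt}{t} = \tfrac{2\Theta_1\|a\|^{-\kappa}}{1+\kappa}(X^{1+\kappa}-1)$, $\int_1^X \Theta_2(\log t)^{2d-2}\tfrac{2dt}{t} = \tfrac{2\Theta_2}{2d-1}(\log X)^{2d-1}$, and the two remaining terms (with $\Theta_3,\Theta_4$) integrate to $2\Theta_3\log\log\log\log\|a\|\log X$ and $2\Theta_4\log X$ respectively. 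Replacing $|F|-2$ by $|F|$ only weakens the inequality in the right direction after absorbing the difference, or one simply carries $|F|-2$; either way one lands on the stated inequality.

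The one genuine subtlety — and the step I expect to require the most care — is making sure the counting theorem is being applied to a \emph{subset of $\O_k$}, namely $F-x$, rather than to all of $\O_k$: Theorem \ref{t-MainPropCount} counts all $x\in\O_k$ satisfying the inequality, so the count for the subset $F-x$ is automatically bounded by it, which is exactly the direction we need (we want an \emph{upper} bound on the count in order to get a \emph{lower} bound on $\sum g(w)$). A second point to check is the range of validity: the integral runs over $t\in[1,X]$ and we need $\|a\|\ge te^{-B}$ throughout, which holds since $t\le X$ and $\log X\le\log\|x-y\|+B = \log\|a\|+B$. Finally one checks that the terms $w = x - x = 0$ and $w = x - y = a$ are correctly excluded (they correspond to $z = x$ and $z = y$, already removed from the sum) and that for the remaining $w$ both $\|w\|\ge 1$ and $\|a-w\|\ge 1$, so $g(w)\ge 0$ and the positive-part manipulation is lossless on the negative side. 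Assembling these pieces gives the claimed lower bound with the indicated dependence of the $\Theta_i$ on $k$ and $B$.
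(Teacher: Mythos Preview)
Your proof is correct and follows essentially the same approach as the paper: translate to reduce to the quantity $\|w(a-w)\|$, express the defect $\sum_w(2\log X-g(w))^+$ as a layer-cake integral $2\int_1^X \#\{w:\|w(a-w)\|\le t^2\}\,\tfrac{dt}{t}$, bound the count over the subset $F-x$ by the count over all of $\O_k$ via Theorem~\ref{t-MainPropCount}, and integrate term by term. The paper writes the truncation as $\min\{g(z),2\log X\}$ rather than via positive parts, but these are equivalent; your observation that the $4\log X$ discrepancy between $2(|F|-2)\log X$ and $2|F|\log X$ is absorbed into $\Theta_4$ is exactly what is needed (note that ``weakens in the right direction'' is the wrong phrasing --- you are strengthening the claimed lower bound and paying for it by enlarging $\Theta_4$, which is permitted).
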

\begin{proof} By translating $F$ if necessary we can assume that $x=0$. Put $a=y$. Then the leftmost sum takes the form 
$$\sum_{z\in F\setminus\left\{0,a\right\}}\log\|z(a-z)\|.$$ For $s\geq 1$ let $\mathcal{E}(a,s)=\left\{z\in \O_k\setminus\left\{0,a\right\}| \|z(a-z)\|\leq s^2\right\}.$ 
We have  
\begin{align*}\sum_{z\in F\setminus\left\{0,a\right\}}\log\|z(a-z)\| =& \sum_{z\in F\setminus\left\{0,a\right\}}\left(2\log X-\int_{\|z(a-z)\|}^{X^2}\frac{dt}{t}\right)\\
\geq& 2(|F|-2)\log X-\sum_{z\in F\setminus\left\{0,a\right\}}\int_1^{X^2} \mathbf{1}_{\mathcal{E}(a,t^{1/2})}(z)\frac{dt}{t}\\
=& 2(|F|-2)\log X-2\int_1^{X} |\mathcal{E}(a,t)\cap F|\frac{dt}{t}\\
\geq& 2(|F|-2)\log X-2\int_1^{X} |\mathcal{E}(a,t)|\frac{dt}{t}\\
\geq& 2(|F|-2)\log X-2\int_1^X\left(\Theta_1 t^{1+\kappa}\|a\|^{-\kappa }+\Theta_2(\log t)^{2d-2}\right.\\ 
& \left.+\Theta_3\logp\logp\logp\log\|a\|+\Theta_4+4\right)\frac{dt}{t}.
\end{align*}
The last inequality is an application of Theorem \ref{t-MainPropCount}. Upon evaluating the integral we get the desired inequality.
\end{proof}
\begin{lemma}\label{l-BallsnOptimal}
There exists a constant $\Theta_7$, dependent only on $k$, such that for every sufficiently large $n$ and every $n$-optimal set $\mathcal S$ we have $\log\|x-y\|\leq \log n+\Theta_7$ for every $x\neq y\in \mathcal S$. 
\end{lemma}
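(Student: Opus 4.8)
The plan is to argue by contradiction, playing the upper bound of Lemma~\ref{l-PointBound} against the lower bound of Lemma~\ref{l-LogSumLB}. Suppose that for arbitrarily large $n$ there is an $n$-optimal set $\mathcal S$ and a pair $x\neq y\in\mathcal S$ with $\log\|x-y\|>\log n+\Theta_7$, where $\Theta_7$ is a constant (depending only on $k$) to be fixed below. Summing the inequality of Lemma~\ref{l-PointBound} over the two points $x$ and $y$, and using $\log\|(x-z)(y-z)\|=\log\|x-z\|+\log\|y-z\|$ together with $\|x-y\|=\|y-x\|$, one obtains
$$\sum_{z\in\mathcal S\setminus\{x,y\}}\log\|(x-z)(y-z)\|+2\log\|x-y\|\le 2n\log n+2A_1n.$$

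Next I would apply Lemma~\ref{l-LogSumLB} with $F=\mathcal S$ (so $|F|=n+1$), with the parameter $B$ fixed once and for all equal to $1$, and with the test value $X=Kn$, where $K=e^{A_1+3}$. The hypotheses $\log|F|\le\log\|x-y\|+B$ and $0<\log X\le\log\|x-y\|+B$ are met as soon as $\Theta_7\ge A_1+2$, since under the contradiction assumption $\log\|x-y\|>\log n+\Theta_7\ge\log(Kn)-1$ and $\log\|x-y\|>\log n\ge\log(n+1)-1$. The lemma then furnishes a lower bound for $\sum_{z}\log\|(x-z)(y-z)\|$ whose leading term is $2(n+1)\log(Kn)\ge 2n\log n+2n\log K$; comparing with the displayed upper bound, this leading term alone already exceeds the right-hand side by roughly $2n(\log K-A_1)=6n$.

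The decisive step is therefore to check that the error terms in Lemma~\ref{l-LogSumLB} cannot absorb this gain. The terms involving $(\log X)^{2d-1}$ and $\log X$ are $O((\log n)^{2d-1})=o(n)$. For the quadruple-logarithm term I would invoke Corollary~\ref{c-DifferenceBound}, which gives $\log\|x-y\|\le n\log n+A_1n$, so that $\log\log\log\log\|x-y\|=o(\log n)$ and this term is $o(n)$ as well. The only genuinely dangerous term is $\frac{2\Theta_1\|x-y\|^{-\kappa}}{1+\kappa}\big((Kn)^{1+\kappa}-1\big)$; here the standing assumption $\|x-y\|>ne^{\Theta_7}$ is what saves us, bounding it by $\frac{2\Theta_1K^{1+\kappa}}{1+\kappa}e^{-\kappa\Theta_7}\,n$. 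I would then fix $\Theta_7$ (depending only on $k$, through $A_1,\kappa,\Theta_1$) large enough that $\frac{\Theta_1K^{1+\kappa}}{1+\kappa}e^{-\kappa\Theta_7}<1$. Putting the pieces together, for all $n$ exceeding some $n_0=n_0(k)$ the lower bound from Lemma~\ref{l-LogSumLB} for $\sum_z\log\|(x-z)(y-z)\|+2\log\|x-y\|$ is strictly larger than $2n\log n+2A_1n$, contradicting the first display; hence no such pair exists for $n\ge n_0$, which is the assertion of the lemma. I expect the only real difficulty to be bookkeeping --- verifying that $\Theta_7$ can indeed be chosen before $n$ and that every error term is genuinely subordinate to the $6n$ gain --- since the substantive work already sits in Lemma~\ref{l-LogSumLB} and, through it, in the counting bound of Theorem~\ref{t-MainPropCount}.
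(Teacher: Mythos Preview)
Your proposal is correct and follows essentially the same route as the paper: combine the upper bound from Lemma~\ref{l-PointBound} (summed over $x$ and $y$) with the lower bound from Lemma~\ref{l-LogSumLB}, invoke Corollary~\ref{c-DifferenceBound} to kill the quadruple-logarithm term, and observe that the only non-negligible error term $\frac{2\Theta_1\|x-y\|^{-\kappa}}{1+\kappa}X^{1+\kappa}$ forces $\|x-y\|\le\Theta_7' n$. The paper uses $B=A_1$, $\log X=\log n+2A_1$ and solves directly for $\|x-y\|$ rather than arguing by contradiction with a pre-chosen $\Theta_7$, but this is purely cosmetic.
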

\begin{proof}
Let $n>0$ and let $A_1$ be the constant from Lemma \ref{l-PointBound}. We recall that $A_1\geq1$ and it depends only on $k$. Let $x\neq y\in \mathcal S$. Either $\log\|x-y\|\leq \log n+A_1$ or we apply  Lemma \ref{l-LogSumLB} with $F=\mathcal S$, $\log X=\log n+ 2A_1$ and $B= A_1 $.  In the latter case we get 
\begin{align*}\sum_{z\in \mathcal S\setminus\left\{x,y\right\}}\left(\log\|z-x\|+\log\|z-y\|\right)\geq 2(n+1)(\log n +2A_1)-\frac{2\Theta_1\|x-y\|^{-\kappa}}{1+\kappa}(X^{\kappa+1}-1)\\ -\frac{2\Theta_2}{2d-1} (\log X)^{2d-1}-2\Theta_3\logp\logp\logp\log\|x-y\|\log X
- 2\Theta_4\log X,
\end{align*}
where the constants $\Theta _i$ and $\kappa$ depend only on $k$. By Corollary \ref{c-DifferenceBound}, we have $\log \|x-y\|\leq n\log n+ A_1n$ so $2\Theta_3\logp\logp\logp\log\|x-y\|\log X=o(n)$. Similarly $\frac{2\Theta _2}{2d-1}(\log X)^{2d-1}=o(n)$ and $2\Theta _4\log X=o(n)$. Hence, for $n$ sufficiently large we have
\begin{align*}\sum_{z\in \mathcal S\setminus\left\{x,y\right\}}\left(\log\|z-x\|+\log\|z-y\|\right)\geq& 2n(\log n +2A_1)-\frac{2\Theta_1\|x-y\|^{-\kappa}}{1+\kappa}(ne^{2A_1})^{\kappa+1}-o(n)\\
\geq& 2n\log n +3nA_1-\frac{2\Theta_1\|x-y\|^{-\kappa}}{1+\kappa}(ne^{2A_1})^{\kappa+1}. 
\end{align*} 
By Lemma \ref{l-PointBound}, we get 
$$2n\log n+2A_1n\geq \sum_{z\in \mathcal S\setminus\left\{x,y\right\}}\left(\log\|z-x\|+\log\|z-y\|\right)+2\log\|x-y\|.$$ Of course $\log\|x-y\|\geq 0$ so we deduce that
\begin{align*}2n\log n + 2A_1n\geq & 2n\log n+3n A_1 -\frac{2\Theta_1\|x-y\|^{-\kappa}}{1+\kappa}(ne^{2A_1})^{\kappa+1},\\
\frac{2\Theta_1\|x-y\|^{-\kappa}}{1+\kappa}(ne^{2A_1})^{\kappa+1}\geq & A_1 n.
\end{align*}
Put
\begin{equation*}\Theta_6:=\frac{2\Theta_1e^{2A_1+2\kappa A_1}}{A_1(1+\kappa)}.\end{equation*}
We have $\Theta _6 \geq \|x-y\|^\kappa n^{-\kappa}$ and $\log \Theta_6\geq \kappa(\log \|x-y\|-\log n)$. Hence, for $n$ sufficiently large $\log\|x-y\|\leq \log n+\kappa^{-1}\log \Theta_6$ where $\Theta_6$ depends only on $k$. The lemma holds with $\Theta_7=\max\left\{\kappa^{-1}\log \Theta_6, A_1\right\}$. The constant $\Theta_7$ depends only on $k$.
\end{proof}
The second ingredient in the proof of Theorem \ref{t-OptimalFrame} is the following weaker version of Theorem \ref{t-OptimalFrame}.
\begin{lemma}\label{l-99prcfram}For every $\delta>0$ there exists a constant $\Theta_8=\Theta_8(\delta)$, dependent only on $k$ and $\delta$, such that for every $n$ sufficiently large and every $n$-optimal set $\mathcal S$ there exists a cylinder $\mathcal{C}_1$ of volume at most $n \Theta_8$ such that $|\mathcal{S}\cap \mathcal C_1|\geq (1-\delta)n.$
\end{lemma}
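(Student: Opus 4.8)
The plan is to use the pointwise difference bound from Lemma \ref{l-BallsnOptimal} together with a dyadic pigeonhole argument on the norm-scale of elements of $\mathcal S$, and then apply the counting estimate of Theorem \ref{t-MainPropCount} (via Lemma \ref{l-LogSumLB}) to rule out too many elements lying far from a common cylinder. First I would fix $x\neq y\in\mathcal S$ realizing a reasonably large difference, say $\log\|x-y\|$ comparable to $\log n$; after translating by $x$ and writing $a=y$, every other $z\in\mathcal S$ contributes $\log\|z(a-z)\|$ to the sum estimated in Lemma \ref{l-PointBound}, which is at most $2n\log n+2A_1n$. On the other hand Lemma \ref{l-LogSumLB} shows that this same sum is bounded \emph{below} by $2|F|\log X$ minus an error term of order $X^{1+\kappa}\|x-y\|^{-\kappa}$ plus lower-order terms. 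Comparing the two bounds, with $\log X$ slightly larger than $\log n$, forces all but a $\delta$-fraction of the $z$'s to satisfy $\|z(a-z)\|\leq X^2$ for a suitable $X=O(n^{1/2+o(1)})$; indeed if too many $z$'s had $\|z(a-z)\|$ large, the lower bound would already exceed the upper bound $2n\log n+O(n)$.

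The key step is to convert "$\|z(a-z)\|\leq X^2$ for $(1-\delta)n$ values of $z$" into "$(1-\delta)n$ values of $z$ lie in a single cylinder of volume $O(n)$." For this I would argue that $\|z(a-z)\|\leq X^2$ together with the pointwise bound $\log\|z\|\leq \log\|x-y\|+\Theta_7 \ll \log n$ from Corollary \ref{c-DifferenceBound} (applied to the pair $x,z$, after translation) confines each such $z$ to the region where $\|z\|\leq X$ or $\|a-z\|\leq X$; then, since these $z$ already lie in a box of norm $O(n)$ in each coordinate by Lemma \ref{l-BallsnOptimal}, one of the two halves contains $\geq (1-\delta)n/2$ of them, and on that half $z$ (or $a-z$) ranges over $\{w\in\O_k: \|w\|\leq X\}\cap(\text{box of side }O(n^{1/N}))$, which is contained in a cylinder of volume $O(n)$. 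Choosing the fundamental-domain machinery of Section \ref{s-Baker} (Lemma \ref{l-cone}, Lemma \ref{l-Cylinder}) lets me replace "$\|z\|\le X$" by a genuine coordinate box of controlled volume. One has to be a little careful that the cylinder's volume constant $\Theta_8$ depends only on $k$ and $\delta$, not on $n$, but this follows because all the implied constants in Theorem \ref{t-MainPropCount}, Lemma \ref{l-LogSumLB} and Lemma \ref{l-BallsnOptimal} are $k$-dependent only.

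The main obstacle, I expect, is the bookkeeping in the comparison of the two bounds on $\sum_{z}\log\|z(a-z)\|$: one must choose $\log X$ carefully (something like $\log n + c$ with $c=c(\delta)$ a large constant, or perhaps $\log X=(\tfrac12+\eta)\log n$) so that the error term $\tfrac{2\Theta_1\|x-y\|^{-\kappa}}{1+\kappa}X^{1+\kappa}$ in Lemma \ref{l-LogSumLB} stays below $\delta n\log n$ while $2(n+1)\log X$ already exceeds the upper bound $2n\log n + O(n)$ by a margin of at least $\delta n\log n$ — and these two requirements pull in opposite directions, so the exponent $\kappa<1$ is exactly what makes room for both. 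The subtlety is that we want to conclude not just that \emph{some} $z$'s have small $\|z(a-z)\|$ but that the \emph{count} of exceptional $z$'s is $\leq\delta n$; this is obtained by the standard trick of replacing each summand $\log\|z(a-z)\|$ by $\min\{\log\|z(a-z)\|,\log X^2\}$, which only decreases the sum, and then noting that each exceptional $z$ loses at least, say, $\log 2$ compared to $\log X^2$ if we instead compare against a slightly larger threshold $X' = 2X$, so that $(\#\text{exceptional})\cdot(\text{gain per exceptional}) \le O(n)$. After passing from $\mathcal S$ to the cylinder I would absorb the at-most-$\delta n$ remaining points into the error, giving $|\mathcal S\cap\mathcal C_1|\ge(1-\delta)n$ with $\Leb(\mathcal C_1)\le\Theta_8 n$ as claimed.
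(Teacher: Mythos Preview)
Your proposal has a genuine gap at the step where you convert ``$\|z\|\le X$'' (or ``$\|z(a-z)\|\le X^2$'') into containment in a cylinder of volume $O(n)$. You write that ``these $z$ already lie in a box of norm $O(n)$ in each coordinate by Lemma~\ref{l-BallsnOptimal}'', but that lemma only bounds the \emph{norm} $\|x-y\|=|N_{k/\Q}(x-y)|$, not the individual Archimedean absolute values $|x-y|_i$. When $d>1$ the region $\{v\in V:\|v\|\le X\}$ is unbounded (it is invariant under the infinite group $\O_k^\times$), so a norm bound alone can never place $z$ in a cylinder. Writing $z=x\lambda^{-1}$ with $x\in\mathcal F\cap\O_k$ and $\lambda\in\O_k^\times$, the obstruction is precisely that $\|\lambda\|_\infty$ may be large, which pushes $z$ out of any fixed cylinder even though $\|z\|=\|x\|\le X$. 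The ``fundamental-domain machinery'' you cite (Lemmas~\ref{l-cone} and~\ref{l-Cylinder}) applies only after one knows $z$ lies in a bounded piece of $\mathcal F$, i.e.\ after the unit has been controlled.

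The paper's proof supplies exactly this missing control via Proposition~\ref{p-CountingMain}\,(2): for every $\varepsilon>0$ there is $M=M(k,\varepsilon)$ such that the number of pairs $(x,\lambda)\in S(a,X)$ with $\|\lambda\|_\infty\ge M$ is at most $\varepsilon X^{1+\kappa}\|a\|^{-\kappa}+\textrm{(lower order)}$. One chooses $x_0,y_0\in\mathcal S$ with $\|x_0-y_0\|$ \emph{maximal} and sets $X=\|a\|$; then the containment $\mathcal S\setminus\{0\}\subset\{x\lambda^{-1}:(x,\lambda)\in S(a,X)\}$ is automatic (every $z$ satisfies $\|z\|,\|a-z\|\le X$ by maximality, so no Lemma~\ref{l-LogSumLB} comparison is needed at this stage). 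Applying Proposition~\ref{p-CountingMain}\,(2) with $\varepsilon=\tfrac{\delta}{2}e^{-\Theta_7}$ discards at most $\delta n$ points, and the survivors all have $\|\lambda\|_\infty\le M$; together with $\|x\|\le X$ and Lemma~\ref{l-cone} this finally yields a genuine coordinate cylinder of volume $n\Theta_8(\delta)$. Your comparison of upper and lower bounds on $\sum_z\log\|z(a-z)\|$ is essentially the mechanism already used inside Lemma~\ref{l-BallsnOptimal}; it does not by itself bound the unit part, and that bound is the entire content of the present lemma.
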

\begin{proof}
Assume $n>0$. We shall crucially use Proposition \ref{p-CountingMain} (2) together with Lemma \ref{l-BallsnOptimal}. In order to use Proposition \ref{p-CountingMain} (2) we fix a good fundamental domain $\mathcal F$ of $\O_k^\times$ in $V^\times$, a basis $\xi_1,\ldots, \xi_{d-1}$ of a maximal torsion free subgroup of $\O_k^\times$ and the associated norm $\|\cdot\|_\infty$ on $\O_k^\times$. Put $A_2=\gamma_\Q-\gamma_k-2$. First note that by the energy formula \cite[Corollary 5.2]{BFS2017} for large enough $n$ we have 
$$\log(N_{k/\Q}(\Vol(\mathcal S)))=\sum_{x\neq y\in \mathcal S} \log \|x-y\|=n^2\log n +n^2(\gamma_\Q-\gamma_k-\frac{3}{2})+o(n^2)\geq (n^2+n)(\log n +A_2).$$ Together with Lemma \ref{l-BallsnOptimal} this implies that there exists at least one pair $x,y\in \mathcal S$ such that $\log n+ A_2\leq \log\|x-y\|\leq \log n+\Theta_7$. Let us fix a pair $x_0,y_0$ with $\|x_0-y_0\|$ maximal among all pairs in $\mathcal S$. By translating $\mathcal S$ if necessary we may assume that $x_0=0$ and put $a=y_0$. Let $X=\|a\|$. Then $\log n+ A_2\leq \log X\leq \log n+\Theta_7$. The question is invariant under multiplying $\mathcal S$ by elements of $\O_k^\times$ so we may assume without loss of generality that $a\in \mathcal F$.  For every $z\in\mathcal S$ we have $\|z\|\leq X$ and $\|a-z\|\leq X$ so  $\|z(a-z)\|\leq X^2$. Therefore, with notation from Proposition \ref{p-CountingMain} we have \begin{equation}\label{e-Containment}\mathcal S\setminus\left\{0\right\}\subseteq \left\{x\lambda^{-1}| (x,\lambda)\in S(a,X)\right\}.
\end{equation}

Let $M>0$ be such that Proposition \ref{p-CountingMain} (2) holds with $\varepsilon=\frac{\delta}{2}e^{-\Theta_7}$ and $B=0$. The constant $M$ depends only on $k$ and $\delta$.  For $n$ sufficiently large we have 
\begin{align*}|S(a,X)[M]|\leq& \frac{\delta e^{-\Theta_7}}{2}X^{1+\kappa}\|a\|^{-\kappa}+\Theta_2(\log X)^{2d-2}+\Theta_3 \logp\logp\logp\log\|a\|+\Theta_4\\
\leq & \frac{\delta}{2} n + o(n)\leq  \delta n.
\end{align*}
Let $\mathcal S':=\mathcal S\setminus \left\{(x\lambda^{-1}| (x,\lambda)\in S(a,X)[M]\right\}$. By the inequality above, $\mathcal S'$ contains at least $(1-\delta)n$ elements. To prove the lemma, it is enough to show that  $\mathcal S'$ is contained in a cylinder of volume at most $n\Theta_8$. By (\ref{e-Containment}) we have $\mathcal S'\subseteq \left\{(x\lambda^{-1}| (x,\lambda)\in S(a,X)\setminus S(a,X)[M]\right\}\cup\left\{0\right\}.$
$$ S(a,X)\setminus S(a,X)[M]\subseteq \left\{(x,\lambda)\in (\mathcal{F}\cap \O_k) \times \O_k^{\times}| \ \  \|x\|\leq X, \|\lambda\|_\infty\leq M\right\}.$$ 
By Lemma \ref{l-cone}, we have a constant $C_0>0$ such that $C_0^{-1}\|x\|^{1/N}\leq |x|_i\leq C_0\|x\|^{1/N}$ for every $x\in \mathcal F$ an every $i=1,\ldots,d$.  Let $C_{21}:=\max_{\|\lambda\|_\infty\leq M}\max_{i=1,\ldots,d}|\lambda^{-1}|_i$. Therefore, for every $(x,\lambda)\in S(a,X)\setminus S(a,X)[M]$ and $i=1,\ldots,d$ we have  $|x\lambda^{-1}|_i\leq C_{21}C_0\|x\|^{1/N}\leq C_{21}C_0 X^{1/N}\leq C_{21}C_0e^{\Theta_7/N} n^{1/N} $. It follows that $\mathcal{S}'$ is contained in the cylinder
$$\mathcal C_1=B_{\R}(0,C_{21}C_0e^{\Theta_7/N}n^{1/N})^{r_1}\times  B_{\C}(0,C_{21}C_0e^{\Theta_7/N}n^{1/N})^{r_2}.$$
The volume of $\mathcal C_1$ is $n 2^{r_1}\pi^{r_2}e^{\Theta_7}C_0^NC_{21}^N=: n\Theta_8$ where $\Theta_8$ depends only on $k$ and $\delta$.
\end{proof}
We are ready to prove Theorem \ref{t-OptimalFrame}
\begin{proof}[Proof of Theorem \ref{t-OptimalFrame}]
Assume that $n$ is sufficiently large so that Lemma \ref{l-BallsnOptimal} holds and Lemma \ref{l-99prcfram} holds with $\delta=1/100$ and $\Theta_8=\Theta_8(1/100)$. Also for technical reasons we require $n\geq 4d, \Theta_8\geq 1$ and the constant $C_{11}$ from Lemma \ref{l-Cylinder} satisfies $C_{11}\geq 1$. This is not a problem since they can be always replaced by a bigger constants as long as these constants depend only on $k$. Let $\mathcal S$ be an $n$-optimal set and let $\mathcal{C}_1$ be a cylinder of volume $n\Theta_8$ containing at least $\frac{99n}{100}$ points of $\mathcal S$. Write 
$$\mathcal C_1=\prod_{i=1}^{r_1}B_{\R}(t_i,R_i)\times \prod_{i=r_1+1}^d B_{\C}(t_i,R_i)$$ 
with $t=(t_1,\ldots,t_d)\in V$. Note that  $2^{r_1}\pi^{r_2}\prod_{i=1}^{r_1}R_i\prod_{i=r_1+1}^d R_i^2=n\Theta_8.$ For a positive constant  $A>0$ (how big will be precised later) we put 
$\mathcal C_1^A=\prod_{i=1}^{r_1}B_{\R}(t_i,AR_i)\times \prod_{i=r_1+1}^d B_{\C}(t_i,AR_i)$. The idea of the proof is to show that for large $A$ (how large depends only on $k$) and every $y\not\in \mathcal C_1^A$ the intersection $\mathcal C_1\cap \left\{x\in V|\|x-y\|\leq ne^{\Theta_7}\right\}$ is too small to contain $\frac{99n}{100}$ points of $\mathcal S$. Then from Lemma \ref{l-BallsnOptimal} and Lemma \ref{l-99prcfram} we can deduce that $y\not \in \mathcal S$ and consequently that $\mathcal{S}\subset \mathcal C_1^A$. 

Let $C_{11}$ be the constant from Lemma \ref{l-Cylinder} and put  
\begin{equation*}A=\max\left\{2,e^{\Theta_7}(2N)^N 2^{r_1}\pi^{r_2}\Theta_8^{N-1} C_{11}^N+1\right\}. 
\end{equation*} Suppose that $y\in \mathcal S\setminus \mathcal C_1^A$. Since $y\not\in \mathcal C_1^A$ for every $x\in \mathcal C_1$ there exists a coordinate $i\in\left\{1,\ldots,d\right\}$ such that $|x-y|_i\geq (A-1)R_i$. Put $\iota=1$ if $i\in\left\{1,\ldots,r_1\right\}$ and $\iota=2$ otherwise. If additionally $\|x-y\|\leq ne^{\Theta_7}$, then we have 
\begin{align}\label{e-IneqCyl1}
\prod_{j=1,j\neq i}^{r_1}|x-y|_j\prod_{j=r_1+1,j\neq i}^d|x-y|_j^2\leq& ne^{\Theta_7}R_i^{-\iota}(A-1)^{-\iota}\\
\leq & (A-1)^{-\iota}\frac{e^{\Theta_7}2^{r_1}\pi^{r_2}}{\Theta_8}\prod_{j=1,j\neq i}^{r_1}R_j\prod_{j=r_1+1,j\neq i}^d R_j^2\\
\leq& \prod_{j=1,j\neq i}^{r_1}\frac{R_j}{2N\Theta_8C_{11}}\prod_{j=r_1+1,j\neq i}^d \left(\frac{R_j}{2N\Theta_8C_{11}}\right)^2. \label{e-IneqCyl2}
\end{align}
Hence, there exists $j\neq i$ such that $|x-y|_j\leq \frac{R_j}{2N\Theta_8C_{11}}$. Define
$$\mathcal C_1(j)=\prod_{l=1,l\neq j}^{r_1}B_{\R}(t_l,R_l)\times\prod_{l=r_1+1,l\neq j}^d B_{\C}(t_l,R_l)\times B_{k_{j}}\left(y_j,\frac{R_j}{2N\Theta_8C_{11}}\right)$$
and note that $\Leb(\mathcal C_1(j))=\frac{n\Theta_8}{(2N\Theta_8 C_{11})^{[k_{j}:\R]}}\leq \frac{n}{2N C_{11}}.$ From inequalities (\ref{e-IneqCyl1}-\ref{e-IneqCyl2}) we deduce that 
\begin{equation}\label{e-IneqCyl3}
\left\{x\in \mathcal C_1|\, \|x-y\|\leq ne^{\Theta_7}\right\}\subset \bigcup_{l=1}^d \mathcal C_1(l).
\end{equation}
By (\ref{e-IneqCyl3}) and Lemma \ref{l-Cylinder}, we get 
\begin{equation}\label{e-UpperBIntersesction}
|\left\{x\in \mathcal C_1\cap \O_k|\, \|x-y\|\leq ne^{\Theta_7}\right\}|\leq  d+ C_{11}\frac{dn}{2N C_{11}}\leq d+\frac{n}{2}\leq \frac{3n}{4}.
\end{equation}
Lemma \ref{l-BallsnOptimal} yields 
$\mathcal S\subseteq \left\{x\in\O_k|\, \|x-y\|\leq ne^{\Theta_7}\right\}$ so we have \begin{equation*}\mathcal S\cap \mathcal C_1\subseteq \left\{x\in \mathcal C_1\cap \O_k | \|x-y\|\leq ne^{\Theta_7}\right\}. 
\end{equation*} In particular $| \left\{x\in \mathcal C_1\cap \O_k|\, \|x-y\|\leq ne^{\Theta_7}\right\}|\geq |\mathcal S\cap \mathcal C_1|\geq \frac{99}{100}n$. This contradicts (\ref{e-UpperBIntersesction}).  Thereby we showed that for $n$ big enough, the set $\mathcal S\setminus \mathcal C_1^A$ is empty, that is $\mathcal S\subset \mathcal C_1^A$. The volume of $\mathcal C_1^A$ is $\Theta_8 A^Nn$ so by taking into account the cases of small $n$ we see that $\mathcal S$ is contained in a cylinder of volume $\Theta_5n$, where $\Theta_5$ depends only on $k$. Theorem \ref{t-OptimalFrame} is proven.
\end{proof} 
\section{Collapsing of measures}\label{s-Collapsing}
Write $\mathcal M^1(V),(\mathcal P^1(V))$ for the set of finite measures (resp. probability measures) $\nu$ on $V$ which are absolutely continuous with respect to the Lebesgue measure such that the density ${d\nu}/{d\Leb}$  is almost everywhere less or equal to $1$. For $i\in \left\{1,\ldots,d\right\}$ and $v_i\in \R$ or $\C$ (depending on whether $i$ corresponds to real or complex place) we will define an operation called collapsing $c_{i,v_i}:\mathcal M^1(V)\to \mathcal M^1(V)$ that has the following property: either $I(c_{i,v_i}(\nu))<I(\nu)$ or $\nu$ is of a very specific form. It is a version of the Steiner symmetrization (\cite{Krantz1999}), but for measures in $\mathcal M^1(V)$ instead of subsets of $V$. We shall make it precise in a moment. The operation of collapsing is the continuous analogue of the collapsing operation on subsets of $\O_k$ used in \cite{PV2013} and \cite{BFS2017} where it was defined for $k$ quadratic imaginary.  We remark that for number fields $k$ other than quadratic imaginary there is no reasonable discrete collapsing procedure for subsets of $\O_k$.  In this section we study the effect of collapsing on the energy of measures. Our goal is Corollary \ref{c-MinimalCollapsing} which says that the measures $\nu$ in $\mathcal P^1(V)$ that minimize the energy $I(\nu)$ are, up to translation, invariant under all collapsing operations.
\begin{definition}\label{d-Collapsing}
Let  $i\in \left\{1,\ldots,d\right\}$ and $v_i\in \R$ if $i\in\left\{1,\ldots, r_1\right\}$ or $v_i\in \C$ otherwise. Let $\nu\in \mathcal{M}^1(V)$ be a measure with density $f\in L^1(V)$. For $x=(x_1,\ldots,x_d)\in V$ define 
\begin{equation}
F_i(x):=\begin{cases} \frac{1}{2}\int_{\R}f(x_1,\ldots,x_{i-1},t,x_{i+1},\ldots,x_d)dt & \textrm{ if } i\in\left\{1,\ldots,r_1\right\}\\
\frac{1}{\sqrt{\pi}}\left(\int_{\C}f(x_1,\ldots,x_{i-1},t,x_{i+1},\ldots,x_d)dt\right)^{1/2} & \textrm{ if } i\in\left\{r_1+1,\ldots,d\right\}.
\end{cases}
\end{equation}
Let $h_i:V\to \R_{\geq 0}$ be given by 
\begin{equation}
h_i(t_1,\ldots,t_d):=\begin{cases} \mathbf 1_{B_{\R}(v_i,F_i(t_1,\ldots,v_i,\ldots,t_d))}(t_i) & \textrm{ if } i\in\left\{1,\ldots,r_1\right\}\\
 \mathbf 1_{B_{\C}(v_i,F_i(t_1,\ldots,v_i,\ldots,t_d))}(t_i) & \textrm{ if } i\in\left\{r_1+1,\ldots,d\right\}.
\end{cases}
\end{equation}
The collapsed measure $c_{i,v_i}(\nu)$ is given by the density $h_i$. By construction $c_{i,v_i}(\nu)$ is symmetric with respect to the subspace $\left\{x=(x_1,\ldots,x_d)\in V| x_i=v_i\right\}.$
\end{definition}
Let $V^i:=\{x=(x_1,\ldots,x_d)\in V| x_i=0\}.$ Collapsing is closely related to the Steiner symmetrization in the following way. If $V=\R^d$, then for any measurable subset $E\subseteq V$ we have $c_{i,0}(\Leb|_E)=\Leb|_{{\rm St}_i(E)}$ where ${\rm St}_i(E)$ is the Steiner symmetrization of $E$ with respect to the hyperplane $V^i$ (c.f. \cite{Krantz1999}).
For further use we introduce a symmetric bilinear form on $\mathcal M^1(V)\times \mathcal M^1(V)$ 
\begin{equation}\label{e-Bilin}\langle \nu,\nu'\rangle =\int_V\int_V \log\|x-y\|d\nu(x)d\nu'(y).
\end{equation} The integral converges as soon as $\nu,\nu'$ are finite signed measures with bounded density and with compact support.
\begin{definition}\label{d-Energy} Let $\nu$ be a compactly supported measure of bounded density. We define the energy of $\nu$ as $I(\nu)=\langle \nu,\nu\rangle.$  
\end{definition}We will also need a modified version of the bilinear form $\langle\cdot,\cdot\rangle$ defined as $$\langle \nu_1,\nu_2\rangle_\delta:=\int_V\int_V\frac{1}{2}\log(\|x-y\|^2+\delta^2)d\nu_1(x) d\nu_2(y) \textrm{ for } \delta\geq 0.$$ Note that  $\langle \nu_1,\nu_2\rangle_0=\langle \nu_1,\nu_2\rangle.$ 
\begin{lemma}\label{l-WeakContinuity}
Let $\Sigma$ be a compact subset of $\R$ or $\C$. The map $\mathcal M^1(\Sigma)\times \mathcal M^1(\Sigma)\ni (\nu_1,\nu_2)\to \langle \nu_1,\nu_2\rangle_\delta\in \R$ is continuous with respect to weak-* topology.
\end{lemma}
\begin{proof}
Let $f_1,f_2$ be the densities of $\nu_1,\nu_2$ respectively. We have 
$$\langle \nu_1,\nu_2\rangle_\delta=\frac{1}{2}\int_{\Sigma}\int_{\Sigma}f_1(x)f_2(y)\log(|x-y|^2+\delta^2)dxdy.$$ We claim that the map $(\nu_1,\nu_2)\mapsto f_1\times f_2\in L^2(\Sigma\times\Sigma)$ is weakly-* continuous on $\mathcal M^1(\Sigma)\times \mathcal M^1(\Sigma)$ with the weak topology on $L^2(\Sigma\times\Sigma)$. We sketch the proof. Let $h\in L^2(\Sigma\times \Sigma), \varepsilon>0$ and let $(\nu_1^i\times \nu_2^i)$ be a weakly-* convergent sequence in  $\mathcal M^1(\Sigma)\times \mathcal M^1(\Sigma)$, with densities $f_1^i\times f_2^i$. By Luzin's theorem there exists a subset $E\subseteq \Sigma\times\Sigma$, such that $\Leb(\Sigma\times\Sigma\setminus E)<\varepsilon$ and $h$ is continuous on $E$. Using the Radon property of the Lebesgue measure we can assume that $E$ is compact. We have 
\[ \int_\Sigma\int_\Sigma f_1^i(x)f_2^i(y)h(x,y)dxdy=\int_E f_1^i(x)f_2^i(y)h(x,y)dxdy + \int_{\Sigma\times\Sigma\setminus E}f_1^i(x)f_2^i(y)h(x,y)dxdy.\]
The first term on right hand side converges because $h(x,y)$ is continuous in $E$. Second term can be always bounded using the Cauchy-Schwartz inequality \[\left|\int_{\Sigma\times\Sigma\setminus E}f_1^i(x)f_2^i(y)h(x,y)dxdy\right|\leq \int_{\Sigma\times\Sigma\setminus E}|h(x,y)|dxdy\leq \varepsilon^{1/2}\|h\|_2.\] 
Taking $\varepsilon\to0$ we see that the inner product of $f_1^i\times f_2^i$ with $h$ always converges. This proves the continuity.
The function $(x,y)\mapsto \frac{1}{2}(\log|x-y|^2+\delta^2)$ is in $ L^2(\Sigma\times\Sigma)$ so the map $ (\nu_1,\nu_2)\mapsto \langle \nu_1,\nu_2\rangle_\delta$ is weakly-* continuous on $\mathcal M^1(\Sigma)\times \mathcal M^1(\Sigma)$. 
\end{proof}
\begin{definition} Let $\nu\in \mathcal M^1(\R)$. We define the "pushed" measure $s_t^-(\nu)\in \mathcal M^1(\R), t\in \R$ by 
\begin{align*}s_t^-(\nu):=\Leb|_{[t-\nu((-\infty,t]),t]}+\nu|_{[t,+\infty)}.\end{align*}
\end{definition}
\begin{lemma}\label{l-CollapsingPairsR}
Let $\nu_1,\nu_2\in \mathcal M^1(\R)$ be non-zero compactly supported measures, let $x\in \R$ and let $\delta\geq 0$. Then $\langle c_{1,x}(\nu_1),c_{1,x}(\nu_2)\rangle_\delta \leq \langle \nu_1,\nu_2\rangle_\delta $ and equality holds if and only if there exists $y\in \R$ such that $\nu_1,\nu_2$ are restrictions of the Lebesgue measure to intervals centered in $y$.
\end{lemma}
\begin{proof} 
Let $[-R,R]$ be an interval supporting both $\nu_1,\nu_2$.  Let $m_i=\nu_i(\R)$. The lemma is equivalent to the following statement: $$\langle \nu_1,\nu_2\rangle _{\delta }\geq \langle \Leb|_{[-m_1/2,m_1/2]},\Leb|_{[-m_2/2,m_2/2]}\rangle_\delta$$ and the equality holds if and only if $\nu_1,\nu_2$ are Lebesgue measures restricted to concentric intervals. 

Write $k(z)=\frac{1}{2}\log(z^2+\delta^2)$ for $z\in \R$. Let $f_i\in L^1(\R)$ be the density of $\nu_i$ and let $F_i(t):=\nu_i((-\infty,t])$. Put $P_i(x)=\int_\R k(x-y)d\nu_i(y).$ By Lemma \ref{l-WeakContinuity}, we may assume that $\langle \nu_1,\nu_2\rangle _{ \delta }$ is minimal among all pairs of measures $\nu'_1,\nu'_2$ subject to conditions $\supp( \nu_i')\subseteq [-R,R], \nu_i'(\R)=m_i$ and with densities at most $1$. We want to show that $\nu_1,\nu_2$ must be Lebesgue measures restricted to concentric intervals. 
Let $a_i:=\sup\{x\in \R|\: \nu_i((-\infty,x))=0\}, b_i:=\inf\{x\in \R|\: \nu_i((x,+\infty))=0\}, t_i:=\sup\{t\geq a_i|\: \nu_i([a_i,t])=t-a_i\}$ and $c_i:=\frac{1}{2}(a_i+t_i)$. Interval $[a_i,b_i]$ is the smallest interval containing the support of $\nu_i$ and $\nu_i$ restricted to $[a_i,t_i]$ is the Lebesgue measure. By minimality assumption, for every $\varepsilon>0$ we have \begin{align*}\langle s^-_{t_1+\varepsilon}(\nu_1)-\nu_1,\nu_2\rangle _{\delta } &\geq 0\\
\langle s^-_{t_2+\varepsilon}(\nu_2)-\nu_2,\nu_1\rangle _{ \delta } &\geq 0\end{align*} 
We claim that this forces $c_1=c_2$. For the sake of contradiction assume $c_1<c_2$. 
Consider two cases.\\
\textbf{Case 1}. Assume $a_{1}=t_{1}$. We want to show that $\langle s_{t_{1}+ \varepsilon}^{-}( \nu _{1}) - \nu _{1}, \nu _{2} \rangle _{\delta } <0$ which leads to the contradiction. We have 
\begin{align*} 
s_{t_{1}+ \varepsilon}^{-}( \nu _{1})- \nu_{1}=& \left( s_{ t_{1}+ \varepsilon}^{-}( \nu_{1}) - \nu _{1} \right) \mid _{[a_{1}, t_{1}+ \varepsilon - F_{1}(t_{1}+ \epsilon)]}+\left( s_{ t_{1}+ \varepsilon } ^{-}( \nu _{1})- \nu _{1} \right) \mid_{t_{1} + \varepsilon - F_{1}( t_{1} + \varepsilon), t_{1}+ \varepsilon]}\\=& - \nu _{1} \mid _{[a_{1}, t_{1}+ \varepsilon-F_{1}(t_{1}+ \varepsilon)]}+ \left( \Leb - \nu _{1} \right) \mid_{[t_{1} + \varepsilon - F_{1}( t_{1}+\epsilon ), t_{1}+ \epsilon ]}  
\end{align*} 
and 
\begin{equation*} 
\langle s_{t_{1}+ \varepsilon}^{-}( \nu _{1}) - \nu _{1}, \nu _{2} \rangle _{\delta} = \int _{t_{1}+ \varepsilon - F_{1}( t_{1}+ \varepsilon ) }^{t_{1}+ \varepsilon } P_{2}(x) d( \Leb - \nu _{1})(x)- \int_{a_{1}}^{t_{1}+ \varepsilon - F_{1}(t_{1}+ \epsilon )} P_{2}(x) d \nu _{1}(x).   
\end{equation*} 
Compute  
\begin{equation*} 
\int_{ t_{1} + \varepsilon - F_{1}(t_{1}+ \varepsilon )}^{ t_{1}+ \varepsilon} d( \Leb - \nu _{1} )(x)=F_{1}\left(t_{1}+ \varepsilon - F_{1}(t_{1}+ \varepsilon )\right)= \int _{a_{1}}^{t_{1}+ \varepsilon - F_{1}(t_{1}+ \varepsilon)} d \nu _{1} (x).  
\end{equation*} 
Let $\varepsilon $ be such that $a_{1} + \varepsilon < c_{2} $. Since the function $P_{2}(x)$ is strictly decreasing on the interval $[a_{1}, a_{1}+\varepsilon ]$ we have $\langle s_{t_{1}+ \varepsilon }^{-}(\nu _{1})- \nu _{1}, \nu _{2} \rangle _{\delta } < 0$.\\       
\textbf{Case 2.} Assume $a_{1}\neq t_{1}$. We can assume that $\varepsilon $ is small enough to satisfy $F_{1}(t_{1}+ \varepsilon ) > \varepsilon $. We have 
$$I_0:=\langle s^-_{t_1+\varepsilon}(\nu_1)-\nu_1,\nu_2\rangle _{\delta }=\int_{t_1}^{t_1+\varepsilon}(1-f_1(x))P_2(x)dx- \int_{a_1}^{t_1+\varepsilon-F_1(t_1+\varepsilon)} P_2(x)dx.$$ We can decompose $P_2(x)=\int_{a_2}^{t_2}k(y-x) dy+\int_{t_2}^{b_2}k(y-x)f_2(y)dy.$ Hence $I_0=I_1+I_2$ where \begin{align*}
I_1:=&\int_{t_1}^{t_1+\varepsilon} \int_{a_2}^{t_2}(1-f_1(x))k(y-x) dydx-\int_{a_1}^{t_1+\varepsilon-F_1(t_1+\varepsilon)} \int_{a_2}^{t_2} k(y-x) dydx\\
I_2:=&\int_{t_1}^{t_1+\varepsilon} \int_{t_2}^{b_2}(1-f_1(x))f_2(y) k(y-x) dydx-\int_{a_1}^{t_1+\varepsilon-F_1(t_1+\varepsilon)} \int_{t_2}^{b_2}f_2(y) k(y-x) dydx. 
\end{align*}
We sketch the computation showing that $I_0$ is negative for small enough $\varepsilon$. Let $K(t):= \int _{0}^{t} k(x) dx$. In what follows, $o(1)$ will mean a quantity converging to $0$ as $\varepsilon\to 0$. Rewrite $I_1$ as 
\begin{align*}
I_1=& \int_{t_1}^{t_1+\varepsilon} (1-f_{1}(x))(K(t_2-x)-K(a_2-x))dx- \int_{a_1}^{t_1+\varepsilon-F_1(t_1+\varepsilon)} (K(t_2-x)-K(a_2-x))dx\\=& \int_{t_1}^{t_1+\varepsilon} (1-f_{1}(x))(K(t_2-t_1)-K(a_2-t_1)+o(1))dx \\-&\int_{a_1}^{t_1+\varepsilon-F_1(t_1+\varepsilon)} (K(t_2-a_1)-K(a_2-a_1)+o(1))dx\\ =&(\varepsilon + F_{1}(t_{1})-F_{1}(t_{1}+ \varepsilon ) )(K(t_2-t_1)+K(a_2-a_1)-K(a_2-t_1)-K(t_2-a_1)+o(1)).
\end{align*}
We have \[K(t_2-t_1)+K(a_2-a_1)-K(a_2-t_1)-K(t_2-a_1)=\int_{a_2-t_1}^{t_2-t_1}k(x)dx -\int_{a_2-a_1}^{t_2-a_1}k(x)dx.\]
The function $k$ is  symmetric and increasing in $|x|$. It follows that $z\mapsto \int_{a_2-z}^{t_2-z}k(x)dx$ is increasing in $|c_2-z|$ unless $a_2=t_2$ in which case it is $0$. As $c_1<c_2$, we have $|c_2-t_1|<|c_2-a_1|$, so  $K(t_2-t_1)+K(a_2-a_1)-K(a_2-t_1)-K(t_2-a_1)<0$ unless $a_2=t_2$. 
Therefore, we either have  $a_2=t_2$ and $I_1=0$ or $I_1<0$ for $\varepsilon$ small enough. 

For $I_2$ we have 
\begin{align*}
I_2=&\int_{t_1}^{t_1+\varepsilon} (1-f_1(x))\int_{t_2}^{b_2}f_2(y)(k(y-t_1)+o(1))dydx\\-&\int_{a_1}^{t_1+\varepsilon-F_1(t_1+\varepsilon)} \int_{t_2}^{b_2}f_2(y)(k(y-a_1)+o(1))dydx.
\end{align*}
As soon as $c_1< c_2$, for $y\in [t_2,b_2]$ we have $|y-t_1|<|y-a_1|$ so $k(y-a_1)\geq k(y-t_1)+\kappa$ for some $\kappa>0$ and all $y\in [t_2,b_2]$. Therefore $I_2\leq (\varepsilon +F_{1}(t_{1})-F_1(t_1+\varepsilon))\nu_2([t_2,b_2])(-\kappa+o(1))$ and we deduce that either $\nu_2([t_2,b_2])=0$ and $I_2=0$ or $I_2<0$ for small enough $\varepsilon$.

Since $\nu_2([a_2,b_2])=m_2>0$, the cases $a_2=t_2$ and $\nu_2([t_2,b_2])=0$ are mutually exclusive.  In particular, for small $\varepsilon$ we would have $I_0<0$, contradicting the minimality of $\langle \nu_1,\nu_2\rangle _{ \delta }.$ We proved that $c_1=c_2$.
 
We proceed to show that $t_2=b_2$ which is equivalent to  $\nu_2([t_2,b_2])=0$. Assume that $\nu_2([t_2,b_2])>0$. Let us keep the notation for $K,I_0,I_1,I_2$ from the previous step. \textit{Case 1.} Assume $a_{1}\neq t_{1}$. We already know that $c_1=c_2$, so $K(t_2-t_1)+K(a_2-a_1)-K(a_2-t_1)-K(t_2-a_1)=0$ and consequently $I_1=o(\varepsilon -F_1(t_1+\varepsilon)+F_1(t_1)).$ 
For $y>t_2$ we will have $|y-t_1|<|y-a_1|,$ so $$\int_{t_2}^{b_2}f_2(y)k(y-t_1)dy-\int_{t_2}^{b_2}f_2(y)k(y-a_1)dy\leq -\kappa$$ for some $\kappa>0$. The upper bound on $I_2$ becomes $(\varepsilon -F_1(t_1+\varepsilon)+F_1(t_1))(-\kappa+o(1))$, so $I_1+I_2$ becomes negative for small enough $\varepsilon$. This contradicts the minimality of $\langle \nu_1,\nu_2\rangle _{\delta }$. \textit{Case 2.} Assume $a_{1}=t_{1}$. If $a_{2} \neq t_{2}$, then in the same way as in Case 2. of the proof of $c_1 =c_2$ we show that $\langle s_{t_{2}+ \varepsilon}^{-}( \nu _{2})- \nu _{2}, \nu_{1} \rangle _{\delta }=I_2 <0$. If $a_{2} =t _{2}$ then $a_1=a_2$, so the function $P_{2}(x)$ is strictly decreasing on the interval $[a_{1}, a_1+\varepsilon)$ for some small $\varepsilon$. Arguing as in the Case 1. of the proof of $c_1=c_2$ we get $\langle s_{t_{1}+ \varepsilon}^{-}( \nu _{1})- \nu _{1}, \nu _{2} \rangle _{\delta } <0$. Again this contradicts the minimality of $\langle \nu _{1}, \nu _{2} \rangle _{\delta } $. We proved that $t_2=b_2.$     

In the same way we show that $t_1=b_1$. We proved that $\nu_i$ is the Lebesgue measure restricted to $[a_i,b_i]$. Since $c_1=c_2$ the intervals $[a_1,b_1],[a_2,b_2]$ are concentric, as desired.

\end{proof}
\begin{remark}The proof above uses only the fact that $\log|z|$ is strictly increasing with $|z|$, symmetric with respect to the $y$-axis and that the singularity at $|z|=0$ is integrable. 
\end{remark} 
\begin{lemma}\label{l-CollapsingPairsC}
Let $\nu_1,\nu_2\in \mathcal M^1(\C)$ be non-zero compactly supported measures and $x\in \C$ . Then $\langle c_{1,x}(\nu_1),c_{1,x}(\nu_2)\rangle\leq \langle \nu_1,\nu_2\rangle$ and equality holds if and only if there is an $y\in \C$ such that $\nu_1,\nu_2$ are the restrictions of the Lebesgue measure to balls centered in $y$.
\end{lemma}
\begin{proof}
\textbf{ Step 1.} We define collapsing along a line $\ell$ in $\C$. First let us assume that $\ell$ is the real line $\R\subset \C$. Let $\nu$ be a finite measure on $\C$ of bounded density $f\in L^1(\C)$. For $x\in \R$ let $F(x)=\int_{-\infty}^{+\infty} f(x+iy)dy.$ We define $h\in L^1(\C)$ as $$h(x+iy)=\begin{cases} 1 & \textrm{ if } |y|\leq F(x)/2\\
0 & \textrm{ otherwise. }\end{cases}.$$ 
We write $c_{\R}(\nu)$ for the measure $h(x+iy)dxdy.$ Let $\nu_1,\nu_2\in \mathcal M^1(\C)$, we argue that $\langle \nu_1,\nu_2\rangle\geq \langle c_{\R}(\nu_1),c_{\R}(\nu_2)\rangle$ with an equality if and only if there exists $t\in\R$ such that $\nu_1,\nu_2$ are translates of $c_{\R}(\nu_1),c_{\R}(\nu_2)$ by $it$.  Let $f_1,f_2$ be the densities of $\nu_1,\nu_2$ respectively. For $x\in \R$ define $\nu_j^x\in \mathcal M^1(\R)$ by $d\nu_j^x(y)=f_j(x+iy)dy.$ We have 
\begin{align*}\langle \nu_1,\nu_2\rangle=&\int_\R\int_\R\int_\R\int_\R\log((x_1-x_2)^2+(y_1-y_2)^2)f_1(x_1+iy_1)f_2(x_2+iy_2)dx_1dy_1dx_2dy_2\\
=& 2 \int_\R\int_\R\langle \nu_1^{x_1},\nu_2^{x_2}\rangle_{|x_1-x_2|}dx_1dx_2\geq 2  \int_\R\int_\R\langle c_{1,0}(\nu_1^{x_1}),c_{1,0}(\nu_2^{x_2})\rangle_{|x_1-x_2|}dx_1dx_2\\
=& \int_\R\int_{-F(x_1)/2}^{F(x_1)/2}\int_\R\int_{-F(x_2)/2}^{F(x_2)/2}\log((x_1-x_2)^2+(y_1-y_2)^2)dx_1dy_1dx_2dy_2\\
=& \langle c_{\R}(\nu_1),c_{\R}(\nu_2)\rangle.
\end{align*} The inequality in the second line holds by Lemma \ref{l-CollapsingPairsR} with equality if and only if $\nu_1^{x_1},\nu_2^{x_2}$ are Lebesgue measures restricted to concentric intervals for almost every $x_1,x_2\in \R$. Call $t$ the common center of these intervals. If the equality holds, then $\nu_1,\nu_2$ are translates of $c_{\R}(\nu_1),c_{\R}(\nu_2)$ by $it$.

For $\ell\neq \R$ we choose any isometry $\iota$ of $\C$ such that $\iota(\ell)=\R$ and put $c_{\ell}(\nu)=\iota^{-1}(c_{\R}(\iota^*\nu)).$ Like before we have that $\langle \nu_1,\nu_2\rangle\geq \langle c_{\ell}(\nu_1),c_{\ell}(\nu_2)\rangle$ with an equality if and only if there exists $z\in \ell^{\bot}$ such that $\nu_1,\nu_2$ are translates of $c_{\R}(\nu_1),c_{\R}(\nu_2)$ by $z$. Equivalently we have $\langle \nu_1,\nu_2\rangle= \langle c_{\ell}(\nu_1),c_{\ell}(\nu_2)\rangle$ if and only if there exists a line $\ell'$ parallel to $\ell$ such that $\nu_j=c_{\ell'}(\nu_j)$ for $j=1,2$.

\textbf{ Step 2.} Let $m_i=\nu_i(\C)$ and let $B_1, B_2$ be closed balls of volumes $m_1,m_2$ respectively, centered at $0$. We show that for every $\nu_1,\nu_2\in \mathcal M^1(\C)$ compactly supported we have either $\langle \nu_1,\nu_2\rangle> \langle \Leb |_{ B_1}, \Leb |_{B_2} \rangle$ or $\nu_1,\nu_2$ are the Lebesgue measure restricted to concentric balls. 

Let $R>0$ be such that $\supp\, \nu_i\subseteq B_{\C}(0,R)$ for $i=1,2$. By Lemma \ref{l-WeakContinuity}, there exists a pair of measures $\nu_1',\nu_2'\in\mathcal M^1(\C)$ supported on $B_\C(0,R)$ with $\nu_1'(\C)=m_1,\nu_2'(\C)=m_2$ such that 
$$\langle\nu_1',\nu_2'\rangle =\min\left\{\langle\mu_1,\mu_2\rangle| \mu_1,\mu_2\in\mathcal M^1(B_\C(0,R)), \mu_1(\C)=m_1,\mu_2(\C)=m_2\right\}.$$
We either have $\langle \nu_1,\nu_2\rangle> \langle\nu_1',\nu_2'\rangle$ or we can assume that $\nu_i=\nu_i'$ for $i=1,2$.  Choose $z,w\in\C$ such that $\arg(z)-\arg(w)\not \in \pi\Q$. By Step $1$ and the choice of $\nu_1',\nu_2'$, we have $\langle c_{z\R}(\nu_1'),c_{z\R}(\nu_2')\rangle=\langle c_{w\R}(\nu_1'),c_{w\R}(\nu_2')\rangle=\langle \nu_1',\nu_2'\rangle.$ Hence, by Step 1 there exist lines $\ell_1,\ell_2$ parallel to $z\R,w\R$ respectively such that $\nu_i'=c_{\ell_j}(\nu_i')$ for $i=1,2$ and $j=1,2$. By translating $\nu_1',\nu_2'$ if necessary we may assume that $\ell_1=z\R, \ell_2=w\R$. Being collapsed implies that densities of $\nu_1,\nu_2$ are characteristic functions of measurable sets, so we have $\nu_i=\Leb|_{I_i}$ for some bounded measurable sets $I_i$. Let $s_i$ be the orthogonal reflection in $\ell_i$ for $i=1,2$. Since $\nu_1',\nu_2'$ are collapsed along $\ell_1,\ell_2$, they are invariant under the group $S$ of isometries generated by $s_1,s_2$. Since $\arg(z)-\arg(w)\not \in \pi\Q$, the group $S$ is dense in ${\rm O}(2)$ (the orthogonal group of $\C$ seen as $\R^2$). We deduce that $I_1,I_2$ must be (up to a measure $0$ set) closed balls $B_1,B_2$ respectively. 

\end{proof}
As an easy consequence of Lemma \ref{l-CollapsingPairsR} and Lemma \ref{l-CollapsingPairsC} we get
\begin{lemma}\label{l-CollapsingPairsV}Let $V=\R^{r_1}\times \C^{r_2}$. Let $\nu_1,\nu_2\in \mathcal M^1(V)$ be non-zero compactly supported measures and $v=(v_1,\ldots,v_d)\in V$. Let $i \in \lbrace 1, \ldots , d \rbrace$. Then $\langle c_{i,v_i}(\nu_1),c_{i,v_i}(\nu_2)\rangle\leq \langle \nu_1,\nu_2\rangle$ and equality holds if and only if there is an $w=(w _{1}, \ldots , w_{d})\in V$ such that $\nu_1=c_{i,w _{i} }(\nu_1)$ and $\nu_2=c_{i, w_{i}}(\nu_2)$.
\end{lemma}
\begin{proof}
Assume without loss of generality that $v=0$. We will first treat the case where $i$ corresponds to a real place. Write $V^i=\left\{v\in V| v_i=0\right\}$ and $e_i=(0,\ldots,0,1,0,\ldots, 0)\in V$ where the unique non-zero entry is placed on the $i$-th coordinate. Let $f_1,f_2$ be the densities of $\nu_1,\nu_2$.  For $x\in V^i$ and $j=1,2$ define the measure  $\nu_j^{x}$ on $\R$ as $d\nu_j^{x}(t)=f_j(x+te_i)dt.$ Note that for every $g\in L^1(V)$ we have $$\int_V g(v)d\nu_j(v)=\int_{V^i}\int_\R g(x+te_i)d\nu_j^x(t)dx.$$ By Lemma \ref{l-CollapsingPairsR}, we get
\begin{align*}
\langle \nu_1,\nu_2\rangle=&\int_{V^i}\int_{V^i}\langle \nu_1^{x},\nu_2^{y}\rangle dxdy\\
\geq& \int_{V^i}\int_{V^i}\langle c_{1,v_i}(\nu_1^{x}),c_{1,v_i}(\nu_2^{y})\rangle dxdy\\
=& \langle c_{i,v_i}(\nu_1),c_{i,v_i}(\nu_2)\rangle.
\end{align*} By Lemma \ref{l-CollapsingPairsR}, the equality holds if and only if there exists $w _{i} \in\R$ such that for all $x,y\in V^i$ the measures $\nu_1^x,\nu_2^y$ are the Lebesgue measure restricted to intervals centered in $w _{i} \in\R$. In that case we also have  $\nu_1=c_{i, w _{i} }(\nu_1)$ and $\nu_2=c_{i, w _{i} }(\nu_2).$ If $i$ corresponds to a complex case the proof is almost identical but we use Lemma \ref{l-CollapsingPairsC} in place of Lemma \ref{l-CollapsingPairsR}.
\end{proof}
\begin{lemma}\label{l-CollapsingAlternative}
Let $\nu\in \mathcal P^1(V)$ be a compactly supported measure and let $i\in\left\{1,\ldots,d\right\},v_i\in \R$ if $i\in\left\{1,\ldots,r_1\right\}$ or $v_i\in \C$ otherwise. Then either $I(c_{i,v_i}(\nu))<I(\nu)$ or $I(c_{i,v_i}(\nu))=I(\nu)$ and there exists $v_i'$ such that $\nu=c_{i,v_i'}(\nu).$
\end{lemma}
\begin{proof}
Use Lemma \ref{l-CollapsingPairsV} for $\nu_1=\nu_2=\nu$.
\end{proof}
As a consequence of Lemma \ref{l-CollapsingAlternative} we get:
\begin{corollary}\label{c-MinimalCollapsing}
Let $\nu\in \mathcal P^1(V)$ be a measure minimizing the energy $I(\nu)$ on $\mathcal P^1(V)$ among compactly supported measures. Then there exists $v=(v_1,\ldots,v_d)\in V$ such that $c_{i,v_i}(\nu)=\nu$ for every $i=1,\ldots,d.$
\end{corollary}
\section{Limit measures and energy}\label{s-LimitMeasures}
Let $(n_i)_{i=1}^\infty $ be an increasing sequence of natural numbers. Let $k$ be a number field and assume that $(\mathcal{S}_{n_i})_{i\in \N}$ is a sequence of $n_i$-optimal sets in $\O_k$. By Corollary \ref{c-CompactFrame}, there is a compact cylinder $\Omega$ and sequences $(t_{n_i})_{i\in\N}\subset V, (s_{n_i})_{i\in\N}\subset V$ such that $\|s_{n_i}\|=n_i|\Delta_k|^{1/2}$ and $s_{n_i}^{-1}(\mathcal S_{n_i}-t_{n_i})\subset \Omega.$ Define a sequence of measures
\begin{equation}\label{e-defLimit}
\mu_{n_i}:=\frac{1}{n_i}\sum_{x\in \mathcal{S}_{n_i}} \delta_{s_{n_i}^{-1}(x-t_{n_i})}.
\end{equation}
Since $\Omega$ is compact we can assume, passing to a subsequence if necessary, that $\mu_{n_i}$ converges weakly-* to a probability measure $\mu$. This observation uses crucially Corollary \ref{c-CompactFrame} and is the key step in the proof of Theorem \ref{mt-noptimal}. Such limit measures are the central object of study in this section. 
\begin{definition}A probability measure $\mu$ on $V$ is called a \textbf{limit measure} if it is a weak-* limit of a sequence of measures $\mu_{n_i}$ constructed as above.
\end{definition}
\subsection{Density of limit measures}\label{s-Density} Let $\nu$ be a probability measure on $V$, absolutely continuous with respect to the Lebesgue measure on $V$. The density of $\nu$ is the unique non-negative function $f\in L^1(V)$ such that $d\nu=f(t)dt$ where $dt$ is the Lebesgue measure. We say that $\nu$ is of density at most $D$ if $f(t)\leq D$ for Lebesgue-almost all $t\in V$.
\begin{lemma}\label{l-DensityLimitMeasure}Any limit measure $\mu$ on $V$ is of density at most $1$.
\end{lemma}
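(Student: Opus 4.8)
The plan is to show that the measure $\mu$ cannot concentrate too much mass on any small ball, using the fact that the $n_i$-optimal sets $\mathcal S_{n_i}$ are almost uniformly distributed modulo prime powers, which bounds how many points of $\mathcal S_{n_i}$ can fall into a small region after rescaling. Concretely, suppose for contradiction that $\mu$ has density exceeding $1$ on a set of positive measure; then by the Lebesgue differentiation theorem there is a point $v_0\in V$ and a radius $r>0$ (which we may take arbitrarily small) such that $\mu(B)/\Leb(B) > 1+\eta$ for some $\eta>0$, where $B$ is a small cylinder (product of balls) of center $v_0$ and multiradius $r$. By the portmanteau theorem applied to the open set $\mathrm{int}\,B$ and weak-* convergence $\mu_{n_i}\to\mu$, we get $\liminf_i \mu_{n_i}(B)\geq \mu(\mathrm{int}\,B)$, so for $i$ large
$$\frac{1}{n_i}\,\bigl|\mathcal S_{n_i}\cap\bigl(s_{n_i}B+t_{n_i}\bigr)\bigr|\;\geq\;(1+\tfrac{\eta}{2})\,\Leb(B).$$
Since $\|s_{n_i}\|=n_i|\Delta_k|^{1/2}$, the cylinder $s_{n_i}B+t_{n_i}$ has Lebesgue volume $n_i|\Delta_k|^{1/2}\Leb(B)$, so the displayed inequality says this cylinder contains more than $(1+\tfrac{\eta}{2})\Leb(B)\,n_i$ points of $\O_k$, i.e.\ strictly more than a $(1+\tfrac{\eta}{2})$-fraction of what the covolume of $\O_k$ in $V$ (namely $|\Delta_k|^{1/2}$) would predict.

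The key point is then to show that an $n$-optimal set $\mathcal S$ cannot have such an excess of points in a cylinder. The cleanest route is through the almost-uniform distribution modulo a prime power: for any prime ideal $\frak p$ with residue field of size $q$ and any $l\geq 1$, the set $\mathcal S$ meets each coset of $\frak p^l$ in either $\lfloor (n+1)/q^l\rfloor$ or $\lceil (n+1)/q^l\rceil$ elements by \eqref{e-ConditionAUE}--\eqref{e-ConditionAUE2}. Choosing $\frak p^l$ large enough that its fundamental domain (a cylinder in $V$ of covolume $q^l|\Delta_k|^{1/2}$, namely $s_{n_i}B+t_{n_i}$ translated to contain lattice cosets) approximately fits inside the cylinder $s_{n_i}B+t_{n_i}$, and covering $s_{n_i}B+t_{n_i}$ by at most $\lceil \Leb(B)n_i/q^l\rceil$ such cosets up to a boundary error $O(\partial)$ which is lower order, we get
$$\bigl|\mathcal S_{n_i}\cap\bigl(s_{n_i}B+t_{n_i}\bigr)\bigr|\;\leq\;\Bigl(\tfrac{\Leb(B)n_i}{q^l}+O(\text{boundary})\Bigr)\cdot\Bigl(\tfrac{n_i+1}{q^l}\Bigr)\cdot q^{-l}\cdot q^l\;\leq\;(1+o(1))\,\Leb(B)\,n_i,$$
which contradicts the lower bound once $\eta>0$. (One needs to choose the parameter $l$, and the scaling of $B$, so that $q^l$ is comparable to $1/\Leb(B)$ times a constant; this is where the freedom to shrink $r$ is used, together with the boundary term of the cylinder being genuinely lower order than its volume.)

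The main obstacle I anticipate is the geometric matching between the rescaled cylinder $s_{n_i}B+t_{n_i}$ and the fundamental domains of $\O_k$ modulo $\frak p^l$: the latter are parallelepipeds adapted to the lattice $\frak p^l$, not axis-aligned cylinders, so one must either argue via a box-counting/covering estimate with controlled boundary error, or invoke directly the volume formula for $n$-optimal sets from \cite[Corollary 5.2]{BFS2017} to rule out the concentration. A slicker alternative, which the paper may well prefer, is purely measure-theoretic: note that $\mu_{n_i}$ is $(1/n_i)$ times a sum of point masses at points of the lattice $s_{n_i}^{-1}(\O_k - t_{n_i})$, whose covolume is exactly $|\Delta_k|^{1/2}/(n_i|\Delta_k|^{1/2})\cdot\;$wait$\;$— after rescaling by $s_{n_i}$ with $\|s_{n_i}\|=n_i|\Delta_k|^{1/2}$ the image lattice has covolume $1/n_i$, hence at most $(1+o(1))\Leb(B)n_i$ of its points lie in any fixed bounded cylinder $B$; since $\mathcal S_{n_i}$ is a subset of $\O_k$ this immediately gives $\mu_{n_i}(B)\le (1+o(1))\Leb(B)$, and passing to the weak-* limit yields $\mu(B)\le\Leb(B)$ for all cylinders $B$, whence density at most $1$ by Lebesgue differentiation. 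This second argument avoids $\frak p$-adic considerations entirely and is, I expect, the intended proof; the only thing to be careful about is the uniform $o(1)$ control on lattice-point counts in cylinders of bounded size as the covolume shrinks, which is standard.
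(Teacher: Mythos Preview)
Your second (``slicker'') argument is exactly the paper's proof: bound $\mu_{n_i}(\mathcal C)\le \tfrac{1}{n_i}|\O_k\cap(s_{n_i}\mathcal C+t_{n_i})|$, use that $s_{n_i}\mathcal C+t_{n_i}$ has volume $n_i|\Delta_k|^{1/2}\Leb(\mathcal C)$ while $\O_k$ has covolume $|\Delta_k|^{1/2}$, and pass to the limit; the first approach via almost-uniform distribution modulo $\frak p^l$ is unnecessary. The one point you flag as ``standard'' is in fact the only delicate step: the rescaled lattice $s_{n_i}^{-1}\O_k$ has covolume $1/n_i$ but may be arbitrarily distorted (since $s_{n_i}$ is only constrained in norm, not coordinatewise), so the naive lattice-point count in a fixed cylinder need not hold with uniform error; the paper handles this by noting that $\O_k$ is $\O_k^\times$-invariant, so one can first multiply $s_{n_i}\mathcal C$ by a unit to make it thick in every coordinate before counting.
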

\begin{proof}
Let $(n_i)_{i\in \N}$ and let $(\mu_{n_i})_{i\in\N}$ be a sequence of measures defined as in (\ref{e-defLimit}) such that $\mu$ is the weak-* limit of $\mu_{n_i}$ as $i\to\infty$. By Lebesgue differentiation theorem it is enough to verify that $\mu(\mathcal C)\leq \Leb(\mathcal C)$ for every bounded cylinder $\mathcal C\subset V$. We have 
$$\mu_{n_i}(\mathcal C)=\frac{1}{n_i}|\mathcal{S}_{n_i}\cap (s_{n_i}\mathcal{C}+t_{n_i})|\leq \frac{1}{n_i}|\O_k\cap  (s_{n_i}\mathcal{C}+t_{n_i})|.$$
Put $\mathcal{C}_i=s_{n_i}\mathcal{C}+t_{n_i}$. Since $\|s_{n_i}\|=|\Delta_k|^{1/2}n_i$ the cylinder $\mathcal C_i$ has volume $|\Delta_k|^{1/2}n_i\Leb(\mathcal C).$ As $\O_k$ is a lattice of covolume $|\Delta_k|^{1/2}$ we get\footnote{This does not work for a general lattice $\Lambda\subset V$. However, we know that $\O_k$ is invariant under multiplication by $\O_k^\times$ so we can multiply $\mathcal C_i$ by an element of $\O_k^\times$ so that it becomes "thick" in every direction.} $|\O_k\cap \mathcal C_i|=|\Delta_k|^{-1/2}\Leb(C_i)+o(\Leb(C_i)).$ Hence
$$\mu(\mathcal{C})=\lim_{i\to\infty}\mu_{n_i}(\mathcal{C})\leq \lim_{i\to\infty}\frac{1}{n_i}\left(n_i\Leb(\mathcal C)+o(n_i \Leb(\mathcal C))\right)=\Leb(\mathcal C).$$
\end{proof}
\subsection{Energy of limit measures}\label{s-Energy}
 Let $\nu$ be a finite measure on $V$ and write $\Delta(V)=\left\{(v,v)| \, v\in V\right\}\subset V\times V$. We define the off-diagonal energy $I'(\nu)$ as
\begin{align*}
I'(\nu)=&\int_{V\times V\setminus \Delta(V)} \log\|x-y\|d\nu(x)d\nu(y)\\ 
\end{align*} provided that the integral converges. The integral converges for all compactly supported measures of bounded density. We shall compute the energy of the limit measures and show that they minimize the energy among all compactly supported probability measures of density at most one.
\begin{proposition}\label{p-LimitEnergy}
Let $k$ be a number field, let $V=k\otimes_\Q \R$ and suppose that $\mu$ is a limit measure on $V$.  Then
  $I(\mu)=-\frac{1}{2}\log|\Delta_k|-\frac{3}{2}-\gamma_k+\gamma_\Q$ where $\gamma_k,\gamma_\Q$ are Euler--Kronecker constants of $k,\Q$ respectively.

\end{proposition}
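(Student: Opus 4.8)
The plan is to relate the energy $I(\mu)$ to the discretized energies $I'(\mu_{n_i})$ of the approximating empirical measures, to compute the latter from the volume formula, and to transfer the result to $I(\mu)$ via a smearing procedure. Fix the data $(\mathcal S_{n_i}),(s_{n_i}),(t_{n_i})$ underlying $\mu$ and write $\phi_i(x)=s_{n_i}^{-1}(x-t_{n_i})$, so $\mu_{n_i}=\frac1{n_i}\sum_{x\in\mathcal S_{n_i}}\delta_{\phi_i(x)}$ is supported on the fixed cylinder $\Omega$ of Corollary \ref{c-CompactFrame}. Since $\phi_i$ is injective and $\|\phi_i(x)-\phi_i(y)\|=\|s_{n_i}^{-1}(x-y)\|=\|x-y\|/(n_i|\Delta_k|^{1/2})$, summing over the $(n_i^2+n_i)$ ordered pairs $x\neq y$ gives
\[
I'(\mu_{n_i})=\frac1{n_i^2}\Big(\sum_{x\neq y\in\mathcal S_{n_i}}\log\|x-y\|-(n_i^2+n_i)\log(n_i|\Delta_k|^{1/2})\Big).
\]
Substituting the asymptotic formula for $\log\|\Vol(\mathcal S_{n_i})\|=\sum_{x\neq y\in\mathcal S_{n_i}}\log\|x-y\|$ coming from \cite[Corollary 5.2]{BFS2017} (equivalently (\ref{e-volume}) together with the estimate for $\log N_{k/\Q}(n!_k)$) and simplifying, a direct computation identifies $\lim_i I'(\mu_{n_i})$ with the right-hand side of the proposition, which I denote by $c$. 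It then remains to prove $I(\mu)=c$.

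For the inequality $I(\mu)\geq c$ I would use a lower truncation. For $M>0$ set $f_M(x,y)=\max\{\log\|x-y\|,-M\}$, a bounded continuous function on $V\times V$; separating the $n_i+1$ diagonal terms, $I'(\mu_{n_i})\leq\int_{V\times V}f_M\,d\mu_{n_i}d\mu_{n_i}+(n_i+1)M/n_i^2$. Letting $i\to\infty$ (weak-$*$ convergence $\mu_{n_i}\otimes\mu_{n_i}\to\mu\otimes\mu$ tested against the bounded continuous $f_M$) and then $M\to\infty$ (monotone convergence, using that $I(\mu)$ is finite — here $\mu$ is compactly supported, has density at most one by Lemma \ref{l-DensityLimitMeasure}, and $-\log\|\cdot\|$ is locally $\Leb$-integrable on $V$) gives $\limsup_i I'(\mu_{n_i})\leq I(\mu)$, whence $I(\mu)\geq c$.

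For the reverse inequality I would smear each atom of $\mu_{n_i}$ over a cell of the rescaled lattice $\Lambda_i:=s_{n_i}^{-1}\O_k$. The conceptual heart of the argument is that $\Lambda_i$ is isotropic at scale $n_i^{-1/N}$, no matter how anisotropic $s_{n_i}$ is: as $\O_k$ is stable under $\O_k^\times$, one can choose a unit $\xi_i$ with $\xi_i^{-1}s_{n_i}$ balanced — all archimedean absolute values $\asymp n_i^{1/N}$, which is possible by Dirichlet's unit theorem, reducing the trace-zero part of $\log s_{n_i}$ modulo the unit lattice — and then $\Lambda_i=(\xi_i^{-1}s_{n_i})^{-1}\O_k$ is the image of the \emph{fixed} lattice $\O_k$ under a diagonal map with all entries $\asymp n_i^{-1/N}$, so all its successive minima are $\asymp n_i^{-1/N}$. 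Thus $\Lambda_i$ has a fundamental domain $D_i$ (of $\Leb$-volume $1/n_i$) with $\diam D_i\asymp n_i^{-1/N}\to 0$. Setting $\tilde\mu_i:=\frac{n_i}{n_i+1}\sum_{x\in\mathcal S_{n_i}}\Leb|_{\phi_i(x)+D_i}$: since the $\Lambda_i$-translates of $D_i$ tile $V$, this is a probability measure of density at most one, supported in a fixed compact set, hence $\tilde\mu_i\in\mathcal P^1(V)$, and $\tilde\mu_i\to\mu$ weakly-$*$ because $\diam D_i\to0$. The same truncation as above (with $\tilde\mu_i$ in place of $\mu_{n_i}$ and using $f_M\geq\log\|\cdot\|$) then gives $I(\mu)\leq\liminf_i I(\tilde\mu_i)$, so it suffices to prove $I(\tilde\mu_i)=I'(\mu_{n_i})+o(1)$.

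The identity $I(\tilde\mu_i)=I'(\mu_{n_i})+o(1)$ is the technical core and the step I expect to be the main obstacle. Expanding $I(\tilde\mu_i)$ over pairs of cells, the $n_i+1$ diagonal pairs each contribute $O(n_i^{-2}\log n_i)$ and are negligible. For the off-diagonal pairs I would fix a slowly growing threshold $C_i=\log n_i$: if every archimedean coordinate of $\phi_i(x)-\phi_i(y)$ has absolute value $\geq C_i\diam D_i$, then for $(u,v)\in(\phi_i(x)+D_i)\times(\phi_i(y)+D_i)$ one has $\log\|u-v\|=\log\|\phi_i(x)-\phi_i(y)\|+O(1/C_i)$, so these good pairs reproduce $I'(\mu_{n_i})$ up to $O(1/C_i)=o(1)$. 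The remaining bad pairs have $\phi_i(x)-\phi_i(y)$ lying in a union of $d$ slabs of thickness $\asymp C_i n_i^{-1/N}$; since $\Lambda_i$ is isotropic at scale $n_i^{-1/N}$ and has covolume $n_i^{-1}$, these slabs contain $\ll C_i n_i^{(N-1)/N}$ lattice points, each arising from at most $n_i+1$ pairs, while for every such pair both $\log\|\phi_i(x)-\phi_i(y)\|$ and the corresponding cell-average lie in $[-O(\log n_i),O(1)]$ (using $\|\phi_i(x)-\phi_i(y)\|\geq(n_i|\Delta_k|^{1/2})^{-1}$); the total bad contribution is $\ll n_i^{-2}\cdot C_i n_i^{(2N-1)/N}\cdot\log n_i=o(1)$. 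Combining the pieces, $I(\mu)\leq\liminf_i I(\tilde\mu_i)=\liminf_i I'(\mu_{n_i})\leq\limsup_i I'(\mu_{n_i})\leq I(\mu)$, forcing $I(\mu)=c$.
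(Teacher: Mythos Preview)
Your overall architecture is different from the paper's and is attractive: instead of bounding $|I'(\mu_{n_i})-I_T(\mu_{n_i})|$ directly for the \emph{atomic} measures (which is what the paper does, via a count of small-norm lattice points in a cylinder using a good fundamental domain, the unit group, and Lemma~\ref{l-IdealsEst}), you smear $\mu_{n_i}$ into $\tilde\mu_i\in\mathcal P^1(V)$, compare $I(\tilde\mu_i)$ with $I'(\mu_{n_i})$, and pass the energy to the limit. The unit-isotropization of $\Lambda_i=s_{n_i}^{-1}\O_k$ is exactly the right observation.

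There is, however, a genuine gap in the direction of one inequality. You write that ``the same truncation \ldots\ using $f_M\geq\log\|\cdot\|$ gives $I(\mu)\leq\liminf_i I(\tilde\mu_i)$''. But from $f_M\geq\log\|\cdot\|$ one gets $\int f_M\,d\tilde\mu_i\,d\tilde\mu_i\geq I(\tilde\mu_i)$, and passing to the weak-$*$ limit in $i$ and then $M\to\infty$ yields
\[
I(\mu)=\lim_{M\to\infty}\int f_M\,d\mu\,d\mu\;=\;\lim_{M\to\infty}\lim_{i\to\infty}\int f_M\,d\tilde\mu_i\,d\tilde\mu_i\;\geq\;\limsup_{i} I(\tilde\mu_i),
\]
the \emph{reverse} of what you need. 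Lower truncation witnesses upper semicontinuity of $I$ --- the same direction you already used for $I(\mu)\geq c$ --- and does not close the sandwich.

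The repair is short, but it is an extra step you have not supplied: on $\mathcal P^1(K)$ for $K$ compact, $I$ is in fact weak-$*$ \emph{continuous}, because the tail is controlled \emph{uniformly} by the density bound. For any $\nu\in\mathcal P^1(K)$ with density $f\leq 1$,
\[
0\leq\int f_M\,d\nu\,d\nu-I(\nu)
=\iint_{\|x-y\|<e^{-M}}\bigl(-M-\log\|x-y\|\bigr)f(x)f(y)\,dx\,dy
\leq \Leb(K)\int_{(K-K)\cap\{\|z\|<e^{-M}\}}\bigl(-\log\|z\|\bigr)\,dz,
\]
and the right-hand side is independent of $\nu$ and tends to $0$ by dominated convergence (it is dominated by $\mathbf 1_{K-K}\,|\log\|z\||\in L^1$). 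Thus $I_M\to I$ uniformly on $\mathcal P^1(K)$, whence $I(\tilde\mu_i)\to I(\mu)$. Combined with your $I(\tilde\mu_i)=I'(\mu_{n_i})+o(1)$ and $I'(\mu_{n_i})\to c$, this gives $I(\mu)=c$ directly, and the separate upper/lower arguments become unnecessary. By comparison, the paper establishes the analogous uniform tail bound for the atomic $\mu_{n_i}$ themselves (its inequality $-\tfrac{n_i+1}{n_i}C_{25}T^{d-1}e^{-T}\leq I'(\mu_{n_i})-I_T(\mu_{n_i})$), which is where the arithmetic counting is needed; your smearing trades that work for the elementary estimate above, at the cost of the cell-by-cell comparison you sketch.
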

\begin{proof} Let us fix a sequence $(\mu_{n_i})_{i\in \N}$ of measure defined as in (\ref{e-defLimit}) such that $\mu$ is the weak-* limit of $\mu_{n_i}$ as $i\to\infty$. Observe that by the energy formula \cite[Corollary 5.2]{BFS2017} 
$$\sum_{x\neq y\in\mathcal S_{n_i}}\log\|x-y\|=n_i^2\log n_i+n_i^2(-\frac{3}{2}-\gamma_k+\gamma_{\Q})+o(n_i^2).$$
We have
\begin{align*}I'(\mu_{n_i})=&\frac{1}{n_i^2}\sum_{x\neq y\in \mathcal S_{n_i}}\log\|s_{n_i}^{-1}(x-y)\| \\ = & -\frac{n_i+1}{n_i}\log\|s_{n_i}\|+\frac{1}{n_i^2}(n_i^2\log n_i+n_i^2(-\frac{3}{2}-\gamma_k+\gamma_\Q)+o(n_i^2))\\=&-\frac{1}{2}\log|\Delta_k|-\frac{3}{2}-\gamma_k+\gamma_\Q+o(1).
\end{align*}
Our task is reduced to proving that $\lim_{i\to\infty}I'(\mu_{n_i})=I(\mu).$ This doesn't simply follow from the weak-* convergence because the logarithm is not continuous in $0$. We remedy that by approximating the logarithm by a well chosen family of continuous functions.

 Let $T>0$. For $x>0$ put $\log^T x:=\max\left\{-T,\log x\right\}$ and let $\log^T 0:=-T$. For any compactly supported probability measure $\nu$ on $V$ put:
\begin{equation}\label{e-TEneregy}
I_T(\nu)=\int_{V \times V}\log^T\|x-y\|d\nu(x)d\nu(y).
\end{equation} 
Note that we integrate over the diagonal as well. The function $\log^T$ is continuous so we get $\lim_{i\to\infty}I_T(\mu_{n_i})=I_T(\mu).$ On the other hand, by Lebesgue dominated convergence theorem we have $\lim_{T\to\infty}I_T(\mu)=I(\mu)$ so $I(\mu)=\lim_{T\to\infty}\lim_{i\to\infty}I_T(\mu_{n_i}).$ We estimate the difference $I_T(\mu_{n_i})-I'(\mu_{n_i}).$ 
\begin{align}\label{e-TDifference}
I'(\mu_{n_i})-I_T(\mu_{n_i})=& \frac{T(n_i+1)}{n_i^2}+\frac{1}{n_i^2}\sum_{\substack{x\neq y\in \mathcal S_{n_i}\\ \|s_n^{-1}(x-y)\|\leq e^{-T}}}\left( \log\|s_n^{-1}(x-y)\|+T\right)\\
 =& \frac{T(n_i+1)}{n_i^2}+\frac{1}{n_i^2}\sum_{\substack{x\neq y\in \mathcal S_{n_i}\\ \|x-y\|\leq \|s_{n_i}\|e^{-T}}}\left( \log\|x-y\|-\log\|s_{n_i}\|+T\right)
\end{align}
Hence 
\begin{equation}\label{eq:55}
- \frac{1}{n_i^2}\sum_{\substack{x\neq y\in \mathcal S_{n_i}\\ \|x-y\|\leq \|s_{n_i}\|e^{-T}}}(\log\|s_{n_i}\|-\log\|x-y\|-T)  \leq I'(\mu_{n_i})-I_T(\mu_{n_i})\leq \frac{T(n_i+1)}{n_i^2}. 
\end{equation}
We proceed to estimate the left hand side. Note that by Corollary \ref{c-CompactFrame} and our choice of $s_{n_i},t_{n_i}$ there is a compact cylinder $\Omega=B_{\R}(0,A)^{r_1}\times B_{\C}(0,A)^{r_2}$ such that $\mathcal S_{n_i}\subset s_{n_i}\Omega+t_{n_i}$. Let $\Omega'=B_{\R}(0,2A)^{r_1}\times B_{\C}(0,2A)^{r_2}.$ Then for every $x,y\in\mathcal S_{n_i}$ we have $x-y\in s_{n_i}\Omega'$. Hence
\begin{align}
\sum_{\substack{x\neq y\in \mathcal S_{n_i}\\ \|x-y\|\leq \|s_{n_i}\|e^{-T}}} (\log\|s_{n_i}\|-\log\|x-y\|-T)\leq \sum_{x\in S_{n_i}}\sum_{\substack{0\neq z\in s_{n_i}\Omega'\cap \O_k\\ 
\|z\|\leq\|s_{n_i}\|e^{-T}}}(\log\|s_{n_i}\|-\log\|z\|-T) \\
=(n_i+1) \sum_{\substack{0\neq z\in s_{n_i}\Omega'\cap \O_k\\ 
\|z\|\leq\|s_{n_i}\|e^{-T}}}(\log\|s_{n_i}\|-\log\|z\|-T) .\label{e-SumCylinder}
\end{align}
Let us fix a good fundamental domain $\mathcal F$ of $\O_k^\times$ acting on $V^\times$ (see Definition \ref{d-GoodFD}) and a basis $\xi_1,\ldots,\xi_{d-1}$ of a maximal torsion free subgroup of $\O_k^\times$ together with the associated norm $\|\cdot\|_\infty$ on $\O_k^\times$ (see above Lemma \ref{l-NormIneq}). We can write $s_{n_i}=v\lambda_0$ with $\lambda_0\in \O_k^\times, v\in \mathcal F$ and $\|v\|=\|s_{n_i}\|=n_i|\Delta_k|^{1/2}.$ Put $A_4:=2|\Delta_k|^{1/2N}C_0 A$. By Lemma \ref{l-cone}, we have 
$$\lambda_0^{-1}s_{n_i}\Omega'=v\Omega'\subseteq \Omega'':=B_{\R}(0,n_i^{1/N}A_4)^{r_1}\times B_{\C}(0,n_i^{1/N}A_4)^{r_2}.$$ By Lemma \ref{l-NormIneq} and Lemma \ref{l-cone}, for every $x\in \mathcal F$ and $\lambda\in\O_k^\times$ such that $x\lambda\in \Omega''$ we have $$\|\lambda\|_\infty \leq \alpha^{-1}(\log (n_i^{1/N}\|x\|^{-1/N}A_4 C_0))=:C_{22}(\log n_i-\log \|x\|)+ C_{23}.$$
We can estimate the sum in (\ref{e-SumCylinder}) by 
\begin{align*}
 &\sum_{\substack{0\neq z\in s_{n_i}\Omega'\cap \O_k\\ \|z\|\leq\|s_{n_i}\|e^{-T}}}(\log\|s_{n_i}\|-\log\|z\|-T) = \sum_{\substack{0\neq z\in \lambda_0^{-1}s_{n_i}\Omega'\cap O_k\\ \|z\|\leq\|s_{n_i}\|e^{-T}}}(\log\|s_{n_i}\|-\log\|z\|- T)\\
 \leq&  \sum_{\substack{x\in\mathcal F\cap\O_k\\ \|x\|\leq n_i|\Delta_k|^{1/2}e^{-T}}}(\log\|s_{n_i}\|-\log\|x\|-T) |\left\{\lambda\in\O_k^\times| \|\lambda\|_\infty\leq C_{22}(\log n_i-\log\|x\|)+C_{23}\right\}| .
\end{align*}
Choose $T$ sufficiently large so that we have $C_{22}(\log n_i-\log \|x\|)+C_{23}\leq 2C_{22}(\log n_i-\log \|x\|)$ for every $x$ satisfying $\|x\|\leq n_i|\Delta_k|^{1/2}e^{-T}$ and $T>\frac{1}{2}\log \Delta_k$. For such $T$, we can bound the last expression by 
\begin{align*}
 \leq & \sum_{\substack{x\in\mathcal F\cap \O_k\\ \|x\|\leq n_i|\Delta_k|^{1/2}e^{-T}}}(\log\|s_{n_i}\|-\log\|x\|-T)|\left\{\lambda\in\O_k^\times| \|\lambda\|_\infty\leq 2C_{22}(\log n_i-\log\|x\|)\right\}|\\
  \leq & C_{24} \sum_{\substack{x\in\mathcal F\cap \O_k\\ \|x\|\leq n_i|\Delta_k|^{1/2}e^{-T}}}(\log\|s_{n_i}\|-\log\|x\|-T) (\log n_i-\log\|x\|)^{d-1}\\
  = & C_{24}\sum_{\substack{x\in\mathcal F\cap\O_k\\ \|x\|\leq n_i|\Delta_k|^{1/2}e^{-T}}}(\log n_i-\log\|x\|+\frac{1}{2}\log|\Delta_k|-T)(\log n_i-\log\|x\|)^{d-1}.
\end{align*} Put $Y=n_i|\Delta_k|^{1/2}e^{-T}.$ For $n_i$ big enough we can estimate the last expression by
\begin{align*}
&C_{24}\sum_{\substack{x\in\mathcal F\cap \O_k \\ \|x\|\leq Y}}(\log Y-\log\|x\|)(\log Y-\log\|x\| + (T-\frac{1}{2}\log|\Delta_k|))^{d-1}\\
= &C_{24}\sum_{i=0}^{d-1}{ d-1\choose i}(T-\frac{1}{2}\log|\Delta_k|)^i\sum_{\substack{x\in\mathcal F\cap\O_k \\ \|x\|\leq Y}}(\log Y-\log\|x\|)^{d-i}\\
\leq & C_{25}T^{d-1} e^{-T}n_i.
\end{align*} For the last inequality we have used Lemma \ref{l-IdealsEst}. Note that the condition $Y\geq 1$ used in that lemma is satisfied for $n_i$ big enough. The constant $C_{25}$ depends only on $k$. We combine above inequalities with (\ref{eq:55}) to get 
\begin{align}
-\frac{n_i+1}{n_i}C_{25}T^{d-1}e^{-T}  \leq I'(\mu_{n_i})-I_T(\mu_{n_i})\leq \frac{T(n_i+1)}{n_i^2},\\
|I'(\mu_{n_i})-I_T(\mu_{n_i})|\leq \frac{T(n_i+1)}{n_i^2}+\frac{n_i+1}{n_i}C_{25}T^{d-1}e^{-T}.
\end{align}
It follows that for any $T$ sufficiently large we have $\limsup_{i\to \infty}|I'(\mu_{n_i})-I_T(\mu_{n_i})|\leq C_{25}T^{d-1}e^{-T}$. Consequently
$$I(\mu)=\lim_{T\to\infty}\lim_{i\to\infty}I_T(\mu_{n_i})=\lim_{i\to\infty}I'(\mu_{n_i})=-\frac{1}{2}\log|\Delta_k|-\frac{3}{2}-\gamma_k+\gamma_\Q.$$ The proposition is proved.
\end{proof}
\begin{lemma}\label{l-LowerEnergyBound}
For every compactly supported\footnote{ The assumption on the support makes the proof easier but the statement should remain valid without it.}  probability measure $\nu$ on $V$ with density at most $1$ we have $-\frac{1}{2}\log|\Delta_k|-\frac{3}{2}-\gamma_k+\gamma_\Q\leq I(\nu).$ 
\end{lemma}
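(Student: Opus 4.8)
The plan is to run the computation of Proposition \ref{p-LimitEnergy} in reverse: approximate an arbitrary density-$\le 1$ measure $\nu$ by rescaled finite subsets of $\O_k$ of cardinality $m+1$, and feed into the inequality (\ref{e-volumeB2}), which says that the norm of the product of differences of \emph{any} such subset is bounded below by $\prod_{j=0}^{m}|N_{k/\Q}(j!_k^2)|$ — by (\ref{e-volume}) exactly the value realized by $n$-optimal sets. Combined with the volume formula \cite[Corollary 5.2]{BFS2017} (the same asymptotic $\sum_{j=0}^{m}\log|N_{k/\Q}(j!_k^2)|=m^2\log m+m^2(-\tfrac32-\gamma_k+\gamma_\Q)+o(m^2)$ used in Proposition \ref{p-LimitEnergy}) this pins down the energy of the approximating atomic measures from below, and a truncation argument transfers the bound to $I(\nu)$.

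In detail, first fix $s_m\in V$ with $\|s_m\|=m|\Delta_k|^{1/2}$ chosen \emph{balanced}, e.g.\ $s_m=(m|\Delta_k|^{1/2})^{1/N}(1,\dots,1)$, so that multiplication by $s_m$ turns a small cube into a box that is large in every direction (this is the analogue of the ``thickening'' used in the proof of Lemma \ref{l-DensityLimitMeasure}). Write $\nu=f\,d\Leb$ with $0\le f\le 1$ supported in a compact box $K_0$, enlarge it to a box $K\supset K_0$ with $\Leb(K)=1+\theta$, $\theta>0$, and for a mesh $\eta>0$ cut $K$ into cubes $Q_j$ of side $\eta$. Since $s_m$ is balanced, $s_mQ_j$ is a box of side $\asymp\eta\,(m|\Delta_k|^{1/2})^{1/N}\to\infty$, so $|\O_k\cap s_mQ_j|=m\,\Leb(Q_j)+o_\eta(m)$ as $m\to\infty$. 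Because $\nu(Q_j)=\int_{Q_j}f\le\Leb(Q_j)$, for $m$ large we may select a subset $F_{m,j}\subseteq\O_k\cap s_mQ_j$ of size $\lfloor m\,\nu(Q_j)\rfloor$ for the cubes inside $K_0$, and then add as many of the remaining lattice points in the cubes meeting $K\setminus K_0$ as are needed to reach a total of exactly $m+1$; put $F_m=\bigcup_j F_{m,j}$ and $\mu_m:=\frac1m\sum_{x\in F_m}\delta_{s_m^{-1}x}$. Keeping track of the boundary and rounding discrepancies (their number is fixed for fixed $\eta$, hence $o(m)$), one checks that $\mu_m\to\nu$ weakly-$*$ upon letting $m\to\infty$ and then $\eta\to0$ along a diagonal $\eta=\eta_m\to0$; all supports stay in the fixed compact set $K$. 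The hypothesis $f\le 1$ is used exactly here, to guarantee that each rescaled cell of $\O_k$ has room for the prescribed number of points.

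For this sequence, (\ref{e-volumeB2}) and the volume formula give
$$I'(\mu_m)=\frac1{m^2}\sum_{x\ne y\in F_m}\log\|x-y\|-\frac{m+1}{m}\log\|s_m\|\ \ge\ -\tfrac12\log|\Delta_k|-\tfrac32-\gamma_k+\gamma_\Q+o(1).$$
To pass from $I'$ to $I$, fix $T>0$ and use the truncated energy $I_T$ of (\ref{e-TEneregy}): off the diagonal $\log^T\ge\log$, and the diagonal of $\mu_m$ carries mass $\tfrac{m+1}{m^2}$ with value $-T$, so $I_T(\mu_m)\ge I'(\mu_m)-\tfrac{(m+1)T}{m^2}$. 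Since $\log^T\|x-y\|$ is bounded and continuous on $K\times K$ and $\mu_m\to\nu$ weakly-$*$ (the total mass $\tfrac{m+1}{m}\to1$ is harmless), $I_T(\mu_m)\to I_T(\nu)$, whence $I_T(\nu)\ge-\tfrac12\log|\Delta_k|-\tfrac32-\gamma_k+\gamma_\Q$. Finally $I_T(\nu)\downarrow I(\nu)$ as $T\to\infty$ by dominated convergence, using that $\log\|x-y\|$ is $\nu\otimes\nu$-integrable because $\nu$ is compactly supported of bounded density; this yields the claimed inequality.

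The routine ingredients are the energy computation and the truncation step; the main obstacle is the construction in the second paragraph — producing, for every large $m$, a subset $F_m\subset\O_k$ of size \emph{exactly} $m+1$ whose rescaled counting measure approximates $\nu$, with the density bound ensuring feasibility and the balancing of $s_m$ making the lattice-point counts accurate. Everything after that is a matter of bookkeeping on the error terms so that the limit measure changes only by an amount that vanishes as $m\to\infty$ and $\eta\to0$.
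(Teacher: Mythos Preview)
Your proof is correct and follows essentially the same strategy as the paper: approximate $\nu$ by rescaled $(m+1)$-point subsets of $\O_k$, invoke the lower bound (\ref{e-volumeB2}) together with the volume asymptotic from \cite[Corollary 5.2]{BFS2017}, and pass to the limit. The paper packages the approximation step as Lemma~\ref{l-AccumMeasures} and passes to the limit via upper semicontinuity of $(x,y)\mapsto\log^*\|x-y\|$, whereas you reconstruct the approximation explicitly (with the useful extra care of keeping supports in a fixed compact) and use the $I_T$ truncation from Proposition~\ref{p-LimitEnergy}; these are cosmetic differences.
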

\begin{proof}
 Let $\nu$ be a compactly supported probability measure on $V$ of density at most $1$. Lemma \ref{l-AccumMeasures} (in the appendix) affords a sequence $E_n$ of subsets of $\O_k$ such that $|E_n|=n+1$ and the measures  $$\nu_{E_n,n}:=\frac{1}{n}\sum_{x\in E_{n}}\delta_{n^{-1/N}|\Delta_k|^{-1/2N} x}$$ converge weakly-* to $\nu$. Put $\log^* t=\log t$ of $t>0$ and $\log^*0=0$. For every finite measure $\mu$ on $V$ we have $I'(\mu)=\int_V\int_V \log^*\|x-y\|d\mu(x)d\mu(y),$ provided that the integral converges. The function $(x,y)\mapsto \log^*\|x-y\|$ is lower semi-continuous on $V \times V$ so 
\begin{align}&\limsup_{n\to\infty}I'(\nu_{E_n,n})\leq I'(\nu)= I(\nu),\\
&\limsup_{n\to\infty}\frac{1}{n^2}\sum_{x\neq y\in E_n}(\log\|x-y\|-\log n-\frac{1}{2}\log|\Delta_k|)\leq I(\nu).
\end{align}
By \cite[Corollary 5.2]{BFS2017}, we get 
$$-\frac{1}{2}\log|\Delta_k|-\frac{3}{2}-\gamma_k+\gamma_\Q \leq I(\nu).$$
\end{proof} 
In conjunction with Proposition \ref{p-LimitEnergy} it follows that any limit measure realizes the minimal energy among all compactly supported probability measures of density at most $1$.
\subsection{Measures of minimal energy}\label{s-EnergyMinimizers}
This entire section is devoted to the proof of the following:
\begin{proposition}\label{p-EnergyMinimizers}
Let $\nu$ be a compactly supported probability measure on $V$ of density at most $1$ which is realizing the minimal energy among all such measures. Then, there exists an open set $U$ and $v\in V$ such that 
\begin{enumerate}
    \item $\nu={\rm Leb}|_U$,
    \item $(\p U-v)\cap V^\times $ is a codimension $1$ submanifold of $V^\times$ of class $C^1$,
    \item $\lambda (\overline U-v)\subset U-v$ for every $0\leq\lambda<1$.
\end{enumerate}
\end{proposition}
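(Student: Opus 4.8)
The plan is to pin down the minimizer through three successive rigidifications: a bang--bang principle forcing $\nu$ to be the uniform measure on a set $U$; the Euler--Lagrange (self--consistency) equation, which realizes $U$ as a sublevel set of its own logarithmic potential and, crucially, exhibits that potential as \emph{separable} in the coordinates of $V$; and the collapsing procedure, which forces the one--dimensional coordinate fibres of $U$ to be balls with a common centre $v$. Together the last two inputs give simultaneously the star--shapedness and, after a regularity bootstrap, the $C^1$--regularity of $\partial U$ on $v+V^\times$. First, for the bang--bang step, write $\log\|x-y\|=\sum_i a_i\log|x-y|_i$ with $a_i=1$ for $i\le r_1$ and $a_i=2$ otherwise, so that $I(\nu)=\sum_i a_i\iint\log|s-t|\,d(\pi_i)_*\nu(s)\,d(\pi_i)_*\nu(t)$, a sum of classical logarithmic energies of the coordinate marginals $(\pi_i)_*\nu$. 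Since the one-- and two--dimensional kernels $-\log|\cdot|$ are strictly conditionally positive definite on compactly supported finite--energy measures, for a non-zero compactly supported signed measure $\sigma$ of total mass $0$ whose projection to some coordinate is non-zero one has $\iint\log\|x-y\|\,d\sigma\,d\sigma<0$, hence $I$ is strictly concave along $\nu+t\sigma$. If the density $f$ of $\nu$ satisfied $0<f<1$ on a set of positive measure, Lebesgue density would furnish inside that set two small balls $B_1,B_2$ which are translates of each other along one coordinate axis and on which $f$ is bounded away from $0$ and $1$; the competitor $\nu\pm t(\mathbf 1_{B_1}-\mathbf 1_{B_2})\Leb$ stays among the compactly supported elements of $\mathcal P^1(V)$ for small $|t|$ and, by strict concavity along it, has energy strictly below $I(\nu)$ for one sign of $t$, contradicting minimality. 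Hence $f\in\{0,1\}$ a.e.; so $\nu=\Leb|_U$ with $\Leb(U)=1$, and $U$ is bounded because $\nu$ is compactly supported.

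Next, set $\phi(z)=\int_U\log\|z-w\|\,d\Leb(w)$. This is continuous on $V$ and, since $\log|z-w|_i$ depends only on $z_i$, it is \emph{separable}: $\phi(z)=\sum_i a_i\psi_i(z_i)$ with $\psi_i(t)=\int\log|t-s|\,dm_i(s)$ the logarithmic potential of the $i$-th marginal $m_i:=(\pi_i)_*\nu$, a bounded compactly supported probability density. Running the first--variation computation of the previous step but now transporting an infinitesimal amount of Lebesgue mass from a density point of $U$ to a density point of its complement shows $\phi(x)\le\phi(y)$ for a.e.\ $x\in U$ and a.e.\ $y\notin U$; by continuity there is a constant $c$ with $\{\phi<c\}\subseteq U\subseteq\{\phi\le c\}$ up to null sets, and $\partial U\subseteq\{\phi=c\}$.

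Then comes the collapsing step. Collapsing $U$ along the $i$-th coordinate (Definition~\ref{d-Collapsing}) is the Steiner (for a real coordinate) or circular (for a complex coordinate) symmetrization of the $i$-th fibres of $U$; by the layer--cake representation of $-\log$ and the Riesz rearrangement inequality it does not increase the $i$-th term of $I$, while it leaves every other marginal --- hence every other term of $I$ --- unchanged, because integrating out the $i$-th coordinate commutes with the operation. Since $\nu$ is a minimizer the $i$-th term is in fact unchanged, and the equality case of the rearrangement inequality forces $U$, up to a translation along the $i$-th axis and a null set, to be symmetric in that coordinate; thus every $i$-th fibre of $U$ is a ball centred at a common point $v_i$, equivalently $\psi_i$ is even about $v_i$. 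Doing this for all $i$ and translating $U$ by $v=(v_1,\dots,v_d)$, every $\psi_i$ is even; the standard monotonicity of the logarithmic potential of a symmetric decreasing non-zero density then gives that each $\psi_i$ is \emph{strictly} increasing away from $0$, i.e.\ $t\psi_i'(t)>0$ for $t\neq0$ (and the analogous statement with the radial derivative in the complex coordinates). Hence for $\theta\in V^\times$ (all $\theta_i\neq0$) and $t>0$, $\tfrac{d}{dt}\phi(t\theta)=\sum_i a_i\theta_i\psi_i'(t\theta_i)>0$: $\phi$ is strictly increasing along every ray through $0$ meeting $V^\times$. Combined with $U=\{\phi<c\}$ --- valid because $\{\phi=c\}$ is null, since on a positive--measure level set $\nabla\phi=(a_i\psi_i'(z_i))_i$ would vanish a.e., but it vanishes only at $z=0$ --- this yields at once $\lambda\,\overline U\subseteq U$ for $0<\lambda<1$, i.e.\ $\lambda(\overline U-v)\subseteq U-v$ in the original picture, and that $\{\phi=c\}$ meets each ray in $V^\times$ exactly once. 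For the $C^1$--statement one bootstraps: from $m_i\in L^\infty$ one gets $\psi_i\in C^{0,\alpha}$ for every $\alpha<1$; but by the previous step $m_i(t)=\Leb\{(z_j)_{j\neq i}:\sum_{j\neq i}a_j\psi_j(z_j)<c-a_i\psi_i(t)\}$ is a H\"older function of $\psi_i(t)$, so $m_i$ inherits the regularity of $\psi_i$, whence $\psi_i\in C^1$ (the logarithmic potential of a H\"older density is $C^1$); then $\nabla\phi$ is continuous and, by the strict monotonicity, non--vanishing on $V^\times$, and the implicit function theorem identifies $(\partial U-v)\cap V^\times=\{\phi=c\}\cap V^\times$ as a $C^1$ hypersurface. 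Finally, discarding the null set $\{\phi=c\}$ makes $U$ open, and $\Leb(\partial U)=0$ since $\partial U\cap V^\times$ is a $C^1$ hypersurface and $V\setminus V^\times$ is null, so indeed $\nu=\Leb|_U$ with $U$ open.

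The first two steps and the "combine" part of the third are routine once set up; the crux is the rigidity inside the collapsing step --- one needs the collapsing operation to \emph{strictly} decrease $I$ unless $U$ is already collapsed, which amounts to the sharp equality case of the Riesz rearrangement inequality for the logarithmic kernel (handled, for the real coordinates, via the collapsing--pairs analysis, cf.\ Lemma~\ref{l-CollapsingPairsR}), together with the verifications that collapsing in one coordinate genuinely leaves the other marginals untouched and that the competitors stay in $\mathcal P^1(V)$. The second difficulty is the regularity bootstrap: controlling $\psi_i$ near the edge of $\supp(m_i)$ and near the degenerate cone $\{\|z-v\|=0\}$ --- which is precisely why the $C^1$--conclusion is only claimed on $V^\times$.
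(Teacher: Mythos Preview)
Your overall strategy is correct and closely parallels the paper's, though ordered differently: you establish the bang--bang property first via strict concavity of $I$ along suitable perturbations (a legitimate alternative to the first-variation argument in the paper's Step~2), then invoke collapsing for the symmetry, and finally attempt the regularity bootstrap. The decomposition $I(\nu)=\sum_i a_i\iint\log|s-t|\,dm_i\,dm_i$ into coordinate marginals, together with the observation that collapsing in the $i$-th coordinate decreases only the $i$-th summand while fixing the others, is correct and gives a clean route to Corollary~\ref{c-MinimalCollapsing}.

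The regularity bootstrap, however, has a genuine gap. You assert that $m_i(t)=G_i\bigl(c-a_i\psi_i(t)\bigr)$ with $G_i(s)=\Leb\{z_{-i}:\sum_{j\neq i}a_j\psi_j(z_j)<s\}$ H\"older, but you give no argument for this, and it does not follow from what you have. In the model case $V=\R^2$ one has $G_1(s)=2\psi_2^{-1}(s)$, and the inverse of a strictly increasing $C^{0,\alpha}$ function need not be H\"older (take $t\mapsto e^{-1/t}$). What is actually required is a \emph{quantitative} lower bound $|\psi_j'(t)|\ge A_\varepsilon>0$ for $\varepsilon\le|t|\le F_j(0)$, which makes $\psi_j^{-1}$ locally Lipschitz on the relevant range; this is precisely the content of the paper's Step~4, proved there by a direct estimate from the explicit formula for $\psi_j'$ in terms of the collapsed radius function $F_j$. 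With that bound in hand your bootstrap closes; without it, it does not. The paper's own bootstrap (Steps~3--5) is organized differently --- via an $L^2$ bound on $\psi_j'$, the relation $F_1(t)=\psi_1^{-1}(\alpha-\psi_2(t))$, and Cauchy--Schwarz --- but the quantitative lower bound on $\psi_j'$ is the common essential ingredient you are missing. Your closing paragraph flags the bootstrap as delicate but misidentifies the obstacle: the issue is not controlling $\psi_i$ near the edge of $\supp m_i$, but bounding $\psi_j'$ \emph{from below} away from $0$, which qualitative strict monotonicity does not give.

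Two minor points. First, the specific perturbation $\sigma=(\mathbf 1_{B_1}-\mathbf 1_{B_2})\Leb$ with $B_1,B_2$ axis-translates inside $\{0<f<1\}$ is not always available as stated, though a bounded $\sigma$ supported there with total mass zero and some nonzero coordinate marginal always exists (split the set by the value of one coordinate). Second, your argument that $\{\phi=c\}$ is null (``on a positive-measure level set $\nabla\phi$ would vanish a.e.'') presupposes $\phi\in W^{1,1}_{\rm loc}$, which you have not yet established; but there is a direct fix using your own radial monotonicity: each ray in $V^\times$ meets $\{\phi=c\}$ in at most one point, so $\{\phi=c\}\cap V^\times$ is null by Fubini in polar coordinates.
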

We remark that the vast majority of the proof is getting around the issue of the singularity of the logarithm at $0$. Had the energy been defined by a smooth kernel, the proof would be settled almost immediately be the analogue of Lemma \ref{lem-Upreimage} and the implicit function theorem. 
\begin{proof}
We start by setting up some notation for the proof. Let $j\neq m\in\{1,\ldots,d\}$ and $s\in k_{\nu_j},t\in k_{\nu_m}$. We write 
\begin{align*}
V^j:=&\{ (v_1,\ldots,v_d)\in V| v_j=0\},\\
V^{m,j}:=&\{(v_1,\ldots,v_d)\in V| v_m=0,v_j=0\},\\
v^j:=&(v_1,\ldots,v_{j-1},0,v_{j+1},\ldots,v_d)\in V^j,\\
v^{j}(s):=&(v_1,\ldots,v_{j-1},s,v_{j+1},\ldots,v_d),\\
v^{m,j}:=&(v_1,\ldots,v_{m-1},0,v_{m+1},\ldots,v_{j-1},0,v_{j+1},\ldots,v_d)\in V^{m,j},\\
v^{m,j}(s,t):=&(v_1,\ldots,v_{m-1},s,v_{m+1},\ldots,v_{j-1},t,v_{j+1},\ldots,v_d).\\
\end{align*}
We will write $dv^{j}, dv^{m,j}$ for the Lebesgue measures on $V^j,V^{m,j}$ respectively. Recall that for $\mu,\mu'\in \mathcal M^1(V)$ we defined the symmetric bilinear form $\langle \mu,\mu'\rangle:=\int_{V}\int_{V} \log \|x-y\|d\mu(x)d\mu'(y).$

One of the key tools used to prove Proposition \ref{p-EnergyMinimizers} is the collapsing procedure $c_{j,v_j}:\mathcal M^1(V)\to\mathcal M^1(V)$ (see Definition \ref{d-Collapsing}), introduced and studied in the Section \ref{s-Collapsing}. 
By Corollary \ref{c-MinimalCollapsing}, there exists $v=(v_1,\ldots,v_d)\in V$ such that $c_{j,v_j}(\nu)=\nu$ for every $j=1,\ldots,d.$ Translating the measure $\nu$ by $-v$ we will assume from now on that $c_{j,0}(\nu)=\nu$ for all $j=1,\ldots,d$.
Let $F_j,h_j$ be the functions defined as in Definition \ref{d-Collapsing}. Since our measure is already collapsed with respect to all coordinates, the function $h_1=h_2=\ldots=h_d$ is the density of $\nu$. Choose $U$ such that $\nu=\Leb|_U$. This condition determines $U$ up to a measure $0$ set. Then 
\begin{equation}\label{eqn-Fj} U=\{v^j(s)|v^j\in V^j, |s|< F_j(v^j)\},\end{equation} which holds modulo a measure $0$ set for $j=1,\ldots, d$.
At this point the functions $F_j$ are defined only modulo measure $0$ sets. For technical reasons we redefine the values at $0$ as 
\[F_j(0):=\esssup_{v\in V^j}F_j(v^j).\]
All these values are finite because we assumed that $\nu$ is compactly supported. Together with (\ref{eqn-Fj}), this implies that 
\begin{equation}\label{eqn-UCylinder} U\subseteq \prod_{j=1}^{r_1} B_{\mathbb R}(0,F_j(0))\times \prod_{j=r_1+1}^d B_{\mathbb C}(0,F_j(0))\end{equation}
modulo measure $0$ set. 
Let us also define 
\begin{equation}\label{eqn-UjCont} U^j:=U\cap V^j=\{v^j\in V^j| F_j(v^j)>0\}\subseteq \{v^j\in V^j|\, |v_m|\leq F_m(0) \text{ for }m\neq j\}.\end{equation}
The last inclusion holds modulo measure $0$ set. 
We will often integrate over these sets in the sequel, so it is important to keep in mind that they are bounded. 
Let $P_V,P_j, j=1,\ldots,d$ be the functions on $V$ defined by 
\begin{align}\label{eq-Pdefinitions} P_j(x)&:=\begin{cases}\int_V \log|x-y|_jd\nu(y) &\textrm{ for }j=1,\ldots, r_1,\\ \int_V \log|x-y|^2_j d\nu(y) &\textrm{ for }j=r_1+1,\ldots, d,\end{cases}\\ P_V(x)&:=\int_V \log\|x-y\|d\nu(y)=\sum_{
j=1}^d P_j(x_j).\end{align} Clearly $P_j(x)$ depends only on the $j$--th coordinate of $x$ so it makes sense to abuse the notation and write $P_j(x)=P_j(x_j)$. Using (\ref{eqn-Fj}) we can express $P_j$ in terms of $F_j$. Let $x=(x_1,\ldots,x_{r_1}, s_{r_1+1}e^{i\theta_{r_1+1}},\ldots, s_{d}e^{i\theta_{d}})=(x_1,\ldots ,x_d)\in V$. We have 
\begin{equation}\label{e-iPotential2}
P_j(x_j)=\begin{cases} \int_{V^j} \left(\int_{-F_j(v^j)}^{F_j(v^j)}\log|x_j-t_j|dt_j\right) dv^j & \textrm{ for } j=1,\ldots,r_1,\\
 \int_{V^j} \left(\int_{0}^{F_j(v^j)}\int_0^{2\pi}t_j\log|x_j-t_je^{i\tau _j}|^2d\tau _j dt_j\right) dv^j& \textrm{ for } j=r_1+1,\ldots,d.
\end{cases}
\end{equation}
We note that the functions $F_j|_{V^j}$ are compactly supported modulo measure $0$ set so the integration happens essentially over a compact set. The function $P_V$ is defined so that $$\langle \nu,\mu\rangle =\int_V P_V(x)d\mu(x)$$ for every $\mu\in\mathcal M^1(V)$.
The regularity properties of $P_j$ play a key role in the proof of the proposition. We will prove a sequence of lemmas that consecutively improve our grasp on the analytic properties of the functions $P_j$ and $F_j$. 
\begin{lemma}\label{l-DIdentities}
\begin{enumerate}
\item For every $x\in \R$, $T\geq0$ we have 
\begin{align*}
\int_{-T}^T \log|x-t|dt=&\begin{cases}
(x+T)\log (x+T)-(x-T)\log (x-T)-2T &\textrm{ if}\ \ x\geq T\\ 
-(x-T)\log(T-x)+(x+T)\log (x+T)-2T &\textrm{ if}\ \ -T \leq x \leq T\\
(x+T)\log (-x-T)+(T-x)\log (T-x)-2T &\textrm{ if} \ \ x\leq -T
\end{cases}\\
\frac{d}{dx}\left(\int_{-T}^T \log|x-t|dt\right)=& \log(|T+x|)-\log(|T-x|)\ \ \textrm{for}\ \ x\neq T,\ \ x\neq -T.
\end{align*}
\item Write $dxdy$ for the Lebesgue measure on $\C$ in coordinates $z=x+iy$. For every $se^{i\theta}\in \C, s>0, T\geq 0$ we have 
\begin{align*}\int_{B_\C(0,T)}\log|se^{i\theta}-z|^2dxdy=&\begin{cases} 2\pi T^2\log T-\pi T^2+\pi s^2& \textrm{ if } s\leq T\\
2\pi T^2\log s & {\rm otherwise,}\end{cases}\\
\frac{d}{ds}\left(\int_{B_\C(0,T)}\log|se^{i\theta}-z|^2dxdy \right)=&\begin{cases} 2\pi s & \textrm{ if } s\leq T\\
\frac{2\pi T^2}{s} & {\rm otherwise.}\end{cases} \end{align*}
\end{enumerate}
\end{lemma}
\begin{proof}
This a routine calculation.
\end{proof}

\begin{lemma}\label{lem-PjProperties} The functions $P_j$ satisfy the following properties:
\begin{enumerate}
    \item \label{lem-Pj1} Let $j=1,\ldots, d$. The function $P_j$ is continuous.
    \item \label{lem-Pj2} Let $j=1,\ldots, r_1$. The function $P_j$ is strictly increasing on $[0,+\infty)$ and strictly decreasing on $(-\infty,0]$. 
    \item \label{lem-Pj3} Let $j=r_1+1,\ldots,d$. The function $P_j(s e^{i\theta})$ is strictly increasing in $s\in[0,+\infty)$ and does not depend on $\theta$. Moreover,
    \begin{align*} 
    \frac{d}{ds_j}P_j\left(s_je^{i\theta _j}\right)=\int _{V^j, s_j\leq F_j(v^j)}2\pi s_jdv^j+\int _{V^j,s_j\geq F_j(v^j)}\frac{2\pi F_j(v^j)^2}{s_j}dv^j. 
    \end{align*}
    \item \label{lem-Pj4}Let $j=1,\ldots, r_1.$ The function $P_j$ is almost everywhere differentiable and the derivative is square integrable on bounded intervals. Moreover,
    \[\frac{d}{d x_j} P_j(x_j)=\int_{V^j}\left(\log|F_j(v^j)+x_j|-\log|F_j(v^j)-x_j|\right) dv^j,\] for all $x_j\in\mathbb R$ where the right hand side converges.
    \item \label{lem-Pj5} Let $j=r_1+1,\ldots,d.$ The function $P_j$ is $C^1$.
\end{enumerate}
\end{lemma}
\begin{proof} 

(\ref{lem-Pj1}) By Lemma \ref{l-DIdentities}, for every $v^j\in V^j$ the integrals 
\[ \int_{B_{\R}(0,F_j(v^j))}\log|x_j-t_j|dt_j \quad\text{ and }\quad \int_{0}^{F_j(v^j)}t_j\int_0^{2\pi}\log|x_j-t_je^{i\theta_j}|^2d\theta_j dt_j\]
are continuous functions of $x_j$. By the formula (\ref{e-iPotential2}) $P_j$ is continuous, being a convergent integral of a family of continuous functions.

(\ref{lem-Pj2}) By Lemma \ref{l-DIdentities} and formula (\ref{e-iPotential2}), the function $P_j(x)$ is a convergent integral of functions which are strictly increasing on $[0,+\infty)$ if $F_j(v^j)>0$ or $0$ if $F_j(v^j)=0.$ Since $F_j(v^j)$ is non-zero on a set of positive measure (e.g. (\ref{eqn-Fj})), we deduce that $P_j(x)$ is strictly increasing on $[0,+\infty)$. Analogous argument shows that it is strictly decreasing on $(-\infty,0]$.

(\ref{lem-Pj3}) This follows from the formula $(\ref{eqn-Fj})$ and Lemma \ref{l-DIdentities}.  

(\ref{lem-Pj4}) By (\ref{eqn-Fj}),
\[P_j(x_j)=\int_{V^j}\left(\int_{-F_j(v^j)}^{F_j(v^j)} \log| x_j - t|dt\right)dv^j.\]
By Lemma \ref{l-DIdentities}, $\frac{d}{dx_j}\left(\int _{-F_j(v^j)}^{F_j(v^j)}\log|x_j-t|dt\right)=\log(|F_j(v^j)+x_j|)-\log(|F_j(v^j)-x_j|)$ for $x_j\neq \pm F_j(v^j).$ This expression is non-negative (resp. non-positive) for all $x_j\geq 0$ (resp. $x_j \leq 0$), whenever it is defined. Therefore, for all $x_j\in\R$ 
\begin{equation}\label{e-iPotentialDerR}
\frac{d}{d x_j} P_j(x_j)=\int_{V^j}\left(\log|F_j(v^j)+x_j|-\log|F_j(v^j)-x_j|\right) dv^j,
\end{equation} whenever the integral is finite. We show that the right hand side is square integrable on bounded intervals. Without loss of generality we can restrict to symmetric intervals. For $[-a,a]\subset \mathbb R$ we have
\begin{align*}\left(\int_{-a}^a \left(\frac{d}{dx_j}P_j(x_j)\right)^2dx_j\right) ^{1/2}=&\left(\int_{-a}^{a}\left(\int_{V^j}\left(\log|F_j(v^j)+x_j|-\log|F_j(v^j)-x_j|\right) dv_j\right)^2dx_j\right)^{1/2}\\
\text{ and by Minkowski's inequality: }&\\
\leq &\int_{V^j}\left(\int_{-a}^{a}\left(\log|F_j(v^j)+x_j|-\log|F_j(v^j)-x_j|\right)^2 dx_j\right)^{1/2}dv^j\\
\leq &2\int_{U^j}\left(\int_{-a}^{a}\left(\log|F_j(v^j)+x_j|\right)^2 dx_j\right)^{1/2}dv^j.
\end{align*}
The last term is finite because $\int (\log |t|)^2dt=t(\log^2|t|-2\log|t|+2)+\textrm{const.}$ and the primitive is continuous in $t$.

(\ref{lem-Pj5}) Since $\frac{d}{d\theta _j}P_j(s_je^{i\theta_j})=0$, it is enough to show that $\frac{d}{ds_j}P_j(s_je^{i\theta _j})$ is continuous in $s_j$.  By Lemma \ref{lem-PjProperties} (\ref{lem-Pj3}), 
\begin{align*} 
\frac{d}{ds_j}P_j(s_je^{i\theta _j})=&\int _{V^j,F_j(v^j)\leq s_j}\frac{2\pi F_j(v^j)^2}{s_j}dv^j+\int _{V^j,F_j(v^j)\geq s_j}2\pi s_j dv^j \\
=& \int_{V^j,F_j(v^j)\leq s_j}2\pi s_j\left( \frac{F_j(v^j)}{s_j}\right)^2dv^j+\int_{V^j,F_j(v^j)\geq s_j}2\pi s_jdv^j\\
=&\int_{V^j}2\pi s_j \min \left\{1,\left(\frac{F_j(v^j)}{s_j}\right)^2\right\} dv^j.
\end{align*} 
The last term is finite and continuous in $s_j$. 
\end{proof} 

\begin{lemma}\label{lem-Upreimage}
There exists a unique $\alpha\in \mathbb R$ such that $U=P_V^{-1}((-\infty,\alpha))$ up to a measure $0$ set. After redefining $U=P_V^{-1}((-\infty, \alpha ))$ , we have $\partial U=P_V^{-1}(\{\alpha\})$.  
\end{lemma} 
\begin{proof} 
Let $t\in\mathbb R$ and choose $j=1,\ldots,d.$ If $j$ corresponds to a real coordinate, we have 
\begin{align*}\Leb(P_V^{-1}(\{t\}))=&\int_{V^j} \Leb\{v_j\in \mathbb R| P_V(v^j(v_j))=t\}dv^j\\=& \int_{V^j} \Leb\left\{v_j\in \mathbb R| P_j(v_j)=t-\sum_{m\neq j}P_m(v_m)\right\}dv^j.\end{align*}
By Lemma \ref{lem-PjProperties} (\ref{lem-Pj2}), the set $\left\{v_j\in \mathbb R| P_j(v_j)=t-\sum_{m\neq j}P_m(v_m)\right\}$ has at most two points, hence the integral is zero. The case when $j$ corresponds to a complex coordinate is handled similarly. We conclude that 
\[\Leb(P_V^{-1}(\{t\}))=0 \text{ for every }t\in\mathbb R.\]
The function $P_V$ is continuous, so $t\mapsto \Leb(P_V^{-1}((-\infty,t)))$ is strictly increasing on $[\inf_{v\in V} P_V(v), +\infty)$. By the above identity, it is also continuous in $t$. It follows that there exists a unique $\alpha\in [\inf_{v\in V} P_V(v), +\infty)$, such that $\Leb(P_V^{-1}((-\infty,\alpha)))=1.$ Let $U':=P_V^{-1}((-\infty,\alpha))$. We claim that $U=U'$, modulo a measure zero set. Note that the function $P_V$ is proper, so $U'$ is a bounded set.

Let $\nu'={\rm Leb}|_{U'}\in\mathcal P^1(V)$. The set $U'$ is chosen to be concentrated precisely where $P_V$ is the smallest. Therefore, $\langle \nu,\nu'\rangle\leq \langle\nu,\nu\rangle$ with equality if and only if $\nu=\Leb|_{U'}$. 
Let $\varepsilon\geq 0$ be small and put $\nu_\varepsilon= (1-\varepsilon)\nu+\varepsilon \nu'$. This measure is compactly supported and is in $\mathcal P^1(V)$. Since, $\nu$ is supposed to be energy minimizing among compactly supported probability measures on $V$ of density at most $1$, we have:
$$ I(\nu_\varepsilon)=(1-\varepsilon)^2 I(\nu)+2\varepsilon(1-\varepsilon)\langle \nu,\nu'\rangle +\varepsilon^2 I(\nu')\geq  I(\nu).$$ We deduce that 
$$\left.\frac{d}{d\varepsilon}I(\nu_\varepsilon)\right|_{\varepsilon=0}=2(\langle \nu,\nu'\rangle- \langle \nu,\nu\rangle)\geq 0.$$ We already know that $\langle \nu,\nu'\rangle-\langle\nu,\nu\rangle\leq 0$ so $\langle \nu,\nu'\rangle= \langle\nu,\nu\rangle$. It follows that  $\nu=\Leb|_{U'}$, which is equivalent to the equality $U=U'$ modulo measure $0$ set. 

Put $U=P_V^{-1}((-\infty,\alpha ))$. We want to show that $\partial U=P_V^{-1}(\{\alpha \})$. By Lemma \ref{lem-PjProperties} (\ref{lem-Pj1}), $P_V$ is continuous and $\partial U\subseteq P_V^{-1}(\{\alpha \})$. To prove the reverse inclusion, take $v=(v_1,\ldots , v_n) \in P_V^{-1}(\{\alpha \})$. Since $0\in U$, $v\neq 0$. Let $l$ be such that $v_l\neq 0$. Take an open neighborhood $W$ of $V$. There exists $\varepsilon >0$ such that $v'=(v_1, \ldots,v_{l-1},(1-\varepsilon )v_l,v_{l+1}, \ldots ,v_d)\in W$. By Lemma \ref{lem-PjProperties} (\ref{lem-Pj2}), (\ref{lem-Pj3}), $P_V(v')<P_V(v)=\alpha$. We showed that $W\cap U$ is non-empty for any open neighborhood $W$ of $v$. It follows that $v\in \partial U$.


\end{proof} 
The functions $F_j$ and the set $U$ were determined only up to a measure $0$ set. Thanks to the above lemma we can \textbf{redefine them exactly}. We put $U:=P^{-1}_V((-\infty,\alpha)),$ $U^j:=U\cap V^j$ and use the identity (\ref{eqn-Fj}) to define $F_j(v_j)$, for every $v^j\in V^j$. The new definition of $F_j$ agrees with the old one modulo measure $0$ sets, but has the advantage of being defined everywhere.

\begin{lemma}\label{lem-FjProperties}Let $P_j^{-1}$ be the inverse of $P_j$ on $[0,+\infty)$ (both in the real and the complex cases). The functions $F_j$ satisfy the following properties:
\begin{enumerate}
    \item\label{lem-Fj1} For every $v^j\in U^j$ $$F_j(v^j)=P_j^{-1}\left(\alpha-P_V(v^j)+P_j(0)\right).$$
    \item\label{lem-Fj2} For $v^j\in U^j$ $$\frac{d}{dv_m}F_j(v^j)=-\frac{d}{dv_m}P_m(v_m) \left(\left. \frac{d}{dt}P_j(t)\right|_{t=F_j(v^j)}\right)^{-1},$$ for $m\neq j$ and when the right hand side is defined.
    \item\label{lem-Fj2.5}We have $F_j(v^j)\leq F_j(0)$ for every $v^j \in V^j$.
     \item\label{lem-Fj3} The function $F_j$ is continuous on $V^j$. 
\end{enumerate}
\end{lemma}
\begin{proof}
(\ref{lem-Fj1}) Recall that $U^j=\{v^j\in V^j| F_j(v^j)>0\}$. By (\ref{eqn-Fj}) and Lemma \ref{lem-PjProperties} (\ref{lem-Pj1}), $P_V(v^j(F_j(v^j)))=\alpha.$ Therefore, 
\begin{align*}P_V(v^j(F_j(v^j)))=P_j(F_j(v^j))+\sum_{m\neq j} P_m(v_m)=\alpha,\\
P_j(F_j(v^j))=\alpha-\sum_{m\neq j} P_m(v_m)=\alpha-P_V(v^j)+P_j(0).\end{align*}
The identity follows.

(\ref{lem-Fj2}) We differentiate both sides of point (\ref{lem-Fj1}). 

(\ref{lem-Fj2.5}) If $v^j\not\in U^j$, then $F_j(v^j)=0$. Let $v^j_0\in U^j, v^j_0\neq 0$. Like in the first point, we have $P_V(v^j_0(F_j(v^j_0)))=\alpha$. Let $s:=F_j(v^j_0)$. By Lemma \ref{lem-PjProperties} (\ref{lem-Pj2}), (\ref{lem-Pj3}), the value of $P_V(v^j(s))$ decreases as the coordinates of $v^j$ approach $0$. In particular, $P_V(0^j(s))<\alpha$ so $0^j(s)\in U$. By (\ref{eqn-Fj}), $F_j(0)=F_j(0^j(0))> s.$

(\ref{lem-Fj3}) Since $P_j$ is strictly increasing and continuous on $[0,+\infty)$ (Lemma \ref{lem-PjProperties}), the inverse is continuous and strictly increasing as well. On $U^j$, the function $F_j$ is continuous as a composition of continuous functions and it is zero on $V^j\setminus U^j$. To prove continuity it is enough to show that for every sequence $(v^j_n)\subseteq U^j$ such that $\lim_{n\to\infty} v^j_n=w^j\in V^j\setminus U^j$ we have $\lim_{n\to\infty} F_j(v^j_n)=0$. For the sake of contradiction, suppose this is not true. Passing to a sub-sequence if necessary we can assume that $\lim_{n\to\infty} F_j(v^j_n)=\ell>0$ for some $\ell\in (0,F_j(0)]$. We have $P_V(v^j_n(F_j(v^j_n)))=\alpha$. By continuity, $P_V(w^j(\ell))=\alpha.$ On the other hand $w^j=w^j(0)\not\in U$, so $P_V(w^j(0))\geq \alpha$. By Lemma \ref{lem-PjProperties} (\ref{lem-Pj2}), (\ref{lem-Pj3}), we conclude that $P_V(w^j(\ell))>\alpha$. This is a contradiction.  
\end{proof}

\begin{lemma}\label{lem-dPjLower} Let $\varepsilon>0$. There exists a positive constant $A_\varepsilon>0$ with the following properties
\begin{enumerate}
    \item\label{lem-dPL1} Let $j=1,\ldots, r_1$. For almost every $x_j\in \R$ such that $\varepsilon<|x_j|\leq F_j(0)$, we have $$\left| \frac{d}{dx_j}P_j(x_j)\right|\geq A_\varepsilon.$$
    \item\label{lem-dPL2} Let $j=r_1+1,\ldots, d$ and write $x_j=s_je^{i\theta_j}, s_j\in[0,+\infty), \theta _j\in [0,2\pi).$ For almost every $x_j\in\C$ such that $\varepsilon <|x_j|\leq F_j(0),$ we have 
    $$\left| \frac{d}{ds_j}P_j(x_j)\right|\geq A_\varepsilon.$$
\end{enumerate}
\end{lemma}
\begin{proof}
(\ref{lem-dPL1}) Let $\varepsilon>0$ and let $\varepsilon<|x_j|\leq F_j(0)$. Assume  that $x_j>\varepsilon$ (i.e. $x_j$ is positive). We can always restrict to a smaller $\varepsilon$, so for convenience we also assume that $1-2\varepsilon \Leb (U^j)>0$. By Lemma \ref{lem-PjProperties}, 
\begin{align*}
\frac{d}{dx_j}P_j(x_j)=&\int_{U^j}(\log |F_j(v^j)+x_j|-\log |F_j(v^j)-x_j|)dv^j\\
\geq & \int_{U^j}\frac{|F_j(v^j)+x_j|-|F_j(v^j)-x_j|}{|F_j(v^j)+x_j|}dv^j\\
=& 2\int_{U^j}\frac{\min\left\{F_j(v^j),x_j\right\}}{|F_j(v^j)+x_j|}dv^j\geq \frac{1}{F_j(0)}\int_{U^j}\min\left\{F_j(v^j),x_j\right\}dv^j\\
\geq&  \frac{1}{F_j(0)}\int_{U^j}\min\left\{F_j(v^j),\varepsilon\right\}dv^j,
\end{align*}provided that the integrals converge (this happens for almost every $x_j$). To estimate the last expression we go back to the definition of $F_j$ (see Definition \ref{d-Collapsing}). It implies that \begin{equation}\label{e-Step4P46}\int_{U^j}2 F_j(v^j)dv^j=\nu(V)=1.\end{equation} Let $E_j:=\left\{v^j\in U^j|\, F_j(v^j)\geq \varepsilon \right\}$. For every $v^j\in U^j$ we have $F_j(v^j)\leq F_j(0)$ so (\ref{e-Step4P46}) yields $F_j(0)\Leb(E_j)+\varepsilon(\Leb(U^j)-\Leb(E_j))\geq \frac{1}{2}.$ Therefore, $\Leb(E_j)\geq \frac{1-2\varepsilon \Leb(U^j)}{2(F_j(0)-\varepsilon)}.$ We get 
\begin{align*}
 \frac{d}{dx_j} P_j(x_j)\geq \frac{1}{F_j(0)}\int_{U^j}\min\left\{F_j(v^j),\varepsilon\right\}dv^j\geq& \frac{\varepsilon}{F_j(0)}\Leb(E_j)\geq\frac{\varepsilon(1-2\varepsilon \Leb(U^j))}{2F_j(0)(F_j(0)-\varepsilon)}=:A_{\varepsilon ,j}>0.
\end{align*} 
The final lower bound is positive and depends only on $\varepsilon, F_j(0)$ and $\Leb(U^j)$. The function $P_j$ is even so for $x_j<0$ we have $\frac{d}{dx_j}P_j(x_j)=-\frac{d}{d(-x_j)}P_j(-x_j)\leq -A_{\varepsilon ,j} $. 

(\ref{lem-dPL2}) Let $\varepsilon >0$ and let $\varepsilon <s_j \leq F_j(0)$. Like in the real case we can restrict to a smaller $\varepsilon$ if necessary so for technical reasons assume $1-\pi \varepsilon ^2 \Leb (U^j)>0$. By Lemma \ref{lem-PjProperties},
\begin{align*} 
\frac{d}{ds_j}P_j(x_j)=&\int_{V^j, F_j(v^j)\leq s_j}\frac{2\pi F_j(v^j)^2}{s_j}dv^j+\int _{V^j, F_j(v^j)\geq s_j}2\pi s_j dv^j\\
=&\int _{U^j,F_j(v^j)\leq s_j}\frac{2\pi F_j(v^j)^2}{s_j}dv^j+\int _{U^j, F_j(v^j)\geq s_j}2\pi s_j dv^j\\ \geq & 2\pi \int_{U^j} \min \left\{ \frac{F_j(v^j)^2}{s_j},s_j\right\}dv^j.
\end{align*}
To estimate the last expression, analogously as before, consider $E_j:=\{ v^j \in U^j | \ \ F_j(v^j) \geq \varepsilon \}$. By Definition \ref{d-Collapsing}, 
\begin{equation*} 
\int _{U^j}\pi F_j(v^j)^2dv^j=\nu (V)=1.
\end{equation*} 
Therefore $1\leq \pi\varepsilon ^2 (\Leb(U^j)- \Leb(E_j))+\pi F_j(0)^2\Leb (E_j)$ and $\Leb(E_j)\geq \frac{1-\pi \varepsilon ^2 \Leb (U ^j)}{\pi F_j(0)^2-\pi\varepsilon ^2}$. We get 
\begin{align*} 
\frac{d}{ds_j}P_j(x_j)\geq & 2\pi \int _{E_j}\min \left\{ \frac{F_j(v^j)^2}{F_j(0)}, \varepsilon \right\} dv^j\geq2\pi \frac{\varepsilon^2}{F_j(0)} \Leb (E_j)\\ \geq & 2\pi \frac{\varepsilon ^2 (1-\pi \varepsilon ^2 \Leb (U^j))}{F_j(0)(\pi F_j(0)^2-\pi \varepsilon ^2)}=:A_{ \varepsilon ,j}>0.   
\end{align*} 
Take $A_{\varepsilon }= \min \{A_{ \varepsilon ,j} | \ \ j=1, \ldots , d\}$. 
\end{proof}

\begin{lemma}\label{lem-Substitution}
Let $m,j\in \{1,\ldots, d\}$ and $m\neq j$. Let $s\geq 0$. We have
\[F_m(v^{m,j}(0,F_j(v^{m,j}(s,0))))=\min\{s, F_m(v^{m,j}(0,0))\}.\]
\end{lemma}

\begin{proof} 
The proof starts with a simple claim. Let $\alpha$ be the constant from Lemma \ref{lem-Upreimage}.


\textbf{ Claim 1.} For any $x\geq 0$ such that $F_j(v^{m,j}(x,0))>0$ we have $P_V(v^{m,j}(x,F_j(v^{m,j}(x,0))))=\alpha.$
Similarly, for any $y\geq 0$ such that $F_m(v^{m,j}(0,y))>0$ we have $P_V(v^{m,j}(F_m(v^{m,j}(0,y)),y))=\alpha.$ The identities are symmetric, so it will be enough to prove the first one. By (\ref{eqn-Fj}), $v^{m,j}(x,t)\in U$ for every $0\leq t< F_j(v^{m,j}(x,0))$. In particular, $P_V(v^{m,j}(x,t))<\alpha$. On the other hand $v^{m,j}(x,F_j(v^{m,j}(x,0)))\not\in U$, so 
$P_V(v^{m,j}(x,F_j(v^{m,j}(x,0))))\geq\alpha.$ The desired equality follows now from the continuity of $P_V$, established in Lemma \ref{lem-PjProperties} (\ref{lem-Pj1}).

We are ready to prove the lemma. Let $s\geq 0$. Consider two cases. 

\textbf{Case 1.} $F_j(v^{m,j}(s,0))=0$. By (\ref{eqn-Fj}), the point $v^{m,j}(s,0)$ is not in $U$, so again by (\ref{eqn-Fj}), $s\geq F_m(v^{m,j}(0,0))$. It follows that 
\[F_m(v^{m,j}(0,F_j(v^{m,j}(s,0))))=F_m(v^{m,j}(0,0))=\min\{s, F_m(v^{m,j}(0,0))\}.\]

\textbf{Case 2.} $F_j(v^{m,j}(s,0))>0$. By Claim 1, $P_V(v^{m,j}(s,F_j(v^{m,j}(s,0))))=\alpha.$ Put $t=F_j(v^{m,j}(s,0))$. If $F_m(v^{m,j}(0,t))=0$, then by (\ref{eqn-Fj}) $v^{m,j}(0,t)\notin U$ and $P_{V}(v^{m,j}(0,t))\geq \alpha $. By Lemma \ref{lem-PjProperties} (\ref{lem-Pj2}), $P_{V}(v^{m,j}(0,t)) <P_{V}(v^{m,j}(s,t))=\alpha $ if $s\neq 0$. Therefore, $s=0$ and the lemma holds. If $F_m(v^{m,j}(0,t))>0$, then by Claim 1, $P_V(v^{m,j}(F_m(v^{m,j}(0,t)),t))=\alpha =P_V(v^{m,j}(s,t))$. By Lemma \ref{lem-PjProperties} (\ref{lem-Pj2}), $s=F_m(v^{m,j}(0,t))$. 
It remains to justify why $F_m(v^{m,j}(0,t))\leq F_m(v^{m,j}(0,0))$. This follows immediately from Lemma \ref{lem-FjProperties} (\ref{lem-Fj1}) and Lemma \ref{lem-PjProperties} (\ref{lem-Pj2}),(\ref{lem-Pj3}). 

\end{proof} 

\begin{lemma}\label{lem-PC1}
Let $j=1,\ldots, r_1$. The function $P_j$ is of class $C^1$ on the interval $(-F_j(0),F_j(0)).$
\end{lemma}
\begin{proof}
Let $\varepsilon>0$. By Lemma \ref{lem-FjProperties} (\ref{lem-Fj3}), the functions $F_j$ are continuous so there exists $\delta>0$ be such that the set $D_0:=\prod_{j=1}^{r_1}[-\delta,\delta]\times \prod_{j=r_1+1}^d B_{\mathbb C}(0,\delta)$ satisfies $F_j(v^j)> F_j(0)-\varepsilon/2 $ for every $v^j\in D_0$. 
Choose a cover of $U^j=\bigcup_{l=0}^L D_l$ by non-empty sets of the form 
\[ D_l=\left[\prod_{m=1}^{j-1}[a_{m,l},b_{m,l}] \times\{0\}\times\prod _{m=j+1}^{r_1}[a_{m,l},b_{m,l}]\times \prod_{m=r_1+1}^d \{z\in\C|\quad a_{m,l}\leq|z|\leq b_{m,l}\}\right]\cap U^j \]
where $l\geq 1$, $a_{m,l}<b_{m,l}$ are such that $D_p$ and $D_q$ have disjoint interiors for $p\neq q$. 

By Lemma \ref{lem-PjProperties} (\ref{lem-Pj4}),
\[\frac{d}{d x_j} P_j(x_j)=\int_{U^j}\left(\log|F_j(v^j)+x_j|-\log|F_j(v^j)-x_j|\right) dv^j,\] for all $x_j\in \R$ such that  the right hand side converges.
To shorten notation, let us write $H(x,t):=\log|x+t|-\log|x-t|$ for $x,t\in\C$. The function $H$ is well defined outside the set $\{(x,t)\in \C^2|\ \ x=t\ \ \textrm{or}\ \  x=-t\}$. 

Using the decomposition of $U$ into sets $D_l$ we get 
\[\frac{d}{d x_j} P_j(x_j)=\sum_{l=0}^L \int_{D_l} H(F_j(v^j),x_j)dv^j= \sum_{l=0}^L \mathcal{I}_l(x_j),\] where $\mathcal{I}_l(x_j):=\int_{D_l} H(F_j(v^j),x_j)dv^j$ provided that the integrals converges. To prove the lemma, we will show that each $\mathcal{I}_l$ converges and is a continuous function on $(-F_j(0)+\varepsilon,F_j(0)-\varepsilon).$

First, let us handle $\mathcal{I}_0(x_j)$. Let $v^j\in D_0$. Since $F_j(v^j)> F_j(0)-\varepsilon/2$, the function $H(F_j(v^j),x_j)$ is continuous in $x_j\in(-F_j(0)+\varepsilon,F_j(0)-\varepsilon).$ We conclude that $\mathcal{I}_0(x_j)$ is continuous on $(-F_j(0)+\varepsilon,F_j(0)-\varepsilon)$, being a convergent integral of continuous functions.

Fix $l\in\{1,\ldots,L\}$. The interior of $D_l$ is disjoint with the interior of $D_0$, so one of the following occurs: 
\begin{itemize}
    \item there exists an $m\in\{1,\ldots,r_1\}$ different than $j$, such that $[a_{m,l},b_{m,l}]\cap (-\delta,\delta)=\emptyset$. This means that $m\in\{1,\ldots, r_1\}$ and $a_{m,l}\geq\delta$ or $b_{m,l}\leq -\delta$ The argument in each case is completely symmetric, so let us assume that $a_{m,l}\geq\delta$.
    \item there exists an $m\in\{r_1+1,\ldots,d\}$ such that $a_{m,l}\geq\delta.$
\end{itemize}
In this part of the proof $l$ is fixed so we drop it from the indices to lighten the notation.
Denote by $D^{m,j}$ the image of $D$ under the projection of $V$ onto $V^{m,j}$. By the definition of $F_j$'s,
\begin{align*}\mathcal{I}(x_j)=&\int_{D^{m,j}}\left(\int_{a_m}^{\min\{b_m,F_m(v^{m,j})\}}H(F_j(v^{m,j}(s,0)),x_j)ds\right)dv^{m,j}  \quad \textrm{if}\ \ j=1,\ldots,r_1,\\
\mathcal{I}(x_j)=&\int_{D^{m,j}}\left(\int_{a_m}^{\min\{b_m,F_m(v^{m,j})\}}\int_0^{2\pi}s H(F_j(v^{m,j}(se^{i\theta},0)),x_j)d\theta ds\right)dv^{m,j} \ \ \textrm{if}\ \  j=r_1+1,\ldots,d.
\end{align*}
We now consider two cases depending on whether $m$ corresponds to a real or to a complex coordinate. 

\textbf{ Case }$m\in\{1,\ldots,r_1\}$. 
Consider the substitution $$s=F_m(v^{m,j}(0,w)), \quad \textrm{where}\ \ w\in [F_j(v^{m,j}(b_m,0)),F_j(v^{m,j}(a_m,0))].$$ By Lemma \ref{lem-FjProperties} (\ref{lem-Fj1}),(\ref{lem-Fj2}),(\ref{lem-Fj3}), Lemma  \ref{lem-PjProperties} (\ref{lem-Pj2}),(\ref{lem-Pj4}) and Lemma \ref{lem-dPjLower} the function $F_m(v^{m,j}(0,w))$ is continuous, almost everywhere differentiable and non-increasing in $w\in [0,\infty)$. By Lemma \ref{lem-Substitution}, Lemma \ref{lem-FjProperties}(\ref{lem-Fj2}), Lemma \ref{lem-PjProperties} (\ref{lem-Pj4}) and integrating by substitution we obtain 
\begin{align*}\mathcal{I}(x_j)=&\int_{D^{m,j}}\left(\int_{F_j(v^{m,j}(b_m,0))}^{F_j(v^{m,j}(a_m,0))}H(w,x_j) \left| \frac{d}{dw} F_m(v^{m,j}(0,w))\right|dw\right)dv^{m,j}\\
=&\int_{D^{m,j}}\left(\int_{F_j(v^{m,j}(b_m,0))}^{F_j(v^{m,j}(a_m,0))}H(w,x_j) \left| \frac{d}{dw}P_j(w)\right| \left| \frac{d}{dt} P_m(t)|_{t=F_m(v^{m,j}(0,w))}\right|^{-1}dw\right)dv^{m,j}.
\end{align*}
First we establish the convergence. By Lemma \ref{lem-dPjLower}, Lemma \ref{lem-FjProperties} (\ref{lem-Fj2.5}), Lemma \ref{lem-PjProperties}(\ref{lem-Pj4}) and the Cauchy-Schwartz inequality,
\begin{align*}
&\int_{D^{m,j}}\left(\int_{F_j(v^{m,j}(b_m,0))}^{F_j(v^{m,j}(a_m,0))}|H(w,x_j)| \left| \frac{d}{dw}P_j(w)\right| \left| \frac{d}{dt} P_m(t)|_{t=F_m(v^{m,j}(0,w))}\right|^{-1}dw\right)dv^{m,j}\\
\leq& A_{\delta /2}^{-1}\int_{D^{m,j}}\left(\int_{F_j(v^{m,j}(b_m,0))}^{F_j(v^{m,j}(a_m,0))}|H(w,x_j)| \left| \frac{d}{dw}P_j(w)\right|dw\right)dv^{m,j}\\
\leq& A_{\delta /2}^{-1}\int_{D^{m,j}}\left(\int_{F_j(v^{m,j}(b_m,0))}^{F_j(v^{m,j}(a_m,0))}H(w,x_j)^2dw\right)^{1/2}\left(\int_{F_j(v^{m,j}(b_m,0))}^{F_j(v^{m,j}(a_m,0))}\left| \frac{d}{dw}P_j(w)\right|^2dw\right)^{1/2}dv^{m,j}\\
\leq& A_{\delta /2}^{-1}\Leb(D^{m,j})\left(\int_0^{F_j(0)}H(w,x_j)^2dw\right)^{1/2}\left(\int_0^{F_j(0)}\left| \frac{d}{dw}P_j(w)\right|^2dw\right)^{1/2}<\infty.\\
\end{align*} To prove continuity, let $x_j'\in (-F_j(0)+\varepsilon,F_j(0)-\varepsilon)$ be close to $x_j$. Similarly
\begin{align*}|\mathcal{I}(x_j)-\mathcal{I}(x_j')|\leq& A_{\delta /2}^{-1}\int _{D^{m,j}}\left(\int_{F_j(v^{m,j}(b_m,0))}^{F_j(v^{m,j}(a_m,0))}|H(w,x_j)-H(w,x_j')| \left| \frac{d}{dw}P_j(w)\right|dw\right)dv^{m,j}\\
\leq A_{\delta /2}^{-1}\Leb&(D^{m,j})\left(\int_0^{F_j(0)}(H(w,x_j)-H(w,x_j'))^2dw\right)^{1/2}\left(\int_0^{F_j(0)}\left| \frac{d}{dw}P_j(w)\right|^2dw\right)^{1/2}.\\
\end{align*}
The continuity follows because $\lim_{x_j'\to x_j}\int_0^{F_j(0)}(H(w,x_j)-H(w,x_j'))^2dw=0.$

\textbf{ Case} $m\in\{r_1+1,\ldots, d\}$.
We consider the same substitution $$s=F_m(v^{m,j}(0,w)),\quad w\in [F_j(v^{m,j}(b_m,0)),F_j(v^{m,j}(a_m,0))],$$ with the caveat that we use it to parametrize only the radial component of the complex coordinate. Note that the function $F_m$ does not depend on the angles of complex coordinates. By Lemma \ref{lem-Substitution}, Lemma \ref{lem-FjProperties}(\ref{lem-Fj2}) and integrating by substitution we obtain 
\begin{align*}
\mathcal{I}(x_j)=&2\pi \int_{D^{m,j}}\left(\int_{F_j(v^{m,j}(b_m,0))}^{F_j(v^{m,j}(a_m,0))}F_m(v^{m,j}(0,w))H(w,x_j) \left| \frac{d F_m(v^{m,j}(0,w))}{dw}\right|dw\right)dv^{m,j}.\end{align*}
By Lemma \ref{lem-FjProperties} (\ref{lem-Fj2.5}),
\begin{align*}2\pi \int _{D^{m,j}}\left( \int _{F_{j}(v^{m,j}(b_m,0))}^{F_j(v^{m,j}(a_m,0))} \left| F_m(v^{m,j}(0,w))H(w,x_j)\left|\frac{dF_m(v^{m,j}(0,w))}{dw}\right|\right|dw\right)dv^{m,j}\\ \leq 2\pi F_m(0)\int_{D^{m,j}}\left(\int_{F_j(v^{m,j}(b_m,0))}^{F_j(v^{m,j}(a_m,0))}|H(w,x_j)| \left| \frac{d F_m(v^{m,j}(0,w))}{dw}\right|dw\right)dv^{m,j}.\\
\end{align*}
As in the real case the last term is finite. Therefore $\mathcal{I}(x_j)$ is convergent. 
\\Similarly for $x_j,x_j'\in (-F_j(0)+\varepsilon,F_j(0)-\varepsilon)$ we have 
\begin{align*}
    |\mathcal{I}(x_j)-\mathcal{I}(x_j')|\leq 2\pi F_m(0)\int_{D^{m,j}}\left(\int_{F_j(v^{m,j}(b_m,0))}^{F_j(v^{m,j}(a_m,0))}|H(w,x_j)-H(w,x_j')| \left| \frac{d F_m(v^{m,j}(0,w))}{dw}\right|dw\right)dv^{m,j}.\\
\end{align*}
Again as in the real case $\lim_{x_j'\to x_j} |\mathcal{I}(x_j)-\mathcal{I}(x_j')|=0$ and $\mathcal{I}(x_j)$ is continuous. 

We proved that the functions $\mathcal{I}_0,\ldots, \mathcal{I}_L$ are well defined and continuous on the interval $(-F_j(0)+\varepsilon,F_j(0)-\varepsilon)$ so the same must hold for $\frac{d}{d x_j} P_j(x_j)$. We prove the lemma by letting $\varepsilon\to 0.$
\end{proof}

We are now ready to prove the proposition. Cases $V=\R$ and $V=\C$ are handled by Lemma \ref{l-CollapsingPairsR} and Lemma \ref{l-CollapsingPairsC} respectively. Assume now $d\geq 2$. Recall that we already reduced the proof to the case $v=0$. By Lemma \ref{lem-Upreimage}, the boundary $\partial U$ is precisely the set $P_V^{-1}(\{\alpha\})$. By Lemma \ref{lem-PC1} and Lemma \ref{lem-PjProperties} (\ref{lem-Pj5}), the functions $P_j$ are all of class $C^1$ on $B_{k_{\nu_j}}(0,F_j(0))$. It follows that $P_V$ is $C^1$ on $\mathcal{B}:=\prod_{l=1}^{r_1}B_{\R}(0,F_l(0))\times \prod_{l=r_1+1}^{d}B_{\C}(0,F_l(0))$. By Lemma \ref{lem-dPjLower} the derivative of $P_V$ is non-zero on $\mathcal{B}\cap V^\times$. Using the implicit function theorem \cite[Thm 9.28]{babyrudin} for $P_V-\alpha $ on the set $V^{\times}\cap \mathcal{B}$ we deduce that $P_V^{-1}(\{\alpha\})\cap V^\times \cap \mathcal{B}=P_V^{-1}(\{ \alpha \}) \cap V^{\times}$ is a $C^1$ sub-manifold. The assertion $\lambda \overline U\subset U$ for $0\leq \lambda<1$ follows immediately from Lemma \ref{lem-PjProperties} (\ref{lem-Pj2}),(\ref{lem-Pj3}) and  Lemma \ref{lem-Upreimage}.\end{proof}
\begin{remark}
The methods developed in Sections \ref{s-Collapsing} and \ref{s-EnergyMinimizers} are likely to work in greater generality. We expect that for any type of kernel $K\colon \mathbb R^n\times \mathbb R^n \to \mathbb R$ satisfying the following properties:
\begin{itemize}
    \item $K(x+w,y+w)=K(x,y)$ for $x,y,w\in \mathbb R^n$,
    \item $K(x,y)=K(y,x)$ for $x,y\in \mathbb R^n$,
    \item $K(0,(x_1,\ldots,x_n))$ is strictly increasing in each $x_i>0$ for $i=1,\ldots,n$,
    \item $K(x,y)$ is $C^1$ outside the subspaces $x_i=y_i$ for $i=1,\ldots,n$,
    \item $K$ has at worst logarithmic singularities on the subspaces $x_i=y_i$ for $i=1,\ldots,n$,
\end{itemize}
the compactly supported measures $\mu\in\mathcal M^1(\mathbb R^n)$ minimizing the energy $\int_{\mathbb R^n}\int_{\mathbb R^n}K(x,y)d\mu(x)d\mu(y)$ satisfy the conclusion of Proposition \ref{p-EnergyMinimizers}. 
\end{remark}
\section{Non-existence of $n$-optimal sets.}\label{s-NonExistence}
\subsection{Discrepancy and almost equidistribution}\label{s-Discrepancy}
Let $\mu$ be any limit measure on $V=k\otimes_\Q \R$. In this section we study the discrepancy of the sets $U$ such that $\mu=\Leb|_U$ which are provided by Proposition \ref{p-EnergyMinimizers}. The \textbf{lattice point discrepancy} is defined as follows (see \cite[Section 6.2]{HuxleyBook}).  
\begin{definition}\label{d-Discrepancy} Let $U$ be a bounded measurable subset of $V$. For $t\in V^\times, v\in V$ let $N_t(U,v):=|(tU+v)\cap \O_k |$ and define the discrepancy 
$$D_t(U,v):=N_t(U,v)-|\Delta_k|^{-1/2}\Leb(U)\|t\|$$ and the maximal discrepancy 
$$D_t(U):=\esssup_{v\in V} |D_t(U,v)|.$$ 
\end{definition}
We will use the following technical property of the maximal discrepancy. 
\begin{lemma}\label{l-DiscrepancyCont}
Let $U$ be a bounded measurable subset of $V$ of positive Lebesgue measure and such that $\p U$ has zero Lebesgue measure. Then either $D_t(U)<1$ for all $t\in V^\times$  or there exists $\delta>0$, a non-empty open subset $T\subseteq V^\times$ and a non-empty open subset $W$ of $V$ such that $|D_t(U,v)|> 1+\delta$ for all $t\in T, v\in W$. 
\end{lemma}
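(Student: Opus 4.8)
The plan is to reduce everything to two elementary semicontinuity facts about counting lattice points in an open versus a closed set. Write $m_t:=|\Delta_k|^{-1/2}\Leb(U)\|t\|$, so that $D_t(U,v)=N_t(U,v)-m_t$. Since $\Leb(\p U)=0$, for each fixed $t\in V^\times$ we have $N_t(U,v)=N_t(\mathrm{int}\,U,v)=N_t(\overline U,v)$ for Lebesgue-almost every $v$, because the set of exceptional $v$ is $\bigcup_{\lambda\in\O_k}(\lambda-t\,\p U)$, a countable union of translates of $t\,\p U$, each of Lebesgue measure $\|t\|\Leb(\p U)=0$. Now $(t,v)\mapsto N_t(\mathrm{int}\,U,v)$ is lower semicontinuous (a lattice point in an open set stays there under small perturbations of $t$ and $v$), $(t,v)\mapsto N_t(\overline U,v)$ is upper semicontinuous, both are integer-valued and, for fixed $t$, bounded (as $U$ is bounded), and they sandwich $N_t(U,\cdot)$ while agreeing with it a.e. Since $\{\,v:N_t(\mathrm{int}\,U,v)\ge n\,\}$ is open, it is Lebesgue-positive iff non-empty, so $\esssup_v N_t(U,v)=\max_v N_t(\mathrm{int}\,U,v)$ is attained on a non-empty \emph{open} set, and dually $\essinf_v N_t(U,v)=\min_v N_t(\overline U,v)$ is attained on a non-empty open set; hence
\[
D_t(U)=\max\!\big(\,\max_v N_t(\mathrm{int}\,U,v)-m_t,\ \ m_t-\min_v N_t(\overline U,v)\,\big).
\]

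Suppose the first alternative fails, so $D_{t_0}(U)\ge 1$ for some $t_0\in V^\times$. (If $\Leb(U)=0$ then $m_t\equiv0$ and $N_t(U,\cdot)=0$ a.e., giving $D_t(U)=0<1$ for every $t$, a contradiction; thus $\Leb(U)>0$ and $m_t>0$ on $V^\times$.) By the displayed formula, either (A) $n_0:=\max_v N_{t_0}(\mathrm{int}\,U,v)\ge m_{t_0}+1$, or (B) $n_1:=\min_v N_{t_0}(\overline U,v)\le m_{t_0}-1$. In case (A): pick $v_0$ with $N_{t_0}(\mathrm{int}\,U,v_0)=n_0$ and list $\lambda_1,\dots,\lambda_{n_0}\in\O_k\cap(t_0\,\mathrm{int}\,U+v_0)$. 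Replacing $t_0$ by $t_1:=(1-\eta)t_0$ for small $\eta>0$ keeps each $\lambda_j$ in the open set $t_1\,\mathrm{int}\,U+v_0$, while $m_{t_1}=(1-\eta)^N m_{t_0}<m_{t_0}\le n_0-1$; put $3\delta:=n_0-m_{t_1}-1>0$. Each condition $\lambda_j\in t\,\mathrm{int}\,U+v$ is open in $(t,v)$ and $t\mapsto m_t$ is continuous, so there is a product neighbourhood $T\times W\ni(t_1,v_0)$ on which $N_t(\mathrm{int}\,U,v)\ge n_0$ and $m_t<m_{t_1}+\delta$; then for all $(t,v)\in T\times W$,
\[
D_t(U,v)=N_t(U,v)-m_t\ \ge\ N_t(\mathrm{int}\,U,v)-m_t\ \ge\ n_0-m_t\ >\ n_0-m_{t_1}-\delta\ =\ 1+2\delta>1+\delta,
\]
which is the claim. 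Case (B) is the mirror image: working with $\overline U$, upper semicontinuity, and the \emph{expansion} $t_1:=(1+\eta)t_0$ (so $m_{t_1}>m_{t_0}\ge n_1+1$), one produces an open $T\times W$ on which $N_t(U,v)\le N_t(\overline U,v)\le n_1$ and $m_t>n_1+1+\delta$, i.e.\ $D_t(U,v)<-1-\delta$ there; in particular $|D_t(U,v)|>1+\delta$ on $T\times W$, which is what the lemma is used for downstream.

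The two semicontinuity assertions and the neighbourhood book-keeping are routine. The only point that genuinely needs care is the borderline case $m_{t_0}=n_0-1$ (resp.\ $m_{t_0}=n_1+1$), where $D_{t_0}(U)=1$ exactly and no neighbourhood of $t_0$ \emph{itself} works: there $N_{t_0}(U,\cdot)$ hovers at $m_{t_0}$ and $m_{t_0}+1$ with discrepancy exactly $1$. This is precisely why I first dilate to $t_1=(1\mp\eta)t_0$ — using that $\|t\|$ runs through a whole interval around $\|t_0\|$ while the collection of lattice points witnessing $n_0$ (resp.\ the complement witnessing $n_1$) is preserved — and only afterwards open up the product neighbourhood. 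I expect this dilation step, and pinning down the constants $\eta,\delta$, to be the main thing to get right.
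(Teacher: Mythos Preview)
Your proof is correct and follows essentially the same strategy as the paper: locate a witness $(t_0,v_0)$ where the discrepancy is $\ge 1$ in absolute value, then dilate $t_0$ by a scalar $1\mp\eta$ to push the discrepancy strictly past $1$ (since the lattice count is locally stable while $m_t$ moves continuously), and finally open a neighbourhood. The paper packages the stability of $N_t(U,v)$ via local constancy on connected components of the complement of the ``exceptional set'' $E=\bigcup_{\lambda\in\O_k}(\lambda-t\,\p U)\times\{t\}$, whereas you phrase it through lower/upper semicontinuity of $N_t(\mathrm{int}\,U,\cdot)$ and $N_t(\overline U,\cdot)$; these are equivalent viewpoints. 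You also correctly note that case (B) yields $D_t(U,v)<-1-\delta$ rather than $>1+\delta$, matching the $|D_t(U,v)|>1+\delta$ that the paper actually invokes downstream in Lemma~\ref{l-LimitDiscrepancy}.
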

\begin{proof}
Let $E=\bigcup_{x\in \O_k}\left[\bigcup_{t\in V^\times}\left[(x-t \p U)\times \left\{t\right\}\right]\right]\subseteq V\times V^\times$ and for every $t\in V^\times$ let  $E_t:=\left\{v\in V| (v,t)\in E\right\}=\bigcup_{x\in \O_k}(x-t \p U)$. Because the set $U$ is bounded, the unions defining $E$ and $E_t$ are locally finite. We deduce that $E$ and $E_t$ are closed and $E_t$ has measure $0$ for every $t\in V^\times$. The function $(v,t)\mapsto N_t(U,v)$ is locally constant on $(V\times V^\times) \setminus E$ so it is constant on the connected components of $(V\times V^\times) \setminus E$. In particular, for every $t\in V^\times$ the function $v\mapsto D_t(U,v)$ is constant on the connected components of $V\setminus E_t$. 

Assume that $D_{t_0}(U)\geq 1$ for some $t_0\in V^\times$. The set of values of $D_{t_0}(U,v)$ is discrete because $D_{t_0}(U,v)\in \mathbb N-|\Delta_k|^{-1/2}\Leb(U)\|t_0\|$. We deduce that there exists a connected component $Q_{t_0}$ of $V\setminus E_{t_0}$ such that $D_{t_0}(U,v)\geq 1$ or $D_{t_0}(U,v)\leq -1$ for all $v\in Q_{t_0}$. Assume that the first inequality holds. Fix a point $v_0\in Q_{t_0}$. Let $Q$ be the unique connected component of $(V\times V^\times)\setminus E$ containing $(v_0,t_0)$. For $\varepsilon>0$ let $Q^\varepsilon:=\left\{(v,t)\in Q| \|t\|<\|t_0\|-\varepsilon\right\}$, for $\varepsilon$ small enough it is a non-empty open set because $(v_0,t_0)$ lies in the interior of $Q$. Choose open sets $T\subseteq V^\times, W\subseteq V$ such that $W\times T\subseteq Q^\varepsilon$. For every $(v,t)\in Q
^\varepsilon$ we have 
\begin{align*} D_t(U,v)=N_t(U,v)-|\Delta_k|^{-1/2}\Leb(U)\|t\|=&N_{t_0}(U,v_0)-|\Delta_k|^{-1/2}\Leb(U)\|t\|\\>& D_{t_0}(U,v_0)+\varepsilon |\Delta_k|^{-1/2}\Leb(U).
\end{align*} We deduce that  $D_t(U,v)>1+\delta$ with $\delta=\varepsilon \Leb(U)|\Delta_k|^{-1/2}$ for $t\in T$ and $v\in W$. In the case $D_{t_0}(U,v)\leq -1$ the same argument works with $Q^{\varepsilon }=\left\{(v,t)\in Q| \|t\|>\|t_0\|+\varepsilon\right\}.$ 
\end{proof}

We show that if $\mu$ is a limit measure and $U$ is the open set provided by Proposition \ref{p-EnergyMinimizers}, then $U$ must have suspiciously low maximal discrepancy. 

\begin{lemma}\label{l-LimitDiscrepancy} Let $\mu$ be a limit measure on $V$ and let $U$ be a non-empty open bounded subset of $V$ such that $\p U$ is Jordan measurable of Jordan measure $0$ and $\mu=\Leb|_U$. Then $U$ satisfies $D_t(U)<1$ for all $t\in V^\times$.
\end{lemma}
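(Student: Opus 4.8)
The plan is to argue by contradiction, combining the dichotomy of Lemma~\ref{l-DiscrepancyCont} with the equidistribution of prime ideals in sectors. Suppose $D_{t_0}(U)\ge 1$ for some $t_0\in V^\times$. Since $\partial U$ is Jordan measurable of Jordan measure $0$, Lemma~\ref{l-DiscrepancyCont} provides $\delta>0$ and non-empty open sets $T\subset V^\times$, $W\subset V$ with $D_t(U,v)>1+\delta$ for all $t\in T$, $v\in W$. Two elementary invariances will be used throughout: $N_t(U,v)$ is unchanged under $v\mapsto v+\lambda$ with $\lambda\in\O_k$, and $D_{ut}(U,uv)=D_t(U,v)$ for every $u\in\O_k^\times$ (because $\|u\|=1$ and $u\O_k=\O_k$), so we may regard $W$ as cutting out an open set $\Omega\subset V/\O_k$ of positive Haar measure. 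Fix a sequence $(\mathcal S_{n_i},s_{n_i},t_{n_i})$ realising $\mu=\Leb|_U$ as a limit measure, so $\mu_{n_i}\to\mu$ weak-$*$ with $\|s_{n_i}\|=n_i|\Delta_k|^{1/2}$. The goal is to produce, for some large $i$, a prime ideal $\mathfrak p$ and a residue class $a+\mathfrak p$ on which $\mathcal S_{n_i}$ violates~(\ref{e-ConditionAUE2}); this contradicts $n_i$-optimality, since an $n_i$-optimal set is almost uniformly distributed modulo every prime.

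First I would record that $\mathcal S_{n_i}$ and $\O_k\cap K_{n_i}$, where $K_{n_i}:=s_{n_i}U+t_{n_i}$, differ in $o(n_i)$ elements: this follows from $\mu_{n_i}\to\Leb|_U$ weak-$*$, the hypothesis $\Leb(\partial U)=0$ (so thin shells around $\partial K_{n_i}$ carry $o(n_i)$ lattice points), and the density bound of Lemma~\ref{l-DensityLimitMeasure}, which rules out concentration of $\mathcal S_{n_i}$. Next, for a degree-one principal prime $\mathfrak p=(\pi)$ and $a\in\O_k$, writing $a+\pi\O_k\subset\O_k$ and substituting $y=\pi^{-1}(x-a)$ gives the exact identity $|(\O_k\cap K_{n_i})\cap(a+\mathfrak p)|=N_t\big(U,\pi^{-1}(t_{n_i}-a)\big)$ with $t:=\pi^{-1}s_{n_i}$. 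Here $\|t\|=n_i|\Delta_k|^{1/2}/N\mathfrak p$, so $|\Delta_k|^{-1/2}\Leb(U)\,\|t\|=n_i/N\mathfrak p$ (recall $\Leb(U)=1$), and the expected count $(n_i+1)/N\mathfrak p$ exceeds it by $1/N\mathfrak p$. Consequently, putting $e_a:=|\mathcal S_{n_i}\cap(a+\mathfrak p)|-|(\O_k\cap K_{n_i})\cap(a+\mathfrak p)|$,
$$\Big|\,|\mathcal S_{n_i}\cap(a+\mathfrak p)|-\tfrac{n_i+1}{N\mathfrak p}\,\Big|\ \ge\ D_t\big(U,\pi^{-1}(t_{n_i}-a)\big)-\tfrac{1}{N\mathfrak p}-|e_a|,\qquad \sum_a|e_a|=o(n_i),$$
and by the $\O_k^\times$-invariance above we may replace $\pi$ by any unit multiple throughout.

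Now I would choose the prime. By the equidistribution of prime ideals in sectors — the refinement of the prime ideal theorem coming from the non-vanishing of Hecke $L$-functions with Gr\"ossencharaktere on $\Re s=1$ (classically due to Hecke, the prime version to Mitsui), together with Chebotarev for the finite data — for $n_i$ large there are a degree-one principal prime $\mathfrak p_i=(\pi_i)$ and a unit $u_i$ with $t:=u_i\pi_i^{-1}s_{n_i}\in T$: the norm constraint $N\mathfrak p_i\asymp n_i|\Delta_k|^{1/2}$ singles out an interval of length $\asymp n_i$ inside which $\gg n_i/\log n_i$ primes meet the angular and splitting/principality conditions, so $\mathfrak p_i$ can in addition be chosen generically. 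For $a\in\O_k/\mathfrak p_i$ set $v_a:=u_i\pi_i^{-1}(t_{n_i}-a)$; then $\{v_a\bmod\O_k\}$ is a coset of the cyclic subgroup $\pi_i^{-1}\O_k/\O_k\subset V/\O_k$ of order $N\mathfrak p_i$, and for a generic admissible $\mathfrak p_i$ the corresponding arithmetic progression equidistributes on $V/\O_k$ (its Weyl sums are $o(N\mathfrak p_i)$ unless a short integer vector is annihilated by $\pi_i^{-1}$, a degenerate congruence that the abundance of candidate primes lets us avoid); hence this coset meets $\Omega$ in at least $\eta N\mathfrak p_i$ points for a fixed $\eta>0$. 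For each such class $a$ we have $v_a\bmod\O_k\in\Omega$, so $D_t(U,v_a)>1+\delta$, and the displayed inequality (with $\pi_i$ replaced by $u_i\pi_i$) gives $\big||\mathcal S_{n_i}\cap(a+\mathfrak p_i)|-(n_i+1)/N\mathfrak p_i\big|>1+\delta-1/N\mathfrak p_i-|e_a|$. Since $\#\{a:|e_a|>\delta/2\}\le 2\,o(n_i)/\delta=o(N\mathfrak p_i)<\eta N\mathfrak p_i$ for $n_i$ large, at least one such class has $|e_a|\le\delta/2$ and also $1/N\mathfrak p_i<\delta/2$, so the left-hand side exceeds $1$ — contradicting~(\ref{e-ConditionAUE2}) for the $n_i$-optimal set $\mathcal S_{n_i}$ modulo $\mathfrak p_i$. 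This proves $D_t(U)<1$ for all $t\in V^\times$.

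The main obstacle is the prime-selection step: one must simultaneously (i) place $\pi_i^{-1}s_{n_i}$ inside the prescribed open set $T$, which genuinely needs quantitative equidistribution of prime ideals in Gr\"ossencharakter sectors rather than merely the prime ideal theorem, and (ii) guarantee that the residue classes whose image meets $\Omega$ form a fixed positive proportion of all $N\mathfrak p_i$ classes, so as to outweigh the $o(n_i)$ discrepancy between $\mathcal S_{n_i}$ and $\O_k\cap K_{n_i}$; step (ii) is a Weyl-sum equidistribution of arithmetic progressions on the torus $V/\O_k$ in which one exploits the freedom of choosing $\mathfrak p_i$ among many primes to sidestep degenerate directions. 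The remaining ingredients — the exact counting identity and the comparison of $\mathcal S_{n_i}$ with $\O_k\cap K_{n_i}$ — are routine.
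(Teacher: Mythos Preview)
Your argument is correct and tracks the paper's proof closely: contradiction via Lemma~\ref{l-DiscrepancyCont}, comparison of $\mathcal S_{n_i}$ with $\O_k\cap(s_{n_i}U+t_{n_i})$ using $\Leb(\partial U)=0$, then production of a prime $\mathfrak p_i$ in the right angular sector. The one place where you take a harder road than necessary is your step~(ii). You parametrise residue classes by $a\in\O_k/\mathfrak p_i$, form $v_a=u_i\pi_i^{-1}(t_{n_i}-a)$, and then invoke Weyl equidistribution of the coset $\{v_a\bmod\O_k\}$ on the torus $V/\O_k$ to guarantee that a positive proportion of the $v_a$ land in $\Omega$. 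The paper bypasses this entirely: having chosen $\varpi_{n_i}\in s_{n_i}T^{-1}\cap\O_k$ prime (which is exactly your Corollary~\ref{c-AngularEq} input), it simply looks at $x\in\varpi_{n_i}W\cap\O_k$. For any such $x$ one has $\varpi_{n_i}^{-1}x\in W$ automatically, so the discrepancy bound $|D_{s_{n_i}\varpi_{n_i}^{-1}}(U,\varpi_{n_i}^{-1}x)|>1+\delta$ holds with no further argument; and $|\varpi_{n_i}W\cap\O_k|\sim\Leb(W)\|\varpi_{n_i}\||\Delta_k|^{-1/2}\asymp n_i$ by a direct lattice-point count, which dominates the $o(n_i)$ classes contaminated by $R_{n_i}$. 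This replaces your torus-equidistribution step by a single volume estimate.

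Two minor remarks. First, your genericity hedge (``sidestep degenerate directions'') is unnecessary: for any fixed non-trivial character $\xi\in\mathfrak d^{-1}\setminus\{0\}$ the Weyl sum over your arithmetic progression is exactly zero once $\mathfrak p_i\nmid\xi\mathfrak d$, which holds for all sufficiently large $N\mathfrak p_i$, so equidistribution needs no freedom in choosing $\mathfrak p_i$. Second, your error bookkeeping via $\sum_a|e_a|\le|\mathcal S_{n_i}\triangle(\O_k\cap K_{n_i})|=o(n_i)$ is fine, but the paper's phrasing --- call $x$ \emph{good} if its residue class misses $R_{n_i}$, so that $e_x=0$ exactly --- is slightly cleaner.
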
 
\begin{proof}
We argue by contradiction. Assume that for some $t_0\in V^\times$ we have $D_{t_0}(U)\geq 1$. By Lemma \ref{l-DiscrepancyCont}, there exist non-empty open sets $T\subseteq V^\times, W\subseteq V$ and $\delta>0$ such that $|D_t(U,v)|>1+\delta$ for every $t\in T, v\in W$. By making $W$ smaller if necessary we may assume that it is an open cylinder in $V$, similarly by taking smaller $T$ if necessary we may assume that there exists $\kappa>1$ such that $\kappa^{-1}\leq \|t\|\leq\kappa$ for all $t\in T$.  Let $(n_i)_{i\in \N}$ be a sequence of natural numbers, let $(\mathcal S_{n_i})_{i\in\N}$ be a sequence of $n_{i}$-optimal sets and let $(t_{n_i})_{i\in\N}\subset V, (s_{n_i})_{i\in \N}\subset V^\times, \|s_{n_i}\|=n_i|\Delta_k|^{1/2}$ be such that the measures $\mu_{n_i}$ defined as in (\ref{e-defLimit}) converge weakly-* to $\mu$. Translating $\mathcal S_{n_i}$ by appropriate elements of $\O_k$ we may assume that $t_{n_i}=0$. This will simplify considerably the formulas in the proof. By \cite[Corollary 2.4]{BFS2017} the sets $\mathcal S_{n_i}$ are \textbf{almost uniformly distributed} modulo powers of every prime ideal $\frak p$ of $\O_k$. This means that for every prime $\frak p$ of $\O_k$, $ l\in \N$ and $a\in \O_k$ we have 
$$\left|| \left\{x\in \mathcal S_{n_i}| x-a\in \frak p^l\right\}|-\frac{n_i+1}{N\frak p^l}\right|<1.$$
In order to get a contradiction we will exhibit prime powers $\frak p_{n_i}^l$ for all sufficiently large $n_i$ such that $\mathcal S_{n_i}$ fails to be almost uniformly equidistributed modulo $\frak p_{n_i}^l$. 

Let $E_{n_i}:=(s_{n_i}U)\cap \O_k$ and put $R_{n_i}=(\mathcal S_{n_i}\setminus E_{n_i})\cup (E_{n_i} \setminus \mathcal S_{n_i})$. Since $\mu_{n_i}$ converges weakly-* to ${\rm Leb}|_U$ and the boundary $\p U$ has Jordan measure $0$ we can deduce that $|R_{n_i}|=o(n_i).$ The set $T^{-1}$ is open so by Corollary \ref{c-AngularEq} (in the appendix) for $n_i$ sufficiently large there exists an $\varpi_{n_i}\in s_{n_i}T^{-1}\cap \O_k$ such that the principal ideal $\frak p_{n_i}^l:=\varpi_{n_i}\O_k$ is a prime power. For every $x\in \O_k\cap (-\varpi_{n_i} W)$ we have
$$\left|\left\{y\in E_{n_i}| x-y\in \frak p_{n_i}^{l}\right\}\right|=\left|(s_{n_i}U)\cap (\varpi_{n_i}\O_k+x)\right|=N_{s_{n_i}\varpi_{n_i}^{-1}}(U, -\varpi_{n_i}^{-1}x).$$ 
Since $s_{n_i}\varpi_{n_i}^{-1}\in T$,  $-\varpi_{n_i}^{-1}x\in W$ and $\|s_{n_i}\varpi_{n_i}^{-1}\|=n_i |\Delta_k|^{1/2}(N\frak p_{n_i}^l)^{-1}$ 
we get \begin{equation}\label{e-BadEqui1} \left| \left|\left\{y\in E_{n_i}| x-y\in \frak p_{n_i}^l\right\}\right|- \frac{n_i}{N\frak p_{n_i}^l}\right|=|D_{s_{n_i}\varpi_{n_i}^{-1}}(U, -\varpi_{n_i}^{-1}x)|>1+\delta \textrm{ for all } x\in -\varpi_{n_i}W.
\end{equation}
We showed that for $n_{i}$ big enough $E_{n_i}$ fails "badly" to be almost uniformly equidistributed modulo $\frak p_{n_i}^l$. From this we want to deduce that $\mathcal S_{n_i}$ is not almost uniformly equidistributed modulo $\frak p_{n_i}^l$. Call $x\in (-\varpi_{n_i}W)\cap \O_k$ \textbf{bad} if $(x+\varpi_{n_i}\O_k)\cap R_{n_i}\neq \emptyset$ and \textbf{good} otherwise. For good points we have $(x+\frak p_{n_i}^l)\cap E_{n_i}=(x+\frak p_{n_i}^l)\cap \mathcal S_{n_i}.$  Our next goal is to prove that for $n_i$ sufficiently large there exists at least one \footnote{ In fact we will show that most of them are good.} good element in $-\varpi_{n_i}W$. Let us estimate the number of bad elements of $(-\varpi_{n_i}W)\cap \O_k$. By Lemma \ref{l-Cylinder}, for every $r\in R_{n_i}$ we have $|(\varpi_{n_i}\O_k-r)\cap(-\varpi_{n_i}W)|=|(r\varpi_{n_i}^{-1}-W)\cap \O_k|=O(1).$ Hence we have at most $O(|R_{n_i}|)=o(n_i)$ bad elements. On the other hand  $|-\varpi_{n_i}W\cap \O_k|=\|\varpi_{n_i}\|\Leb(W)|\Delta_k|^{-1/2}+ o(\|\varpi_{n_i}\|).$ We chose $\varpi_{n_i}\in s_{n_i}T^{-1}$ so that $\kappa^{-1} n_i|\Delta_k|^{1/2}\leq \|\varpi_{n_i}\|\leq \kappa n_i|\Delta_k|^{1/2}$ therefore the number of good elements is at least $\Leb(W)n_i\kappa^{-1}-o(n_i)$. We infer that for $n_i$ sufficiently large there exists at least one good element $x\in (-\varpi_{n_i}W)\cap \O_k$. Let $x\in (-\varpi_{n_i}W)\cap \O_k$ be such a good element. We have 
$E_{n_i}\cap (x+\frak p_{n_i}^l)=\mathcal S_{n_i}\cap(x+\frak p_{n_i}^l)$ so by (\ref{e-BadEqui1}) we get 
\begin{equation}\label{e-BedEqui2}
\left| \left|\left\{y\in \mathcal S_{n_i}| x-y\in \frak p_{n_i}^l\right\}\right|- \frac{n_i+1}{N\frak p_{n_i}^l}\right|=\left|D_{s_{n_i}\varpi_{n_i}^{-1}}(U, -\varpi_{n_i}^{-1}x)-\frac{1}{N\frak p_{n_i}^l}\right|>1+\delta-\frac{1}{N\frak p_{n_i}^l}.
\end{equation}
We know that $N\frak p_{n_i}^l=\|\varpi_{n_i}\|\geq \kappa^{-1}|\Delta_k|^{1/2}n_i$ so for $n_i$ sufficiently large (\ref{e-BedEqui2}) implies that $\mathcal S_{n_i}$ is not almost uniformly equidistributed modulo $\frak p_{n_i}^{l}$. This is a contradiction because $n$-optimal sets are almost uniformly equidistributed modulo all prime powers.
\end{proof} On the other hand we have the following lower bound on the discrepancy.
\begin{lemma}\label{l-NoAEq} Assume that $V= \R^{r_1}\times \C^{r_2}$ with $r_1+2 r_2>1$.  Let $U$ be an open bounded subset of $V$  such that $\p U\cap V^\times$ is a submanifold of $V^\times$ of class $C^1$ and $\lambda\overline U\subset U$ for every $0\leq \lambda<1$. Then there exists $t\in V^\times$ such that $D_t(U)>1$.
\end{lemma}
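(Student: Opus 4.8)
The plan is to argue by contradiction: suppose $D_t(U)\le 1$ for every $t\in V^\times$, and derive a contradiction with the hypothesis $\dim_\R V\ge 2$. The condition $D_t(U)\le 1$ for all $t$ says that for every scaling $t$ and every translation $v$, the number of lattice points in $tU+v$ differs from the expected count $|\Delta_k|^{-1/2}\Leb(U)\|t\|$ by at most $1$ — an extraordinarily strong \emph{uniform} low-discrepancy condition, much stronger than what any genuinely $\ge 2$-dimensional body can satisfy. The core of the argument is that such uniformity forces $\partial U$ to be (piecewise) affine in a way that is incompatible with the homothety-invariance condition $\lambda\overline U\subset U$ together with $\partial U\cap V^\times$ being a $C^1$-hypersurface.

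Here is the order in which I would carry this out. First, exploit translation averaging: for fixed $t$, averaging $N_t(U,v)$ over $v$ in a fundamental domain of $\O_k$ gives exactly $|\Delta_k|^{-1/2}\Leb(U)\|t\|$, so $D_t(U,v)$ has mean zero; combined with $|D_t(U,v)|\le 1$ and the fact (from the proof of Lemma \ref{l-DiscrepancyCont}) that $D_t(U,v)$ takes values in a single coset of $\Z$, one learns that $|\Delta_k|^{-1/2}\Leb(U)\|t\|$ must stay within distance $1$ of $\Z$ and that $D_t(U,\cdot)$ takes only the two values straddling it. Now let $\|t\|$ vary continuously: as $\|t\|\to\infty$ one can choose $t$ so that $|\Delta_k|^{-1/2}\Leb(U)\|t\|$ is a half-integer plus a small perturbation, and then \emph{every} translate $tU+v$ must contain either $m$ or $m+1$ lattice points where $m$ is that half-integer rounded down. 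The key geometric step is then a slicing/Fourier argument: the function $v\mapsto N_t(U,v)$ is, up to the constant $\Leb(U)\|t\|/|\Delta_k|^{1/2}$, the periodization of $\mathbf 1_{tU}$, and its oscillation being bounded by $1$ forces the Fourier coefficients $\widehat{\mathbf 1_{tU}}(\xi)$ over the dual lattice to be summably small — but by the standard decay estimate for indicators of $C^1$ domains with nonvanishing Gaussian curvature-type behaviour (or, more elementarily, by a one-dimensional slicing in a direction where $\partial U$ is a graph), the boundary contributes a term growing like a positive power of $\|t\|$ in at least one direction once $\dim V\ge 2$, giving oscillation $\gg 1$ for large $\|t\|$. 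I would realize this concretely by fixing a point $p\in\partial U\cap V^\times$ where the tangent hyperplane is in "general position" relative to the lattice directions (possible since $\partial U\cap V^\times$ is a $C^1$-submanifold of positive dimension $\ge 1$), taking $t$ a large real scalar so that $tp$ has a neighbourhood in $\partial(tU)$ that is a $C^1$ graph over a hyperplane of diameter $\gg\|t\|^{1/N}$, and counting lattice points in a thin slab against this graph: the number of columns meeting the graph grows like a power of $\|t\|$, while in each column the count jumps by $1$ precisely when the graph crosses a lattice level, producing a translate with too many points and another with too few — hence oscillation $>1$.

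The main obstacle I anticipate is making the slicing argument clean in the mixed real/complex setting $V\simeq\R^{r_1}\times\C^{r_2}$, where "columns" are one- or two-dimensional and the norm $\|\cdot\|$ is the product norm rather than the Euclidean one; one must choose the slicing direction to be one of the coordinate axes of $V$ and use that $\partial U\cap V^\times$, being $C^1$ and (by $\lambda\overline U\subset U$) star-shaped-ish, is locally a graph in that coordinate over a region whose size in the complementary coordinates is comparable to $\|t\|^{1/N}$ up to constants, via Lemma \ref{l-cone}-type bounds applied to $U$. A secondary subtlety is the single point $0$ (and more generally $\partial U\setminus V^\times$), which must be avoided by working away from the coordinate hyperplanes; this is harmless since $\partial U\setminus V^\times$ has empty interior in $\partial U$. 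Once the oscillation of $v\mapsto N_t(U,v)$ is shown to exceed $2$ (hence the maximal discrepancy exceeds $1$) for a suitable large $t$, the contradiction is complete, and this is exactly the one and only place the hypothesis $k\ne\Q$, i.e.\ $\dim_\R V\ge 2$, is used — in dimension $1$ an interval has maximal discrepancy exactly $1$.
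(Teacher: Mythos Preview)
Your proposal is a strategy sketch, and its central step---showing that the oscillation of $v\mapsto N_t(U,v)$ exceeds $2$ for large $t$---is not established. The Fourier route invokes decay estimates for $\widehat{\mathbf 1_{tU}}$ that need curvature or at least $C^2$ regularity, which you do not have; the fallback slicing argument (``in each column the count jumps by $1$ \ldots\ hence oscillation $>1$'') is a non sequitur, since column jumps of $\pm1$ can cancel across columns and a bare $C^1$ graph gives no mechanism forcing the total oscillation over $v$ to exceed $2$. Your parenthetical implication (bounded oscillation forces the Fourier coefficients to be summably small) also points the wrong way: it is $\ell^1$ of the Fourier side that bounds $L^\infty$, not conversely. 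Most tellingly, the star-shapedness hypothesis $\lambda\overline U\subset U$ is never actually used---in your outline it appears only as decoration.

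The paper's argument is entirely different and hinges precisely on that hypothesis. Rather than vary the translation $v$ at fixed $t$, it varies the \emph{scaling}. First one arranges three points of $\O_k$ on $t_0\partial U+v_0$ for some $t_0,v_0$: choose $x_0\in\partial U\cap V^\times$ whose tangent hyperplane is not one of the coordinate hyperplanes, run a short $C^1$ curve $\gamma$ on $\partial U$ from $x_0$ with $\|\gamma(s)-x_0\|>0$ for $s>0$, and observe that as $s\to 0$ the affine lattice $(\gamma(s)-x_0)\O_k+x_0$ becomes arbitrarily fine, so its intersection with $U$ has cardinality tending to $\infty$; at the first $s_1$ where this cardinality drops back to its value at $s=1$, a third lattice point (besides $x_0$ and $\gamma(s_1)$, which correspond to $0,1\in\O_k$) must lie on $\partial U$. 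After rescaling by $t_0=(\gamma(s_1)-x_0)^{-1}$ and translating by $v_0=-x_0 t_0$, three elements of $\O_k$ sit on $t_0\partial U+v_0$. Now star-shapedness gives $(1-\varepsilon)\overline U\subset U$ and $\overline U\subset(1+\varepsilon)U$, so for small $\varepsilon$ and all $v_1$ near $v_0$ one has $N_{(1+\varepsilon)t_0}(U,v_1)-N_{(1-\varepsilon)t_0}(U,v_1)\ge 3$, whence $D_{(1+\varepsilon)t_0}(U,v_1)-D_{(1-\varepsilon)t_0}(U,v_1)>\tfrac{5}{2}$ once $\varepsilon$ is small, and one of the two maximal discrepancies must exceed $1$. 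The decisive idea you are missing is this monotonicity in the \emph{scaling} direction, supplied by $\lambda\overline U\subset U$; your translation-only viewpoint has no substitute for it.
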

\begin{proof}
We claim that there exists $t_0\in V^\times, v_0\in V$ such that $(t_0\p U+v_0)\cap \O_k$ contains at least $3$ points\footnote{ This is of course not true if $V=\R$ and $U$ is an interval.}.  Let $N=\dim_\R V$. For every $v\in V^\times$ let us identify the tangent space $T_v V^\times$ with $V$ in the obvious way. For every point $p\in \p U\cap V^\times$ the tangent space $T_p \p U$ is a codimension $1$ subspace of $V$. Call a (real) codimension $1$ subspace $H$ of $V$ singular if we have $H\subset V\setminus V^\times$. 

\textbf{ Step 1.} We show that there is $x_0\in \p U\cap V^\times$ such that $T_{x_0} \p U$ is not singular.
Write ${\rm Gr}_{N-1}(V)$ for the space parametrizing all $(N-1)$-dimensional real vector subspaces of $V$. The map $\phi(p):= [T_p \p U]\in {\rm Gr}_{N-1}(T_p V^\times)\simeq {\rm Gr}_{N-1}(V)$ is continuous on $\p U\cap V^\times$ because the latter is a $C^1$-submanifold. Let $M$ be any connected component of $\p U\cap V^\times$. Either there exists a point $p\in M$ such that $T_p \p U$ is nonsingular or we can assume that the image  $\phi(M)$ consists solely of singular subspaces. First assume that the latter is true. The set of singular subspaces in ${\rm Gr}_{N-1}(V)$ has $r_1$ elements, each corresponding to a real coordinate of $V$. Hence $\phi(M)=H$ for some singular subspace $H$. We deduce that $M$ is contained in a hyperplane $H'$ parallel to $H$. In particular the $r_1$ singular subspaces and $H'$ cut out a connected component of $U\cap V^\times$ which is a bounded region of $V$. This is a contradiction because $r_1+1\leq N+1$ and the only way $N+1$ (real) codimension $1$ hyperplanes can cut out a bounded region in $N$-dimensional space is when they are pairwise non-parallel. Therefore there exists $x_{0}\in M$ such that $T_{x_{0}} \p U$ is nonsingular. 

\textbf{ Step 2.} Let $x_{0}$ be as in Step 1. We construct a continuous map $\gamma\colon [0,1]\to \p U$ such that $\gamma(0)=x_0$ and $\|\gamma(s)-\gamma(0)\|>0$ for $0<s\leq 1$.
Choose any smooth complete Riemannian metric on $\p U\cap V^\times$. Choose a vector $w\in T_{x_0}\p U$ such that $\|w\|\neq 0$. Let $\gamma: [0,+\infty)\to \p U \cap V^{\times }$ be the unique geodesic ray such that $\gamma(0)=x_0$ and $\frac{d}{dt}\gamma(t)|_{t=0}=w$. We have $\gamma (t)= \gamma (0)+tw+tr(t)$ where $r(t)\to 0$ when $t \to 0$. Therefore $\|\gamma (t) - \gamma (0) \|=\prod _{i=1}^{r_{1}}|tw_{i}+tr(t)_{i}|\prod _{i=r_{1}+1}^{d}|tw_{i}+tr(t)_{i}|^{2}=t^{n} \|w\| +O(t^{n} \textrm{max}_{i=1, \ldots , d} | r(t)|_{i})$ and for $t$ small enough we have $\|\gamma(s)-\gamma(0)\|>0$ for every $s\leq t$. Up to reparametrizing  $\gamma$ we may assume $t=1$.

\textbf{ Step 3.} We show that there exists $s_1>0$ such that $((\gamma(s_1)-x_0)\O_k+ x_0)\cap \p U$ contains at least one point $x_1$ other than $x_0, \gamma(s_1)$. First note that as $s$ approaches $0$ the norm $\|\gamma(s)-x_0\|$ tends to $0$. Hence $$\lim_{s\to 0}|(\gamma(s)-x_0)\O_k+ x_0)\cap U|=+\infty.$$ Let $s_1:=\inf\left\{s>0| |(\gamma(s)-x_0)\O_k+ x_0)\cap U|\leq |(\gamma(1)-x_0)\O_k+ x_0)\cap U|\right\}.$  The equality above ensures that $s_1>0$. The intersection $((\gamma(s_1)-x_0)\O_k+ x_0)\cap \p U$ must contain another point except $x_0$ and $\gamma(s_1)$ because otherwise the function $s\mapsto |((\gamma(s)-x_0)\O_k+ x_0)\cap U|$ would be constant in an open neighborhood of $s_1$, contradicting the definition of $s_1$.

\textbf{ Step 4.} Put $v_0=-x_0(\gamma(s_1)-x_0)^{-1}$ and  $t_0= (\gamma(s_1)-x_0)^{-1}$. Then $(t_0\p U+v_0)\cap \O_k$ contains at least $3$ distinct points $p_1,p_2,p_3$. Indeed, we may take $p_1=0, p_2=1$ and $p_3=(x_1-x_0)(\gamma(s_1)-x_0)^{-1}$ where $s_1,x_1$ are provided by Step 3.

\textbf{ Step 5.} We will show that for every small enough $\varepsilon>0$ there exists an open neighborhood $W$ of $v_0$  such that $((1-\varepsilon)t_0 U+v_1)\cap \O_k=(t_0 U+v_0)\cap \O_k$ and  $((1+\varepsilon)t_0 U+v_1)\cap \O_k\supset ((t_0 U+v_0)\cap \O_k)\sqcup \left\{p_1,p_2,p_3\right\}$ for every $v_1\in W$. 

Choose $\varepsilon>0$ such that $((1-\varepsilon)t_0 U+v_0)\cap \O_k=(t_0 U+v_0)\cap \O_k$ and $((1+\varepsilon)t_0 U + v_0)\cap \O_k \supset ((t_0 U+v_0)\cap \O_k)\sqcup \left\{p_1,p_2,p_3\right\}$.  The desired conditions are satisfied once $\varepsilon$ is small enough because $(1-\varepsilon)t_0 \overline U\subset t_0 U$ and $t_0\overline{U}\subset (1+\varepsilon)t_0U$. Conditions $((1-\varepsilon)t_0 U+v_1)\cap \O_k=(t_0 U+v_0)\cap \O_k$ and  $((1+\varepsilon)t_0 U+v_1)\cap \O_k\supset ((t_0 U+v_0)\cap \O_k)\sqcup \left\{p_1,p_2,p_3\right\}$ hold for $v_1$ in an open neighborhood of $v_0$.

\textbf{ Step 6.} We show that for small enough $\varepsilon>0$  we have either $D_{(1-\varepsilon)t_0}(U)>1$ or $D_{(1+\varepsilon)t_0}(U)>1$. By Step 5 for every $v_1\in W$ we have $N_{(1+\varepsilon)t_0}(U,v_1)-N_{(1-\varepsilon)t_0}(U,v_1)\geq 3$ so $D_{(1+\varepsilon)t_0}(U,v_1)-D_{(1-\varepsilon)t_0}(U,v_1)\geq 3- |\Delta_k|^{-1/2}\Leb(U)\|t_0\|((1+\varepsilon)^N-(1-\varepsilon)^N)$. By choosing $\varepsilon$ small enough we can ensure that $D_{(1+\varepsilon)t_0}(U,v_1)-D_{(1-\varepsilon)t_0}(U,v_1)\geq \frac{5}{2}$. The set $W$ is open so it has positive measure. We deduce that  $D_{(1-\varepsilon)t_0}(U)+D_{(1+\varepsilon)t_0}(U)\geq \frac{5}{2}$. One of them must be bigger than $1$ so Step 6 and the lemma follows.
\end{proof}
\begin{remark} Had we known that the boundary of $U$ is $C^4$, a much stronger lower bound on the discrepancy $D_t(U)$ could be extracted from classical results on lattice point counting \cite{HuxleyBook}. 
\end{remark}
\subsection{Proof of the main theorem}\label{s-Proof}
 In this section we prove the main result.
\begin{proof}[Proof of Theorem \ref{mt-noptimal}]
We argue by contradiction. As before $V=k\otimes_\Q \R=\R^{r_1}\times \C^{r_2}.$ Assume that there is a sequence $(n_i)_{i\in \N}\subseteq \N$ with $n_i\to\infty$ such that for every $i\in\N$, there exists an $n_i$-optimal set $\mathcal S_{n_i}$. By Corollary \ref{c-CompactFrame} there exists a compact cylinder $\Omega\subset V$ and sequences $(s_{n_i})_{i\in\N},(t_{n_i})_{i\in \N}\subset V$ such that $\|s_{n_i}\|=n_i|\Delta_k|^{1/2}$ and $s_{n_i}^{-1}(\mathcal{S}_{n_i}-t_{n_i})\subset \Omega$.  Put \begin{equation}\label{e-defLimit2}
\mu_{n_i}:=\frac{1}{n_i}\sum_{x\in \mathcal{S}_{n_i}} \delta_{s_{n_i}^{-1}(x-t_{n_i})}.
\end{equation}
Those measures are supported in $\Omega$. Since $\Omega$ is compact we may assume, passing to a subsequence if needed, that $\mu_{n_i}$ converges weakly-* to a probability measure $\mu$. This is a measure that we called in Section \ref{s-LimitMeasures} a limit measure. By Lemma \ref{l-DensityLimitMeasure}, the measure $\mu$ is absolutely continuous with respect to the Lebesgue measure on $V$ with the density bounded by $1$.  By Proposition \ref{p-LimitEnergy} we have $I(\mu)= -\frac{1}{2}\log|\Delta_k|-\frac{3}{2}-\gamma_k+\gamma_\Q$ where $\gamma_k,\gamma_\Q$ are Euler--Kronecker constants of $k,\Q$ respectively. Recall that $\mathcal P^1(V)$ is the set of probability measures on $V$ absolutely continuous with respect to the Lebesgue measure and of density at most $1$. By Lemma \ref{l-LowerEnergyBound}, the measure $\mu$ realizes the minimal energy among all compactly supported probability measures in $\mathcal P^1(V)$. Therefore by Proposition \ref{p-EnergyMinimizers} there exists an open set $U$ of Lebesgue measure $1$ such that $\p U\cap V^\times$ is a $C^1$-submanifold of $V^\times$ , $\lambda \overline U\subset U$ for $0<\lambda<1$ and up to translation $\mu=\Leb|_U$.  By Lemma \ref{l-NoAEq} applied to $U$, there exists $t\in V^\times$ such that $D_t(U)>1$. On the other hand  Lemma \ref{l-LimitDiscrepancy} yields  $D_t(U)<1$ for every $t\in V^\times$. This yields the desired contradiction.
\end{proof}

\section{Appendix} 
\subsection{Measure theory}\label{s-AppMeasures}
\begin{lemma}\label{l-AccumMeasures} Let $\nu$ be a probability measure on $V$ of density at most $1$. Then there exists a sequence of subsets $(E_{n})_{n\in \N}$ of $\O_k$ such that $|E_{n}|=n+1$ and the sequence of measures 
\begin{equation}\label{e-DensityEq1}\nu_{E_n,n}:=\frac{1}{n}\sum_{x\in E_{n}}\delta_{n^{-1/N}|\Delta_k|^{-1/2N} x}\end{equation} converges weakly-* to $\nu$.
\end{lemma}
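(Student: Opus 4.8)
The plan is to use that $\O_k$ is a full lattice in $V\cong\R^{N}$ of covolume $|\Delta_k|^{1/2}$, so that the rescaled lattice $\Lambda_{n}:=c_{n}\O_k$, where $c_{n}:=n^{-1/N}|\Delta_k|^{-1/2N}$, has covolume $1/n$; in particular, for every bounded Jordan measurable set $A\subset V$ one has $|\Lambda_{n}\cap A|=n\Leb(A)+o(n)$ as $n\to\infty$. The hypothesis $d\nu/d\Leb\le 1$ is exactly what is needed to realize $\nu$ as a weak-$*$ limit of empirical measures on subsets of $\Lambda_{n}$: in any small region the $\nu$-mass never exceeds the Lebesgue measure, hence asymptotically never exceeds $1/n$ times the number of lattice points available in that region, leaving enough room to select the right number of points.

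First I would discretize. Fix $\delta>0$ and tile $V$ by the half-open cubes $Q$ of the grid $\delta\Z^{N}$, so that $m_{Q}:=\nu(Q)\le\Leb(Q)=\delta^{N}$ and $\sum_{Q}m_{Q}=1$. Given $\varepsilon>0$, choose a ball $B$ about the origin with $\Leb(B)>1$ meeting only finitely many cubes and with $\nu(B)>1-\varepsilon$; let $\mathcal{Q}$ be the finite set of cubes meeting $B$. For $n$ large, in each $Q\in\mathcal{Q}$ we have $\lfloor n m_{Q}\rfloor\le n\delta^{N}$ while $|\Lambda_{n}\cap Q|=n\delta^{N}+o(n)$, so setting $k_{Q}:=\min\{\lfloor n m_{Q}\rfloor,|\Lambda_{n}\cap Q|\}$ gives $k_{Q}/n\to m_{Q}$, $\sum_{Q\in\mathcal Q}k_{Q}\le n$, and $\sum_{Q\in\mathcal Q}|\Lambda_{n}\cap Q|\ge|\Lambda_{n}\cap B|\ge n+1$ for $n$ large. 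I would then pick $k_{Q}$ points of $\Lambda_{n}\cap Q$ in each $Q\in\mathcal Q$ and, since there is room, add $(n+1)-\sum_{Q}k_{Q}$ further points of $\Lambda_{n}\cap B$ arbitrarily; this produces $E_{n}\subset\O_k$ with $|E_{n}|=n+1$, and the number of added points is at most $n\varepsilon+o(n)$.

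Next I would verify convergence. Since $\nu$ and the $\nu_{E_{n},n}$ are (essentially) probability measures, it suffices to test against $\phi\in C_{c}(V)$ together with tightness, and tightness is immediate because all but $\varepsilon+o(1)$ of the mass of $\nu_{E_{n},n}$ lies in $B$ by construction. For $\phi\in C_{c}(V)$ with modulus of continuity $\omega_{\phi}$, writing $\xi_{Q}$ for an arbitrary point of $Q$, the oscillation of $\phi$ within cubes contributes at most $\tfrac{n+1}{n}\,\omega_{\phi}(\delta\sqrt N)$, the added points contribute at most $(\varepsilon+o(1))\|\phi\|_{\infty}$, and the main term $\tfrac1n\sum_{Q\in\mathcal Q}k_{Q}\phi(\xi_{Q})$ converges to $\sum_{Q\in\mathcal Q}m_{Q}\phi(\xi_{Q})$, which differs from $\int_{V}\phi\,d\nu$ by at most $\omega_{\phi}(\delta\sqrt N)+\varepsilon\|\phi\|_{\infty}$. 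Hence $\limsup_{n}\big|\int\phi\,d\nu_{E_{n},n}-\int\phi\,d\nu\big|\le 2\omega_{\phi}(\delta\sqrt N)+O(\varepsilon\|\phi\|_{\infty})$. Running the construction along sequences $\delta_{i}\downarrow0$, $\varepsilon_{i}\downarrow0$ and extracting a diagonal subsequence yields a single sequence $(E_{n})$ with $\nu_{E_{n},n}\to\nu$ weakly-$*$.

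The only genuinely delicate point is the one flagged above: that for $n$ large $n\,\nu(Q)$ does not exceed $|\Lambda_{n}\cap Q|$, which rests squarely on the density bound $d\nu/d\Leb\le1$ (and on the lattice-point count, which is uniform enough here because $\delta$ is fixed at each stage); the remaining ingredients — the exact cardinality bookkeeping, the truncation to a finite ball, and the diagonal argument in $(\delta,\varepsilon)$ — are routine.
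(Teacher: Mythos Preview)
Your argument is correct and gives a perfectly clean proof of the lemma, but the route is quite different from the paper's. You proceed \emph{directly}: tile $V$ by $\delta$-cubes, use the density bound $\nu(Q)\le\Leb(Q)$ to guarantee that each cube contains enough points of the rescaled lattice $\Lambda_n$ to carry the mass $m_Q$, select $k_Q$ points per cube, top up to exactly $n+1$ using spare points inside a large ball, and finally run a diagonal argument in $(\delta,\varepsilon)$. The paper instead argues \emph{abstractly}: it shows that the set $P\subset\mathcal M^1(V)$ of achievable limits is weak-$*$ closed and convex, checks that $\Leb|_U\in P$ for Jordan-regular open sets (hence for all measurable $E$ by approximation), and then uses the layer-cake representation $\nu=\int_0^1\Leb|_{\{f\ge t\}}\,dt$ to conclude by convexity. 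Your approach is more elementary and constructive (one can almost read off the sets $E_n$), while the paper's approach isolates the structural reason the density bound matters---namely that $\mathcal P^1(V)$ is the closed convex hull of indicator measures---which is conceptually pleasant and reusable, at the cost of being less explicit.
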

\begin{proof}
The proof is based on a sequence of reductions to easier problems. Since the set of compactly supported probability measures on $V$ is dense among all probability measures we can, without loss of generality, assume that $\nu $ is compactly supported. First note that if we manage to find a sequence of sets $E_{n}\subset \O_k$ such that the measures $\nu_{E_n,n}$ converge weakly-* to $\nu$ and $n^{-1/N}E_n$ are all contained in a bounded set, then $|E_{n}|=n+o(n)$.  Removing or adding $o(n)$ points to each $E_{n}$ does not affect the weak-* limit. The proof is reduced to finding any sequence $(E_{n})$ of finite subsets of $\O_k$ such that $\nu_{E_n,n}$ converges weakly-* to $\nu$ and $n^{-1/N}E_n$ is uniformly bounded. Let $P\subseteq\mathcal M^1(V)$ be the set of  finite measures for which this is possible. 

\textbf{Step 1.} We prove that $P$ is a closed convex subset of $\mathcal M^1(V)$. The fact that $P$ is closed is immediate by definition. Thus, to prove that it is convex we only need to show that for every $\nu,\nu'\in P$ we have $\frac{1}{2}(\nu+\nu')\in P$. Fix a set $a_1,\ldots, a_{2^N}$ of representatives of $\O_k/2\O_k$. Let $E_{n},E'_{n}$ be sequences of subsets of $\O_k$ such that $\nu_{E_n,n},\nu_{E'_n,n}$ converge weakly-* to $\nu,\nu'$ respectively and $n^{-1/N}E_{n}, n^{-1/N}E'_{n}$ are uniformly bounded. Define $$F_{2^Nn}=\bigcup_{i=1}^{2^{N-1}}(a_i+2E_n)\cup \bigcup_{i=2^{N-1}+1}^{2^{N}}(a_i+2E'_n)$$ and $F_{m}:=F_{2^N[m/2^N]}$. A simple computation shows that $\lim_{m\to\infty} \nu_{F_m,m}=\frac{1}{2}(\nu+\nu')$ so the latter belongs to $P$.

\textbf{Step 2.} Let $U\subset V$ be a bounded open set of finite Lebesgue measure such that $\p U$ is Jordan measurable and has Jordan measure $0$. Then the measure $\nu(A):=\Leb(A\cap U)$ belongs to $P$. Indeed it is enough to take $E_n=\O_k\cap n^{1/N}U.$  

\textbf{Step 3.} For every bounded measurable set $E\subset V$ of finite measure the measure $\nu_E(A):=\Leb(A\cap E)$ is in $P$. This follows from the fact that the Lebesgue measure is Radon so there exists a sequence of open sets $U_n$ containing $E$ such that $\nu_{U_n}$ converges weakly-* to $\nu_E$. Removing from $U_n$ a closed set of arbitrarily small Lebesgue measure we can assume that $\p U_n$ has Jordan measure $0$ so Step 2 applies.

\textbf{Step 4.} The convex hull of measures $\nu_E$ from the previous step is weakly-* dense in the set of measures of density at most $1$ and bounded support. Indeed, let $\nu$ such a measure with density $f\in L^1(V)$ such that $f(v)\leq 1$ almost everywhere. For every $t\in [0,1]$ let $E_t=\left\{v\in V| f(v)\geq t\right\}$. Those are bounded measurable sets of finite measure and we have 
$\nu=\int_0^1 \nu_{E_t}dt$. Hence, by Step 1 $\nu\in P$. The lemma is proven.
\end{proof}
\subsection{Angular distribution of prime ideals}
We prove a version of prime number theorem for number fields where principal ideals are weighted with respect to their "angular" position in $V^\times/ \O_k^\times$. This is very close to the prime number theorem for products of cylinders and sectors proved by  Mitsui \cite{Mitsui56}. The version we need is a little bit different and we don't need an explicit error term. The following result is rather folklore, we include a short proof for completeness.
\begin{lemma}\label{l-AngEqC}
Let $k$ be a number field and let $V=k\otimes_{\Q}\R$. Let $\varphi: V^\times \to \C$ be a continuous function such that $\varphi(t \lambda x)=\varphi(x)$ for every $x\in V^\times, \lambda\in\O_k^\times$ and  $t\in \R^\times$. For a principal ideal $I=a\O_k$ we put $\varphi(I):=\varphi(a)$. Then for $X>0$ 
$$\sum_{\substack{N(\frak p^l)\leq X\\ \frak p \textrm{ principal}}} \varphi(\frak p)\log N\frak p=\frac{X}{R_k h_k}\int_{\mathcal I/\O_k^\times}\varphi(t)dt + o(X),$$ where $R_k,h_k$ are the regulator and the class number of $k$ and $\mathcal I:=\left\{v\in V|\, \|v\|=1\right\}.$
\end{lemma}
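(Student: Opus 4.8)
The plan is to reduce the lemma to the prime number theorem for Hecke Gr\"ossencharacters of $k$ via harmonic analysis on a compact abelian group. First I would pass to a von Mangoldt sum over principal ideals. Since $\varphi$ is continuous and invariant under $\R^\times$ and $\O_k^\times$, it descends to the quotient group $\mathcal T:=V^\times/(\R^\times\O_k^\times)$, which is compact by Dirichlet's unit theorem; one moreover checks $\mathcal T\cong\mathcal I/\O_k^\times$ (using $-1\in\O_k^\times$), so in particular $\varphi$ is bounded. The prime powers $\mathfrak p^l$ with $l\ge 2$, $N\mathfrak p^l\le X$, number $O(\sqrt X\log X)$ and hence contribute $o(X)$; writing $\Lambda_k$ for the ideal von Mangoldt function and $\varphi(a\O_k):=\varphi(a)$ (well defined by the invariance of $\varphi$), it then suffices to show $\Psi_\varphi(X):=\sum_{\mathfrak a\ \mathrm{principal},\,N\mathfrak a\le X}\Lambda_k(\mathfrak a)\varphi(\mathfrak a)=\tfrac{X}{R_kh_k}\int_{\mathcal I/\O_k^\times}\varphi\,dt+o(X)$.

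On the compact abelian group $\mathcal T$, finite linear combinations of unitary characters are uniformly dense in $C(\mathcal T)$ (Stone--Weierstrass), so for $\epsilon>0$ I pick a trigonometric polynomial $\varphi_\epsilon=\sum_{\chi\in F}c_\chi\chi$ ($F$ finite) with $\|\varphi-\varphi_\epsilon\|_\infty<\epsilon$; then $|\Psi_\varphi(X)-\Psi_{\varphi_\epsilon}(X)|\le\epsilon\sum_{N\mathfrak a\le X}\Lambda_k(\mathfrak a)=\epsilon X+o(X)$ by the prime number theorem for $\zeta_k$, and it remains to treat $\Psi_\chi(X)$ for each $\chi\in F$. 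Pulling $\chi$ back along $\iota\colon k^\times\hookrightarrow V^\times\to\mathcal T$ gives a character of the group $P_k$ of principal ideals; extending it in one of $h_k$ ways to a character $\chi_1$ of the full ideal group $I_k$ and using orthogonality of the class-group characters, $\mathbf 1_{P_k}=\tfrac1{h_k}\sum_\psi\psi$, yields
$$\Psi_\chi(X)=\frac1{h_k}\sum_{\psi\in\widehat{\mathrm{Cl}(k)}}\ \sum_{N\mathfrak a\le X}\Lambda_k(\mathfrak a)\,(\chi_1\psi)(\mathfrak a),$$
where each $\eta:=\chi_1\psi$ is an everywhere unramified Hecke Gr\"ossencharacter of $k$; the adelic formalism is the economical way to package the archimedean $\infty$-type $\chi$, the maximal compact $\prod_v\O_v^\times$ and the class group into a single unitary character of $\A^1/(k^\times\prod_v\O_v^\times)$ and to read off its analytic properties.

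For each such $\eta$, the Hecke $L$-function $L(s,\eta)$ has meromorphic continuation, a functional equation, finite order in vertical strips and no zeros on $\Re s=1$; it is entire unless $\eta$ is trivial, in which case $L(s,\eta)=\zeta_k(s)$ has a simple pole at $s=1$. A standard Perron/contour-shift argument (or a soft Tauberian theorem) then gives $\sum_{N\mathfrak a\le X}\Lambda_k(\mathfrak a)\eta(\mathfrak a)=o(X)$ for $\eta\ne 1$ and $=X+o(X)$ for $\eta=1$; and since $\iota(k^\times)$ is dense in $\mathcal T$, the character $\eta=\chi_1\psi$ is trivial exactly when $\chi=1$ on $\mathcal T$ (with $\chi_1$ the trivial extension) and $\psi=1$. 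Hence $\Psi_\chi(X)=\mathbf 1_{\chi=1}\,\tfrac{X}{h_k}+o(X)$, so $\Psi_{\varphi_\epsilon}(X)=\big(\int_{\mathcal T}\varphi_\epsilon\,dm\big)\tfrac{X}{h_k}+o(X)$ with $m$ the Haar probability measure on $\mathcal T$; letting $\epsilon\to 0$ (whence $\int\varphi_\epsilon\,dm\to\int\varphi\,dm$ and the $\epsilon X$ error disappears) gives $\Psi_\varphi(X)=\tfrac{X}{h_k}\int_{\mathcal T}\varphi\,dm+o(X)$.

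It remains to identify the constant, which I would do by testing against $\varphi\equiv 1$: the left-hand side of the lemma is then $\sum_{\mathfrak a\ \mathrm{principal},\,N\mathfrak a\le X}\Lambda_k(\mathfrak a)=\tfrac{X}{h_k}+o(X)$, which forces $\int_{\mathcal I/\O_k^\times}dt=R_k$ for the normalization of $dt$ used in the statement --- this is precisely the class-number-formula identification of the covolume of the unit lattice with the regulator. Rewriting $\int_{\mathcal T}\varphi\,dm=\tfrac1{R_k}\int_{\mathcal I/\O_k^\times}\varphi\,dt$ then gives the lemma. There is no genuine obstacle here --- this is why the paper calls the statement \emph{folklore} --- and the only points requiring care are the uniformity in $X$ of the approximation step (hence the passage to a \emph{finite} character sum in sup norm rather than summing the a priori non-absolutely-convergent Fourier series of $\varphi$), and the bookkeeping of the class group, the roots of unity and the normalization of $dt$, which the test case $\varphi\equiv 1$ isolates cleanly.
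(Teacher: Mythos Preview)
Your proof is correct and follows essentially the same route as the paper: reduce to unitary characters of the compact group $V^\times/(\R^\times\O_k^\times)$ by density, interpret each such character as an unramified Hecke Gr\"ossencharacter, average over class-group characters to isolate principal ideals, and apply the prime number theorem for Hecke $L$-functions. The only cosmetic differences are that you strip off the higher prime powers at the outset and make the Stone--Weierstrass approximation explicit with an $\epsilon$-argument, whereas the paper keeps the sum over $\frak p^l$ and invokes density more tersely; both arrive at the identification $\int_{\mathcal I/\O_k^\times}dt=R_k$ by testing against $\varphi\equiv 1$.
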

\begin{proof}
Write $\mathcal A$ for the space of continuous functions $\varphi\colon V^\times \to \mathbb C$ invariant by $\R^\times$ and $\O_k^\times$. The unitary characters $\chi: V^\times\to \C^\times$ such that $\chi(\lambda)=1$ for every $\lambda\in\O_k^\times$ and $\chi(t)=1$ for every $t\in \R^\times$ span a dense subspace of $\mathcal A$. As a consequence it is enough to prove the statement for unitary characters $\chi$ as above. 

Our first step is to associate to $\chi$ a Hecke character. Write $\mathbb A^\times$ for the group of ideles of $k$ and $\mathbb A_\infty^\times$ and $\mathbb A_f^\times$ for the groups of infinite and finite ideles respectively. We distinguish the subgroup $\mathbb A^1$ of ideles of idelic norm $1$. Let $K=\prod_{\frak p}\O_{k_\frak p}^\times$ be the maximal compact subgroup of $\A_f^\times$. We identify $V^
\times$ with $\mathbb A_\infty^\times$. By abuse of notation let us write $\chi$ for the extension of $\chi$ to $\mathbb A_\infty^\times \times K\subseteq \mathbb A^\times$ by setting $\chi(vx)=\chi(v)$ for $v\in \mathbb A_\infty^\times,x\in K$. The character $\chi$ 
factors through $(\mathbb A_\infty^\times \times K)/\O_k^{\times}$ and the latter is a closed subgroup of $\mathbb A^\times/k^\times$ of index $h_k$. Let $\widehat\chi$ be 
any extension of $\chi$ to $\mathbb A^\times/k^\times$. There are precisely $h_k$ such extensions and they are all of the form $\psi\widehat\chi$ for $\psi: \mathbb A^{\times}  /(\mathbb A_\infty^\times \times K) k^\times=: {\rm Cl}_k\to \C^\times$ where ${\rm Cl}_k$ stands for the class group of $k$. Through the standard procedure, $\widehat \chi$ gives rise to an unramified Hecke character $\widehat \chi$ such that for every principal prime ideal $\frak p=a\O_k$ we have $\widehat\chi(\frak p)=\chi(a)$ (the character on the right is the original $\chi\colon V^\times \to\mathbb C^\times$). 
For any $\psi: {\rm Cl}_k\to \C^\times$, consider the Hecke L-function $$L(s,\psi\widehat\chi):=\prod_{\frak p}\left(1-\frac{\psi(\frak p)\widehat\chi(\frak p)}{(N\frak p)^s}\right)^{-1}.$$
 By \cite[Theorem 5.34]{IwaniecKowalski2004} there exist constants $c,b>0$ such that the function $L(s,\psi\widehat\chi)$ has at most one zero in the region $$\Re\, z>1-\frac{c}{b\log|\Delta_k|(|\Im\, z| +3)^b}.$$ The exceptional zero is always real, less than $1$ and can occur only when $\psi\widehat\chi$ is a real character. With this information at hand, the standard argument used to prove the prime number theorem  (see \cite[Theorem 5.13]{IwaniecKowalski2004}) shows that  for any $X>0$ we have
\begin{equation}\label{e-AngularE1}\sum_{N\frak p^l\leq X} \psi(\frak p)\widehat\chi(\frak p)\log N\frak p =rX+ o(X),
\end{equation} where $r=1$ if $L(s,\psi\widehat\chi)$ has a simple pole at $1$ and $r=0$ otherwise. In our case $r=1$ if $\psi\widehat\chi=1$ and $r=0$ otherwise.
We take the average of (\ref{e-AngularE1}) over all characters $\psi:{\rm Cl}_k\to \C^\times$ to get 
$$\sum_{\substack{N\frak p^l\leq X \\ \frak p^l \textrm{ principal}}} \widehat\chi(\frak p)\log N\frak p =\frac{1}{h_k}\sum_{N\frak p^l\leq X} \sum_{\psi\in \widehat{{\rm Cl}_k}}\psi(\frak p)\widehat\chi(\frak p)\log N\frak p=\begin{cases}\frac{X}{h_k} +o(X)& \textrm{ if } \chi=1\\ o(X) &\textrm{ otherwise.}\end{cases}.$$
Since $\int_{\mathcal I/\O_k^\times}1dt=R_k$ this agrees with the formula predicted by the lemma. We deduce that the lemma holds for $\chi$, which is enough to deduce the general case.
\end{proof}
\begin{corollary}\label{c-AngularEq} Let $U$ be a bounded open subset of $V^\times$. Then for any $t\in V^\times$ with $\|t\|$ sufficiently large there is at least one element $x\in tU\cap \O_k$ such that $x\O_k$ is a prime ideal power.
\end{corollary}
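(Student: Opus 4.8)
\textbf{Proof proposal for Corollary \ref{c-AngularEq}.}
The plan is to deduce this from Lemma \ref{l-AngEqC} by a standard positivity argument. Fix a bounded open $U\subset V^\times$. Since multiplying by a unit $\lambda\in\O_k^\times$ and scaling by a positive real does not change primality of the ideal $a\O_k$, the natural quantity to look at is the image of $U$ in the compact quotient $\mathcal I/\O_k^\times$, where $\mathcal I=\{v\in V\mid \|v\|=1\}$. The first step is to pick a non-zero, non-negative continuous function $\varphi$ on $V^\times$, invariant under $\R^\times$ and $\O_k^\times$ (i.e.\ $\varphi\in\mathcal A$ in the notation of Lemma \ref{l-AngEqC}), whose support, viewed in $\mathcal I/\O_k^\times$, is contained in the image of $U$. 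Concretely, one takes a small open set inside the radial/unit projection of $U$, spreads it around by the (proper, free on $V^\times$ modulo a finite group) $\R^\times\times\O_k^\times$-action, and mollifies; because $U$ is open and non-empty, its image in $\mathcal I/\O_k^\times$ has non-empty interior, so such a $\varphi\ge 0$, $\varphi\not\equiv 0$ exists with $\int_{\mathcal I/\O_k^\times}\varphi(t)\,dt>0$.

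Next I would apply Lemma \ref{l-AngEqC} to this $\varphi$, giving
$$\sum_{\substack{N(\frak p^l)\leq X\\ \frak p\textrm{ principal}}}\varphi(\frak p)\log N\frak p=\frac{X}{R_k h_k}\int_{\mathcal I/\O_k^\times}\varphi(t)\,dt+o(X).$$
The right-hand side tends to $+\infty$ because the integral is strictly positive. Since $\varphi\geq 0$, every term on the left is non-negative, so for $X$ large there must be at least one principal prime power $\frak p^l$ with $N(\frak p^l)\leq X$ and $\varphi(\frak p)\neq 0$. The contribution of proper prime powers ($l\geq 2$) to the sum is $O(\sqrt X\log X)=o(X)$ by the usual crude estimate, so in fact for $X$ large there is a principal \emph{prime} ideal $\frak p=a\O_k$ with $N\frak p\leq X$ and $\varphi(a)\neq 0$; by our choice of $\varphi$ the class of $a$ in $\mathcal I/\O_k^\times$ lies in the image of $U$, hence $a\lambda t_0\in U$ for some unit $\lambda$ and some $t_0\in\R^\times$, i.e.\ after replacing the generator $a$ by $a\lambda$ we get a generator lying in a positive-real rescaling of $U$.

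Finally I translate this into the statement about an arbitrary $t\in V^\times$ with $\|t\|$ large. Given such $t$, apply the above to the set $t^{-1}U$ (still bounded and open in $V^\times$) with the threshold $X$ chosen comparable to $\|t\|$; a principal prime $a\O_k$ with generator in a positive rescaling of $t^{-1}U$ and $\|a\|$ of the right size yields, after the rescaling is absorbed (here one uses that $tU$ is an open cone-like region only up to the radial direction, so one adjusts $a$ by a positive real built from a suitable power/root — more cleanly, one runs the counting argument directly with the weight $\varphi$ supported near the image of $tU$ and the constraint $N\frak p\le\|t\|\cdot C$), an element $a\in tU\cap\O_k$ with $a\O_k$ prime. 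The main obstacle, and the only non-formal point, is the bookkeeping in this last step: matching the \emph{size} constraint $N\frak p\leq X$ coming from Lemma \ref{l-AngEqC} with the requirement that the generator land in $tU$ rather than merely in the cone over $tU$. This is handled by noting $tU$ is bounded, so $\|a\|$ is automatically $\asymp\|t\|$ for $a\in tU$, and then choosing $X=C\|t\|$ with $C$ large enough that the positive main term $\frac{X}{R_kh_k}\int\varphi$ dominates both the $o(X)$ error and the $O(\sqrt X\log X)$ from higher prime powers; everything else is the positivity argument above.
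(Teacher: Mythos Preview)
Your overall strategy---choose a nonnegative $\varphi\in\mathcal A$ supported on the angular image of $U$, apply Lemma \ref{l-AngEqC}, discard higher prime powers, and conclude---is the same as the paper's. The genuine gap is exactly the ``bookkeeping'' step you flag but do not resolve. Because $\varphi$ is $\R^\times$-invariant, $\varphi(a)\neq 0$ only tells you that $a$ lies in the cone $\R^\times\O_k^\times U'$ for your chosen $U'\subset U$; it carries no information about $\|a\|$. The sentence ``adjust $a$ by a positive real built from a suitable power/root'' is the problem: you cannot multiply an element of $\O_k$ by a non-unit real scalar and stay in $\O_k$, so this cannot be how the size is fixed. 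Likewise, replacing $U$ by $t^{-1}U$ does nothing useful here, since $t^{-1}U$ and $U$ have the same image in $\mathcal I/\O_k^\times$ up to a rotation that is absorbed into $\varphi$ anyway.

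The paper fixes this by subtracting two instances of Lemma \ref{l-AngEqC}: choose $a<1<b$ so that the shell $\{a\le\|v\|\le b\}$ intersected with the cone $\R^\times\O_k^\times U'$ sits inside $\O_k^\times U$, and look at
\[
\sum_{\substack{aX\le N\frak p\le bX\\ \frak p\ \text{principal}}}\varphi(\frak p)\log N\frak p \;=\; c(b-a)X+o(X),
\]
which forces a principal prime in the \emph{norm window} $[aX,bX]$ with the correct angular position; after multiplying the generator by a unit this lands in $X^{1/N}U$. The passage to arbitrary $t\in V^\times$ is then done by compactness of $V^\times/\R^\times\O_k^\times$: pick finitely many $y_1,\dots,y_m$ and a small open $W$ so that every $tU$ contains some $\lambda t_0 y_iW$ with $\lambda\in\O_k^\times$, $t_0\in\R^\times$, and apply the real-$t$ case to each $y_iW$. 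Your proposal is missing both the norm-shell trick and this compactness reduction; with those added it becomes essentially the paper's proof.
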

\begin{proof} First we prove the statement for $t\in \R^\times\subset V^\times$.
Let $U'$ be a small cylinder contained in $U$ such that $\overline U'\subseteq U$. Put $U''=\O_k^\times \R^\times U'$. Let $ \varphi _{0}: V \to \R _{ \geq 0}$ be any continuous function such that $ \varphi _{0} | _{U'} =1$ and $ \varphi _{0}$ vanishes outside $U$. For $v \in V^{ \times }$ put $ \varphi (v) := \sum _{x \in \O _{k} ^{\times }} \int _{0}^{ \infty } \varphi _{0} (txv) \frac{dt}{t}$. Function $ \varphi $ is continuous, positive on $U$, supported on $U''$ and it satisfies the assumptions of Lemma \ref{l-AngEqC}. As $V^\times/\R^\times\O_k^\times$ is compact the function $\varphi$ is necessarily bounded. There exist $0<a<b$ such that $U''\cap\left\{v\in V^\times| a\leq \|v\|\leq b\right\}\subseteq \O_k^\times U$. By Lemma \ref{l-AngEqC}, there is a positive constant $c$ such that
\begin{equation}\label{e-AngEqCol}\sum_{\substack{a|t|^N\leq N(\frak p^l)\leq b|t|^N\\ \frak p \textrm{ principal}}} \varphi(\frak p)\log N\frak p=c(b-a)|t|^N+ o(b|t|^N).\end{equation}
We deduce that for $|t|^N=\|t\|$ sufficiently large there exists an element $w\in t\O_k^\times U\cap \O_k$ such that $w\O_k$ is a prime power. We replace  $w$ by $w\lambda$ for some $\lambda\in\O_k^\times$ to get an element of $tU\cap \O_k$ generating a prime power. 

To get the general case choose an open set $W\subseteq V^\times$ and a finite set $y_1,\ldots,y_m$ of elements of $V^\times$ such that for every translate $tU, t\in V^\times$ there exists an $\lambda\in \O_k^\times$, $t_0\in \R^\times$ and $i\in\left\{1,\ldots,m\right\}$ such that $\lambda t_0 y_i W\subseteq t U$. This can be always arranged because $V^\times/ \R^\times \O_k^\times$ is compact. The case of the corollary that we have already proved applied to the open sets $y_iW$ implies that for $\|t_0\|$ sufficiently large the sets $t_0 y_i W$ all contain an element generating a prime power. But then so do the translates $\lambda t_0 y_i W$ for every $\lambda\in \O_k^\times$. Since one of them is contained in $tU$ and $\|t_0\|\to\infty$ as soon as $\|t\|\to\infty$  the corollary is proven.\end{proof}
\bibliography{ref} 
\bibliographystyle{plain} 
\end{document}